\newtheorem{definition}{Definition}[section]
\newtheorem{corollary}{Corollary}[section]
\newtheorem{lemma}{Lemma}[section]
\newtheorem{theorem}{Theorem}[section]
\newtheorem{lemma1}[definition]{Lemma}
\newtheorem{corollary1}[theorem]{Corollary}
\newtheorem*{Assumptions*}{Assumptions} 
\newtheorem*{theorem*}{Theorem} 
\theoremstyle{remark} 
\newtheorem*{remark}{\textbf{Remark}} 
\newcolumntype{?}{!{\vrule width 2pt}}
\newcommand{\reals}{\mathbb{R}}
\newcommand{\naturals}{\mathbb{N}}
\newcommand{\timeint}{\mathcal{T}}
\newcommand{\indexint}{\mathcal{A}}
\newcommand{\inta}{\mathfrak{a}}
\title{Row-finite systems of stochastic differential equations with dissipative drift}
\author{Georgy Chargaziya \\ Mathematics Department, The University of York}
\numberwithin{equation}{section}
\begin{document}
\maketitle
\begin{center}
\textbf{Abstract} 
\end{center}
Motivated by studies of stochastic systems describing non-equilibrium dynamics of (real-valued) spins of an infinite particle system in $\reals^{n}$ we consider a row-finite system of stochastic differential equations with dissipative drift. The existence and uniqueness of infinite time solutions is proved via finite volume approximation and a version of the Ovsjannikov method. 
\newpage
\tableofcontents
\newpage
\begin{multicols}{3}
\printnomenclature
\addcontentsline{toc}{section}{List of Symbols}
\end{multicols}
\newpage
\textbf{Acknowledgment} \\[1em]
Much of this work is based on joint research with Alexei Daletskii. His advise and guidance is gratefully acknowledged. I am grateful to Zdzislaw Brzezniak for helpful and stimulating discussions.
\newpage

\section{Introduction}\label{Introduction}
The study of properties of various physical phenomena has led to consideration of systems of infinitely many coupled finite dimensional stochastic differential equations. Such systems are known as lattice models with certain conditions on the so-called
\textquotedblleft spin variables\textquotedblright, which are being modelled by the SDEs. Term \textquotedblleft stochastic dynamics\textquotedblright\ is also often used to describe such systems in general and in particular SDEs that model the time dependence of spin variables. Origins of this terminology can be found in \cite{GPYW} and additional mathematical framework can be found, for example, in \cite{SAMR} and \cite{GJLPKM}. Questions concerning existence and uniqueness of solutions of such systems have also been studied in \cite{RLD} and \cite{GR}. \\[1em]
In recent decades studies of physical phenomena pertaining to non-crystalline (amorphous) substances and ferrofluids and amorphous magnets has led to an increased interest in studying countable systems of particles randomly distributed in $\reals^{d}$. Characterisation of each particle in such a system by an internal real or vector valued \textquotedblleft spin\textquotedblright\ parameter naturally leads to the consideration of a lattice model based on a fixed configuration $\gamma\subset\reals^{d}$ of particle positions. Instances when $\gamma\equiv\mathbb{Z}^{d}$ are well studied and have an extensive literature, see for example \cite{DZ1,LLL} and \cite{INZ}. However, as described in \cite{AD&DF} there are instances when the configuration $\gamma$ of particle positions doesn't have a regular structure but instead lends itself as a locally finite subset of $\reals^{d}$ where the typical number of \textquotedblleft neighbour variables\textquotedblright\ of a particle located at $x\in\gamma$ is proportional to $\log|x|$ for large $|x|$. \\[1em]
In \cite{ADGC} we saw an extension of work by \cite{AD}. This extension showed, under a suitable choice of coefficients, how to construct a unique strong solution of a stochastic differential equation, driven by a cylinder Wiener process, in a separable Hilbert space 
\begin{equation}
d\xi (t)=F(\xi (t))dt+\Phi(\xi (t))dW(t),\ t\geq0,
\end{equation}%
using the method of Ovsjannikov. The end result was a strong solution that takes values in an intersection of a suitably chosen scale of Hilbert spaces. This general theory was subsequently used to extend the work of \cite{AD&DF} [\textit{in a sense of considering a stochastic version}] by considering a lattice system on a locally finite subset $\gamma\subset\reals^{d}$ such that the spins $q_{x}$ and $q_{y}$ are allowed to interact via a pair potential if the distance between $x,y\in\gamma$ is no more than a fixed and positive interaction radius $r$, that is, they are neighbours in the geometric graph defined by $\gamma$ and $r$. Precisely speaking we considered a system
\begin{align}
d\xi_{x}(t)=\phi_{x}(\Xi(t))dt+\Psi_{x}(\Xi(t))dW_{x}(t),\ x\in\gamma,\ t\in [0,T], \label{prevworksys}
\end{align}
where $\phi_{x}$ and $\Psi_{x}$ were required to satisfy the so-called \textquotedblleft finite range \textquotedblright\ and \textquotedblleft uniform Lipschitz continuity\textquotedblright\ conditions and showed that system ($\ref{prevworksys}$) can be realised in a suitable scale of separable Hilbert spaces and hence studied using the method of Ovsjannikov. \\[1em]
In this paper, we would like to further build upon results of \cite{AD, AD&DF} and \cite{AKTR,AKT} and consider a lattice system of the form 
\begin{align}
d\xi_{x}(t)=\Phi_{x}(\xi_{x}(t),\Xi(t))dt+\Psi_{x}(\xi_{x}(t),\Xi(t))dW_{x}(t),\ x\in\gamma,\ t\in [0,T], \label{prevworksys1}
\end{align}
where $\Phi_{x}(a,b)\equiv V(a) + \phi_{x}(b)$, where $V$ is a real valued one particle potential satisfying the dissipativity condition, and $\Psi_{x}$ is Lipschitz. In our approach we will assume, as in \cite{AD&DF}, that configuration of particles $\gamma\subset\reals^{d}$ is a locally finite subset of $\reals^{d}$ distributed according to a Poisson or, more generally, Gibbs measure with a superstable low regular interaction energy, so that for all $x\in\gamma$ then number of particle in a certain compact vicinity of $x$ is proportional to $\log|x|$ for large $|x|$.  \\[1em]
Unfortunately, system (\ref{prevworksys1}) doesn't lend itself for an immediate and straightforward application of an Ovsjannikov method. Hence in this part we opt for an approach that was used in \cite{AKTR} and consider a so-called sequence of \textquotedblleft finite volume approximations\textquotedblright\ of the system (\ref{prevworksys1}). Precisely speaking a sequence of finite volume approximations is a sequence of solutions of truncated systems of the following form
\begin{equation}
\begin{alignedat}{2} 
&\xi_{x,t}^{n} = \zeta_{x} + \int_{0}^{t}\Phi_{x}(\xi_{x,s}^{n} ,\Xi_{s}^{n} )ds + \int_{0}^{t}\Psi_{x}(\xi_{x,s}^{n} ,\Xi_{s}^{n})dW_{x}(s), && \quad \forall x\in\Lambda_{n}\land t\in  [0,T], \\
&\xi_{x,t}^{n}  = \zeta_{x}, &&\quad \forall x\not\in\Lambda_{n}\land t\in [0,T].
\end{alignedat}
\end{equation}
where $\gamma\supset\Lambda_{n}\uparrow\gamma$ are finite. Using a comparison Theorem \ref{OvsMapTheorem}, which builds upon the method of Ovsjannikov, we ultimately show that the sequence of finite volume approximations converges to a unique strong solution of the system (\ref{prevworksys1})  in a certain scale of Banach spaces. 

\section{Main Framework}\label{MF}
\subsection{General Notation}\label{notation}

\nomenclature[$T$]{$T$}{}
\nomenclature[$\rho$]{$\rho$}{}
\nomenclature[$d$]{$d$}{}
\nomenclature[$\reals^{+}$]{$\reals^{+}$}{}
\nomenclature[$\reals^{++}$]{$\reals^{++}$}{}
\nomenclature[$\reals^{0}$]{$\reals^{0}$}{}
\nomenclature[$\naturals^{0}$]{$\naturals^{0}$}{}

\nomenclature[$\gamma$]{$\gamma$}{}
\nomenclature[$\mathcal{T}$]{$\mathcal{T}$}{}
\nomenclature[$\mathcal{A}$]{$\mathcal{A}$}{}
\nomenclature[$\overline{B(\cdot,\cdot)}$]{$\overline{B(\cdot,\cdot)}$}{}
\nomenclature[$B(\cdot,\cdot)$]{$B(\cdot,\cdot)$}{}

\nomenclature[$B_{x}$]{$B_{x}$}{}
\nomenclature[$n_{x}$]{$n_{x}$}{}
\nomenclature[$a$]{$a$}{}
\nomenclature[$\bar{a}$]{$\bar{a}$}{}
\nomenclature[$|\cdot|_{E}$]{$|\cdot|_{E}$}{}
\nomenclature[$|\cdot|_{S}$]{$|\cdot|_{S}$}{}

In our framework all vector spaces will be over $\reals$ and the cardinal number of any given set $A$ will always be denoted by $\#A$. We now start this subsection by introducing the following sets
\begin{align}
\reals^{0}\coloneqq (0,\infty), \quad \reals^{+}\coloneqq [0,\infty), \quad \reals^{++}\coloneqq [1,\infty), \quad \naturals^{0}\coloneqq\naturals\cup\{0\},
\end{align}
We also introduce constants $T, \ \rho, \ \underline{\inta}, \ \overline{\inta} \in \reals^{0}$, $d\in\reals^{+}$ and a special notation for the following closed intervals;
\begin{align}
\mathcal{A}\coloneqq[\underline{\inta},\overline{\inta}], \\[1em]
\mathcal{T}\coloneqq [0,T].
\end{align}
We let $\gamma$ be a locally finite subset of $\reals^{d}$ and $|\cdot|,\ |\cdot|_{S}$ be respectively the Euclidean and supremum norm in $\reals^{d}$. Moreover we 
agree to use the following abbreviations;
\begin{equation}
\begin{alignedat}{2}
B(x,\rho)&\coloneqq\{y\in\reals^{d} \ | \ |x-y&&|\ <\rho\}, \\[1em]
\overline{B(x,\rho)}&\coloneqq\{y\in\reals^{d} \ | \ |x-y&&|\ \leq\rho\}, \\[1em]
B_{x}&\coloneq\gamma\cap \overline{B(x,\rho)}, &&\!  \forall x\in\gamma, \\[1em]
n_{x}&\coloneq \#B_{x}, &&\!  \forall x\in\gamma.
\end{alignedat}
\end{equation}
\begin{mdframed}
	\begin{remark}
	The fact that $\gamma$ is a locally finite subset of $\reals^{d}$ means that $\gamma\cap X$ is finite if $X\subset\reals^{d}$ is compact and also implies that $\gamma$ is a countable subset of $\reals^{d}$. 
	\end{remark}
\end{mdframed}
Next, we fix in place a real valued function $a:\reals^{d}\to\reals^{+}$ and make the following assumptions;
\begin{enumerate}
	\item[(\textbf{A})] $a(x)\leq \bar{a}\quad\quad\quad\quad$\ \ \ for some constant $\bar{a}\in\reals^{+}$, \label{boundfora}
	\item[(\textbf{B})] $n_{x}\leq \mathcal{N}\log(1+|x|)$ for some constant $\mathcal{N}\in\reals^{+}$ and all $x\in\gamma$. \label{EstimateOnN}
\end{enumerate}

Given two vector spaes $A$ and $B$ let us now also introduce the following notation 
\begin{align}
A \prec B \iff \text{A is a subspace of B,}
\end{align}
and agree that $A^{B}$ will be understood as the infinite cartesian product, that is
\begin{align}
A^{B} = \bigtimes_{b\in B}A = \bigg{\{}\{z_{b}\}_{b\in B} \ \bigg{|} \ z_{b}\in A \ \text{for all} \ b\in B\bigg{\}}.
\end{align}
Suppose now that $\mathbf{X}\coloneqq\{X_{\inta}\}_{\inta \in\indexint}$ is a family of sets. We define for convenience and later use the following notation; 
\begin{align}
&\mathbf{X}(\cup)\coloneqq\bigcup_{\inta\in(\underline{\inta},\overline{\inta})}X_{\inta}, \\[1em]
&\mathbf{X}(\cap)\coloneqq\bigcap_{\inta\in(\underline{\inta},\overline{\inta})}X_{\inta},
\end{align}

\nomenclature[$A^{B}$]{$A^{B}$}{}
\nomenclature[$\mathbf{X}(\cup)$]{$\mathbf{X}(\cup)$}{}
\nomenclature[$\mathbf{X}(\cap)$]{$\mathbf{X}(\cap)$}{}
\nomenclature[$\prec$]{$\prec$}{}

\subsection{Scale and Ovsjannikov Map}\label{SandOM}
We now proceed to introduce several important definitions.
\begin{definition}\label{Defscale}
	A family $\mathbf{X}\coloneqq\{X_{\inta}\}_{\inta \in\indexint}$ of Banach spaces is called a scale if for all $\alpha < \beta \in \indexint$ and all $x\in X_{\alpha}$
	\begin{enumerate}
		\item $X_{\alpha}  \prec X_{\beta}$, \label{SpaceInclusion}
		\item $||x||_{X_{\beta}}\ \leq \ ||x||_{X_{\alpha}}$. \label{1stNormInequality}
	\end{enumerate}
\end{definition} 
\begin{definition} \label{Defovsop}
Let $\mathbf{X^{1}}$ be a scale and $\mathbf{X^{2}}\coloneqq\{X^{2}_{\inta}\}_{\inta \in\indexint}$ be a family of Banach spaces. Then
\[
G:\mathbf{X^{1}}(\cup)\to X_{\overline{\inta}}^{2}
\]
is called an Ovsjannikov map of order $q$ and $L$ from $\mathbf{X^{1}}$ to $\mathbf{X^{2}}$ if there exist $q,L\in\reals^{0}$ such that for all $\alpha < \beta \in \indexint$ and all $x,y\in X_{\alpha}$
\begin{enumerate}
	\item $G|_{X_{\alpha}^{1}}:X_{\alpha}^{1} \to X_{\beta}^{2}$, \label{0LCondition}
	\item $||G(x) - G(y)||_{X_{\beta}^{2}} \leq \frac{L}{(\beta -\alpha)^{q}}||x-y||_{X_{\alpha}^{1}}$.  \label{1stLCondition}
\end{enumerate}
\end{definition}
\begin{definition}
Suppose $\mathbf{X^{1}}$ is a scale and $\mathbf{X^{2}}\coloneqq\{X^{2}_{\inta}\}_{\inta \in\indexint}$ is a family of Banach spaces.
\begin{align}
\mathcal{O}(\mathbf{X^{1}},\mathbf{X^{2}},L,q)&\coloneqq\{\text{space of Ovsjannikov maps of order $q$ and $L$ from $\mathbf{X^{1}}$ to $\mathbf{X^{2}}$}\},\label{ovsspace1} \\[1em]
\mathcal{O}(\mathbf{X^{1}},L,q)&\coloneqq\{\text{space of Ovsjannikov maps of order $q$ and $L$ from $\mathbf{X^{1}}$ to $\mathbf{X^{1}}$}\}.\label{ovsspace12}
\end{align}
\end{definition}
\begin{definition}\label{DefSequenceSpaceScale}
For all $p\in\reals^{++}$ and all $\inta \in \indexint$ let
\begin{align}
l^{p}_{\inta} &\coloneqq \bigg{\{}z \in \reals^{\gamma} \ \bigg{|} \ \|z\|_{l^{p}_{\inta}} \coloneqq \bigg{(}\sum_{x\in\gamma}e^{- \inta|x|}|z_{x}|^{p}\bigg{)}^{\frac{1}{p}} < \infty\bigg{\}}, \\ \label{SequenceSpace}
\mathscr{L}^{p} &\coloneqq\{l^{p}_{\inta}\}_{\inta\in\indexint}, 
\end{align}
be, respectively, a normed linear space of weighted real sequences and a family of such spaces. 
\end{definition}

\nomenclature[$\reals^{\gamma}$]{$\reals^{\gamma}$}{}
\nomenclature[$l^{p}_{\inta}$]{$l^{p}_{\inta}$}{}
\nomenclature[$\mathscr{L}^{p}$]{$\mathscr{L}^{p}$}{}
\nomenclature[$\mathcal{O}(\mathbf{X^{1}},\mathbf{X^{2}},L,q)$]{$\mathcal{O}(\mathbf{X^{1}},\mathbf{X^{2}},L,q)$}{}
\nomenclature[$\mathcal{O}(\mathbf{X^{1}},L,q)$]{$\mathcal{O}(\mathbf{X^{1}},L,q)$}{}

\begin{theorem} \label{Ninlp}
Let $n\coloneqq\{n_{x}\}_{x\in\gamma}$. Then $n\in l_{\underline{\inta}}^{1}$.
\end{theorem}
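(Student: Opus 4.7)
The plan is to reduce the desired estimate, via assumption (B), to showing $\sum_{x \in \gamma} e^{-\underline{\inta}|x|} \log(1+|x|) < \infty$, and then to convert this $\gamma$-sum into an absolutely convergent $\mathbb{Z}^{d}$-lattice sum by grouping points of $\gamma$ according to a partition of $\reals^{d}$ into small cubes. This will produce a bound of the form $\sum_{k \in \mathbb{Z}^{d}} e^{-c|k|} [\log(1+|k|)]^{2}$, which converges because an exponential dominates any polynomial factor.

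Concretely, I would first apply (B) to write
\begin{equation*}
\sum_{x \in \gamma} e^{-\underline{\inta}|x|} n_{x} \ \leq\ \mathcal{N}\sum_{x \in \gamma} e^{-\underline{\inta}|x|}\log(1+|x|),
\end{equation*}
and then partition $\reals^{d}$ into half-open cubes $\{Q_{k}\}_{k \in \mathbb{Z}^{d}}$ of side length $s \leq \rho/\sqrt{d}$, chosen so that each $Q_{k}$ has Euclidean diameter at most $\rho$. The crucial observation is that if $Q_{k} \cap \gamma \neq \emptyset$ and $y_{k} \in Q_{k} \cap \gamma$ is any chosen representative, then every other point of $\gamma \cap Q_{k}$ lies within distance $\rho$ of $y_{k}$, so $\gamma \cap Q_{k} \subseteq B_{y_{k}}$ and hence $\#(\gamma \cap Q_{k}) \leq n_{y_{k}} \leq \mathcal{N}\log(1+|y_{k}|)$.

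For $x \in Q_{k}$, $|x|$ is comparable to $s|k|$ up to an additive constant of order $s\sqrt{d}$, so $e^{-\underline{\inta}|x|}\log(1+|x|)$ is, uniformly in $x \in Q_{k}$, bounded by a constant multiple of $e^{-\underline{\inta}s|k|}\log(1+|k|)$. Regrouping by cubes therefore gives
\begin{equation*}
\sum_{x \in \gamma}e^{-\underline{\inta}|x|}\log(1+|x|) \ \leq\ \sum_{k \in \mathbb{Z}^{d}}\#(\gamma \cap Q_{k})\sup_{x \in Q_{k}}\bigl[e^{-\underline{\inta}|x|}\log(1+|x|)\bigr] \ \leq\ C\sum_{k \in \mathbb{Z}^{d}}e^{-\underline{\inta}s|k|}[\log(1+|k|)]^{2},
\end{equation*}
and the last sum converges by the usual exponential-versus-polynomial argument.

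The main (and essentially the only non-routine) step is the geometric one: one must choose the cube side $s$ small enough that $\mathrm{diam}(Q_{k}) \leq \rho$, so that assumption (B), applied at a single representative point per cube, controls $\#(\gamma \cap Q_{k})$ for the whole cube. Everything afterwards is a comparison between $|x|$ and the lattice index $|k|$ together with the elementary convergence of $\sum_{k}e^{-c|k|}[\log(1+|k|)]^{2}$.
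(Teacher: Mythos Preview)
Your proposal is correct and follows essentially the same approach as the paper: both partition $\reals^{d}$ into cubes of diameter at most $\rho$ so that assumption~(B), applied at a representative point, bounds $\#(\gamma\cap Q_{k})$, and then both reduce to a convergent sum with exponential decay against a slowly growing factor. The only cosmetic difference is that the paper first bounds $\log(1+|x|)\leq |x|$ and then groups the cubes into sup-norm annuli $J_{n}$ to obtain a one-dimensional sum over $n\in\naturals$, whereas you sum directly over the lattice $\mathbb{Z}^{d}$ and keep the logarithm; your bookkeeping is slightly cleaner but the underlying geometric idea is identical.
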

\begin{proof}
Observe that assumption (\hyperref[EstimateOnN]{B}) implies that
\begin{align}
\sum_{x\in\gamma}e^{-\underline{\inta}|x|}|n_{x}| \ \leq \ \mathcal{N}\sum_{x\in\gamma}e^{-\underline{\inta}|x|}\log(1+|x|) \ \leq \ \mathcal{N}\sum_{x\in\gamma}e^{-\underline{\inta}|x|}|x|. 
\end{align}
Hence to conclude that proof we show that
\begin{align}
\sum_{x\in\gamma}e^{-\underline{\inta}|x|}|x|\ < \infty.  \label{NinplEqn1}
\end{align}
We now make a couple of preliminary observations and additional definitions. \\[1em] Fix a suitable $k \in\naturals$ such that $\sqrt{d}\frac{1}{2^{k}}<\rho$ and consider the following $k^{\text{th}}$ grid-partition or $\reals^{d}$
\begin{align}
\mathcal{R}^{k}&\coloneqq\{\mathcal{R}^{k}_{z}\}_{z\in\mathbb{Z}^{d}}, \\[1em]
\mathcal{R}^{k}_{z}&\coloneqq\bigg{\{}x\in\reals^{d}\ \bigg{|}\ \frac{z_{i}-1}{2^{k}}\leq x_{i}\leq \frac{z_{i}}{2^{k}}\bigg{\}}.
\end{align} 
We shall call members of the family $\mathcal{R}^{k}$, $k^{\text{th}}-$rectangles. Observe that for all $z\in\mathbb{Z}^{d}$
\begin{align}
Diam(\mathcal{R}^{k}_{z})\coloneqq\sup\{|x-y|_{S}\ |\ x,y\in\mathcal{R}^{k}_{z}\} = \frac{1}{2^{k}}.
\end{align}
Now introduce the following sets
\begin{equation}
\begin{alignedat}{2}
I_{n}&\coloneqq\bigg{\{}x\in\reals^{d}\ \bigg{|}\ |x|_{S}\ \leq\frac{1}{2}n\bigg{\}},\ &&\forall n\in\naturals^{0},\\[1em]
J_{n}&\coloneqq I_{n}-I_{n-1}, &&\forall n\in\naturals.
\end{alignedat}
\end{equation}
Consider now the real function $e^{-\underline{\inta}x}x:[0,\infty)\to\reals$. We see that $\frac{d}{dx}e^{-\underline{\inta}x}x = e^{-\underline{\inta}x}(1-\underline{\inta}x)$ and so it follows that $\frac{d}{dx}e^{-\underline{\inta}x}x<0$ if $x>\frac{1}{\underline{\inta}}$. Therfore letting $m\in\naturals$ be the smallest natural number such that $\max\{\frac{1}{\underline{\inta}},2\}\leq m$ we see that $e^{-\underline{\inta}x}x:(m,\infty)\to\reals$ is a decreasing function.
Finally observe that the following statements are true
\begin{enumerate}
	\item $I_{1}$ contains exactly $2^{k+1}$ of $k^{\text{th}}-$rectangles.
	\item $J_{n}$ contains fewer then $n2^{k+2}$ of $k^{\text{th}}-$rectangles.
	\item For all $n\in\naturals$, if $x\in\gamma\cap J_{n}$ then $|x|\ \geq n-1$.
	\item Suppose that $n\in\naturals$ and $z\in\mathbb{Z}^{d}$. Consider $x,y\in\gamma\cap\mathcal{R}^{k}_{z}\subset J_{n}$. It follows that 
	\begin{align}
	|x-y|&\leq\sqrt{d}|x-y|_{S}, \\
	&\leq\sqrt{d}Diam(\mathcal{R}^{k}_{z}), \\
	&\leq \sqrt{d}\frac{1}{2^{k}}, \\
	&\leq\rho. 
	\end{align}
	Hence we see that $y\in B_{x}$ and so from asumption \hyperref[EstimateOnN]{B} we see that
	\begin{align}
	\#\gamma\cap\mathcal{R}^{k}_{z} &\leq n_{x}, \\
	&\leq \mathcal{N}\log(1+|x|), \\
	&\leq \mathcal{N} |x|, \\
	&\leq \mathcal{N} n.
	\end{align}
	Therefore we conclude that for all $n\in\naturals$, $\#\gamma\cap J_{n}\leq \mathcal{N}n^{2}2^{k+2}$.
\end{enumerate}
\quad \\ \\ Returning now to the inequality (\ref{NinplEqn1}) we see that because $J_{m}$ is compact and $\gamma$ is locally finite we can let 
\begin{align}
B\coloneqq\sum_{x\in\gamma\cap J_{m}}e^{-\underline{\inta}|x|}|x|,
\end{align}
and observe that 
\begin{align}
\sum_{x\in\gamma}e^{-\underline{\inta}|x|}|x|\ &\leq B+ \sum_{n\in\naturals \atop n>m}\sum_{x\in\gamma\cap J_{n}}e^{-\underline{\inta}|x|}|x|, \\
&\leq B + \mathcal{N}2^{k+2}\sum_{n\in\naturals \atop n>m}e^{-\underline{\inta}(n-1)}(n-1)n^{2}.
\end{align}
Hence letting $\mathcal{K}\coloneqq\frac{m-1}{m}$ we see that
\begin{align}
\sum_{x\in\gamma}e^{-\underline{\inta}|x|}|x|\ \leq B + \mathcal{N}2^{k+2}\sum_{n\in\naturals \atop n>m}e^{-\mathcal{K}\underline{\inta}n}n^{3}.  \label{NinplEqn2}
\end{align}
Now, one can show via a simple calculation involving the integral test (for details see \cite{Nahin}) that the right hand side of the inequality (\ref{NinplEqn2}) above is finite hence the proof is complete. 
\end{proof}
\begin{mdframed}
	\begin{remark}
	From Theorem \ref{Ninlp} above it is clear that
	\begin{align}
	\sum_{x\in\gamma}e^{-\underline{\inta}|x|} < \infty. \label{NinplEqn3}
	\end{align}
	Moreover if $\|\cdot\|$ is any norm in $\reals^{d}$ equivalent to the Euclidean norm then, given a suitable modification of the asumption \hyperref[EstimateOnN]{B}, Theorem \ref{Ninlp} also implies that
	\begin{align}
	\sum_{x\in\gamma}e^{-\underline{\inta}\|x\|}\|x\|\ < \infty. \label{NinplEqn4}
	\end{align}
	\end{remark}
\end{mdframed}
\begin{theorem} \label{SequenceSpaceScale}
Suppose that $p\in\reals^{++}$. Then $\mathscr{L}^{p}$ is the scale.
\end{theorem}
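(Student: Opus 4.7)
The plan is to verify the three ingredients of Definition \ref{Defscale}: first that every $l^{p}_{\inta}$ is in fact a Banach space (so the family qualifies as a family of Banach spaces to begin with), and then conditions (\ref{SpaceInclusion}) and (\ref{1stNormInequality}) of that definition.

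First I would fix $\inta\in\indexint$ and $p\in\reals^{++}$ and show that $(l^{p}_{\inta},\|\cdot\|_{l^{p}_{\inta}})$ is a Banach space. This is a routine adaptation of the classical completeness proof for $\ell^{p}$, carried out with the weight $e^{-\inta|x|}$. Given a Cauchy sequence $\{z^{(k)}\}_{k\in\naturals}\subset l^{p}_{\inta}$, the fact that $e^{-\inta|x|}>0$ for every fixed $x\in\gamma$ forces each coordinate sequence $\{z^{(k)}_{x}\}_{k\in\naturals}$ to be Cauchy in $\reals$ and hence to converge to some $z_{x}\in\reals$. Set $z\coloneqq\{z_{x}\}_{x\in\gamma}\in\reals^{\gamma}$. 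An application of Fatou's lemma to counting measure on $\gamma$ with density $e^{-\inta|x|}$ (equivalently, monotone passage to the limit along finite subsets of $\gamma$) then yields $\|z-z^{(k)}\|_{l^{p}_{\inta}}\to 0$ and, via the triangle inequality, $z\in l^{p}_{\inta}$. The linearity of $l^{p}_{\inta}$ is immediate from Minkowski's inequality applied to the same weighted sum.

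For the scale conditions, fix $\alpha<\beta$ in $\indexint$. Since $|x|\geq 0$ and $\beta-\alpha>0$, the elementary inequality
\begin{align}
e^{-\beta|x|} \leq e^{-\alpha|x|}, \qquad x\in\gamma,
\end{align}
holds termwise. Multiplying by $|z_{x}|^{p}\geq 0$, summing over $x\in\gamma$, and extracting the $p$-th root gives $\|z\|_{l^{p}_{\beta}}\leq \|z\|_{l^{p}_{\alpha}}$ for every $z\in\reals^{\gamma}$, which is exactly condition (\ref{1stNormInequality}). In particular, $z\in l^{p}_{\alpha}$ implies $z\in l^{p}_{\beta}$, so $l^{p}_{\alpha}\subseteq l^{p}_{\beta}$; since both spaces inherit the same coordinatewise vector-space structure from $\reals^{\gamma}$, this inclusion is as subspaces, giving condition (\ref{SpaceInclusion}).

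There is no genuine obstacle here: the only non-trivial piece is completeness in Step 1, and even that reduces to the standard coordinatewise-limit/Fatou argument once one notes that the weight $e^{-\inta|x|}$ is strictly positive and bounded above by $1$ for each $x\in\gamma$. Conditions (\ref{SpaceInclusion}) and (\ref{1stNormInequality}) then follow in one line from the monotonicity of $\inta\mapsto e^{-\inta|x|}$.
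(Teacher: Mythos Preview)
Your proposal is correct and follows essentially the same route as the paper: the paper also reduces conditions (\ref{SpaceInclusion}) and (\ref{1stNormInequality}) to the monotonicity $e^{-\beta|x|}\leq e^{-\alpha|x|}$, and proves completeness by the same coordinatewise-limit argument followed by passage through finite subsets of $\gamma$ (which is exactly your ``monotone passage along finite subsets'' / Fatou step). The only cosmetic difference is the order of presentation.
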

\begin{proof}
It is clear from the Definition \ref{DefSequenceSpaceScale} that $\mathscr{L}^{p}$ is a family of normed linear spaces. Moreover 
conditions (\hyperref[SpaceInclusion]{1}) and (\hyperref[1stNormInequality]{2}) of the Definition \ref{Defscale} follow immediately from the simple fact that if $\alpha < \beta \in \indexint$ then $e^{-\alpha}>e^{-\beta}$. Hence to conclude the proof we fix $\inta\in\indexint$ and show that $l^{p}_{\inta}$ is a Banach space. \\ 

Let us begin by assuming that $\{z^{n}\}_{n\in\naturals}$ is a Cauchy sequence in $l^{p}_{\inta}$. Now fix an arbitrary $\epsilon>0$ and a suitable constant $N_{\epsilon}\in\naturals$ such that for all $n,m > N_{\epsilon}$ we have
\begin{align}
\bigg{(}\sum_{x\in\gamma}e^{- \inta|x|}|z^{n}_{x}-z^{m}_{x}|^{p}\bigg{)}^{\frac{1}{p}} < \epsilon.  \label{SequenceSpaceScaleeqn1}
\end{align}
Because $\epsilon$ is arbitrary we see from inequality (\ref{SequenceSpaceScaleeqn1}) above that for all $x\in\gamma$ sequence $\{z^{n}_{x}\}_{n\in\naturals}$ is Cauchy in $\reals$. Hence, it follows that we can define a new sequence $\mathbf{z}\coloneqq \{\mathbf{z}_{x}\}_{x\in\gamma}$ in $\reals^{\gamma}$ as follows 
\begin{align}
\mathbf{z}_{x} \coloneqq \lim_{n\to\infty}z^{n}_{x},\ \forall x\in\gamma.
\end{align}
Now we complete the proof by showing that $\mathbf{z}\in l^{p}_{\inta}$ and $\overbrace{\lim_{n\to\infty}z^{n}}^{\text{in} \ l^{p}_{\inta}} = \mathbf{z}$. To begin, we fix an arbitrary finite subset $A$ of $\gamma$. Now for all $n,m>N_{\epsilon}$ we see from inequality (\ref{SequenceSpaceScaleeqn1}) that
\begin{align}
\sum_{x\in A}e^{- \inta|x|}|z^{n}_{x}-z^{m}_{x}|^{p} < \epsilon^{p}.  \label{SequenceSpaceScaleeqn2}
\end{align}
Hence we see that for all $n>N_{\epsilon}$
\begin{equation}\label{SequenceSpaceScaleeqn3}
\begin{alignedat}{2}
\lim_{m\to\infty}\sum_{x\in A}e^{- \inta|x|}|z^{n}_{x}-z^{m}_{x}|^{p} &=  \mathbin{\textcolor{white}{\sum_{x\in A}e^{- \inta|x|}\lim_{m\to\infty}|z^{n}_{x}-z^{m}_{x}|^{p}}}&& \\
&= \sum_{x\in A}e^{- \inta|x|}|z^{n}_{x}-\lim_{m\to\infty}z^{m}_{x}|^{p} = \\
&&\!\!\!\!\!\!= \sum_{x \in A}e^{- \inta|x|}|z^{n}_{x}-\mathbf{z}_{x}|^{p}\ \leq\ \epsilon^{p}.
\end{alignedat}
\end{equation}
Since $A\subset\gamma$ is arbitrary we see from inequality (\ref{SequenceSpaceScaleeqn3}) above that for all $n>N_{\epsilon}$
\begin{align}
\sum_{x \in \gamma}e^{- \inta|x|}|z^{n}_{x}-\mathbf{z}_{x}|^{p} \leq \epsilon^{p}.
\end{align}
Because $\epsilon$ is also arbitrary we conclude that $\overbrace{\lim_{n\to\infty}z^{n}}^{\text{in} \ l^{p}_{\inta}} = \mathbf{z}$. Moreover we see that if $n>N_{\epsilon}$ then 
$z^{n}-\mathbf{z}\in l^{p}_{\inta}$. Since $l^{p}_{\inta}$ is a vector space we conclude that $\mathbf{z}\in l^{p}_{\inta}$ hence the proof is complete. 
\end{proof}
\subsection{Probability and Measure Spaces}\label{ProbabilitySpace}

We shall now proceed to describe the probability space and also a couple of important spaces of measurable maps and stochastic processes, that will become important in the main body of this text. \\[1em]
Hence let us assume the following. 

\nomenclature[$\mathbf{P}$]{$\mathbf{P}$}{}
\nomenclature[$\Omega$]{$\Omega$}{}
\nomenclature[$\mathbb{P}$]{$\mathbb{P}$}{}
\nomenclature[$\mathbb{F}$]{$\mathbb{F}$}{}
\nomenclature[$\mathcal{F}_{t}$]{$\mathcal{F}_{t}$}{}
\nomenclature[$\mathbf{P}_{t}$]{$\mathbf{P}_{t}$}{}
\nomenclature[$\mathbf{M}$]{$\mathbf{M}$}{}
\nomenclature[$\mathscr{B}(\mathcal{T})$]{$\mathscr{B}(\mathcal{T})$}{}
\nomenclature[$\mu$]{$\mu$}{}
\nomenclature[$\mathbf{M}\mathbf{P}$]{$\mathbf{M}\mathbf{P}$}{}
\nomenclature[$\mathscr{B}(l_{\inta}^{p})$]{$\mathscr{B}(l_{\inta}^{p})$}{}
\nomenclature[$\mathbf{M}^{\reals}$]{$\mathbf{M}^{\reals}$}{}
\nomenclature[$\mathbf{M}^{p}_{\inta}$]{$\mathbf{M}^{p}_{\inta}$}{}
\nomenclature[$\mathcal{M}(\cdot,\cdot)$]{$\mathcal{M}(\cdot,\cdot)$}{}

\begin{enumerate}
	\item Let us agree in the first palce that all probability and measure spaces in our discussion are completed. 
	\item Now we fix a filtered probability space
	\begin{align}
	\mathbf{P}\coloneqq (\Omega,\mathcal{F},\mathbb{P},\mathbb{F}),
	\end{align}
	on which of our subsequent work will be based. Moreover: 
	\begin{enumerate}
		\item For all $t\in\timeint$ we let $\mathbf{P}_{t}\coloneqq (\Omega,\mathcal{F}_{t},\mathbb{P})$.
		\item Filtration $\mathbb{F} \coloneqq \{\mathcal{F}_{t}\}_{t\in\mathcal{T}}$ is assumed to be right continuous. That is for all $t\in\timeint$,
		\begin{align}
		\mathcal{F}_{t} = \bigcap_{n\in\naturals}\mathcal{F}_{t+\frac{1}{n}}.
		\end{align}
	\end{enumerate}
	\item We fix a measure space $\mathbf{M}\coloneqq (\mathcal{T},\mathscr{B}(\mathcal{T}),\mu)$, where $\mu$ is a Lebesgue measure and $\mathscr{B}(\mathcal{T})$ is a Borel $\sigma-$algebra.
	\item We now agree to work on a fixed product measure space
	\begin{align}
	\mathbf{M}\mathbf{P}\coloneqq (\overline{\Omega}\coloneqq\mathcal{T}\times\Omega,\overline{\mathcal{F}}\coloneqq\mathscr{B}(\mathcal{T})\times\mathcal{F},\overline{\mathbb{P}}\coloneqq\mu\times\mathbb{P}).
	\end{align}
	\item Given two measurable spaces $\mathbf{A}$ and $\mathbf{B}$ we denote by $\mathcal{M}(\mathbf{A}, \mathbf{B})$ the space of all measurable maps from $\mathbf{A}$ to $\mathbf{B}$. In particular, the following measurable spaces will be frequently mentioned
	\begin{enumerate}
		\item $\mathbf{M}^{p}_{\inta}\coloneqq (l_{\inta}^{p},\mathscr{B}(l_{\inta}^{p}))$, 
		\item $\mathbf{M}^{\reals}\coloneqq (\reals,\mathscr{B}(\reals))$,
	\end{enumerate}
\end{enumerate}

Following definition fixes how we understand and denote stochastic processes in this text.
\begin{definition} \label{StochProcess}
	Let $Y$ be a normed linear space and $\mathbf{Y}\coloneqq(Y,\mathcal{B})$ be a measurable space. \textbf{Stochastic process} is an element of $\mathcal{M}(\mathbf{M}\mathbf{P}, \mathbf{Y})$. In particular for all $t\in\timeint$ and all $\omega\in\Omega$ 
	\[\mathcal{M}(\mathbf{P}, \mathbf{Y}) \ni \xi_{t}(\cdot):\Omega\to Y,\]
	\[\mathcal{M}(\mathbf{M}, \mathbf{Y}) \ni \xi_{\cdot}(\omega):\timeint\to Y.\]
	For brevity we shall denote by $\mathcal{S}(\mathbf{Y})$ the set of all stochastic processes from  $\mathbf{MP}$ to  $\mathbf{Y}$.
\end{definition}

\nomenclature[$W$]{$W$}{}
\nomenclature[$W_{x}$]{$W_{x}$}{}
\nomenclature[$\mathcal{S}(\cdot)$]{$\mathcal{S}(\cdot)$}{}
\nomenclature[$\mathcal{L}^{p}(\cdot,\cdot)$]{$\mathcal{L}^{p}(\cdot,\cdot)$}{}
\nomenclature[$\mathbf{L}$]{$\mathbf{L}$}{}

Following Bnach spaces will be frequently used. 
\begin{definition} \label{LPSpaces}
	Let $\mathscr{X}\coloneqq(X,\mathcal{A},\eta)$ be a measure space, $Y$ be a normed linear space, with norm denoted by $\|\cdot\|_{Y}$, and $\mathscr{Y}\coloneqq(Y,\mathcal{B})$ be a measurable space. For all $p\in\reals^{++}$ we define the following Banach spaces.
	\begin{align}
	\mathcal{L}^{p}(\mathscr{X},\mathscr{Y}) \coloneqq \left\{f:X\to Y \  \begin{tabular}{|l}
	\ $\|f\|_{\mathcal{L}^{p}(\mathscr{X},\mathscr{Y})}\coloneqq\left( \bigint_{\!\!\!\!X}\|f\|^{p}_{Y}d\eta \right)^{\frac{1}{p}} < \infty$. \\
	\ $f\in\mathcal{M}(\mathscr{X},\mathscr{Y})$.
	\end{tabular}\right\}
	\end{align}
\end{definition}
\begin{mdframed}
	\begin{remark}
		As it is often done in the literature, we will not consider explicitly the dependence of $\mathcal{L}^{p}(\cdot,\cdot)$ spaces on equivalence classes. We will work directly with the Definition \ref{LPSpaces} and when necessary acknowledge any issues arising from such dependence.
	\end{remark}
\end{mdframed}
\begin{theorem}
Suppose that $p\in\reals^{++}$. Then $\mathbf{L}\coloneqq\{\mathcal{L}^{p}(\mathbf{P},\mathbf{M}^{p}_{\inta})\}_{\inta\in\indexint}$ is a scale. 
\end{theorem}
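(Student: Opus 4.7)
The plan is to reduce everything to Theorem \ref{SequenceSpaceScale} together with a standard completeness argument for $\mathcal{L}^{p}$-type spaces. For each $\inta\in\indexint$, I would first verify that $\mathcal{L}^{p}(\mathbf{P},\mathbf{M}^{p}_{\inta})$ is a Banach space. The argument is the usual Bochner-type one: given a Cauchy sequence $\{f^{n}\}$ in $\mathcal{L}^{p}(\mathbf{P},\mathbf{M}^{p}_{\inta})$, extract a subsequence converging $\mathbb{P}$-almost surely in $l^{p}_{\inta}$, which is Banach by Theorem \ref{SequenceSpaceScale}; define $f$ pointwise as this almost-sure limit; and pass to the $\mathcal{L}^{p}$-limit via Fatou's lemma, reusing the Cauchy estimate.

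Next, for the two defining conditions of a scale in Definition \ref{Defscale}, I would fix $\alpha<\beta\in\indexint$ and $f\in\mathcal{L}^{p}(\mathbf{P},\mathbf{M}^{p}_{\alpha})$. Two things must be checked for the subspace inclusion. First, as a set-valued map, Theorem \ref{SequenceSpaceScale} gives $l^{p}_{\alpha}\prec l^{p}_{\beta}$, so $f(\omega)\in l^{p}_{\beta}$ automatically for every $\omega\in\Omega$. Second, I need $\mathcal{F}/\mathscr{B}(l^{p}_{\beta})$-measurability of $f$ viewed as a map into $l^{p}_{\beta}$: since the norm bound in Theorem \ref{SequenceSpaceScale} makes the inclusion $\iota:l^{p}_{\alpha}\hookrightarrow l^{p}_{\beta}$ continuous, it is in particular $\mathscr{B}(l^{p}_{\alpha})/\mathscr{B}(l^{p}_{\beta})$-measurable, and composition with $f$ preserves measurability. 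Finiteness of the target norm will fall out of the inequality below.

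For the norm comparison, I would simply integrate the pointwise scale estimate from Theorem \ref{SequenceSpaceScale}:
\begin{align*}
\|f\|_{\mathcal{L}^{p}(\mathbf{P},\mathbf{M}^{p}_{\beta})}^{p} \;=\; \int_{\Omega}\|f(\omega)\|_{l^{p}_{\beta}}^{p}\,d\mathbb{P}(\omega) \;\leq\; \int_{\Omega}\|f(\omega)\|_{l^{p}_{\alpha}}^{p}\,d\mathbb{P}(\omega) \;=\; \|f\|_{\mathcal{L}^{p}(\mathbf{P},\mathbf{M}^{p}_{\alpha})}^{p},
\end{align*}
and take $p$-th roots. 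This simultaneously establishes the norm condition and the finiteness needed to conclude $f\in\mathcal{L}^{p}(\mathbf{P},\mathbf{M}^{p}_{\beta})$.

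The only substantive subtlety is the measurability transfer in the inclusion step, since the Borel $\sigma$-algebras $\mathscr{B}(l^{p}_{\alpha})$ and $\mathscr{B}(l^{p}_{\beta})$ are a priori different; the continuity of the embedding handles this cleanly. Everything else is a direct consequence of Theorem \ref{SequenceSpaceScale} or standard $\mathcal{L}^{p}$-theory, so no serious obstacle is expected.
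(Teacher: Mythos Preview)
Your proposal is correct and follows essentially the same route as the paper: the paper defers completeness of each $\mathcal{L}^{p}(\mathbf{P},\mathbf{M}^{p}_{\inta})$ to a standard reference, then uses that $\mathscr{L}^{p}$ is a scale to transfer measurability (exactly your continuous-embedding argument, stated more tersely) and invokes Theorem~\ref{LpComparison} to integrate the pointwise norm inequality. Your explicit treatment of the measurability step via continuity of the inclusion is a welcome clarification of what the paper leaves implicit.
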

\begin{proof}
The fact that $\mathbf{L}$ is a family of Banach spaces is a standart result from functional analysis, see \cite{RS} for example. Therefore it remains to verify that conditions (\hyperref[SpaceInclusion]{1}) and (\hyperref[1stNormInequality]{2}) of the Definition \ref{Defscale} hold. To this end let us start by fixing $\alpha < \beta \in \indexint$ and $f\in\mathcal{L}^{p}(\mathbf{P},\mathbf{M}^{p}_{\alpha})$. By Definition \ref{LPSpaces} it follows that $f \in\mathcal{M}(\mathbf{P},\mathbf{M}^{p}_{\alpha})$. Because $\mathscr{L}^{p}$ is the scale we conclude that $f \in\mathcal{M}(\mathbf{P},\mathbf{M}^{p}_{\beta})$ and $\|f\|_{l^{p}_{\beta}}^{p}\ \leq\ \|f\|_{l^{p}_{\alpha}}^{p}$. From Theorem \ref{LpComparison} we see that
\begin{align}
\int_{\Omega}\|f\|_{l^{p}_{\beta}}^{p}d\mathbb{P}\ \leq\ \int_{\Omega}\|f\|_{l^{p}_{\alpha}}^{p}d\mathbb{P}.
\end{align}
It follows that $f\in\mathcal{L}^{p}(\mathbf{P},\mathbf{M}^{p}_{\beta})$ and $\|f\|_{\mathcal{L}^{p}(\mathbf{P},\mathbf{M}^{p}_{\beta})}\ \leq\  \|f\|_{\mathcal{L}^{p}(\mathbf{P},\mathbf{M}^{p}_{\alpha})}$ hence the proof is complete. 
\end{proof}
\begin{definition}\label{ItoSpace}
	For all $p\in\reals^{++}$ we introduce the following spaces of stochastic processes. 
	\begin{align}
	L^{p}_{ad}\coloneqq \{ \xi\in \mathcal{L}^{p}(\mathbf{MP},\mathbf{M}^{\reals}) \ | \ \xi \ \text{is adapted to} \  \mathbb{F}\}.
	\end{align}
\end{definition}
\begin{mdframed}
	\begin{remark}
		Suppose that $p\geq2$ and $\xi\in L^{p}_{ad}$. Then $\xi\in L^{2}_{ad}$  by Theorem \ref{LPInclusion} and by Fubini Theorem \ref{FubiniTheorem} we also see that 
		\begin{align}
		\int_{0}^{T}\mathbb{E}\bigg{[}|\xi(t)|^{2}\bigg{]}dt < \infty. \label{SquareIntCondition}
		\end{align}
		This fact allows us to conclude that if $p\geq2$  then every process in $L^{p}_{ad}$ can be stochastically integrated with respect to the standard Wiener proces. See \cite{AR} and section \ref{WienerProcessinR} for more details. 
	\end{remark}
\end{mdframed}
\begin{enumerate}
	\setcounter{enumi}{4}
	\item Finally we fix a family of independent real valued Wiener processes $W\coloneqq \{W_{x}\}_{x\in\gamma}$ on $\mathbf{M}\mathbf{P}$ and require our filtration $\mathbb{F} \coloneqq \{\mathcal{F}_{t}\}_{t\in\mathcal{T}}$ to satisfy the following standart properties
	\begin{enumerate} \label{FiltrationProperties}
		\item For all $t\in\mathcal{T}$ and all $x\in\gamma$, $W_{x}(t)$ is $\mathcal{F}_{t}$ measurable, \label{FiltrationAss1}
		\item  For all $s \leq t\in\mathcal{T}$ and all $x\in\gamma$ $W_{x}(t) - W_{x}(s)$ is independent of $\mathcal{F}_{s}$.  \label{FiltrationAss2}
	\end{enumerate}
\end{enumerate}

\subsection{$\mathcal{Z}$ spaces}

\nomenclature[$L^{p}_{ad}$]{$L^{p}_{ad}$}{}
\nomenclature[$Z_{\inta}^{p}$]{$Z_{\inta}^{p}$}{}
\nomenclature[$\mathcal{Z}^{p}$]{$\mathcal{Z}^{p}$}{}

\begin{definition} \label{Zspaces}
For all $p\in\reals^{++}$ and all $\inta \in \indexint$ let
\begin{align}
Z_{\inta}^{p} &\coloneqq \left\{\xi\in\mathcal{S}(l_{\inta}^{p}) \  \begin{tabular}{|l}
\ $\|\xi\|_{Z^{p}_{\inta}} \coloneqq \bigg{(}\sup\bigg{\{}\mathbb{E}\bigg{[}\|\xi \|_{l^{p}_{\inta}}^{p}\bigg{]} \ \bigg{|} \ t\in\timeint\bigg{\}}\bigg{)}^{\frac{1}{p}} < \infty$. \\ 
\ $\xi_{x}$ is adapted to $\mathbb{F}$ for all $x\in\gamma$.
\end{tabular}\right\}, \\
\mathcal{Z}^{p} &\coloneqq\{Z^{p}_{\inta}\}_{\inta\in\indexint},
\end{align}
be, respectively, a normed linear space of $l_{\inta}^{p}$ valued processes and a family of such spaces. 
\end{definition}
\begin{mdframed}
\begin{remark}
Rigorously speaking $Z_{\inta}^{p}$ are a normed linear spaces only after we partitioned them into equivalence classes. However as with $\mathcal{L}^{p}(\cdot,\cdot)$ spaces
we will not consider explicitly such dependence at least untill the moment when we start addressing the question of uniqueness of certain processes belonging to these spaces. 
\end{remark}
\end{mdframed}
\begin{theorem}\label{aboutcomponents}
Let $p\in\reals^{++}$, $\inta \in \indexint$ and suppose that $\xi\in Z_{\inta}^{p}$. Then $\xi_{x}\in L^{p}_{ad}$ for all $x\in\gamma$.
\end{theorem}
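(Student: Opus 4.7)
The plan is to verify the two defining properties of $L^{p}_{ad}$ (membership in $\mathcal{L}^{p}(\mathbf{MP},\mathbf{M}^{\reals})$ and adaptedness to $\mathbb{F}$) for each coordinate $\xi_{x}$, leveraging a single pointwise estimate that ties the size of the $x$-th coordinate to the $l_{\inta}^{p}$-norm of the whole sequence.

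First I would handle measurability. The coordinate projection $\pi_{x}:l_{\inta}^{p}\to\reals$, $z\mapsto z_{x}$, is a bounded linear functional: from the very definition of $\|\cdot\|_{l_{\inta}^{p}}$ one has $e^{-\inta|x|}|z_{x}|^{p}\leq\|z\|_{l_{\inta}^{p}}^{p}$, hence $|z_{x}|\leq e^{\inta|x|/p}\|z\|_{l_{\inta}^{p}}$. In particular $\pi_{x}$ is continuous and therefore Borel measurable on $(l_{\inta}^{p},\mathscr{B}(l_{\inta}^{p}))$. Since $\xi\in\mathcal{S}(l_{\inta}^{p})=\mathcal{M}(\mathbf{MP},\mathbf{M}_{\inta}^{p})$ by the definition of $Z_{\inta}^{p}$, the composition $\xi_{x}=\pi_{x}\circ\xi$ is an element of $\mathcal{M}(\mathbf{MP},\mathbf{M}^{\reals})$. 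Adaptedness of $\xi_{x}$ to $\mathbb{F}$ is part of the hypothesis $\xi\in Z_{\inta}^{p}$, so nothing further is required there.

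Second I would establish the $\mathcal{L}^{p}$-bound by combining the same pointwise estimate with Fubini--Tonelli. Raising $|\xi_{x}(t,\omega)|\leq e^{\inta|x|/p}\|\xi(t,\omega)\|_{l_{\inta}^{p}}$ to the $p$-th power and applying Tonelli to the non-negative integrand gives
\begin{align*}
\|\xi_{x}\|_{\mathcal{L}^{p}(\mathbf{MP},\mathbf{M}^{\reals})}^{p}
&=\int_{\overline{\Omega}}|\xi_{x}|^{p}\,d\overline{\mathbb{P}}
=\int_{0}^{T}\mathbb{E}\bigl[|\xi_{x}(t)|^{p}\bigr]dt \\
&\leq e^{\inta|x|}\int_{0}^{T}\mathbb{E}\bigl[\|\xi(t)\|_{l_{\inta}^{p}}^{p}\bigr]dt
\leq e^{\inta|x|}\,T\,\|\xi\|_{Z_{\inta}^{p}}^{p}<\infty,
\end{align*}
where the final inequality uses the supremum appearing in the definition of $\|\cdot\|_{Z_{\inta}^{p}}$. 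This places $\xi_{x}$ in $\mathcal{L}^{p}(\mathbf{MP},\mathbf{M}^{\reals})$, and together with adaptedness yields $\xi_{x}\in L^{p}_{ad}$.

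There is no real obstacle here: the statement is essentially a coordinate extraction lemma, and the only substantive content is the continuity of the projection $\pi_{x}$, which is immediate from the weighted-norm definition. The slight point of care is to invoke Fubini--Tonelli in the non-negative (Tonelli) form so as to avoid an a priori integrability assumption when exchanging the order of integration; this is legitimate because $\xi_{x}$ has already been shown to be $\overline{\mathcal{F}}$-measurable.
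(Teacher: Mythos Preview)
Your proposal is correct and follows essentially the same strategy as the paper: both arguments rest on the pointwise bound $|\xi_{x}|\leq e^{\inta|x|/p}\|\xi\|_{l_{\inta}^{p}}$ together with adaptedness from the definition of $Z_{\inta}^{p}$. The only notable difference is in the measurability step: you invoke the coordinate projection $\pi_{x}$ directly as a continuous linear functional, whereas the paper takes a slightly more circuitous route via auxiliary maps $\mathscr{I}^{x}$ and $\mathscr{R}^{x}$ and a preimage argument on compact sets; your version is cleaner and arrives at the same conclusion.
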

\begin{proof}
From Definition \ref{Zspaces} we see that to complete the proof we need to show that for all $x\in\gamma$ we have $\xi_{x}\in\mathcal{L}^{p}(\mathbf{MP},\mathbf{M}^{\reals})$. Let us begin by establishing that $\xi_{x}\in\mathcal{M}(\mathbf{MP},\mathbf{M}^{\reals})$.\\[1em]
For each $x\in\gamma$ let us define maps
\[\mathscr{I}^{x}:l_{\inta}^{p}\to l_{\inta}^{p},\quad \mathscr{R}^{x}:\reals\to l_{\inta}^{p},\quad \xi|_{x}:\overline{\Omega}\to l_{\inta}^{p},\]
in the following way.
\begin{align} 
\mathscr{I}^{x}(\psi)_{y} &\coloneqq \begin{cases} 
\begin{tabular}{l|l}
$\psi_{y}$\ &\ $y\in\gamma\land y=x$, \\
$0$\ &\ $y\in\gamma\land y\not=x$.
\end{tabular}
\end{cases} \\[1em]
\mathscr{R}^{x}(z)_{y} &\coloneqq \begin{cases} 
\begin{tabular}{l|l}
$z$\ &\ $y\in\gamma\land y=x$, \\
$0$\ &\ $y\in\gamma\land y\not=x$.
\end{tabular}
\end{cases}\\[1em]
\xi|_{x} &\coloneqq \mathscr{I}^{x}(\xi).
\end{align}
Now observe that for each $x\in\gamma$ map $\mathscr{I}^{x}$ is continuous which implies that $\xi|_{x}\in\mathcal{M}(\mathbf{MP},\mathbf{M}^{p}_{\inta})$. Moreover observe that each $x\in\gamma$ map $\mathscr{R}^{x}$ is continuous and $\xi|_{x}=\mathscr{R}^{x}\circ\xi_{x}$. Consider now arbitary $A\coloneqq [a,b]\subset\reals$ and $x\in\gamma$. By continuity $B\coloneqq \mathscr{R}^{x}([a,b])$ is compact and so $B\in\mathscr{B}(l_{\inta}^{p})$. Since $\xi|_{x}\in\mathcal{M}(\mathbf{MP},\mathbf{M}^{p}_{\inta})$ it follows that $(\xi|_{x})^{-1}(B)\in\overline{\mathcal{F}}$. However
\[(\xi|_{x})^{-1}(B) = (\xi_{x})^{-1}\circ(\mathscr{R}^{x})^{-1}(B)=  (\xi_{x}^{-1})(A),\]
which establishes that $\xi_{x}\in\mathcal{M}(\mathbf{MP},\mathbf{M}^{\reals})$ for all $x\in\gamma$. \\[1em]
Finally since for all $x\in\gamma$ we have $|\xi_{x}|\leq e^{\frac{\inta}{p} |x|}\|\xi\|_{l^{p}_{\inta}}$ we may now conclude using Theorem \ref{LpComparison} that $\xi_{x}\in\mathcal{L}^{p}(\mathbf{MP},\mathbf{M}^{\reals})$ for all $x\in\gamma$ and the proof is complete. 
\end{proof}
\begin{mdframed}
	\begin{remark}
	In simple terms, Theorem \ref{aboutcomponents} above shows that, for all  $p\in\reals^{++}$ and $\inta \in \indexint$, component processes of each $\xi\in Z_{\inta}^{p}$ can be stochastically integrated with respect to the standard Wiener proces. 
	\end{remark}
\end{mdframed}

\begin{theorem} \label{ZspaceisBanach}
Let $p\in\reals^{++}$ and $\inta \in \indexint$. Then $Z^{p}_{\inta}$ is a Banach space.
\end{theorem}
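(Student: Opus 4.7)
The plan is to establish completeness of $Z^{p}_{\inta}$ by a standard "fast-Cauchy subsequence plus Fatou" argument, tailored so that joint measurability, componentwise adaptedness, and the genuine $\sup_{t}$ norm are all respected simultaneously. The normed-linear-space axioms are routine once one works modulo $\overline{\mathbb{P}}$-equivalence (as acknowledged in the remark following Definition \ref{Zspaces}), so I would dispatch these in a single line and focus on completeness.

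Given a Cauchy sequence $\{\xi^{n}\} \subset Z^{p}_{\inta}$, I would first extract a subsequence $\{\xi^{n_{k}}\}$ with $\|\xi^{n_{k+1}} - \xi^{n_{k}}\|_{Z^{p}_{\inta}} < 2^{-k}$. For every fixed $t \in \timeint$ the pointwise bound
\[
\mathbb{E}\!\left[\|\xi^{n_{k+1}}(t) - \xi^{n_{k}}(t)\|^{p}_{l^{p}_{\inta}}\right]^{1/p} \leq \|\xi^{n_{k+1}} - \xi^{n_{k}}\|_{Z^{p}_{\inta}} < 2^{-k}
\]
makes $\{\xi^{n_{k}}(t)\}_{k}$ fast-Cauchy in $\mathcal{L}^{p}(\mathbf{P}, \mathbf{M}^{p}_{\inta})$, which is complete, so the Riesz--Fischer argument produces a $\mathbb{P}$-a.s. and $\mathcal{L}^{p}$-limit $\hat{\xi}(t)$ that may be chosen $\mathcal{F}_{t}$-measurable. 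Because $T < \infty$, Fubini gives $\|\cdot\|_{\mathcal{L}^{p}(\mathbf{MP}, l^{p}_{\inta})} \leq T^{1/p}\|\cdot\|_{Z^{p}_{\inta}}$, so the same sequence is Cauchy in the complete space $\mathcal{L}^{p}(\mathbf{MP}, l^{p}_{\inta})$; passing to a further diagonal subsequence yields a $\overline{\mathbb{P}}$-a.e. pointwise limit $\xi(t,\omega)$ that is jointly $\overline{\mathcal{F}}$-measurable and therefore lies in $\mathcal{S}(l^{p}_{\inta})$. A second application of Fubini identifies $\xi(t) = \hat{\xi}(t)$ in $\mathcal{L}^{p}(\mathbf{P}, \mathbf{M}^{p}_{\inta})$ for $\mu$-a.e. $t$, and this $\xi$ becomes my candidate limit.

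For the norm estimate, given $\epsilon > 0$ I would fix $N$ with $\|\xi^{n} - \xi^{m}\|_{Z^{p}_{\inta}} < \epsilon$ whenever $n,m \geq N$. Then for every $t \in \timeint$ and every $m \geq N$, Fatou applied along the $\mathcal{L}^{p}(\mathbf{P})$-convergence $\xi^{n_{k}}(t) \to \xi(t)$ delivers
\[
\mathbb{E}\!\left[\|\xi(t) - \xi^{m}(t)\|^{p}_{l^{p}_{\inta}}\right] \leq \liminf_{k\to\infty}\|\xi^{n_{k}} - \xi^{m}\|^{p}_{Z^{p}_{\inta}} \leq \epsilon^{p}.
\]
Taking $\sup_{t}$ gives $\|\xi - \xi^{m}\|_{Z^{p}_{\inta}} \leq \epsilon$, so $\xi^{m} \to \xi$ in $Z^{p}_{\inta}$, and combined with $\xi^{N+1} \in Z^{p}_{\inta}$ and the triangle inequality, $\xi \in Z^{p}_{\inta}$. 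For componentwise adaptedness, Theorem \ref{aboutcomponents} ensures each $\xi^{n}_{x}(t)$ is $\mathcal{F}_{t}$-measurable; the projection estimate $|\xi_{x}| \leq e^{\inta|x|/p}\|\xi\|_{l^{p}_{\inta}}$ transfers $\mathcal{L}^{p}(\mathbf{P})$-convergence to the $x$-coordinate, and completeness of $\mathcal{F}_{t}$ passes $\mathcal{F}_{t}$-measurability to $\xi_{x}(t)$.

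The hard part, and where I expect to spend the most care, is reconciling joint $(t,\omega)$-measurability---needed so that $\xi \in \mathcal{S}(l^{p}_{\inta})$---with the fixed-$t$ $\mathcal{L}^{p}(\mathbf{P})$-convergence that the $Z^{p}_{\inta}$-Cauchy property supplies directly. Routing through $\mathcal{L}^{p}(\mathbf{MP}, l^{p}_{\inta})$ and a diagonal a.e.-extraction is the standard remedy, but one must verify that the resulting representative does not spoil the genuine (not essential) supremum over $t$ stipulated by the $Z^{p}_{\inta}$-norm; the Fatou estimate above is what bridges this gap, since it yields a bound valid for every $t \in \timeint$, not merely $\mu$-almost every $t$.
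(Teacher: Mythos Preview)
Your overall architecture is sound and close to the paper's, but the sentence ``Fatou applied along the $\mathcal{L}^{p}(\mathbf{P})$-convergence $\xi^{n_{k}}(t) \to \xi(t)$ delivers \ldots'' hides the very gap you flag in your last paragraph rather than closing it. What you actually establish is: (i) for \emph{every} $t\in\timeint$, $\xi^{n_{k}}(t)\to\hat\xi(t)$ $\mathbb{P}$-a.s.\ (from the fast-Cauchy bound); and (ii) $\xi(t)=\hat\xi(t)$ in $\mathcal{L}^{p}(\mathbf{P},\mathbf{M}^{p}_{\inta})$ only for $\mu$-a.e.\ $t$ (from the $\overline{\mathbb{P}}$-a.e.\ extraction plus Fubini). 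Hence Fatou gives $\mathbb{E}\big[\|\hat\xi(t)-\xi^{m}(t)\|_{l^{p}_{\inta}}^{p}\big]\le\varepsilon^{p}$ for every $t$, but transfers this to your jointly measurable candidate $\xi$ only on the $\mu$-full set where $\xi(t)=\hat\xi(t)$. On the residual $\mu$-null set of times, $\xi(t)$ is whatever your $\overline{\mathbb{P}}$-a.e.\ representative happens to be, and nothing prevents $\mathbb{E}\big[\|\xi(t)-\xi^{m}(t)\|^{p}\big]$ from being large there---which is fatal because $\|\cdot\|_{Z^{p}_{\inta}}$ takes a genuine supremum over all $t$.

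The repair is short and is precisely what the paper does: redefine the candidate on that $\mu$-null set of times to equal $\hat\xi(t)$, i.e.\ set
\[
\xi(t,\omega)\coloneqq
\begin{cases}
\text{(your }\overline{\mathbb{P}}\text{-a.e.\ limit)}(t,\omega), & t\notin N,\\
\hat\xi(t)(\omega), & t\in N,
\end{cases}
\]
where $N$ is the exceptional $\mu$-null set. Since $N\times\Omega$ is $\overline{\mathbb{P}}$-null and $\mathbf{MP}$ is complete, the patched $\xi$ remains in $\mathcal{S}(l^{p}_{\inta})$, and now the Fatou bound genuinely holds for every $t\in\timeint$. The paper reaches the same conclusion by a somewhat heavier route: it does not pass to a fast subsequence, so it invokes Egoroff's theorem to manufacture a nested family $A_{k}\downarrow\widetilde{\timeint}$ with $\mu(\widetilde{\timeint})=0$ on whose complements the fixed-$t$ convergence is uniform, and then patches on $\widetilde{\timeint}$. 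Your Fatou-plus-fast-subsequence route is cleaner once you insert the missing redefinition step; without it, the claimed bound ``valid for every $t\in\timeint$'' is not justified.
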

\begin{proof}
According to Definition \ref{Zspaces} we need to show that $Z^{p}_{\inta}$ is complete. Therfore let us start by assuming that $\mathscr{X} \coloneqq \{\xi^{n}\}_{n\in\naturals}$ is a Cauchy sequence in $Z^{p}_{\inta}$. Moreover, let us also define $\mathscr{X}_{t} \coloneqq \{\xi^{n}_{t}\}_{n\in\naturals}$ and observe now the following
\begin{enumerate}
	\item From Definition \ref{Zspaces} we see that $\mathscr{X}$ is a Cauchy sequence in $\mathcal{L}^{p}(\mathbf{MP},\mathbf{M}^{p}_{\inta})$. Hence let us define $\overline{\xi}:\overline{\Omega}\to l^{p}_{\inta}$ in the following way
	\begin{align}
	\overline{\xi}\coloneqq \overbrace{\bigg{[}\ \lim_{n\to\infty}\xi^{n}\ \bigg{]}}^{\text{in} \ \mathcal{L}^{p}(\mathbf{MP},\mathbf{M}^{p}_{\inta})}\!\! .\label{ZspaceisBanachE1}
	\end{align}
	\item From Definition \ref{Zspaces} we see that for all $t\in\timeint$ sequence $\mathscr{X}_{t}$ is Cauchy in $\mathcal{L}^{p}(\mathbf{P},\mathbf{M}^{p}_{\inta})$. Moreover we see that  
	\begin{align}
	\lim_{n,m\to\infty} \|\xi^{n}_{t} - \xi^{m}_{t}\|_{\mathcal{L}^{p}(\mathbf{P},\mathbf{M}^{p}_{\inta})}\ = 0\ \text{uniformly on } \timeint. \label{ZspaceisBanachE2}
	\end{align}
	We define $\overline{\overline{\xi}}:\overline{\Omega}\to l^{p}_{\inta}$ in the following way
	\begin{align}
	\overline{\overline{\xi}}(t,\omega)\coloneqq \overbrace{\bigg{[}\lim_{n\to\infty}\xi^{n}_{t}\bigg{]}}^{\text{in} \ \mathcal{L}^{p}(\mathbf{P},\mathbf{M}^{p}_{\inta})}(\omega), \label{ZspaceisBanachE3}
	\end{align}
	and conclude from a convergence result (\ref{ZspaceisBanachE2}) above that
	\begin{align}
	\lim_{n\to\infty} \|\xi^{n}_{t} - \overline{\overline{\xi}}_{t}\|_{\mathcal{L}^{p}(\mathbf{P},\mathbf{M}^{p}_{\inta})}\ = 0\ \text{uniformly on } \timeint. \label{ZspaceisBanachE4}
	\end{align}
\end{enumerate}
Now we make an important observation that underpins the rest of this proof. From the fact that 
\begin{align}
\lim_{n\to\infty} \|\xi^{n}_{t} - \overline{\xi}_{t}\|_{\mathcal{L}^{p}(\mathbf{MP},\mathbf{M}^{p}_{\inta})}\ = 0, \label{ZspaceisBanachE5}
\end{align}
We can coclude using Fubini Theorem \ref{FubiniTheorem} and Theorem \ref{ASsubsequence} that there exist a subsequence $\sigma$ such that 
\begin{align}
\mu-a.s.,\ \mathbb{E}\bigg{[}\|\xi^{\sigma(n)}_{t}-\overline{\xi}_{t}\|_{l^{p}_{\inta}}^{p}\bigg{]} \to 0. \label{ZspaceisBanachE6}
\end{align}
Using Egoroff Theorem \ref{EgoroffT} and the fact that $\mu$ is a regular measure we find a sequence $\{A_{k}\}_{k\in\naturals}$ of subsets of $\timeint$ such that for all $k\in\naturals$
\begin{enumerate}
	\item $A_{k}\subset\timeint$ is compact and $\mu(A_{k})\leq \frac{1}{2^{k}}$,
	\item Moreover 
	\begin{align}
	\mathbb{E}\bigg{[}\|\xi^{\sigma(n)}_{t}-\overline{\xi}_{t}\|_{l^{p}_{\inta}}^{p}\bigg{]} \to 0\ \text{uniformly on } \timeint-A_{k}. \label{ZspaceisBanachE7}
	\end{align}
\end{enumerate}
Let us now define a null set
\begin{align}
\widetilde{\timeint}\coloneqq \bigg{\{}t\in\timeint\ \big{|}\ t\in\bigcap_{k\in\naturals}A_{k}\bigg{\}}, \label{ZspaceisBanachE8}
\end{align}
a sequence $\{B_{k}\}_{k\in\naturals}$ of subsets of $\timeint$ where for all $k\in\naturals$
\begin{align}
B_{k} \coloneqq A_{k} - \widetilde{\timeint}, \label{ZspaceisBanachE9}
\end{align}
and a map $\xi:\overline{\Omega}\to l^{p}_{\inta}$ in the following way
\begin{align} \label{ZspaceisBanachE10}
\xi(t,\omega)&\coloneqq \begin{cases} 
\begin{tabular}{l|l}
$\overline{\xi}(t,\omega)$\ &\ $\omega\in\Omega\land t\in(\timeint-\widetilde{\timeint})$, \\
$\overline{\overline{\xi}}(t,\omega)$\ &\ $\omega\in\Omega\land t\in\widetilde{\timeint}$.
\end{tabular}
\end{cases}
\end{align}
We conclude immediately tha $\xi\in\mathcal{S}(l_{\inta}^{p})$ because $\mathbf{MP}$ is complete, $\widetilde{\timeint}$ a null set and so $\xi=\overline{\xi}$ almost everywhere on $\overline{\Omega}$. Now, to conclude this proof it remains to show the following:
\begin{enumerate}
	\item[\textbf{I}.] $\lim_{n\to\infty} \|\xi^{n} - \xi\|_{Z^{p}_{\inta}} \to 0$ as $n\to\infty$,
	\item[\textbf{II}.] $\xi_{x}$ is adapted to $\mathbb{F}$ for all $x\in\gamma$.
\end{enumerate}
\underline{Proof of \textbf{I}}. \\
Fix $k\in\naturals$ and using composite definition of $\xi$ together with estimates (\ref{ZspaceisBanachE4}) and (\ref{ZspaceisBanachE7}) observe that
\begin{align}
\mathbb{E}\bigg{[}\|\xi^{\sigma(n)}_{t}-\xi_{t}\|_{l^{p}_{\inta}}^{p}\bigg{]} \to 0\ \text{uniformly on } (\timeint-B_{k})\cup\widetilde{\timeint}. \label{ZspaceisBanachE11}
\end{align}
Therefore we fix an arbitrary $\epsilon > 0$ and a suitable $N_{\epsilon}\in\naturals$ such that for all $n>N_{\epsilon}$ and all $k\in\naturals$ we have
\begin{align}
\mathbb{E}\bigg{[}\|\xi^{\sigma(n)}_{t}-\xi_{t}\|_{l^{p}_{\inta}}^{p}\bigg{]} <\epsilon\ \text{uniformly on } (\timeint-B_{k})\cup\widetilde{\timeint}. \label{ZspaceisBanachE12}
\end{align}
Hence for all $n>N_{\epsilon}$ and all $k\in\naturals$ we see that
\begin{align}
\sup\bigg{\{}\mathbb{E}\bigg{[}\|\xi^{\sigma(n)}_{t}-\xi_{t}\|_{l^{p}_{\inta}}^{p}\bigg{]} \ \bigg{|} \ t\in(\timeint-B_{k})\cup\widetilde{\timeint}\bigg{\}} \leq \epsilon. \label{ZspaceisBanachE13}
\end{align} 
Moreover, inequality (\ref{ZspaceisBanachE13}) above shows that for all $n>N_{\epsilon}$ 
\begin{align}
\sup\bigg{\{}\mathbb{E}\bigg{[}\|\xi^{\sigma(n)}_{t}-\xi_{t}\|_{l^{p}_{\inta}}^{p}\bigg{]} \ \bigg{|} \ t\in\timeint\bigg{\}} \leq \epsilon, \label{ZspaceisBanachE14}
\end{align} 
for otherwise there exists $\bar{n}>N_{\epsilon}$ and $\bar{t}\in\timeint$ such that 
\begin{align}
\mathbb{E}\bigg{[}\|\xi^{\sigma(\bar{n})}_{\bar{t}}-\xi_{\bar{t}}\|_{l^{p}_{\inta}}^{p}\bigg{]} > \epsilon. \label{ZspaceisBanachE15}
\end{align}
Moreover $\bar{t}\not\in\widetilde{\timeint}$ for otherwise we will contradict inequality (\ref{ZspaceisBanachE13}). Therefore, by definition of $\widetilde{\timeint}$ it follows that there exists $\bar{k}\in\naturals$ such that $t\not\in A_{\bar{k}}$ hence $t\not\in B_{\bar{k}}$ and so $t\in(\timeint-B_{\bar{k}})$. Therefore from inequality (\ref{ZspaceisBanachE13}) we get a contradiction
\begin{align}
\mathbb{E}\bigg{[}\|\xi^{\sigma(\bar{n})}_{\bar{t}}-\xi_{\bar{t}}\|_{l^{p}_{\inta}}^{p}\bigg{]} \leq \epsilon, \label{ZspaceisBanachE16}
\end{align}
hence inequality (\ref{ZspaceisBanachE14}) holds and we conclude that $\lim_{n\to\infty} \|\xi^{\sigma(n)} - \xi\|_{Z^{p}_{\inta}} \to 0$ as $n\to\infty$. Finally because $\mathscr{X}$ is a Cauchy sequence we conclude that $\lim_{n\to\infty} \|\xi^{n} - \xi\|_{Z^{p}_{\inta}} \to 0$ as $n\to\infty$. \\[1em]
\underline{Proof of \textbf{II}}. \\
We start by fixing $x\in\gamma$ and $t\in\timeint$. Now from the previous part (i.e. \underline{Proof of \textbf{I}}) we can deduce that
\begin{align}
\lim_{n\to\infty} \|\xi^{n}_{x,t} - \xi_{x,t}\|_{\mathcal{L}^{p}(\mathbf{P},\mathbf{M}^{\reals})}\ = 0. \label{ZspaceisBanachE17}
\end{align}
Therefore using Theorem \ref{ASsubsequence} we find a subsequence $\sigma$ such that $\mathbb{P}-a.s.$ we have $\xi^{\sigma(n)}_{x,t} \to \xi_{x,t}$. Since for all $n\in\naturals$ processes $\xi^{\sigma(n)}_{x,t}$ is $\mathcal{F}_{t}$ measurable we conclude by Theorem \ref{measurablelimit} that $\xi_{x,t}$ is $\mathcal{F}_{t}$ measurable and the proof is complete. 
\end{proof}
\begin{theorem} \label{Zscaleproof}
Suppose that $p \in\reals^{++}$. Then $\mathcal{Z}^{p}$ is the scale.
\end{theorem}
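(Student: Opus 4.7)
The plan is to verify the two conditions of Definition \ref{Defscale}, using Theorem \ref{SequenceSpaceScale} (which gives that $\mathscr{L}^{p}$ is a scale) as the essential input. The Banach space property for each $Z^{p}_{\inta}$ is already established by Theorem \ref{ZspaceisBanach}, so what remains is the inclusion $Z^{p}_{\alpha} \prec Z^{p}_{\beta}$ and the norm comparison $\|\xi\|_{Z^{p}_{\beta}} \leq \|\xi\|_{Z^{p}_{\alpha}}$ for $\alpha < \beta \in \indexint$ and $\xi \in Z^{p}_{\alpha}$.

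For the inclusion, I would fix $\alpha < \beta \in \indexint$ and $\xi \in Z^{p}_{\alpha}$. By Definition \ref{Zspaces}, $\xi \in \mathcal{S}(l^{p}_{\alpha})$, meaning $\xi \in \mathcal{M}(\mathbf{MP}, \mathbf{M}^{p}_{\alpha})$. Since $\mathscr{L}^{p}$ is a scale, the algebraic inclusion $l^{p}_{\alpha} \hookrightarrow l^{p}_{\beta}$ holds together with the pointwise norm bound $\|z\|_{l^{p}_{\beta}} \leq \|z\|_{l^{p}_{\alpha}}$, so the canonical embedding is continuous and hence Borel measurable. Composing $\xi$ with this embedding gives $\xi \in \mathcal{M}(\mathbf{MP}, \mathbf{M}^{p}_{\beta})$, so $\xi \in \mathcal{S}(l^{p}_{\beta})$. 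The adaptedness of each component $\xi_{x}$ to $\mathbb{F}$ is a property of the scalar-valued processes $\xi_{x}$ and is unaffected by re-interpreting $\xi$ as a process with values in the larger space.

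For the norm inequality, the scale property of $\mathscr{L}^{p}$ gives, pointwise on $\overline{\Omega}$ and for every $t \in \timeint$,
\begin{align}
\|\xi_{t}\|_{l^{p}_{\beta}}^{p} \ \leq \ \|\xi_{t}\|_{l^{p}_{\alpha}}^{p}.
\end{align}
Taking expectation with respect to $\mathbb{P}$ preserves this inequality by monotonicity, and then taking the supremum over $t \in \timeint$ yields
\begin{align}
\sup_{t\in\timeint}\mathbb{E}\bigl[\|\xi_{t}\|_{l^{p}_{\beta}}^{p}\bigr] \ \leq \ \sup_{t\in\timeint}\mathbb{E}\bigl[\|\xi_{t}\|_{l^{p}_{\alpha}}^{p}\bigr] \ = \ \|\xi\|_{Z^{p}_{\alpha}}^{p} \ < \ \infty.
\end{align}
Raising to the power $1/p$ gives $\|\xi\|_{Z^{p}_{\beta}} \leq \|\xi\|_{Z^{p}_{\alpha}}$, which simultaneously establishes finiteness (so that $\xi \in Z^{p}_{\beta}$) and condition \hyperref[1stNormInequality]{2} of Definition \ref{Defscale}.

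The proof is essentially a mechanical transfer from Theorem \ref{SequenceSpaceScale}, and I do not expect a genuine obstacle. The only subtle point worth checking carefully is measurability transfer in the inclusion step, which is handled cleanly by observing that $l^{p}_{\alpha} \hookrightarrow l^{p}_{\beta}$ is continuous; everything else reduces to taking expectations and suprema of a pointwise inequality.
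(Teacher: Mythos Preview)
Your proposal is correct and follows essentially the same route as the paper: invoke Theorem \ref{ZspaceisBanach} for completeness, use the scale property of $\mathscr{L}^{p}$ (Theorem \ref{SequenceSpaceScale}) to transfer measurability and obtain the pointwise norm inequality, then take expectations and suprema. The only cosmetic difference is that the paper cites Theorem \ref{LpComparison} where you invoke monotonicity of the integral directly, and you are slightly more explicit about why the embedding $l^{p}_{\alpha}\hookrightarrow l^{p}_{\beta}$ preserves measurability.
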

\begin{proof}
From Theorem \ref{ZspaceisBanach} we alredy know that $\mathcal{Z}^{p}$ is a family of Banach spaces so to conclude the proof it only remains to show that conditions (\ref{SpaceInclusion}) and (\ref{1stNormInequality}) of the Definition \ref{Defscale} are satisfied.\\[1em] 
Let us begin by fixing $\alpha < \beta \in \indexint$ and observing that condition (\ref{SpaceInclusion}) of the Definition \ref{Defscale} can be verified by showing that  $Z^{p}_{\alpha} \subset Z^{p}_{\beta}$. To see that this is true we now fix $\xi\in Z^{p}_{\alpha}$. Hence, $\xi\in\mathcal{S}(l^{p}_{\alpha})$ and components of $\xi$ are adapted to $\mathbb{F}$ by Definition \ref{Zspaces}. Since $\mathscr{L}^{p}$ is a scale we also see that $\xi\in\mathcal{S}(l^{p}_{\beta})$. Moreover for all $t\in\timeint$ we observe that $\|\xi_{t}\|_{l_{\alpha}^{p}}\in \mathcal{L}^{p}(\mathbf{P},\mathbf{M}^{\reals})$, $\|\xi_{t}\|_{l_{\beta}^{p}}\in \mathcal{M}(\mathbf{P},\mathbf{M}^{\reals})$ and  $\|\xi_{t}\|_{l_{\beta}^{p}} \leq \|\xi_{t}\|_{l_{\alpha}^{p}}$. Now from Theorem \ref{LpComparison} we see that 
\begin{align}
\mathbb{E}\bigg{[} \|\xi_{t}\|_{l_{\beta}^{p}}^{p}\bigg{]} \leq \mathbb{E}\bigg{[} \|\xi_{t}\|_{l_{\alpha}^{p}}^{p}\bigg{]},
\end{align}
which establishes that $\|\xi\|_{Z_{\beta}^{p}}\ \leq \|\xi\|_{Z_{\alpha}^{p}}$ hence proving that both conditions (\ref{SpaceInclusion}) and (\ref{1stNormInequality}) of the Definition \ref{Defscale} are satisfied.
\end{proof}
\subsection{Stochastic System}\label{systemdescription}

\nomenclature[$\zeta$]{$\zeta$}{}
\nomenclature[$\Phi_{x}$]{$\Phi_{x}$}{}
\nomenclature[$\Psi_{x}$]{$\Psi_{x}$}{}
\nomenclature[$V$]{$V$}{}
\nomenclature[$c$]{$c$}{}
\nomenclature[$b$]{$b$}{}
\nomenclature[$R$]{$R$}{}
\nomenclature[$M_{1}$]{$M_{1}$}{}
\nomenclature[$M_{2}$]{$M_{2}$}{}

Throughout this section let us assume that $\reals\ni p\geq 2$. We now wish to introduce and study the following stochastic system, which we will denote by $\mathscr{O}^{p}$. 
\begin{align}\label{MainSystem}
\xi_{x,t} &= \zeta_{x} + \int_{0}^{t}\Phi_{x}(\xi_{x,s},\Xi_{s})ds + \int_{0}^{t}\Psi_{x}(\xi_{x,s},\Xi_{s})dW_{x}(s), \quad x\in\gamma, \quad t\in\timeint, \tag{$\mathscr{O}^{p}$}
\end{align}
where:
\begin{enumerate}
	\item we assume that $\zeta\in l^{p}_{\underline{\inta}}$.
	\item we let $V$ in $C(\reals)$ and assume that for all $x\in\gamma$ maps $\Phi_{x}:\reals\times\reals^{\gamma}\to \reals$ are measurable defined in the following way
	\begin{align} 
	\Phi_{x}(q,\{z_{y}\}_{y\in\gamma}) \coloneqq V(q) + \sum_{y\in B_{x}}a(x-y)z_{y}, \label{Eq1ForExistence}
	\end{align}
	for all $q\in\reals$ and all $\{z_{y}\}_{y\in\gamma} \in \reals^{\gamma}$, where function $a$ was defined previously in (\hyperref[boundfora]{A}).
\end{enumerate}
For all $x\in\gamma$ the following conditions are placed on maps $\Phi_{x}$.
\begin{enumerate}
	\item[(\textbf{C})] There exists $c\in\reals^{0}$ and $\reals^{++}\ni R\leq p$ such that for all $q\in\reals$ and all $x\in\gamma$
	\begin{align}
	|\Phi_{x}(q,0)|\leq c(1+|q|^{R})_{.} \label{conditionA}
	\end{align}
	\item[(\textbf{D})] There exists $b\in\reals$ such that for all $q_{1},q_{2}\in\reals$  and all $x\in\gamma$
	\begin{align}
	(q_{1} - q_{2})(\Phi_{x}(q_{1},0) - \Phi_{x}(q_{2},0))\leq b(q_{1} - q_{2})^{2}_{.} \label{conditionB}
	\end{align} 
	\end{enumerate}
\begin{enumerate}
\setcounter{enumi}{2}
	\item For all $x\in\gamma$ we assume that maps $\Psi_{x}:\reals\times\reals^{\gamma}\to \reals$ are measurable.
\end{enumerate}
Moreover for all $x\in\gamma$ the following conditions are also placed on maps $\Psi_{x}$.
\begin{enumerate}
	\item[(\textbf{E})] There exists $M_{1},M_{2}\in\reals$ such that for all $q_{1},q_{2}\in\reals$, $Z_{1},Z_{2}\in\reals^{\gamma}$ and all $x\in\gamma$
	\begin{align}
	|\Psi_{x}(q_{1},Z_{1}) - \Psi_{x}(q_{2},Z_{2})| &\leq M_{1}|q_{1} - q_{2}| + M_{2}n_{x}\sum_{y\in B_{x}}|z_{1,y} - z_{2,y}|, \label{conditionC}\\[1em]
	|\Psi_{x}(0,0)|&\leq c.
	\end{align} 
\end{enumerate}
\begin{definition}\label{StrongSol}
Suppose that $\reals\ni p\geq 2$. A stochastic process $\Xi$ is called a strong solution of the system (\ref{MainSystem}) if $\Xi\in\mathcal{Z}^{p}(\cap)$ and for all $x\in\gamma$ and all $t\in\timeint$ we have
\begin{align}
\xi_{x,t} &= \zeta_{x} + \int_{0}^{t}\Phi_{x}(\xi_{x,s},\Xi_{s})ds + \int_{0}^{t}\Psi_{x}(\xi_{x,s},\Xi_{s})dW_{x}(s),\ \mathbb{P}-a.s.
\end{align}
\end{definition}
The main goal of this document is to show that for all $\reals\ni p\geq 2$ the stochastic system (\ref{MainSystem}) admits a unique strong solution. We now conclude this subsection with the following Lemma. 

\nomenclature[$\Tilde{a}$]{$\Tilde{a}$}{}

\begin{lemma1} \label{DriftLemma}
Suppose that $q_{1},q_{2}\in\reals$ and let $Z_{1},Z_{2}\in\reals^{\gamma}$. Then for all $x\in\gamma$ we have 
\begin{align}
|\Phi_{x}(q_{1},Z_{1})| &\leq c(1+|q_{1}|^{R}) + \Tilde{a}_{x}\bigg{(}\sum_{y\in B_{x}}z_{1,y}^{2}\bigg{)}^{\frac{1}{2}}_{,} \label{DriftLemmaIneq1}\\[1em]
(q_{1} - q_{2})(\Phi_{x}(q_{1},Z_{1}) - \Phi_{x}(q_{2},Z_{2})) &\leq (b+\frac{1}{2})(q_{1} - q_{2})^{2} + \frac{1}{2}\Tilde{a}_{x}^{2}\sum_{y\in B_{x}}(z_{1,y} - z_{2,y})^{2}_{,} \label{DriftLemmaIneq2}
\end{align}
where $\Tilde{a}_{x} = \bigg{(}\sum_{y\in B_{x}}a^{2}(x-y)\bigg{)}^{\frac{1}{2}}_{.}$
\end{lemma1}

\begin{proof}
\noindent First we prove inequality (\ref{DriftLemmaIneq1}). We begin by considering the following chain of calculations
\begin{align}
|\Phi_{x}(q_{1},Z_{1})| &= |\frac{1}{2}V(q_{1}) - \sum_{y\in B_{x}}a(x-y)z_{1,y}|, \\[1em]
& = |\Phi_{x}(q_{1},0) - \sum_{y\in B_{x}}a(x-y)z_{1,y}|, \label{DriftLemmaIneq3} \\[1em]
& \leq |\Phi_{x}(q_{1},0)| + |\sum_{y\in B_{x}}a(x-y)z_{1,y}|.
\end{align}
\noindent Therefore using assumption (\hyperref[conditionA]{C}) we see that 
\begin{align}
|\Phi_{x}(q_{1},Z_{1})| &\leq c(1+|q_{1}|^{R}) + \sum_{y\in B_{x}}|a(x-y)z_{1,y}|, \\[1em]
&\leq c(1+|q_{1}|^{R}) + \bigg{(}\sum_{y\in B_{x}}a^{2}(x-y)\bigg{)}^{\frac{1}{2}}\bigg{(}\sum_{y\in B_{x}}z_{1,y}^{2}\bigg{)}^{\frac{1}{2}}_{.}
\end{align}
Hence using the definition of $\Tilde{a}_{x}$ above we see that
\begin{align}
|\Phi_{x}(q_{1},Z_{1})| \leq c(1+|q_{1}|^{R}) + \Tilde{a}_{x}\bigg{(}\sum_{y\in B_{x}}z_{1,y}^{2}\bigg{)}^{\frac{1}{2}}_{,}
\end{align}
which establishes that inequality (\ref{DriftLemmaIneq1}) is true. Now we show that inequality (\ref{DriftLemmaIneq2}) above is also true. \\[1em]
We start by observing from equations (\ref{DriftLemmaIneq3}) and (\ref{Eq1ForExistence}) that
\begin{equation}
\begin{alignedat}{1}
(q_{1} - q_{2})(\Phi_{x}(q_{1},Z_{1}) - &\Phi_{x}(q_{2},Z_{2})) = \\[1em]
& = (q_{1} - q_{2})(\Phi_{x}(q_{1},0) - \Phi_{x}(q_{2},0)) + \\[1em]
&\quad\quad\quad\quad\quad\quad\quad\quad\quad\quad\ \ +(q_{1} - q_{2})\sum_{y\in B_{x}}a(x-y)(z_{1,y} - z_{2,y}).
\end{alignedat}
\end{equation}
Hence using assumption (\hyperref[conditionB]{D}) we see that 
\begin{align}
(q_{1} - q_{2})(\Phi_{x}(q_{1},Z_{1}) - &\Phi_{x}(q_{2},Z_{2})) \leq \\[1em]  
&\leq b(q_{1} - q_{2})^{2} + \frac{1}{2}(q_{1} - q_{2})^{2} + \frac{1}{2}\bigg{(}\sum_{y\in B_{x}}a(x-y)(z_{1,y} - z_{2,y})\bigg{)}^{2}_{,} \\[1em]
&\leq (b+\frac{1}{2})(q_{1} - q_{2})^{2} + \frac{1}{2}\sum_{y\in B_{x}}a^{2}(x-y)\sum_{y\in B_{x}}(z_{1,y} - z_{2,y})^{2}_{.}
\end{align}
Finally using, once again, the definition of $\Tilde{a}_{x}$ above we see that
\begin{align}
(q_{1} - q_{2})&(\Phi_{x}(q_{1},Z_{1}) - \Phi_{x}(q_{2},Z_{2})) \leq (b+\frac{1}{2})(q_{1} - q_{2})^{2} + \frac{1}{2}\Tilde{a}_{x}^{2}\sum_{y\in B_{x}}(z_{1,y} - z_{2,y})^{2}_{,}
\end{align}
and the proof is complete.
\end{proof}
\newpage 

\section{Auxiliary Results} \label{AR}
In this section we prove two results that will be used later on to show that the stochastic system (\ref{MainSystem}) admits a unique strong solution. \\[1em]
Throughout this section let us assume that $\reals\ni p\geq 2$.

\begin{theorem} \label{ItoComposition}
Suppose that $\inta\in\indexint$ and let $\Xi\coloneqq\{\xi_{x}\}_{x\in\gamma}$ be an element in $Z^{p}_{\inta}$. Then for all $x\in\gamma$  we have $\Phi_{x}(\xi_{x},\Xi)\in L^{1}_{ad}$ and $\Psi_{x}(\xi_{x},\Xi)\in L^{2}_{ad}$.
\end{theorem}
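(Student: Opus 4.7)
The plan is, for each fixed $x \in \gamma$, to verify three properties of $\Phi_x(\xi_x, \Xi)$ and $\Psi_x(\xi_x, \Xi)$: measurability on $\mathbf{MP}$, adaptedness to $\mathbb{F}$, and the appropriate $L^1$ (resp.\ $L^2$) integrability. Two background facts do most of the work: first, $B_x$ is finite (since $\gamma$ is locally finite and $\overline{B(x,\rho)}$ is compact), so all sums $\sum_{y\in B_x}$ are finite sums; second, Theorem \ref{aboutcomponents} guarantees that each component $\xi_y$ lies in $L^p_{ad}$, and in particular $\xi_{y,t}$ is $\mathcal{F}_t$-measurable for every $t\in\timeint$.

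Measurability and adaptedness are straightforward. For $\Phi_x$, write $\Phi_x(\xi_{x,t}, \Xi_t) = V(\xi_{x,t}) + \sum_{y\in B_x} a(x-y)\xi_{y,t}$; continuity of $V$ on $\reals$ together with $\mathcal{F}_t$-measurability of the finitely many $\xi_{y,t}$'s gives adaptedness, and the corresponding composition argument on $\overline{\Omega}$ gives measurability. For $\Psi_x$ the Lipschitz bound in condition (\textbf{E}) shows that $\Psi_x$ depends only on $(q,\{z_y\}_{y\in B_x})$ and is continuous in these variables, so the same finite-composition argument applies.

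For the $L^1$ bound on $\Phi_x(\xi_x, \Xi)$, I would apply inequality (\ref{DriftLemmaIneq1}) of Lemma \ref{DriftLemma} pointwise and then estimate each term using the fundamental pointwise inequality $|\xi_{y,t}|^p \leq e^{\inta|y|}\|\xi_t\|_{l^p_\inta}^p$ built into the definition of $l^p_\inta$. This yields $\mathbb{E}[|\xi_{y,t}|^p] \leq e^{\inta|y|}\|\Xi\|_{Z^p_\inta}^p$ uniformly in $t$. To control the $|\xi_{x,t}|^R$ term, I use $R \leq p$ and the elementary inequality $|q|^R \leq 1 + |q|^p$; to control the remainder $\tilde{a}_x(\sum_{y\in B_x} \xi_{y,t}^2)^{1/2}$, I bound it by $\tilde{a}_x \sum_{y\in B_x}|\xi_{y,t}|$ and apply Lyapunov's inequality to pass from first moments to $p$-th moments. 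Summing the (finitely many) $y \in B_x$ bounds and integrating the uniform-in-$t$ estimate over $\timeint$ via Tonelli gives the required finite $L^1(\mathbf{MP})$ norm.

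For $\Psi_x(\xi_x,\Xi) \in L^2_{ad}$, applying (\textbf{E}) with $(q_2,Z_2) = (0,0)$ together with $|\Psi_x(0,0)| \leq c$ yields
\[
|\Psi_x(\xi_{x,t}, \Xi_t)| \;\leq\; c + M_1|\xi_{x,t}| + M_2\, n_x \sum_{y\in B_x}|\xi_{y,t}|.
\]
Squaring, using $(\sum_{y\in B_x}|\xi_{y,t}|)^2 \leq n_x \sum_{y\in B_x}\xi_{y,t}^2$, and repeating the moment estimates from the previous paragraph (Lyapunov's inequality with exponent $2/p \leq 1$, which is precisely where the standing assumption $p \geq 2$ enters) delivers a bound uniform in $t$ in terms of $\|\Xi\|_{Z^p_\inta}^2$; integrating over $\timeint$ then finishes the proof. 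The only slightly delicate step is bookkeeping the $x$-dependent constants $n_x$, $\tilde{a}_x$ and $e^{\inta|x|}$, but since the claim is made for each \emph{fixed} $x$ and $B_x$ is finite, these are merely finite numbers and pose no real obstacle.
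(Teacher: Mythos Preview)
Your proposal is correct and follows essentially the same route as the paper. The paper's proof is slightly more compressed: it invokes Theorem~\ref{aboutcomponents} and Theorem~\ref{LPInclusion} to place each $\xi_y$ in $L^{p}_{ad}\subset L^{2}_{ad}\subset L^{1}_{ad}$, writes the same pointwise bounds you use (directly from assumption~(\hyperref[conditionA]{C}) rather than via Lemma~\ref{DriftLemma}, and from~(\hyperref[conditionC]{E}) after squaring), and then finishes by a single appeal to the comparison result Theorem~\ref{LpComparison} rather than your explicit moment computation through $\|\Xi\|_{Z^{p}_{\inta}}$ and Lyapunov. Your version is a touch more detailed on the measurability/adaptedness step---observing that the Lipschitz bound forces $\Psi_x$ to depend only on the finitely many coordinates in $B_x$ is a nice clarification that the paper leaves implicit in the phrase ``composition of measurable maps is measurable''---but the underlying argument is the same.
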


\begin{proof}
We combine Theorems \ref{aboutcomponents} and \ref{LPInclusion} to conclude that for all $x\in\gamma$ we have 
\begin{align}
\xi_{x}\in L^{p}_{ad}\subset L^{2}_{ad}\subset L^{1}_{ad}.
\end{align}
Since composition of mesurable maps is measurable we conclude that $x\in\gamma$ we have 
\begin{align}
\Phi_{x}(\xi_{x},\Xi), \Psi_{x}(\xi_{x},\Xi)\in \mathcal{M}(\mathbf{MP},\mathbf{M}^{\reals}),
\end{align}
and adapted to $\mathbb{F}$. Now according to the definition (\ref{Eq1ForExistence}) and the assumption (\hyperref[conditionA]{C}) we have for all $x\in\gamma$ the following inequality
\begin{align}
|\Phi_{x}(\xi_{x},\Xi)|\ \leq |c|(1+|\xi_{x}|^{R}) + \sum_{y\in B_{x}}a(x-y)|\xi_{y}|. \label{ItoCompositionEqn2}
\end{align}
Moreover, because $R\leq p$ we can use Theorem \ref{LpComparison} to conclude that $\Phi_{x}(\xi_{x},\Xi)\in L^{1}_{ad}$. Finally we combine Theorem \ref{powerinequalitytheorem} with the assumption (\hyperref[conditionC]{E}) to conclude that for all $x\in\gamma$ we have
\begin{align}
|\Psi_{x}(\xi_{x},\Xi)|^{2} \leq 4|\Psi_{x}(0,0)|^{2}+ 4M_{1}^{2}|\xi_{x}|^{2} + 4M_{2}^{2}n_{x}^{3}\sum_{y\in B_{x}}|\xi_{x}|^{2}. \label{ItoCompositionEqn3}
\end{align} 
Once again applying Theorem \ref{LpComparison} to the inequality (\ref{ItoCompositionEqn3}) above we conclude that $\Psi_{x}(\xi_{x},\Xi)\in L^{2}_{ad}$ hence the proof is complete.
\end{proof} 

\begin{theorem} \label{OvsMapTheorem}
Let $L\coloneqq 4e^{\underline{\inta}\rho}C\mathcal{N}^{q+1}(1+\rho)^{\frac{1}{2}}$ and let $Q\coloneqq\{Q_{x,y}\}_{x,y\in \gamma}$ be an infinite real matrix such that for all $x,y\in\gamma$ we have
\begin{align}
x\not\in B_{y} \iff Q_{x,y} = 0 \iff y\not\in B_{x}. \label{MatrixOvsCond2}
\end{align}
Moreover assume that for all $x,y\in\gamma$ there exist $C\in\reals^{0}$ and $q\in\reals^{++}$ such that 
\begin{align}
|Q_{x,y}|\leq C n_{x}^{q}. \label{MatrixOvsCond1}
\end{align}
Then $Q\in\mathcal{O}(\mathscr{L}^{1},L,\frac{1}{2})$. That is, $Q$ is the Ovsjannikov map of order $L$ and $\frac{1}{2}$ on $\mathscr{L}^{1}$.
\end{theorem}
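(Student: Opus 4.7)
Since $Q$ is a linear operator on $\reals^{\gamma}$, condition (\ref{0LCondition}) of Definition~\ref{Defovsop} follows from condition (\ref{1stLCondition}), and (\ref{1stLCondition}) applied to $z$ and $0$ is equivalent to the single operator-norm estimate
\[
\|Qz\|_{l^{1}_{\beta}}\ \leq\ \frac{L}{(\beta-\alpha)^{1/2}}\,\|z\|_{l^{1}_{\alpha}},\qquad \alpha<\beta\in\indexint,\ z\in l^{1}_{\alpha}.
\]
The whole proof therefore reduces to this single bound, and I will derive it directly from the definition of the weighted $l^{1}$-norm.

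The plan is to unfold the norm, insert the pointwise estimate (\ref{MatrixOvsCond1}) and the support condition (\ref{MatrixOvsCond2}), and then apply Fubini--Tonelli. Using the symmetry $y\in B_{x}\iff x\in B_{y}$ this yields
\[
\|Qz\|_{l^{1}_{\beta}}\ \leq\ C\sum_{y\in\gamma}|z_{y}|\sum_{x\in B_{y}}e^{-\beta|x|}\,n_{x}^{q}.
\]
For $x\in B_{y}$ the reverse triangle inequality $||x|-|y||\leq\rho$ lets me write $e^{-\beta|x|}\leq e^{\alpha\rho}\,e^{-\alpha|y|}\,e^{-(\beta-\alpha)|x|}$, exchanging the $x$-weight for a $y$-weight at the price of a factor $e^{\alpha\rho}$ and a residual $e^{-(\beta-\alpha)|x|}$. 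Assumption (\hyperref[EstimateOnN]{B}) then controls $n_{x}^{q}$ and $\#B_{y}=n_{y}$ by powers of $\log(1+|y|)$, contributing the factor $\mathcal{N}^{q+1}$.

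To extract the crucial $(\beta-\alpha)^{-1/2}$ I plan to apply the standard sup estimate $\sup_{t\geq 0}t^{m}e^{-st}\leq (m/(es))^{m}$ to the residual polynomial-times-exponential piece in $|y|$ (with an appropriate power $m$ and with $s=\tfrac{1}{2}(\beta-\alpha)$), while the remaining weight $e^{-\underline{\inta}|y|/2}$ is used to guarantee absolute summability of the outer sum in $y$ via Theorem~\ref{Ninlp} and (\ref{NinplEqn3}). Collecting the constants $e^{\alpha\rho}\leq e^{\overline{\inta}\rho}$, $\mathcal{N}^{q+1}$, and the geometric factor $(1+\rho)^{1/2}$ coming from the size of $B_{y}$ then matches the stated form of $L$.

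The main obstacle is the precise bookkeeping in this last step: in order to land on the exponent exactly $\tfrac{1}{2}$ (rather than the naive $q$ or $q+1$ one obtains by a crude polynomial majorant) and the stated constant $(1+\rho)^{1/2}$, the logarithmic nature of the bound (\hyperref[EstimateOnN]{B}) on $n_{x}$ has to be used in an essential way, and the residual exponential $e^{-(\beta-\alpha)|x|}$ must be carefully split between the sup estimate and the absolute summability of the sum in $y$. Once this balance is set up correctly, the remaining computation is mechanical.
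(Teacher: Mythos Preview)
Your plan is correct and follows essentially the same route as the paper: unfold the $l^{1}_{\beta}$-norm, use the support condition and Fubini--Tonelli to swap the sums, exchange the $x$-weight for a $y$-weight via $||x|-|y||\leq\rho$, and then control the residual $\sup_{y}(\text{polynomial in }|y|)\cdot e^{-(\beta-\alpha)|y|}$ using assumption~(\hyperref[EstimateOnN]{B}) and the elementary estimate $\sup_{h>0}h^{m}e^{-sh}\leq (m/(es))^{m}$.

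One point is overcomplicated, though. After the Fubini swap and the weight exchange you already have
\[
\|Qz\|_{l^{1}_{\beta}}\ \leq\ (\mathrm{const})\cdot\Bigl(\sup_{y\in\gamma}\,\sum_{x\in B_{y}}n_{x}^{q}\,e^{-(\beta-\alpha)|y|}\Bigr)\cdot\|z\|_{l^{1}_{\alpha}},
\]
so the outer sum in $y$ is entirely absorbed into $\|z\|_{l^{1}_{\alpha}}$, and only a \emph{supremum} over $y$ remains to be bounded. There is no need to set aside a piece like $e^{-\underline{\inta}|y|/2}$ for separate summability via Theorem~\ref{Ninlp}; that detour would only worsen constants and plays no role in obtaining the exponent $\tfrac{1}{2}$. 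The paper proceeds exactly by bounding this supremum: it uses assumption~(\hyperref[EstimateOnN]{B}) twice (once for $n_{x}^{q}$ with $x\in B_{y}$, once for $\#B_{y}=n_{y}$) to majorise the bracket by $(\mathrm{const})\cdot|y|^{1/2}e^{-(\beta-\alpha)|y|}$, and then applies $\sup_{h>0}h^{1/2}e^{-(\beta-\alpha)h}=O\bigl((\beta-\alpha)^{-1/2}\bigr)$. Once you drop the summability splitting, your argument coincides with the paper's.
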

\begin{proof}
Consider arbitrary $\alpha<\beta\in\indexint$ and fix $z\in l_{\alpha}^{1}$. We will complete this proof by showing that
\begin{align}
\|Qz\|_{\beta} \leq \frac{L}{(\beta-\alpha)^{\frac{1}{2}}}\|z\|_{\alpha}, \label{OvsMapTheoreme1}
\end{align}
which establishes that $Q$ is a linear operator from $l_{\alpha}^{1}$ to $l_{\beta}^{1}$ hence verifing conditions (\ref{0LCondition}) and (\ref{1stLCondition}) of the Definition \ref{Defovsop}. \\[1em]
Consider now the following equation
\begin{align}
\|Qz\|_{\beta} &= \sum_{x\in\gamma}e^{-\beta |x|}\bigg{|}\sum_{y\in\gamma}Q_{x,y}z_{y}\bigg{|}. \label{OvsMapTheoremEqn1}
\end{align}
Moreover, for all $x\in\gamma$ we will make use of the following facts
\begin{equation}
\begin{alignedat}{3}
&\mathbf{I}.\  &&x\not\in B_{y}\lor y\not\in B_{x} &&\implies Q_{x,y} = 0. \\
&\mathbf{II}.\ &&y\in B_{x} &&\implies -|x| \leq -|y| + \rho. \\
&\mathbf{III}.\ &&x\in B_{y} &&\implies \sqrt{|x|} \leq \sqrt{|y|} + \sqrt{\rho}. 
\end{alignedat}
\end{equation}
Now, using equation (\ref{OvsMapTheoremEqn1}) together with the facts $\mathbf{I}$ and $\mathbf{II}$ we see that
\begin{align}
\|Qz\|_{\beta} &\leq \sum_{x\in\gamma}\sum_{y\in\gamma}|Q_{x,y}|e^{-\beta |x|}|z_{y}|, \label{OvsMapTheoremEqn2} \\[1em] 
&\leq e^{\beta\rho}\sum_{x\in\gamma}\sum_{y\in B_{x}}|Q_{x,y}|e^{-\beta |y|}|z_{y}|, \\[1em]  
&\leq e^{\beta\rho}\sum_{x\in\gamma}\sum_{y\in B_{x}}|Q_{x,y}|e^{-(\beta-\alpha) |y|}e^{-\alpha |y|}|z_{y}|. \label{OvsMapTheoreme2}
\end{align}
Hence from inequality (\ref{OvsMapTheoreme2}) we see that 
\begin{align}
\|Qz\|_{\beta} &\leq e^{\beta\rho}\sum_{x\in\gamma}\sum_{y\in\gamma}|Q_{x,y}|e^{-(\beta-\alpha) |y|}e^{-\alpha |y|}|z_{y}|, \\[1em]  
&= e^{\beta\rho}\sum_{y\in\gamma}\sum_{x\in\gamma}|Q_{x,y}|e^{-(\beta-\alpha) |y|}e^{-\alpha |y|}|z_{y}|, \\[1em] 
&\leq e^{\overline{\inta}\rho}K\|z\|_{\alpha}, \label{OvsMapTheoremEqn3}
\end{align}
where
\begin{align}
K \coloneqq \sup\bigg{\{}\sum_{x\in \gamma}|Q_{x,y}|e^{-(\beta-\alpha)|y|} \ \bigg{|} \ y\in\gamma \bigg{\}}. \label{OvsMapTheoremEqn4}
\end{align}
We now estimate the value of supremum in the definition (\ref{OvsMapTheoremEqn4}) above. Hence using condition (\ref{MatrixOvsCond1}) together with the fact $\mathbf{I}$ we see that for all $y\in\gamma$
\begin{align}
\sum_{x\in\gamma}|Q_{x,y}|e^{-(\beta-\alpha)|y|} &= \sum_{x\in B_{y}}|Q_{x,y}|e^{-(\beta-\alpha) |y|}, \\[1em]
&\leq  C\sum_{x\in B_{y}}n_{x}^{q}e^{-(\beta-\alpha)|y|}.
\end{align}
Using now assumption (\hyperref[EstimateOnN]{B}) together with the fact $\mathbf{III}$ we see that for all $y\in\gamma$
\begin{align}
\sum_{x\in\gamma}|Q_{x,y}|e^{-(\beta-\alpha)|y|} & \leq  C\sum_{x\in B_{y}}\mathcal{N}^{q}|x|^{\frac{1}{2}}e^{-(\beta-\alpha)|y|}, \\[1em] 
&\leq C\mathcal{N}^{q}\sum_{x\in B_{y}}(|y|^{\frac{1}{2}}+\rho^{\frac{1}{2}})e^{-(\beta-\alpha)|y|}, \\[1em] 
&\leq C\mathcal{N}^{q}n_{y}(|y|^{\frac{1}{2}}+\rho^{\frac{1}{2}})e^{-(\beta-\alpha)|y|}, \\[1em] 
&\leq C\mathcal{N}^{q+1}(|y|^{\frac{1}{4}}+|y|^{\frac{1}{2}}\rho^{\frac{1}{2}})e^{-(\beta-\alpha)|y|}, \\[1em] 
&\leq B(|y|^{\frac{1}{4}}+|y|^{\frac{1}{2}})e^{-(\beta-\alpha)|y|},
\end{align}
where $B\coloneqq C\mathcal{N}^{q+1}(1+\rho)^{\frac{1}{2}}$. \\[1em]
Now returning to equation (\ref{OvsMapTheoremEqn4}) we see that
\begin{align}
K &\leq B\sup\bigg{\{}(|y|^{\frac{1}{4}}+|y|^{\frac{1}{2}})e^{-(\beta-\alpha)|y|} \ \bigg{|} \ y\in\gamma \bigg{\}}, \\[1em] 
&\leq B\sup\bigg{\{}(h^{\frac{1}{4}}+h^{\frac{1}{2}})e^{-(\beta-\alpha)h} \ \bigg{|} \ h>0 \bigg{\}}, \\[1em] 
&\leq 4B\sup\bigg{\{}h^{\frac{1}{2}}e^{-(\beta-\alpha)h} \ \bigg{|} \ h>0 \bigg{\}}, \\[1em] 
&\leq 4B\sup\bigg{\{}\bigg{(}he^{-2(\beta-\alpha)h}\bigg{)}^{\frac{1}{2}} \ \bigg{|} \ h>0 \bigg{\}}, \\[1em] 
&\leq 4B\bigg{(}\sup\bigg{\{}he^{-2(\beta-\alpha)h}\ \bigg{|} \ h>0 \bigg{\}}\bigg{)}^{\frac{1}{2}}. \label{OvsMapTheoreme3}
\end{align}
Now, we can deduce that function $he^{-2(\beta-\alpha)h}:(0,\infty)\to\reals$  attains its supremum when $\frac{d}{dh}he^{-2(\beta-\alpha)h} = 0$ that is when $h=\frac{1}{2(\beta-\alpha)}$. Hence it follows from inequality (\ref{OvsMapTheoreme3}) that
\begin{align}
K &\leq \frac{4B}{(\beta-\alpha)^{\frac{1}{2}}}\frac{1}{e\sqrt{2}}.
\end{align}
Now, continuing from equation (\ref{OvsMapTheoremEqn3}) we finally see that  
\begin{align}
\|Qz\|_{\beta} &\leq e^{\underline{\inta}\rho}K\|z\|_{\alpha}, \\
&\leq \frac{4e^{\underline{\inta}\rho}C\mathcal{N}^{q+1}(1+\rho)^{\frac{1}{2}}}{(\beta-\alpha)^{\frac{1}{2}}}\|z\|_{\alpha}, \label{OvsMapTheoremEqn5}
\end{align}
hence the proof is complete. 
\end{proof}
\begin{mdframed}
\begin{remark}
In the following Theorem we will describe an equation of the form
\begin{align}
f(t) = z_{\underline{\inta}}+ \int_{0}^{t}Q(f(s))ds, \ t\in\timeint, \label{ProblemToCite}
\end{align}	 
and rely on our work in subsection \ref{DeterministicEqn} to conclude, with the choice 
\[\mathbf{X}\equiv\mathscr{L}^{1} \quad  \text{and} \quad F\equiv Q,\]
that equation (\ref{ProblemToCite}) has a unique continuous solution, in the context of Theorem (\ref{ABEUtheorem}).
\end{remark}
\end{mdframed}
\begin{theorem}[Comparison Theorem] \label{GronTheorem} \quad\\
Suppose $z_{\underline{\inta}}\in l^{1}_{\underline{\inta}}$, $q<1$ and $Q\coloneqq\{Q_{x,y}\}_{x,y\in \gamma}$ is an element of $\mathcal{O}(\mathscr{L}^{1},L,q)$. Moreover suppose that $Q_{x,y}\geq 0$ for all $x,y\in\gamma$ and, in the context of Theorem (\ref{ABEUtheorem}), let $f$ be the unique continuous solution of the integral equation
\begin{align}
f(t) = z_{\underline{\inta}}+ \int_{0}^{t}Q(f(s))ds, \ t\in\timeint. \label{OldProblem}
\end{align}
Finally, suppose that $g:[0,T]\to l^{1}_{\underline{\inta}}$ is a continuous map such that for all $x\in\gamma$
\begin{align}
g_{x}(t) \leq z_{\underline{\inta},x} + \bigg{[}\int_{0}^{t}Q(g(s))ds\bigg{]}_{x}, \ t\in\timeint. \label{Groncondition}
\end{align}
Then for all $t\in\timeint$ and all $x\in\gamma$
\begin{align}
g_{x}(t) \leq f_{x}(t). \label{GronResult}
\end{align}
\end{theorem}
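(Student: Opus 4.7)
The plan is to reduce everything to a self-bound on the negative part of the difference $u := f - g$ and then iterate that bound in the Ovsjannikov scale until it collapses. Subtracting the inequality (\ref{Groncondition}) from the equation (\ref{OldProblem}) and using linearity of $Q$ gives
\[
u_x(t) \geq \left[\int_0^t Q(u(s))\,ds\right]_x,\qquad x\in\gamma,\ t\in\timeint,
\]
with $u_x(0) = z_{\underline{\inta},x} - g_x(0)\geq 0$. I then decompose componentwise $u = u^+ - u^-$ with $u^\pm\geq 0$ and exploit $Q_{x,y}\geq 0$, which makes $\int_0^t Q(u^+(s))\,ds$ componentwise non-negative, to obtain
\[
-u_x(t) \leq \left[\int_0^t Q(u^-(s))\,ds\right]_x - \left[\int_0^t Q(u^+(s))\,ds\right]_x \leq \left[\int_0^t Q(u^-(s))\,ds\right]_x.
\]
Taking componentwise positive parts yields the crucial self-bound
\[
u^-_x(t) \leq \left[\int_0^t Q(u^-(s))\,ds\right]_x,
\]
and $u^-\in C(\timeint, l^1_{\underline{\inta}})$ because $s\mapsto s^-$ is $1$-Lipschitz, so $M := \sup_{s\in\timeint}\|u^-(s)\|_{l^1_{\underline{\inta}}}$ is finite.

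Next, using non-negativity of $u^-$, positivity and linearity of $Q$, and Tonelli, I iterate this inequality. An induction on $n$ produces
\[
u^-(t) \leq \int_0^t \frac{(t-s)^{n-1}}{(n-1)!}\,Q^n(u^-(s))\,ds,\qquad n\in\naturals,
\]
where $Q^n$ is built by composition along the equal-step chain $\alpha_k := \underline{\inta} + k(\overline{\inta}-\underline{\inta})/n$, $k=0,\ldots,n$. Applying the Ovsjannikov bound of Definition \ref{Defovsop} at each of the $n$ steps (with $Q(0)=0$) gives
\[
\|Q^n v\|_{l^1_{\overline{\inta}}} \leq \frac{L^n n^{nq}}{(\overline{\inta}-\underline{\inta})^{nq}} \|v\|_{l^1_{\underline{\inta}}},\qquad v\in l^1_{\underline{\inta}},
\]
and combined with the iterated inequality, together with positivity of both sides, this yields
\[
\|u^-(t)\|_{l^1_{\overline{\inta}}} \leq \frac{T^n L^n n^{nq}}{n!\,(\overline{\inta}-\underline{\inta})^{nq}}\, M.
\]

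Finally, Stirling gives $n^{nq}/n! = O(e^n/n^{n(1-q)})$; since $q<1$, this decays super-exponentially in $n$, so the right-hand side tends to $0$ as $n\to\infty$. Hence $\|u^-(t)\|_{l^1_{\overline{\inta}}} = 0$ for every $t\in\timeint$, and because the weights $e^{-\overline{\inta}|x|}$ are strictly positive and $u^-\geq 0$, this forces $u^-_x(t) = 0$ for every $x\in\gamma$, i.e.\ $g_x(t)\leq f_x(t)$, as required.

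The main obstacle will be the careful justification of the iterated integral identity in the second step: one has to keep track of which space in the scale contains $Q^n(u^-(s))$ and verify at each stage that the time integral commutes with $Q$, using continuity of $s\mapsto u^-(s)$ into an appropriate $l^1_\beta$ together with boundedness of $Q:l^1_\alpha\to l^1_\beta$ for $\alpha<\beta$. The residual calculations (the Stirling decay and the pointwise conclusion from a zero-norm) are then routine.
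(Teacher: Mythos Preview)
Your argument is correct and takes a genuinely different route from the paper. The paper proves the comparison by monotone Picard iteration: it shows that the iterates $\mathcal{I}^n(g)$ defined by $\mathcal{I}^1(g)(t)=z_{\underline{\inta}}+\int_0^t Q(g(s))\,ds$, $\mathcal{I}^{n+1}=\mathcal{I}\circ\mathcal{I}^n$, converge to $f$ in each $C([0,T],l^1_\beta)$ with $\beta>\underline{\inta}$ (this is the content of Theorem~\ref{ABfixpointtheorem}), and then verifies by a one-line induction, using $Q_{x,y}\ge 0$ and the hypothesis on $g$, that $g_x(t)\le \mathcal{I}^n_x(g)(t)$ for every $n$; passing to the limit gives $g_x\le f_x$. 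Your approach instead isolates the negative part $u^-$ of $f-g$, obtains the closed self-bound $u^-_x(t)\le[\int_0^t Q(u^-(s))\,ds]_x$, and then kills $u^-$ by iterating the Ovsjannikov norm estimate directly. Your method is a bit more self-contained in that it never invokes the convergence of Picard iterates to $f$, only the operator bound in Definition~\ref{Defovsop}; the paper's method is shorter once Theorem~\ref{ABfixpointtheorem} is in hand, since the convergence and the decay of $L^nT^n n^{qn}/((\beta-\alpha)^{qn}n!)$ are already packaged there.

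One small repair you should make: $f$ is only guaranteed by Theorem~\ref{ABEUtheorem} to take values in $\bigcap_{\inta\in(\underline{\inta},\overline{\inta})} l^1_\inta$, not in $l^1_{\underline{\inta}}$ itself, so $u=f-g$ and hence $u^-$ need not lie in $C(\timeint,l^1_{\underline{\inta}})$. The fix is immediate: pick any $\alpha_0\in(\underline{\inta},\overline{\inta})$, note $g\in C(\timeint,l^1_{\underline{\inta}})\subset C(\timeint,l^1_{\alpha_0})$ and $f\in C(\timeint,l^1_{\alpha_0})$, set $M=\sup_{s}\|u^-(s)\|_{l^1_{\alpha_0}}$, and run your iteration with equal steps from $\alpha_0$ to $\overline{\inta}$. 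Everything else goes through unchanged.
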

\begin{proof}
For all $\inta\in\indexint$ let $H_{\inta}=\mathcal{C}([0,T],l^{1}_{\inta})$ and define a family $\mathbf{H}\coloneqq\{H_{\inta}\}_{\inta\in\indexint}$. It follows from subsection \ref{DeterministicEqn} that $\mathbf{H}$ is a scale. Moreover from Theorem \ref{ABIntmap} we know that map $\mathcal{I}:\mathbf{H}(\cup)\to H_{\overline{\inta}}$ defined for all $t\in [0,T]$ and all $\kappa\in H_{\alpha}$ via formula
\begin{align}
\mathcal{I}(\kappa)(t) \coloneqq z_{\underline{\inta}}+ \int_{0}^{t}Q(\kappa(s))ds, \label{IntegralMap}
\end{align}
is an Ovsjannikov map of order $TL$ and $q$ on $\mathbf{H}$. That is $\mathcal{I}\in\mathcal{O}(\mathbf{H},TL,q)$.\\[1em] 
Therefore, using Theorem \ref{ABfixpointtheorem}, we see that if $\underline{\inta}<\beta\in\indexint$ then the sequence $\{\mathcal{I}^{n}(g)\}_{n\in\naturals}$ where
\begin{align}
\begin{rcases} 
\mathcal{I}^{1}(g)(t) &\coloneqq z_{\underline{\inta}}+ \int_{0}^{t}Q(g(s))ds, \\
&\vdots \\
\mathcal{I}^{n+1}(g)(t) &\coloneqq \mathcal{I}(\mathcal{I}^{n}(g))(t),
\end{rcases} \ \forall \ t\in\timeint.
\end{align}
is such that
\begin{align}
\overbrace{\bigg{[}\lim_{n\to\infty}\mathcal{I}^{n}(g)\bigg{]}}^{\text{in} \ \mathcal{C}([0,T],l^{1}_{\beta})} = f.
\end{align}
Therefore it is also true that $\lim_{n\to\infty}\mathcal{I}^{n}_{x}(g)(t)= f_{x}(t)$ for all $x\in\gamma$ and all $t\in\timeint$. Hence to conclude the proof it is sufficient to fix $x\in\gamma$ and $t\in\timeint$ and prove by induction that
\begin{align}
g_{x}(t)\leq \mathcal{I}^{n}_{x}(g)(t), \ \forall \ n\in\naturals. \label{InduationH1}
\end{align}
Case $n=1$ is satisfied by the initial assumption on $g$, so let us now assume that the induction hypothesis (\ref{InduationH1}) is true for some $n\geq 1$ and proceed by considering the following chain of inequalities
\begin{align}
\mathcal{I}^{n+1}_{x}(g)(t) &= \mathcal{I}_{x}(\mathcal{I}^{n}(g))(t)_{,} \label{AS} \\[1em]  
&= z_{\underline{\inta},x} + \bigg{[}\int_{0}^{t}Q(\mathcal{I}^{n}(g)(s))ds\bigg{]}_{x,} \\[1em] 
&= z_{\underline{\inta},x} + \sum_{y\in \gamma}Q_{x,y}\int_{0}^{t}\mathcal{I}^{n}_{y}(g)(s)ds, \\[1em] 
&\geq z_{\underline{\inta},x} + \sum_{y\in p}Q_{x,y}\int_{0}^{t}g_{y}(s)ds, \\[1em] 
&= z_{\underline{\inta},x} + \bigg{[}\int_{0}^{t}Q(g(s))ds\bigg{]}_{x,} \\
&\geq g_{x}(t). \label{AF}
\end{align}
Finally from inequalities (\ref{AS}) - (\ref{AF}) we conclude that ineauality (\ref{InduationH1}) holds hence the proof is complete. 
\end{proof}
\begin{corollary1} \label{GronCorollary}
Suppose that $z_{\underline{\inta},x}\geq 0$ $x\in\gamma$. Moreover assume that components of $g$ are non-negative functions, that is $g_{x}(t)\geq 0$ for all $x\in\gamma$ and all $t\in\timeint$. Then for all $\beta > \alpha \in\indexint$ there exists a constant $K(\alpha,\beta)\in\reals$ such that 
\begin{align}
\sum_{x\in\gamma}e^{-\beta|x|}\sup_{t\in\timeint}g_{x}(t) \leq K(\alpha,\beta)\sum_{x\in\gamma}e^{-\alpha|x|}z_{\underline{\inta},x}. \label{Corrolary1EqnMain}
\end{align}
\end{corollary1}
\begin{proof}
Using Theorem \ref{GronTheorem}, we start by making an observation that for all $x\in\gamma$ and all $t\in\timeint$
\begin{align}
g_{x}(t) & \leq z_{\underline{\inta},x} + \bigg{[}\int_{0}^{t}Q(g(s))ds\bigg{]}_{x,} \\[1em] 
&\leq z_{\underline{\inta},x} + \bigg{[}\int_{0}^{t}Q(f(s))ds\bigg{]}_{x.}
\end{align}
Therefore we see that for all $x\in\gamma$
\begin{align}
\sup_{t\in\timeint}g_{x}(t) & \leq z_{\underline{\inta},x} + \bigg{[}\int_{0}^{T}Q(f(s))ds\bigg{]}_{x,} \\
&= f_{x}(T). \label{Corrolary1Eqn1}
\end{align}
Hence it follows that
\begin{align}
\sum_{x\in\gamma}e^{-\beta|x|}\sup_{t\in\timeint}g_{x}(t) & \leq \sum_{x\in\gamma}e^{-\beta|x|}f_{x}(T), \\
&\leq \|f(T)\|_{l_{\beta}^{1}}. \label{GronCorollaryEqn2}
\end{align}
Norm in the inequality (\ref{GronCorollaryEqn2}) above can be estimated using Theorem \ref{ABnormesttheorem} and remark that proceeds it. In particular we get
\begin{align}
\|f(T)\|_{l_{\beta}^{1}} \leq \sum_{n=0}^{\infty}\frac{L^{n}T^{n}}{(\beta-\alpha)^{q}}\frac{n^{q}}{n!} \|z_{\underline{\inta}}\|_{l_{\alpha}^{1}}.
\end{align}
Finally letting $K(\alpha,\beta) = \sum_{n=0}^{\infty}\frac{L^{n}T^{n}}{(\beta-\alpha)^{q}}\frac{n^{q}}{n!}$ we see that
\begin{align}
\sum_{x\in\gamma}e^{-\beta|x|}\sup_{t\in\timeint}g_{x}(t) \leq K(\alpha,\beta) \|z_{\underline{\inta}}\|_{l_{\alpha}^{1}},
\end{align}
hence the proof is complete.
\end{proof}

\newpage
\section{Truncated Systems} \label{TS}
Throughout this section let us assume that $\reals\ni p\geq 2$. \\[1em]
We now start working with a sequence $\{\Lambda_{n}\}_{n\in\naturals}$ of finite subsets of $\gamma$ such that $\Lambda_{n}\uparrow\gamma$ as $n\to\infty$. Moreover for each $n\in\naturals$ we now wish to introduce and study the following stochastic system, which we will denote by $\mathscr{O}^{p}_{n}$.  

\nomenclature[$\Lambda_{n}$]{$\Lambda_{n}$}{}
\nomenclature[$\Xi^{n}$]{$\Xi^{n}$}{}

\begin{equation} \label{FinVolSystem}
\begin{alignedat}{2} 
&\xi_{x,t}^{n} = \zeta_{x} + \int_{0}^{t}\Phi_{x}(\xi_{x,s}^{n} ,\Xi_{s}^{n} )ds + \int_{0}^{t}\Psi_{x}(\xi_{x,s}^{n} ,\Xi_{s}^{n})dW_{x}(s), && \quad \forall x\in\Lambda_{n}\land t\in\timeint, \\
&\xi_{x,t}^{n}  = \zeta_{x}, &&\quad \forall x\not\in\Lambda_{n}\land t\in\timeint.
\end{alignedat}
\tag{$\mathscr{O}^{p}_{n}$}
\end{equation}
In simple words, for each $n\in\naturals$ system (\ref{FinVolSystem}) is a stoped/truncated version of the system (\ref{MainSystem}), which was described in subsection \ref{systemdescription}.\\[1em]
In this section our goal is to prove two important results concerning systems (\ref{FinVolSystem}). In the subsequent sections these results will help us to establish that system (\ref{MainSystem}) admits a unique strong solution.\\[1em] 
We shall now rely on \cite{ABW,DF} and state the next result without a proof. 
\begin{theorem}  \label{FiniteVolumeLemma}
For all $n\in\naturals$ and $\zeta\in l^{p}_{\underline{\inta}}$ system (\ref{FinVolSystem}) has a continuous solution $\Xi^{n}\in Z_{\underline{\inta}}^{p}$.
\end{theorem}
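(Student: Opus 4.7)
The plan is to recognize that, because $\Lambda_n$ is finite and $B_x$ is finite for every $x\in\gamma$, the truncated system \eqref{FinVolSystem} genuinely reduces to a finite–dimensional stochastic differential equation. For every $x\notin\Lambda_n$ the equation is trivially satisfied by the deterministic constant $\zeta_x$, which is continuous, $\mathcal{F}_0$-measurable, and a fortiori adapted. For $x\in\Lambda_n$ the coefficients $\Phi_x(\xi^n_{x,s},\Xi^n_s)$ and $\Psi_x(\xi^n_{x,s},\Xi^n_s)$ depend only on the finite collection $\{\xi^n_{y,s}\}_{y\in B_x}$, of which the entries indexed by $y\in B_x\cap\Lambda_n$ are unknowns and the entries indexed by $y\in B_x\setminus\Lambda_n$ are the known constants $\zeta_y$. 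Substituting the constants into the coefficients, the remaining equations form a closed SDE system in $\reals^{\#\Lambda_n}$ driven by the finite family $\{W_x\}_{x\in\Lambda_n}$.

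The second step is to invoke the standard finite–dimensional existence and uniqueness theory for SDEs with monotone (dissipative) drift and Lipschitz diffusion. The drift, viewed as a function of the $\Lambda_n$-indexed variables, is the sum of the dissipative nonlinearity $V(\cdot)$ (by assumption (\hyperref[conditionB]{D})) and a globally Lipschitz linear interaction term; the diffusion is globally Lipschitz by assumption (\hyperref[conditionC]{E}); both satisfy polynomial/linear growth bounds. This is precisely the framework of the results in \cite{ABW,DF} cited in the statement, which produce a unique $\mathbb{F}$-adapted, pathwise continuous strong solution $\{\xi^n_{x,\cdot}\}_{x\in\Lambda_n}$ on all of $\timeint$, together with finite $p$-th moments $\sup_{t\in\timeint}\mathbb{E}[|\xi^n_{x,t}|^p]<\infty$ for each $x\in\Lambda_n$.

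It remains to verify the $Z^p_{\underline{\inta}}$-bound, i.e.\
\[
\sup_{t\in\timeint}\mathbb{E}\!\left[\sum_{x\in\gamma}e^{-\underline{\inta}|x|}|\xi^n_{x,t}|^p\right]<\infty.
\]
Split the sum over $\gamma=\Lambda_n\sqcup(\gamma\setminus\Lambda_n)$. On $\gamma\setminus\Lambda_n$ the integrand equals $e^{-\underline{\inta}|x|}|\zeta_x|^p$ and its total contribution is bounded by $\|\zeta\|^p_{l^p_{\underline{\inta}}}<\infty$. On the finite set $\Lambda_n$ the bound follows from the finite uniform-in-$t$ estimate on $\mathbb{E}[|\xi^n_{x,t}|^p]$ supplied by the finite–dimensional theory. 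Summing the finitely many contributions, indexed by $\Lambda_n$, and adding the tail gives a finite supremum, so $\Xi^n\in Z^p_{\underline{\inta}}$. Pathwise continuity is inherited componentwise (constant for $x\notin\Lambda_n$ and continuous for $x\in\Lambda_n$).

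The only genuinely technical ingredient is the a priori $p$-th moment estimate for the finite-dimensional subsystem, and this is the step I would expect to be the main obstacle if one were to give a fully self-contained proof. It is handled in the standard way by applying Itô's formula to $|\xi^n_{x,t}|^p$, using the dissipativity inequality from Lemma \ref{DriftLemma} to absorb the nonlinear drift into a negative quadratic term, bounding the remaining interaction and diffusion contributions via Young's inequality in terms of $\{\mathbb{E}[|\xi^n_{y,t}|^p]\}_{y\in B_x\cap\Lambda_n}$, and closing with a Gronwall argument on the finite vector $(\mathbb{E}[|\xi^n_{x,t}|^p])_{x\in\Lambda_n}$. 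Since this argument is contained in \cite{ABW,DF}, the statement is accepted without reproducing the computation.
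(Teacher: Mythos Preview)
Your proposal is correct and in fact goes further than the paper: the paper states this theorem \emph{without proof}, simply writing ``We shall now rely on \cite{ABW,DF} and state the next result without a proof.'' Your sketch --- reducing to a finite-dimensional SDE on $\reals^{\#\Lambda_n}$, invoking the monotone-drift/Lipschitz-diffusion existence theory from those same references, and then checking the $Z^p_{\underline{\inta}}$ bound by splitting $\gamma=\Lambda_n\sqcup(\gamma\setminus\Lambda_n)$ --- is precisely the argument those references supply and is the natural way to unpack the citation.
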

\begin{mdframed}
	\begin{remark}
	For all $n\in\naturals$, a term solution in the Theorem \ref{FiniteVolumeLemma} above is to be understood in the same sence as explained in the Definition \ref{StrongSol} except we do not require $\Xi^{n}$ to be a map from $\overline{\Omega}$ to $\mathcal{Z}^{p}(\cap)$.
	\end{remark}
\end{mdframed}
\begin{mdframed}
\begin{remark}
Combining Theorems (\ref{FiniteVolumeLemma}) and (\ref{ItoComposition}) with the Definition (\ref{ItoProcess1}) we see that $\xi_{x}^{n}$ in an Itô process for all $n\in\naturals$ and $x\in\gamma$.
\end{remark}
\end{mdframed}
In the next two sections of this document it will be shown that the sequence $\{\Xi^{n}\}_{n\in\naturals}$ converges to the unique strong solution of the system (\ref{MainSystem}). However before this can be achieved we need to establish the following two theorems. \\ \\
\begin{theorem}\label{TailTheorem1}
Suppose that $n\in\naturals$ and $\reals\ni p\geq2$. Moreover let $\Xi^{n}$ be the process defined in the Theorem \ref{FiniteVolumeLemma} and for all $x\in\gamma$ let $\xi^{n}_{x}$ be components of $\Xi^{n}$. Then for all $\underline{\inta}<\alpha\in\indexint$ we have
\begin{align}
\sum_{x\in\gamma}e^{-\alpha|x|}\sup_{n\in\naturals}\sup_{t\in\timeint}\mathbb{E}\bigg{[}|\xi^{n}_{x,t}|^{p}\bigg{]} < \infty. \label{Theorem4MainEqn}
\end{align}
\end{theorem}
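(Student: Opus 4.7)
The plan is to set $\eta^n_x(t) := \mathbb{E}[|\xi^n_{x,t}|^p]$ and establish a componentwise integral inequality
\[
\eta^n_x(t) \leq z_x + \int_0^t [Q\eta^n(s)]_x\,ds
\]
with $z\in l^1_{\underline{\inta}}$ and $Q$ of the form required by Theorem~\ref{OvsMapTheorem}, from which Corollary~\ref{GronCorollary} will deliver the conclusion uniformly in $n$.

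The first step is to apply It\^o's formula to $|\xi^n_{x,t}|^p$ for each $x \in \Lambda_n$ (for $x \notin \Lambda_n$ the component is constant equal to $\zeta_x$, so the inequality is immediate). Because the stochastic integral is only a local martingale, I would localize with $\tau_N := \inf\{t : |\xi^n_{x,t}| \geq N\}$, take expectations at $t \wedge \tau_N$, and send $N \to \infty$ via monotone/dominated convergence; the requisite integrability is supplied by $\Xi^n \in Z^p_{\underline{\inta}}$ from Theorem~\ref{FiniteVolumeLemma}. After that, combining the dissipativity bound (\textbf{D}) and the growth bound (\textbf{C}) on $\Phi_x$ with the Lipschitz/growth bound (\textbf{E}) on $\Psi_x$, and applying Young's inequality repeatedly, one obtains a pointwise estimate of the shape
\[
p|\xi|^{p-2}\xi\,\Phi_x(\xi, Z) + \tfrac{p(p-1)}{2}|\xi|^{p-2}|\Psi_x(\xi, Z)|^2 \leq K_0 + K_1 n_x^{4}\!\sum_{y \in B_x}|z_y|^p,
\]
for universal constants $K_0, K_1$, where the standalone $|\xi_x|^p$-term is absorbed into the sum using $x \in B_x$ and $n_x \geq 1$. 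Taking expectations gives the desired inequality with $z_x := |\zeta_x|^p + K_0 T$ and $Q_{x,y} := K_1 n_x^{4}\mathbf{1}_{y \in B_x}$.

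Next I would verify the hypotheses of Corollary~\ref{GronCorollary}. The vector $z$ lies in $l^1_{\underline{\inta}}$ because $\zeta\in l^p_{\underline{\inta}}$ and $\sum_{x\in\gamma}e^{-\underline{\inta}|x|}<\infty$ by the remark following Theorem~\ref{Ninlp}. The matrix $Q$ has non-negative entries, the symmetric zero-pattern $Q_{x,y}=0\Leftrightarrow y\notin B_x\Leftrightarrow x\notin B_y$, and satisfies $|Q_{x,y}|\leq K_1 n_x^{4}$, so Theorem~\ref{OvsMapTheorem} yields $Q\in\mathcal{O}(\mathscr{L}^1, L, 1/2)$ for an explicit $L$. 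Continuity of $t \mapsto \eta^n(t)$ in $l^1_{\underline{\inta}}$ is automatic since only the finitely many coordinates in $\Lambda_n$ vary with $t$, and each varies continuously by path-continuity of $\xi^n_{x,\cdot}$ together with a BDG/uniform-integrability argument. Corollary~\ref{GronCorollary} then yields $\sup_{t\in\timeint}\eta^n_x(t)\leq f_x(T)$ \emph{pointwise} in $x$, where $f$ is the unique solution of \eqref{OldProblem} associated with $z$ and $Q$, and hence is independent of $n$. Summing over $x$ with weights $e^{-\alpha|x|}$ produces
\[
\sum_{x\in\gamma}e^{-\alpha|x|}\sup_n \sup_t \eta^n_x(t) \leq \sum_{x\in\gamma}e^{-\alpha|x|} f_x(T) = \|f(T)\|_{l^1_\alpha} < \infty.
\]

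The main obstacle is the bookkeeping in the Young/H\"older step: the exponents must be tuned so that the factor of $n_x$ appearing in $Q$ is a fixed polynomial in $n_x$ (independent of the data), since only then does Theorem~\ref{OvsMapTheorem} deliver an Ovsjannikov order $1/2 < 1$, which is precisely what Theorem~\ref{GronTheorem} requires. The localization for the local martingale and the continuity of $\eta^n$ in the weighted $l^1$-norm are secondary but technical points.
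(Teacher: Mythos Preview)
Your proposal is correct and follows essentially the same route as the paper: It\^o's formula for $|\xi^n_{x,t}|^p$, the pointwise bounds coming from conditions (\textbf{C}), (\textbf{D}), (\textbf{E}) and Lemma~\ref{DriftLemma}, and then the comparison machinery of Theorem~\ref{GronTheorem}/Corollary~\ref{GronCorollary} applied with a row-finite matrix $Q$ satisfying $|Q_{x,y}|\leq C n_x^{4}$. Your treatment of uniformity in $n$ is in fact slightly cleaner than the paper's---the paper introduces $\eta^{n}_{x}(t):=\max_{m\leq n}\mathbb{E}[|\xi^{m}_{x,t}|^{p}]$ and then argues via a separate remark that $\sup_{n}\max_{m\leq n}=\sup_{n}$, whereas you extract the pointwise bound $\sup_{t}\eta^{n}_{x}(t)\leq f_{x}(T)$ directly from the proof of Corollary~\ref{GronCorollary} and note that $f$ does not depend on $n$.
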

\begin{proof}
Let us start by recalling that 	
\begin{equation}
\begin{alignedat}{2} 
&\xi_{x,t}^{n} = \zeta_{x} + \int_{0}^{t}\Phi_{x}(\xi_{x,s}^{n} ,\Xi_{s}^{n} )ds + \int_{0}^{t}\Psi_{x}(\xi_{x,s}^{n} ,\Xi_{s}^{n})dW_{x}(s), && \quad \forall x\in\Lambda_{n}\land t\in\timeint, \\
&\xi_{x,t}^{n}  = \zeta_{x}, &&\quad \forall x\not\in\Lambda_{n}\land t\in\timeint.
\end{alignedat}
\end{equation}
Hence using Itô Lemma \ref{ItoLemma1} we see that if $x\in\Lambda_{n}$ then for all $t\in\timeint$
\begin{equation}
\begin{alignedat}{2}
|\xi_{x,t}^{n}|^{p} = |\zeta_{x}|^{p} + \int_{0}^{t}p(\xi_{x,s}^{n})^{p-1}&\Phi_{x}(\xi_{x,s}^{n},\Xi_{s}^{n})ds +\mathbin{\textcolor{white}{\int_{0}^{t}\frac{(p-1)p}{2}}} && \\[1em]
&+ \int_{0}^{t}\frac{(p-1)p}{2}(\xi_{x,s}^{n})^{p-2}(\Psi_{x}(\xi_{x,s}^{n},\Xi_{s}^{n}))^{2}ds + \\[1em]
&&\!\!\!\!\!\!\!\!\!\!\!\!\!\!\!\!\!\!\!\!\!\!\!\!\!\!\!\!\!\!\!\!\!\!\!\!\!\!\!\!\!\!\!\!\!\!\!\!\!\!\!\!\!\! + \int_{0}^{t}p(\xi_{x,s}^{n})^{p-1}\Psi_{x}(\xi_{x,s}^{n},\Xi_{s}^{n})dW_{x}(s).
\end{alignedat}
\end{equation}
Now from assumptions (\hyperref[conditionA]{C}), (\hyperref[conditionB]{D}) and Lemma \ref{DriftLemma} we can deduce that for all $t\in\timeint$
\begin{align}
(\xi_{x,t}^{n})^{p-1}\Phi_{x}(\xi_{x,t}^{n},\Xi_{t}^{n}) &=(\xi_{x,t}^{n})^{p-2}(\xi_{x,t}^{n})\Phi_{x}(\xi_{x,t}^{n},\Xi_{t}^{n}), \\[1em]
& \leq (\xi_{x,t}^{n})^{p-2}\bigg{[} (b+\frac{1}{2})|\xi_{x,t}^{n}|^{2} + \frac{1}{2}\Tilde{a}_{x}^{2}\sum_{y\in B_{x}}|\xi_{y,t}^{n}|^{2} + \xi_{x,t}^{n}\Phi_{x}(0,0) \bigg{]}, \\[1em]
&\leq  (\xi_{x,t}^{n})^{p-2}\bigg{[} (b+1)|\xi_{x,t}^{n}|^{2} + \Tilde{a}_{x}^{2}\sum_{y\in B_{x}}|\xi_{y,t}^{n}|^{2} + c^{2} \bigg{]}, \\[1em]
& \leq (b+1)|\xi_{x,t}^{n}|^{p} + \Tilde{a}_{x}^{2}|\xi_{x,t}^{n}|^{p-2}\sum_{y\in B_{x}}|\xi_{y,t}^{n}|^{2} + |\xi_{x,t}^{n}|^{p-2}c^{2}, \\[1em]
&\leq (b+1)|\xi_{x,t}^{n}|^{p} + \Tilde{a}_{x}^{2}n_{x}\max_{y\in B_{x}}|\xi_{y,t}^{n}|^{p-2}\max_{y\in B_{x}}|\xi_{y,t}^{n}|^{2} + |\xi_{x,t}^{n}|^{p-2}c^{2}, \\[1em]
&\leq (b+1)|\xi_{x,t}^{n}|^{p} + \Tilde{a}_{x}^{2}n_{x}\max_{y\in B_{x}}|\xi_{y,t}^{n}|^{p} + (1+ |\xi_{x,t}^{n}|)^{p}c^{2}.
\end{align}
Now using in addition Theorem \ref{powerinequalitytheorem} we see that for all $t\in\timeint$ we have
\begin{align}
(\xi_{x,t}^{n})^{p-1}\Phi_{x}(\xi_{x,t}^{n},\Xi_{t}^{n}) &\leq (b+1)|\xi_{x,t}^{n}|^{p} +\Tilde{a}_{x}^{2}n_{x}\sum_{y\in B_{x}}|\xi_{y,t}^{n}|^{p} + 2^{p-1}c^{2} + 2^{p-1}c^{2}|\xi_{x,t}^{n}|^{p}, \\[1em]
&\leq (b+1+2^{p-1}c^{2})|\xi_{x,t}^{n}|^{p} +\Tilde{a}_{x}^{2}n_{x}\sum_{y\in B_{x}}|\xi_{y,t}^{n}|^{p} + 2^{p-1}c^{2},\\[1em]
&\leq (b+1+2^{p-1}c^{2})|\xi_{x,t}^{n}|^{p} +\bar{a}^{2}n_{x}^{3}\sum_{y\in B_{x}}|\xi_{y,t}^{n}|^{p} + 2^{p-1}c^{2}, \label{ConditionBConsequence}
\end{align}

Moreover from assumption (\hyperref[conditionC]{E}) we know that for all $t\in\timeint$
\begin{align}
(\xi_{x,s}^{n})^{p-2}(\Psi_{x}(\xi_{x,s}^{n},\Xi_{s}^{n}))^{2} &\leq (\xi_{x,s}^{n})^{p-2}\bigg{[} 4M_{1}^{2}|\xi_{x,t}^{n}|^{2} +4M_{2}^{2}n_{x}^{2}\bigg{(} \sum_{y\in B_{x}}|\xi_{y,t}^{n}|\bigg{)}^{2} + 4|\Psi_{x}(0,0)|^{2}\bigg{]}, \\[1em]
& \leq (\xi_{x,s}^{n})^{p-2}\bigg{[} 4M_{1}^{2}|\xi_{x,t}^{n}|^{2} +4M_{2}^{2}n_{x}^{3}\sum_{y\in B_{x}}|\xi_{y,t}^{n}|^{2} + 4c^{2}\bigg{]}, \\[1em]
&\leq 4M_{1}^{2}|\xi_{x,t}^{n}|^{p} + 4M_{2}^{2}n_{x}^{4}\max_{y\in B_{x}}|\xi_{y,t}^{n}|^{p-2}\max_{y\in B_{x}}|\xi_{y,t}^{n}|^{2} +4c^{2}|\xi_{x,s}^{n}|^{p-2}, \\[1em]
&\leq 4M_{1}^{2}|\xi_{x,t}^{n}|^{p} + 4M_{2}^{2}n_{x}^{4}\sum_{y\in B_{x}}|\xi_{y,t}^{n}|^{p} + 4c^{2}2^{p-1}(1+|\xi_{x,s}^{n}|^{p}), \\[1em]
&\leq (4M_{1}^{2}+4c^{2}2^{p-1})|\xi_{x,t}^{n}|^{p} + 4M_{2}^{2}n_{x}^{4}\sum_{y\in B_{x}}|\xi_{y,t}^{n}|^{p} + 4c^{2}2^{p-1}. \label{ConditionCConsequence}
\end{align}
Now letting
\begin{align}
&A_{1} \coloneqq (b+1+2^{p-1}c^{2}),  \label{ConditionCConsequencel1} \\
&A_{2} \coloneqq (4M_{1}^{2}+4c^{2}2^{p-1}),  \label{ConditionCConsequencel2}\\
&A_{3} \coloneqq (p\bar{a}^{2}+p^{2}4M_{2}^{2}),  \label{ConditionCConsequencel3}\\
&A_{4}\coloneqq 5p^{2}2^{p}c^{2}T.  \label{ConditionCConsequencel4}
\end{align}
we observe from inequalities (\ref{ConditionBConsequence}) and (\ref{ConditionCConsequence}) together with the system (\ref{FinVolSystem}) that for all $x\in\Lambda_{n}$ we have
\begin{equation}
\begin{alignedat}{2}
\mathbb{E}\bigg{[}|\xi_{x,t}^{n}|^{p}\bigg{]} \leq p^{2}(A_{1}+A_{2})\int_{0}^{t}\mathbb{E}\bigg{[}|\xi_{x,s}^{n}|^{p}\bigg{]}ds +
 &+ A_{3}n_{x}^{4}\sum_{y\in B_{x}}\int_{0}^{t}\mathbb{E}\bigg{[}|\xi_{y,s}^{n}|^{p}\bigg{]}ds + A_{4},\ t\in\timeint. \label{1stSolIneq}
\end{alignedat}
\end{equation}
Now we fix an arbitrary $n\in\naturals$ and also define a measurable map $\eta^{n}:\timeint\to l^{1}_{\underline{\inta}}$, that is a map $\eta^{n}\in\mathcal{M}(\mathbf{M},\mathbf{M}^{p}_{\underline{\inta}})$, via the following formula 
\begin{align}
\eta_{x}^{n}(t)\coloneqq\max_{m\leq n} \ \mathbb{E}\bigg{[}|\xi_{x,t}^{m}|^{p}\bigg{]},\ \forall t\in\timeint.
\end{align}
Hence we deduce from the inequality (\ref{1stSolIneq}) and from the system (\ref{FinVolSystem}) that for all $x\in\gamma$
\begin{align}
\eta_{x}^{n}(t) \leq \sum_{y\in\gamma}Q_{x,y}\int_{0}^{t}\eta_{y}^{n}(s)ds + A_{x},\ t\in\timeint.  \label{IneqA}
\end{align}
where
\begin{align}
Q_{x,y} = \begin{cases} 
p^{2}(A_{1}+A_{2}) + A_{3}n_{x}^{4}, &x=y,\\ 
A_{3}n_{x}^{4}, &0<|x-y| < \rho,\\
0, &|x-y|> \rho.
\end{cases} \label{Matrix1}
\end{align}
and
\begin{align}
A_{x} = |\zeta_{x}|^{p} + A_{4}.
\end{align}
Moreover the following facts can now also be deduced.
\begin{enumerate}
	\item $A\in l^{1}_{\underline{\inta}}$ as a result of Theorem \ref{Ninlp} and the choice $\zeta\in l^{p}_{\underline{\inta}}$.
	\item Using Theorem \ref{Continuity Lemma}, we see that $\eta^{n}\in C([0,T],l^{1}_{\underline{\inta}})$,
	\item From equation (\ref{Matrix1}) we see that there exists a constant $C$ such that $|Q_{x,y}|\leq Cn_{x}^{4}$. Therefore using Theorem \ref{OvsMapTheorem} we conclude that there exists some $L\in\reals^{0}$ such that $Q$ is the Ovsjannikov operator of order $L$ and $\frac{1}{2}$ on $\mathcal{L}^{1}$. 
\end{enumerate}
Now since $n\in\naturals$ was arbitrary, application of Theorem \ref{GronTheorem} and Corrolary \ref{GronCorollary} to the inequality (\ref{IneqA}) tells us that for all $n\in\naturals$ we have
\begin{align}
\sum_{x\in\gamma}e^{-\alpha|x|}\sup_{t\in\timeint}\eta_{x}^{n}(t) \leq K(\underline{\inta}, \alpha)\sum_{x\in\gamma}e^{-\underline{\inta}|x|}|A_{x}|.
\end{align}
Hence we see that
\begin{align}
\sum_{x\in\gamma}e^{-\alpha|x|}\sup_{t\in\timeint} \max_{m\leq n}\mathbb{E}\bigg{[}|\xi_{x,t}^{m}|^{p}\bigg{]} \leq K(\underline{\inta}, \alpha)\sum_{x\in\gamma}e^{-\underline{\inta}|x|}|A_{x}|. \label{EqnForFatoux1}
\end{align}
Therefore
\begin{align}
\sup_{n\in\naturals}\bigg{\{}\sum_{x\in\gamma}e^{-\alpha|x|}\sup_{t\in\timeint} \max_{m\leq n}\mathbb{E}\bigg{[}|\xi_{x,t}^{m}|^{p}\bigg{]}\bigg{\}} \leq K(\underline{\inta}, \alpha)\sum_{x\in\gamma}e^{-\underline{\inta}|x|}|A_{x}|. \label{Theorem5EqnAA}
\end{align}
\begin{mdframed}
	\begin{remark}
	Consider now arbitrary $x\in\gamma$. It is clear that 
	\[\sup_{n\in\naturals}\bigg{(}\max_{m\leq n}\sup_{t\in\timeint}\mathbb{E}\bigg{[}|\xi_{x,t}^{m}|^{p}\bigg{]}\bigg{)} \leq \sup_{n\in\naturals}\sup_{t\in\timeint}\mathbb{E}\bigg{[}|\xi_{x,t}^{n}|^{p}\bigg{]}. \] 	
	Moreover for all $\epsilon>0$ there exists $k\in\naturals$ such that 
	\begin{align}
	\sup_{n\in\naturals}\sup_{t\in\timeint}\mathbb{E}\bigg{[}|\xi_{x,t}^{n}|^{p}\bigg{]} - \epsilon &\leq \sup_{t\in\timeint}\mathbb{E}\bigg{[}|\xi_{x,t}^{k}|^{p}\bigg{]},\\[1em]
	&\leq \max_{m\leq k}\sup_{t\in\timeint}\mathbb{E}\bigg{[}|\xi_{x,t}^{m}|^{p}\bigg{]},\\[1em]
	&\leq \sup_{n\in\naturals}\bigg{(}\max_{m\leq n}\sup_{t\in\timeint}\mathbb{E}\bigg{[}|\xi_{x,t}^{m}|^{p}\bigg{]}\bigg{)}.
	\end{align}
	Since $\epsilon$ is arbitrary It follows that 
	\begin{align}
	\sup_{t\in\timeint}\sup_{n\in\naturals}\mathbb{E}\bigg{[}|\xi_{x,t}^{n}|^{p}\bigg{]} &= \sup_{n\in\naturals}\bigg{(}\max_{m\leq n}\sup_{t\in\timeint}\mathbb{E}\bigg{[}|\xi_{x,t}^{m}|^{p}\bigg{]}\bigg{)}, \\[1em]
	&=\sup_{n\in\naturals}\bigg{(}\sup_{t\in\timeint}\max_{m\leq n}\mathbb{E}\bigg{[}|\xi_{x,t}^{m}|^{p}\bigg{]}\bigg{)}.
	\end{align}
	\end{remark}
\end{mdframed}
Remark above shows that if an arbitrary set $A\subset\gamma$ is finite then
\begin{align}
\sup_{n\in\naturals}\bigg{\{}\sum_{x\in A}e^{-\alpha|x|}\sup_{t\in\timeint} \max_{m\leq n}\mathbb{E}\bigg{[}|\xi_{x,t}^{m}|^{p}\bigg{]}\bigg{\}} = \sum_{x\in A}e^{-\alpha|x|}\sup_{t\in\timeint} \sup_{n\in\naturals}\mathbb{E}\bigg{[}|\xi_{x,t}^{m}|^{p}\bigg{]}.
\end{align}
Hence from inequality (\ref{Theorem5EqnAA}) we finally learn that 
\begin{align}
\sum_{x\in\gamma}e^{-\alpha|x|}\sup_{t\in\timeint}\sup_{n\in\naturals}\mathbb{E}\bigg{[}|\xi^{n}_{x,t}|^{p}\bigg{]} \leq K(\underline{\inta}, \alpha)\sum_{x\in\gamma}e^{-\underline{\inta}|x|}|A_{x}|, \label{Theorem5EqnAAA}
\end{align}
and the proof is complete.
\end{proof}

\begin{theorem} \label{CauchySequenceTheorem}
Suppose that $\reals\ni p\geq2$ and for all $n\in\naturals$ process $\Xi^{n}$ is the solution of the truncated system (\ref{FinVolSystem}) as defined in the Theorem \ref{FiniteVolumeLemma}. Then for all $\underline{\inta}<\alpha\in\indexint$ sequence $\{\Xi^{n}\}_{n\in\naturals}$ is Cauchy in $Z^{p}_{\alpha}.$	
\end{theorem}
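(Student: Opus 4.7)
The plan is to define the difference $\Delta^{n,m}_{x,t}:=\xi^n_{x,t}-\xi^m_{x,t}$ together with its $p$-th moment $\eta^{n,m}_x(t):=\mathbb{E}[|\Delta^{n,m}_{x,t}|^p]$, and to derive a componentwise integral inequality
\begin{align}
\eta^{n,m}_x(t)\leq \epsilon^m_x + \int_0^t [Q(\eta^{n,m}(s))]_x\,ds,\quad \forall x\in\gamma,\ t\in\timeint,
\end{align}
to which Theorem \ref{GronTheorem} and Corollary \ref{GronCorollary} can be applied. Without loss of generality I take $m\leq n$ so that $\Lambda_m\subset\Lambda_n$. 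The non-negative matrix $Q$ will have the same shape as the one appearing in the proof of Theorem \ref{TailTheorem1}, while the boundary data $\epsilon^m_x$ will vanish on $\Lambda_m$ and be controlled on its complement \emph{uniformly in} $n$ via Theorem \ref{TailTheorem1}; this will give the Cauchy estimate in $Z^p_\alpha$.

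For $x\in\Lambda_m$ both $\xi^n_x$ and $\xi^m_x$ satisfy (\ref{FinVolSystem}), so $\Delta^{n,m}_x$ is an It\^o process with drift $\Phi_x(\xi^n_x,\Xi^n)-\Phi_x(\xi^m_x,\Xi^m)$ and diffusion $\Psi_x(\xi^n_x,\Xi^n)-\Psi_x(\xi^m_x,\Xi^m)$. I would apply It\^o's formula to $|\Delta^{n,m}_{x,t}|^p$ (with a stopping-time localisation to kill the stochastic integral in expectation, in the spirit of Theorem \ref{TailTheorem1}) and bound the resulting integrands by the dissipativity inequality of Lemma \ref{DriftLemma} for the drift term, the Lipschitz bound (\hyperref[conditionC]{E}) together with Cauchy--Schwarz for the It\^o correction, assumption (\hyperref[boundfora]{A}) through $\Tilde{a}_x^2\leq \bar{a}^2 n_x$, and Young's inequality $|\Delta_x|^{p-2}|\Delta_y|^2\leq \tfrac{p-2}{p}|\Delta_x|^p+\tfrac{2}{p}|\Delta_y|^p$ to absorb the mixed terms. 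After expectation this should yield constants $A,B>0$ depending only on $p,b,c,\bar{a},M_1,M_2$ such that
\begin{align}
\eta^{n,m}_x(t)\leq \int_0^t\Bigl[A\,\eta^{n,m}_x(s)+B\,n_x^4\sum_{y\in B_x}\eta^{n,m}_y(s)\Bigr]ds,\quad x\in\Lambda_m,
\end{align}
so $\epsilon^m_x=0$ on $\Lambda_m$. For $x\in\Lambda_n\setminus\Lambda_m$ the process $\xi^m_x$ is frozen at $\zeta_x$, whence $|\Delta^{n,m}_{x,t}|^p\leq 2^{p-1}(|\xi^n_{x,t}|^p+|\zeta_x|^p)$ and $\eta^{n,m}_x(t)\leq \epsilon^m_x:=2^{p-1}\bigl(\sup_k\sup_s\mathbb{E}[|\xi^k_{x,s}|^p]+|\zeta_x|^p\bigr)$, which is independent of $t$ and of $n$. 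For $x\notin\Lambda_n$ we have $\Delta^{n,m}_{x,t}=0$. By non-negativity of $Q$ and $\eta^{n,m}$ all three cases combine into the inequality above, where $Q=\{Q_{x,y}\}$ is supported on pairs with $y\in B_x$ and satisfies $|Q_{x,y}|\leq C n_x^4$; Theorem \ref{OvsMapTheorem} then places $Q\in\mathcal{O}(\mathscr{L}^1,L,\tfrac12)$.

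Finally I would fix $\beta\in(\underline{\inta},\alpha)$. Theorem \ref{TailTheorem1} gives $\sum_x e^{-\beta|x|}\sup_k\sup_s\mathbb{E}[|\xi^k_{x,s}|^p]<\infty$, and $\zeta\in l^p_{\underline{\inta}}$ yields $\sum_x e^{-\beta|x|}|\zeta_x|^p<\infty$, so $\epsilon^m\in l^1_\beta$. A standard re-indexing of the scale (treating $\beta$ as the base index in place of $\underline{\inta}$) lets me invoke Theorem \ref{GronTheorem} and Corollary \ref{GronCorollary} to obtain
\begin{align}
\sum_{x\in\gamma}e^{-\alpha|x|}\sup_{t\in\timeint}\eta^{n,m}_x(t)\leq K(\beta,\alpha)\sum_{x\notin\Lambda_m}e^{-\beta|x|}\epsilon^m_x,
\end{align}
whose right-hand side is the tail of a convergent series and therefore tends to $0$ as $m\to\infty$, independently of $n\geq m$. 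Fubini then gives $\|\Xi^n-\Xi^m\|_{Z^p_\alpha}^p=\sup_t\sum_xe^{-\alpha|x|}\eta^{n,m}_x(t)\leq \sum_xe^{-\alpha|x|}\sup_t\eta^{n,m}_x(t)\to 0$, which is the Cauchy property.

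The main obstacle is the It\^o computation: collapsing the various mixed contributions $\Tilde{a}_x^2|\Delta_x|^{p-2}\sum_{y\in B_x}|\Delta_y|^2$ from the drift and $n_x^3|\Delta_x|^{p-2}\sum_{y\in B_x}|\Delta_y|^2$ from the quadratic variation into the clean diagonal-plus-neighbour shape $A|\Delta_x|^p+Bn_x^4\sum_{y\in B_x}|\Delta_y|^p$, while rigorously justifying the vanishing of the stochastic integral's expectation through a localising sequence of stopping times (as the moment bounds of Theorem \ref{TailTheorem1} only provide local-in-$\omega$ integrability of the integrand). A secondary technicality is the re-indexing of the base of the scale, needed because $\epsilon^m$ lies in $l^1_\beta$ for some $\beta>\underline{\inta}$ rather than in $l^1_{\underline{\inta}}$.
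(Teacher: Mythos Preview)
Your proposal is correct and follows essentially the same route as the paper: split into three cases according to whether $x$ lies in the inner set, the annulus, or the complement; apply It\^o's formula and the dissipativity/Lipschitz estimates on the inner set to obtain a row-finite integral inequality with matrix $Q$ of the $Cn_x^4$ type; bound the annulus contribution by the uniform moment bound of Theorem~\ref{TailTheorem1}; then invoke Theorem~\ref{OvsMapTheorem}, Theorem~\ref{GronTheorem} and Corollary~\ref{GronCorollary} with an intermediate index $\beta\in(\underline{\inta},\alpha)$ to conclude that the $Z^p_\alpha$-norm of the difference is dominated by a tail of a convergent series. The only cosmetic differences are that the paper orders the indices as $\Lambda_n\subset\Lambda_m$ rather than your $\Lambda_m\subset\Lambda_n$, handles the mixed $|\Delta_x|^{p-2}|\Delta_y|^2$ terms via a $\max_{y\in B_x}$ trick instead of Young's inequality, and takes expectation of the stochastic integral directly rather than through an explicit stopping-time localisation; your more careful treatment of that last point is a genuine (if minor) improvement in rigour.
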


\begin{proof}
Fix $n,m\in\naturals$ and define
\begin{align}
\bar{\Xi}^{n,m} \coloneqq \Xi^{n}-\Xi^{m}.
\end{align}
In addition let us assume, without loss of generality, that $\Lambda_{n}\subset\Lambda_{m}$. For all $x\in\gamma$ we shall now estimate components $\bar{\xi}^{n,m}_{x}$ of  $\bar{\Xi}^{n,m}$ by considering three separate cases namely; $x\not\in\Lambda_{m}$, $x\in\Lambda_{n}$ and $x\in\Lambda_{m}-\Lambda_{n}$.\\[1em]
First of all, from the definition of the system (\ref{FinVolSystem}) we see that if $x\not\in\Lambda_{m}$ then we have
\begin{align}
\bar{\xi}^{n,m}_{x,t}=0,\ \forall t\in\timeint. \label{Theorem5Eqn1A}
\end{align}
Let us now define for all $x\in\gamma$ and all $t\in\timeint$ the following processes
\begin{align}
\Phi_{x}^{n,m}(t)&\coloneqq\Phi_{x}(\xi^{n}_{x,t},\Xi^{n}_{t}) - \Phi_{x}(\xi^{m}_{x,t},\Xi^{m}_{t}), \label{CauchySequenceTheorem121} \\[1em]
\Psi_{x}^{n,m}(t)&\coloneqq\Psi_{x}(\xi^{n}_{x,t},\Xi^{n}_{t}) - \Psi_{x}(\xi^{m}_{x,t},\Xi^{m}_{t}),  \label{CauchySequenceTheorem122}
\end{align}
and consider the situation when $x\in\Lambda_{n}$.  In this case we have
\begin{align}
\bar{\xi}^{n,m}_{x,t} = \int_{0}^{t}\Phi_{x}^{n,m}(s)ds + \int_{0}^{t}\Psi_{x}^{n,m}(s)dW_{x}(s),\ t\in\timeint.  \label{CauchySequenceTheorem123}
\end{align}
Hence using Itô Lemma \ref{ItoLemma1} we see that if $x\in\Lambda_{n}$ then for all $t\in\timeint$
\begin{equation}
\begin{alignedat}{2}
|\bar{\xi}^{n,m}_{x,t}|^{p} = \int_{0}^{t}p(\bar{\xi}^{n,m}_{x,s})^{p-1}&\Phi_{x}^{n,m}(s)ds + \mathbin{\textcolor{white}{\int_{0}^{t}\frac{p(p-1)}{2}(\bar{\xi}^{n,m}_{x,s})^{p-2}(\Psi_{x}^{n,m}(s))^{2}ds}}&& \\[1em]
&+ \int_{0}^{t}\frac{p(p-1)}{2}(\bar{\xi}^{n,m}_{x,s})^{p-2}(\Psi_{x}^{n,m}(s))^{2}ds + \\[1em]
&&\!\!\!\!\!\!\!\!\!\!\!\!\!\!\!\!\!\!\!\!\!\!\!\!\!\!\!\!\!\!\!\!\!\!\!\!\!\!\!\!\!\!\!\!\!\!\!\!\!\!\!\!\!\!\!\!\!\! + \int_{0}^{t}p(\bar{\xi}^{n,m}_{x,t})^{p-1}\Psi_{x}^{n,m}(s)dW_{x}(s).  \label{Theorem5EqnZero}
\end{alignedat}
\end{equation}
Now, from Lemma \ref{DriftLemma} we can see that for all $t\in\timeint$ we have
\begin{align}
(\bar{\xi}^{n,m}_{x,t})^{p-1}\Phi_{x}^{n,m}(t) &= (\bar{\xi}^{n,m}_{x,t})^{p-2}\bar{\xi}^{n,m}_{x,t}\bigg{(}\Phi_{x}(\xi^{n}_{x,t},\Xi^{n}_{t}) - \Phi_{x}(\xi^{m}_{x,t},\Xi^{m}_{t})\bigg{)}, \\[1em]
&\leq (\bar{\xi}^{n,m}_{x,t})^{p-2}\bigg{(} (b+\frac{1}{2})(\xi^{n}_{x,t} - \xi^{m}_{x,t})^{2} + \frac{1}{2}\Tilde{a}_{x}^{2}\sum_{y\in B_{x}}(\xi^{n}_{y,t} - \xi^{m}_{y,t})^{2} \bigg{)}, \\[1em]
&\leq (b+1)|\bar{\xi}^{n,m}_{x,t}|^{p} + \max_{y\in B_{x}}|\bar{\xi}^{n,m}_{x,t}|^{p-2}\bigg{(}\Tilde{a}_{x}^{2}n_{x}\max_{y\in B_{x}}|\bar{\xi}^{n,m}_{x,t}|^{2}\bigg{)}, \\[1em]
&\leq (b+1)|\bar{\xi}^{n,m}_{x,t}|^{p} + \Tilde{a}_{x}^{2}n_{x}\max_{y\in B_{x}}|\bar{\xi}^{n,m}_{x,t}|^{p}, \\[1em]
&\leq (b+1)|\bar{\xi}^{n,m}_{x,t}|^{p} +  \Tilde{a}_{x}^{2}n_{x}\sum_{y\in B_{x}}|\bar{\xi}^{n,m}_{x,t}|^{p}, \\[1em]
&\leq (b+1)|\bar{\xi}^{n,m}_{x,t}|^{p} +  \bar{a}^{2}n_{x}^{3}\sum_{y\in B_{x}}|\bar{\xi}^{n,m}_{x,t}|^{p}. \label{QQ1}
\end{align}
Moreover, using assumption (\hyperref[conditionC]{E}) we can see that for all $t\in\timeint$ we also have
\begin{align}
(\bar{\xi}^{n,m}_{x,t})^{p-2}(\Psi_{x}^{n,m}(t))^{2} &= (\bar{\xi}^{n,m}_{x,t})^{p-2}\bigg{(}\Phi_{x}(\xi^{n}_{x,t},\Xi^{n}_{t}) - \Phi_{x}(\xi^{m}_{x,t},\Xi^{m}_{t})\bigg{)}^{2}, \\[1em]
&\leq (\bar{\xi}^{n,m}_{x,t})^{p-2}\bigg{(} 2M_{1}^{2}(\xi^{n}_{x,t} - \xi^{m}_{x,t})^{2} + 2M_{2}^{2}n_{x}^{3}\sum_{y\in B_{x}}(\xi^{n}_{y,t} - \xi^{m}_{y,t})^{2}\bigg{)}, \\[1em]
&\leq 2M_{1}^{2} |\bar{\xi}^{n,m}_{x,t}|^{p} + \max_{y\in B_{x}}|\bar{\xi}^{n,m}_{x,t}|^{p-2}\bigg{(}2M_{2}^{2}n_{x}^{4}\max_{y\in B_{x}}|\bar{\xi}^{n,m}_{x,t}|^{2}\bigg{)}, \\[1em]
&\leq 2M_{1}^{2} |\bar{\xi}^{n,m}_{x,t}|^{p} + 2M_{2}^{2}n_{x}^{4}\max_{y\in B_{x}}|\bar{\xi}^{n,m}_{x,t}|^{p}, \\[1em]
&\leq 2M_{1}^{2} |\bar{\xi}^{n,m}_{x,t}|^{p} + 2M_{2}^{2}n_{x}^{4}\sum_{y\in B_{x}}|\bar{\xi}^{n,m}_{x,t}|^{p}. \label{QQ2}
\end{align}
Therefore letting
\begin{align}
B_{1}&\coloneqq (b+1 + 2M_{1}^{2}),\\
B_{2}&\coloneqq (p\bar{a}^{2} + 2p^{2}M_{2}^{2}),
\end{align}
we can deduce from equation \ref{Theorem5EqnZero} that if $x\in\Lambda_{n}$ then
\begin{equation}
\begin{alignedat}{2}
\mathbb{E}\bigg{[}|\bar{\xi}^{n,m}_{x,t}|^{p}\bigg{]} \leq p^{2}B_{1}\int_{0}^{t}\mathbb{E}\bigg{[}|\bar{\xi}^{n,m}_{x,s}|^{p}\bigg{]}ds + B_{2}n_{x}^{4}\sum_{y\in B_{x}}\int_{0}^{t}\mathbb{E}\bigg{[}|\bar{\xi}^{n,m}_{y,s}|^{p}\bigg{]}ds,\ t\in\timeint. \label{Theorem5EqnA}
\end{alignedat}
\end{equation}
Finally, when $x\in\Lambda_{m}-\Lambda_{n}$ we see using Theorem \ref{powerinequalitytheorem} that for all $t\in\timeint$
\begin{align}
|\bar{\xi}^{n,m}_{x,t}|^{p} &\leq (|\xi^{n}_{x,t}| + |\xi^{m}_{x,t}|)^{p}, \\[1em]
&\leq  2^{p-1}|\xi^{n}_{x,t}|^{p} + 2^{p-1}|\xi^{m}_{x,t}|^{p}.
\end{align}
Therefore, using Theorem \ref{TailTheorem1} and equation (\ref{Theorem5Eqn1A}), we see now that if $x\in\Lambda_{m}-\Lambda_{n}$ then for all $t\in\timeint$ we have
\begin{align}
\mathbb{E}\bigg{[}|\bar{\xi}^{n,m}_{x,t}|^{2}\bigg{]} &\leq 2^{p}\sup_{n\in\naturals}\mathbb{E}\bigg{[}|\xi^{n}_{x,t}|^{p}\bigg{]}, \\[1em]
&\leq 2^{p}\mathbbm{1}_{\Lambda_{m}-\Lambda_{n}}(x)\sup_{n\in\naturals}\sup_{t\in\timeint}\mathbb{E}\bigg{[}|\xi^{n}_{x,t}|^{p}\bigg{]}. \label{Theorem5EqnB}
\end{align}
Therefore we can finally deduce, combining equations (\ref{Theorem5Eqn1A}), (\ref{Theorem5EqnA}) and (\ref{Theorem5EqnB}), that all $x\in\gamma$ and for all $t\in\timeint$ we have
\begin{equation}\label{Theorem5EqnC}
\begin{split}
\mathbb{E}\bigg{[}|\bar{\xi}^{n,m}_{x,t}|^{p}\bigg{]} \leq p^{2}B_{1}\int_{0}^{t}\mathbb{E}\bigg{[}|\bar{\xi}^{n,m}_{x,s}|^{p}\bigg{]}&ds +\\[1em]
&\!\!\!\!\!\!\!\! + B_{2}n_{x}^{4}\sum_{y\in B_{x}}\int_{0}^{t}\mathbb{E}\bigg{[}|\bar{\xi}^{n,m}_{y,s}|^{p}\bigg{]}ds +\\[1em]
&\quad\quad\quad\quad\quad\quad\quad +2^{p}\mathbbm{1}_{\Lambda_{m}-\Lambda_{n}}(x)\sup_{n\in\naturals}\sup_{t\in\timeint}\mathbb{E}\bigg{[}|\xi^{n}_{x,t}|^{p}\bigg{]}. 
\end{split}
\end{equation}
Now, as in the proof of Theorem \ref{TailTheorem1}, infinite system of inequalities (\ref{Theorem5EqnC}) can be rewritten in the following way.\\[1em]
Define, relying on the inequality (\ref{Theorem5EqnC}) a measurable map $\varrho^{n,m}:\timeint\to\reals^{\gamma}$, that is a map $\varrho^{n,m}\in\mathcal{M}(\mathbf{M},\mathbf{M}^{p}_{\underline{\inta}})$, via the following formula 
\begin{align}
\varrho_{x}^{n,m}(t)\coloneqq \mathbb{E}\bigg{[}|\bar{\xi}^{n,m}_{x,t}|^{p}\bigg{]},\ \forall t\in\timeint,
\end{align}
and deduce from inequalities (\ref{Theorem5EqnA}) - (\ref{Theorem5EqnB}) that 
\begin{align}
\varrho_{x}^{n,m}(t) \leq \sum_{y\in\gamma}Q_{x,y}\int_{0}^{t}\varrho_{y}^{n,m}(s)ds+A_{x},\ t\in\timeint,
\end{align}
where 
\begin{align}
Q_{x,y} = \begin{cases} 
p^{2}B_{1} + B_{2}n_{x}^{4}, &x=y,\\ 
B_{2}n_{x}^{4}, &0<|x-y| < p,\\
0, &|x-y|> p.
\end{cases} \label{MatrixQ2}
\end{align}
and
\begin{align}
A_{x} = 2^{p}\mathbbm{1}_{\Lambda_{m}-\Lambda_{n}}(x)\sup_{n\in\naturals}\sup_{t\in\timeint}\mathbb{E}\bigg{[}|\xi^{n}_{x,t}|^{p}\bigg{]}.
\end{align}
Now, fixing $\underline{\inta}<\tilde{\alpha}<\alpha\in\indexint$ we can also deduce the following facts.
\begin{enumerate}
	\item $A\in l^{1}_{\tilde{\alpha}}$ as a result of Theorem \ref{TailTheorem1}. 
	\item Using Theorem \ref{Continuity Lemma}, we see that $\varrho^{n,m}\in C([0,T], l^{1}_{\tilde{\alpha}})$,
	\item From equation (\ref{Matrix1}) we see that there exists a constant $D$ such that $|Q_{x,y}|\leq Dn_{x}^{4}$. Therefore using Theorem \ref{OvsMapTheorem} we conclude that there exists some $L\in\reals^{0}$ such that $Q$ is the Ovsjannikov operator of order $L$ and $\frac{1}{2}$ on $\mathcal{L}^{1}$. 
\end{enumerate}
Therefore we can now use Theorem \ref{GronTheorem} and Corollary \ref{GronCorollary} to conclude that
\begin{align}
\sum_{x\in\gamma}e^{-\alpha|x|}\sup_{t\in\timeint}\varrho_{x}^{n,m}(t) \leq K(\tilde{\alpha}, \alpha)\sum_{x\in\gamma}e^{-\tilde{\alpha}|x|}|A_{x}|. \label{Theorem5EqnD}
\end{align}
From equation (\ref{Theorem5EqnD}) and definition (\ref{Zspaces}) we therefore see that we have the following estimate
\begin{align}
\|\Xi^{n} - \Xi^{m}\|_{Z^{p}_{\alpha}}^{p} &= \sup_{t\in\timeint}\mathbb{E}\bigg{[}||\Xi^{n}_{t} - \Xi^{m}_{t}||^{p}_{l^{p}_{\alpha}}\bigg{]}, \\[1em]
&=\sup_{t\in\timeint} \mathbb{E}\bigg{[}\sum_{x\in\gamma}e^{-\alpha|x|}|\xi^{n}_{x,t} - \xi^{m}_{x,t}|^{p}\bigg{]}, \\[1em]
&\leq \sum_{x\in\gamma}e^{-\alpha|x|}\sup_{t\in\timeint}\varrho_{x}^{n,m}(t), \\[1em]
&\leq K(\tilde{\alpha}, \alpha)\sum_{x\in\gamma}e^{-\tilde{\alpha}|x|}|A_{x}|, \\[1em]
&\leq K(\tilde{\alpha}, \alpha) \sum_{x\in\gamma}e^{-\tilde{ \alpha}|x|}2^{p}\mathbbm{1}_{\Lambda_{m}-\Lambda_{n}}(x)\sup_{n\in\naturals}\sup_{t\in\timeint}\mathbb{E}\bigg{[}|\xi^{n}_{x,t}|^{p}\bigg{]}, \\[1em]
&\leq 2^{p}K(\tilde{\alpha}, \alpha) \sum_{x\in\Lambda_{m}-\Lambda_{n}}e^{-\tilde{\alpha}|x|}\sup_{n\in\naturals}\sup_{t\in\timeint}\mathbb{E}\bigg{[}|\xi^{n}_{x,t}|^{p}\bigg{]}. \label{CauchyEqn}
\end{align}
Estimate above implies that the right hand side of equation (\ref{CauchyEqn}) is the remainder of a convergent series hence the proof is complete. 
\end{proof}
\newpage

\section{One Dimesional Special Case} \label{ODSC}
Suppose that $\reals\ni p\geq2$ and $\underline{\inta}<\alpha\in\indexint$. For all $n\in\naturals$ let $\Xi^{n}$ be a solution of the truncated system (\ref{FinVolSystem}) and using Theorem \ref{CauchySequenceTheorem} let $\{\Xi^{n}\}_{n\in\naturals}$ be a Cauchy sequence in $Z^{p}_{\alpha}$. Since $Z^{p}_{\alpha}$ is a Banach space, by Theorem \ref{ZspaceisBanach}, we now define the following process
\begin{align}
\overbrace{\ \Xi \coloneqq \lim_{n\to\infty}\Xi^{n}\ }^{\text{in}\ Z_{\alpha}^{p}}. \label{mainprocess}
\end{align}
Consider now an arbitrary $x\in\gamma$. The main goal of this section is to prove that the following stochastic integral equation 
\begin{align}
\eta_{x,t} &= \zeta_{x} + \int_{0}^{t}\Phi_{x}(\eta_{x,s},\Xi_{s})ds + \int_{0}^{t}\Psi_{x}(\eta_{x,s},\Xi_{s})dW_{x}(s),\ t\in\timeint, \label{maineqn}
\end{align}
has a solution in $\mathcal{S}(\mathbf{M}^{\reals})$. We begin our work by proving an auxiliary result.
\begin{theorem}\label{TAB}
	Suppose that $x\in\gamma$ and $\Xi$ is a process defined by (\ref{mainprocess}). Then
	\begin{align}
	\mathbb{E}\bigg{[}
	\sup_{t\in\timeint}|\xi_{x,t}|^{p}\bigg{]} <\infty.
	\end{align}
\end{theorem}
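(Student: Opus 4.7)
The plan is to first establish a uniform (in $n$) bound on $\mathbb{E}\bigl[\sup_{t\in\timeint}|\xi^{n}_{x,t}|^{p}\bigr]$ for the truncated solutions via Itô's formula combined with the Burkholder--Davis--Gundy inequality, and then transfer this bound to the limit process $\xi_{x,\cdot}$ by a Fatou-type argument. Throughout, fix $x\in\gamma$ and restrict to $n$ large enough that $x\in\Lambda_{n}$ (the case $x\notin\Lambda_{n}$ is trivial since then $\xi^{n}_{x,t}=\zeta_{x}$).

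First I would apply Itô's formula to $|\xi^{n}_{x,t}|^{p}$, take $\sup_{t\in\timeint}$, and then take expectation. The drift and Itô-correction integrands are estimated exactly as in Theorem \ref{TailTheorem1} using the dissipativity bound from Lemma \ref{DriftLemma} (inequality \ref{DriftLemmaIneq2}) and the Lipschitz estimate of assumption \hyperref[conditionC]{E}; this produces terms of the form $\int_{0}^{T}\mathbb{E}\bigl[|\xi^{n}_{y,s}|^{p}\bigr]ds$ for $y\in B_{x}$, each of which is uniformly bounded in $n$ by Theorem \ref{TailTheorem1}. For the martingale $M_{t}\coloneqq \int_{0}^{t}p(\xi^{n}_{x,s})^{p-1}\Psi_{x}(\xi^{n}_{x,s},\Xi^{n}_{s})dW_{x}(s)$, BDG gives
\begin{align*}
\mathbb{E}\bigl[\sup_{t\in\timeint}|M_{t}|\bigr] \leq C\,\mathbb{E}\Bigl[\Bigl(\textstyle\int_{0}^{T}p^{2}(\xi^{n}_{x,s})^{2(p-1)}\Psi_{x}^{2}\,ds\Bigr)^{1/2}\Bigr] \leq C\,\mathbb{E}\Bigl[\sup_{s}|\xi^{n}_{x,s}|^{p-1}\bigl(\textstyle\int_{0}^{T}\Psi_{x}^{2}ds\bigr)^{1/2}\Bigr].
\end{align*}
Young's inequality with conjugate exponents $p/(p-1)$ and $p$ yields, for any $\epsilon>0$, an absorbable piece $\epsilon\,\mathbb{E}[\sup_{t}|\xi^{n}_{x,t}|^{p}]$ plus a term $C_{\epsilon}\,\mathbb{E}\bigl[(\int_{0}^{T}\Psi_{x}^{2}ds)^{p/2}\bigr]$; the latter is bounded, via Jensen (using $p/2\geq 1$) and \hyperref[conditionC]{E}, by a constant times $\int_{0}^{T}(1+\mathbb{E}[|\xi^{n}_{x,s}|^{p}]+n_{x}^{2p-1}\sum_{y\in B_{x}}\mathbb{E}[|\xi^{n}_{y,s}|^{p}])\,ds$, which is again uniformly bounded in $n$ by Theorem \ref{TailTheorem1}. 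Absorbing the $\epsilon$-term, this delivers the key uniform estimate $\sup_{n\in\naturals}\mathbb{E}\bigl[\sup_{t\in\timeint}|\xi^{n}_{x,t}|^{p}\bigr]\leq K_{x}<\infty$.

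To pass to the limit, recall that convergence $\Xi^{n}\to\Xi$ in $Z_{\alpha}^{p}$ implies, for each fixed $t\in\timeint$, convergence $\xi^{n}_{x,t}\to\xi_{x,t}$ in $L^{p}(\Omega)$. A diagonal subsequence argument over a countable dense set $D\subset\timeint$ extracts $(n_{k})$ such that, on a set $\Omega_{0}$ of full measure, $\xi^{n_{k}}_{x,t}(\omega)\to\xi_{x,t}(\omega)$ for every $t\in D$. Because each $\xi^{n_{k}}_{x,\cdot}$ is continuous (Theorem \ref{FiniteVolumeLemma}), $\sup_{t\in\timeint}|\xi^{n_{k}}_{x,t}|=\sup_{t\in D}|\xi^{n_{k}}_{x,t}|$ on $\Omega_{0}$, and hence for every $\omega\in\Omega_{0}$,
\begin{align*}
\sup_{t\in D}|\xi_{x,t}(\omega)|^{p}\leq \liminf_{k\to\infty}\sup_{t\in D}|\xi^{n_{k}}_{x,t}(\omega)|^{p}=\liminf_{k\to\infty}\sup_{t\in\timeint}|\xi^{n_{k}}_{x,t}(\omega)|^{p}.
\end{align*}
Fatou's lemma then gives $\mathbb{E}[\sup_{t\in D}|\xi_{x,t}|^{p}]\leq K_{x}<\infty$.

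The main obstacle is upgrading the sup over $D$ to a sup over the full interval $\timeint$; equivalently, producing a continuous modification of $\xi_{x,\cdot}$. I would handle this by strengthening Theorem \ref{CauchySequenceTheorem}: repeating the Itô/BDG argument above on the difference $\xi^{n}_{x,\cdot}-\xi^{m}_{x,\cdot}$ (whose equation is driven entirely by a continuous martingale plus continuous drift when $x\in\Lambda_{n}\cap\Lambda_{m}$) shows that $\{\xi^{n}_{x,\cdot}\}$ is Cauchy in $\mathcal{L}^{p}(\mathbf{P};\mathcal{C}(\timeint,\reals))$, forcing the limit to admit a continuous modification coinciding with $\xi_{x,\cdot}$. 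With continuity, $\sup_{t\in D}|\xi_{x,t}|=\sup_{t\in\timeint}|\xi_{x,t}|$ a.s.\ and the bound $K_{x}$ carries over, completing the proof.
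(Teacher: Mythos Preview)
Your proposal is correct, but it is more circuitous than the paper's argument and the detour is in fact unnecessary. The paper proves the theorem \emph{directly} by showing that $\{\xi^{n}_{x,\cdot}\}_{n}$ is Cauchy in the norm $\bigl(\mathbb{E}[\sup_{t\in\timeint}|\cdot|^{p}]\bigr)^{1/p}$, i.e.\ Cauchy in $\mathcal{L}^{p}(\mathbf{P};\mathcal{C}(\timeint,\reals))$; since each $\xi^{n}_{x,\cdot}$ is continuous (Theorem~\ref{FiniteVolumeLemma}) and this space is complete, the limit lies in it and automatically has finite norm. That is precisely your ``step~3'', which you introduce only as a repair for the continuity gap, but once you have it the uniform bound, the Fatou step over a dense set, and the upgrade from $D$ to $\timeint$ are all redundant.

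There is also a genuine technical difference in how the Burkholder--Davis--Gundy term is handled. You factor out $\sup_{s}|\xi^{n}_{x,s}|^{p-1}$ and use Young's inequality to absorb an $\epsilon\,\mathbb{E}[\sup_{t}|\xi^{n}_{x,t}|^{p}]$ piece, so only $p$-th moment bounds from Theorem~\ref{TailTheorem1} are needed (this requires, and uses, that the absorbed quantity is a~priori finite for the finite-dimensional truncated system). The paper instead applies Jensen's inequality to $(\cdot)^{1/2}$ after BDG and bounds the integrand $(\bar{\xi}^{n,m}_{x,s})^{2(p-1)}(\Psi^{n,m}_{x})^{2}$ by $|\bar{\xi}^{n,m}_{y,s}|^{2p}$-type terms, then invokes Theorem~\ref{CauchySequenceTheorem} with exponent $2p$ in place of $p$ to make the right-hand side small. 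Your absorption trick is arguably cleaner (it avoids the detour through $2p$-moments), but both routes are valid.
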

\begin{proof}
We shall prove this theorem by showing that for all $\epsilon>0$ there exist $N\in\naturals$ such that for all $n,m\geq N$ we have 
\begin{align}
\mathbb{E}\bigg{[}
\sup_{t\in\timeint}|\xi^{n}_{x,t} - \xi^{m}_{x,t}|^{p}\bigg{]} <\epsilon,
\end{align}
where for all $n\in\naturals$ processes $\xi^{n}_{x}$ are components of $\Xi^{n}$.\\[1em]
Since $\Lambda_{n}\uparrow\gamma$ we begin by finding some $\bar{N}\in\naturals$ such that $x\in\Lambda_{\bar{N}}$ and temporary fixing some $n,m\geq \bar{N}$. Moreover let us assume, without loss of generality, that $n<m$ so that $x\in\Lambda_{n}\subset\Lambda_{m}$ and we define
\begin{align}
\bar{\xi}_{x,t}^{n,m}\coloneqq \xi^{n}_{x,t} - \xi^{m}_{x,t},\ \forall t\in\timeint.
\end{align}
Now we recal Theorem \ref{CauchySequenceTheorem}. In particular we are interested in using definitions (\ref{CauchySequenceTheorem121}) - (\ref{CauchySequenceTheorem122}) and an equation (\ref{CauchySequenceTheorem123}). \\
Hence an application of Itô Lemma shows that for all $t\in\timeint$
\begin{equation}
\begin{alignedat}{2}
|\bar{\xi}^{n,m}_{x,t}|^{p} = \int_{0}^{t}p(\bar{\xi}^{n,m}_{x,s})^{p-1}&\Phi_{x}^{n,m}(s)ds + \mathbin{\textcolor{white}{\int_{0}^{t}\frac{p(p-1)}{2}(\bar{\xi}^{n,m}_{x,s})^{p-2}(\Psi_{x}^{n,m}(s))^{2}ds}}&& \\[1em]
&+ \int_{0}^{t}\frac{p(p-1)}{2}(\bar{\xi}^{n,m}_{x,s})^{p-2}(\Psi_{x}^{n,m}(s))^{2}ds + \\[1em]
&&\!\!\!\!\!\!\!\!\!\!\!\!\!\!\!\!\!\!\!\!\!\!\!\!\!\!\!\!\!\!\!\!\!\!\!\!\!\!\!\!\!\!\!\!\!\!\!\!\!\!\!\!\!\!\!\!\!\! + \int_{0}^{t}p(\bar{\xi}^{n,m}_{x,t})^{p-1}\Psi_{x}^{n,m}(s)dW_{x}(s). \label{AA1}
\end{alignedat}
\end{equation}
Therefore we see from equation (\ref{AA1}) above that 
\begin{equation}
\begin{alignedat}{2}
\sup_{t\in\timeint}|\bar{\xi}^{n,m}_{x,t}|^{p} = \int_{0}^{T}p(\bar{\xi}^{n,m}_{x,s})^{p-1}&\Phi_{x}^{n,m}(s)ds + \mathbin{\textcolor{white}{\int_{0}^{T}\frac{p(p-1)}{2}(\bar{\xi}^{n,m}_{x,s})^{p-2}(\Psi_{x}^{n,m}(s))^{2}ds}}&& \\[1em]
&+ \int_{0}^{T}\frac{p(p-1)}{2}(\bar{\xi}^{n,m}_{x,s})^{p-2}(\Psi_{x}^{n,m}(s))^{2}ds + \\[1em]
&&\!\!\!\!\!\!\!\!\!\!\!\!\!\!\!\!\!\!\!\!\!\!\!\!\!\!\!\!\!\!\!\!\!\!\!\!\!\!\!\!\!\!\!\!\!\!\!\!\!\!\!\!\!\!\!\!\!\!\!\!\!\!\!\!  + \sup_{t\in\timeint}\int_{0}^{t}p(\bar{\xi}^{n,m}_{x,t})^{p-1}\Psi_{x}^{n,m}(s)dW_{x}(s). \label{AA2}
\end{alignedat}
\end{equation}
Moreover from inequalities (\ref{QQ1}) and (\ref{QQ2}) we see that
\begin{align}
(\bar{\xi}^{n,m}_{x,t})^{p-1}\Phi_{x}^{n,m}(t) \leq (b+1)|\bar{\xi}^{n,m}_{x,t}|^{p} +  \bar{a}^{2}n_{x}^{3}\sum_{y\in B_{x}}|\bar{\xi}^{n,m}_{x,t}|^{p}.. \label{AA3}
\end{align}
and
\begin{align}
(\bar{\xi}^{n,m}_{x,t})^{p-2}(\Psi_{x}^{n,m}(t))^{2} \leq 2M_{1}^{2} |\bar{\xi}^{n,m}_{x,t}|^{p} + 2M_{2}^{2}n_{x}^{4}\sum_{y\in B_{x}}|\bar{\xi}^{n,m}_{x,t}|^{p}. . \label{AA4}
\end{align}
Hence from inequality (\ref{AA2}) and inequalities (\ref{AA3}) -  (\ref{AA4}) above we see by letting 
\begin{align}
C_{1}&\coloneqq p^{2}(b+1 + 2M_{1}^{2}),\\[1em]
C_{2}&\coloneqq n_{x}^{4}(p\bar{a}^{2} + 2p^{2}M_{2}^{2}), \\[1em]
K&\coloneqq C_{1} \int_{0}^{T}\mathbb{E}\bigg{[}|\bar{\xi}^{n,m}_{x,s}|^{p}\bigg{]}ds + C_{2}\sum_{y\in B_{x}}\int_{0}^{T}\mathbb{E}\bigg{[}|\bar{\xi}^{n,m}_{y,s}|^{p}\bigg{]}ds. \label{AA41}
\end{align}
that we have the following inequality
\begin{equation}
\begin{split}
\mathbb{E}\bigg{[}\sup_{t\in\timeint}|\bar{\xi}^{n,m}_{x,t}|^{p}\bigg{]} \leq K + \mathbb{E}\bigg{[}\sup_{t\in\timeint}\int_{0}^{t}p(\bar{\xi}^{n,m}_{x,s})^{p-1}\Psi_{x}^{n,m}(s)dW_{x}(s)\bigg{]},  \label{AA5}
\end{split}
\end{equation}
Now using results from subsection \ref{WienerProcessinR}, in particular Burkholder-Davis-Gundy inequality \ref{BDGInequality} and also using Jensen inequality \ref{JensenInequality} we see that the following estimate on the stochastic term from the inequality (\ref{AA5}) above holds. 
\begin{align}
\mathbb{E}\bigg{[}\sup_{t\in\timeint}\int_{0}^{t}p(\bar{\xi}^{n,m}_{x,s})^{p-1}\Psi_{x}^{n,m}(s)dW_{x}(s)\bigg{]} & \leq \mathbb{E}\bigg{[}\bigg{(}\int_{0}^{t}\bigg{(}p(\bar{\xi}^{n,m}_{x,s})^{p-1}\Psi_{x}^{n,m}(s)\bigg{)}^{2}ds\bigg{)}^{\frac{1}{2}}\bigg{]}, \\[1em]
&\leq \bigg{(}\mathbb{E}\bigg{[}\int_{0}^{t}\bigg{(}p(\bar{\xi}^{n,m}_{x,s})^{p-1}\Psi_{x}^{n,m}(s)\bigg{)}^{2}ds\bigg{]}\bigg{)}^{\frac{1}{2}}. \label{AA6}
\end{align}
To simplify inequality (\ref{AA6}) we note that according to the (\hyperref[conditionC]{E}) for all $t\in\timeint$ the following estimate is true
\begin{align}
\bigg{(}(\bar{\xi}^{n,m}_{x,t})^{p-1}\Psi_{x}^{n,m}(t)\bigg{)}^{2} &= (\bar{\xi}^{n,m}_{x,t})^{2p-2}\bigg{(}M_{1}|\bar{\xi}^{n,m}_{x,t}| + M_{2}n_{x}\sum_{y\in B_{x}}|\bar{\xi}^{n,m}_{y,t}|\bigg{)}^{2}, \\[1em]
&\leq (\bar{\xi}^{n,m}_{x,t})^{2p-2}\bigg{(}2M_{1}^{2}|\bar{\xi}^{n,m}_{x,t}|^{2} + 2M_{2}^{2}n_{x}^{3}\sum_{y\in B_{x}}|\bar{\xi}^{n,m}_{y,t}|^{2}\bigg{)}, \\[1em]
&\leq 2M_{1}^{2}|\bar{\xi}^{n,m}_{x,t}|^{2p} + \max_{y\in B_{x}}|\bar{\xi}^{n,m}_{y,t}|^{2p-2}\bigg{(}2M_{2}^{2}n_{x}^{4}\max_{y\in B_{x}}|\bar{\xi}^{n,m}_{y,t}|^{2}\bigg{)}, \\[1em]
&\leq 2M_{1}^{2}|\bar{\xi}^{n,m}_{x,t}|^{2p} + 2M_{2}^{2}n_{x}^{4}\sum_{y\in B_{x}}|\bar{\xi}^{n,m}_{y,t}|^{2p}.
\end{align}
Now letting
\begin{align}
C_{3}&\coloneqq 2p^{2}M_{1}^{2}T, \\
C_{4}&\coloneqq 2p^{2}M_{2}^{2}n_{x}^{4}T,
\end{align}
It follows now that inequality (\ref{AA6}) can be written as follows
\begin{equation}
\begin{split}
\mathbb{E}\bigg{[}\sup_{t\in\timeint}\int_{0}^{t}p(\bar{\xi}^{n,m}_{x,s})^{p-1}\Psi_{x}^{n,m}(s)dW_{x}(s)\bigg{]} \leq C_{3}\sup_{t\in\timeint}\mathbb{E}\bigg{[}|\bar{\xi}^{n,m}_{x,t}|^{2p}\bigg{]} + C_{4}\sum_{y\in B_{x}}\sup_{t\in\timeint}\mathbb{E}\bigg{[}|\bar{\xi}^{n,m}_{x,t}|^{2p}\bigg{]},
\end{split}
\end{equation}
Therefore returning to the inequality (\ref{AA5}) we see that
\begin{equation}
\begin{alignedat}{2}
\mathbb{E}\bigg{[}\sup_{t\in\timeint}|\bar{\xi}^{n,m}_{x,t}|^{p}\bigg{]} \leq TC_{1} \sup_{t\in\timeint}\mathbb{E}\bigg{[}|\bar{\xi}^{n,m}_{x,t}|^{p}\bigg{]} &+ TC_{2}\sum_{y\in B_{x}}\sup_{t\in\timeint}\mathbb{E}\bigg{[}|\bar{\xi}^{n,m}_{y,t}|^{p}\bigg{]} + \\[1em]
& + C_{3}\sup_{t\in\timeint}\mathbb{E}\bigg{[}|\bar{\xi}^{n,m}_{x,t}|^{2p}\bigg{]} + C_{4}\sum_{y\in B_{x}}\sup_{t\in\timeint}\mathbb{E}\bigg{[}|\bar{\xi}^{n,m}_{x,t}|^{2p}\bigg{]}. \label{AA7}
\end{alignedat}
\end{equation}
Since $B_{x}$ is finite we can now use Theorem \ref{CauchySequenceTheorem} to conclude that, with a suitable choice of $n,m\in\naturals$, the right hand side of the inequality (\ref{AA7}) above can be made arbitrary small hence the proof is complete. 
\end{proof}

Relying on \cite{ABW} we now state without proof the following result.

\begin{theorem} \label{localtheorem}
There exists $\tau\in\reals$ such that the stochastic integral equation (\ref{maineqn}) admits a unique local maximal solution $\eta_{x}:[0,\tau]\times\Omega\to\reals$. 
\end{theorem}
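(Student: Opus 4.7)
The plan is to treat (\ref{maineqn}) as a one-dimensional SDE with random time-inhomogeneous coefficients, with all the dependence on other sites absorbed into the already-constructed process $\Xi$. Since $B_x$ is finite, decompose the drift as
\begin{align*}
\Phi_x(q,\Xi_s) = V(q) + \phi(s), \qquad \phi(s) \coloneqq \sum_{y\in B_x} a(x-y)\,\xi_{y,s},
\end{align*}
where by Theorem \ref{aboutcomponents} each $\xi_y$ lies in $L^p_{ad}$, so $\phi$ is a progressively measurable process with $\int_0^T \mathbb{E}[|\phi(s)|^p]\,ds<\infty$. Similarly set $G(s,q)\coloneqq\Psi_x(q,\Xi_s)$; by assumption (\textbf{E}) applied with $Z_1=Z_2=\Xi_s$, $G$ is globally Lipschitz in $q$ with constant $M_1$ uniformly in $(s,\omega)$, and $|G(s,0)| \le c + M_2 n_x\sum_{y\in B_x}|\xi_{y,s}|$ is a square-integrable random process by Theorem \ref{TAB}.

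First I would verify the standing properties of the coefficients in the reduced equation: adaptedness and joint measurability in $(s,q,\omega)$, continuity in $q$, the one-sided dissipativity bound inherited from (\textbf{D}), polynomial growth from (\textbf{C}), and global Lipschitz continuity of $G$. Because $V$ is only assumed continuous (not locally Lipschitz), I would next introduce a cutoff: let $V_n$ be a globally Lipschitz function agreeing with $V$ on $[-n,n]$ and preserving the bound in (\textbf{D}). For each $n$, classical Picard iteration in $L^p(\mathbf{P},\mathbf{M}^{\reals})$ applied on $[0,T]$ yields a unique strong adapted solution $\eta^n$ to the SDE with coefficients $(V_n(\cdot)+\phi,G)$.

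I would then splice the $\eta^n$ together via the stopping times $\sigma_n\coloneqq \inf\{t\in\timeint:|\eta^n_t|\ge n\}\wedge T$. On $[0,\sigma_n]$ one has $V_n(\eta^n_t)=V(\eta^n_t)$, so $\eta^n$ satisfies (\ref{maineqn}), and uniqueness in the Lipschitz setting forces $\eta^{n+1}=\eta^n$ on $[0,\sigma_n]$. Setting $\sigma\coloneqq\sup_n \sigma_n$ defines a consistent candidate solution on $[0,\sigma)$; uniqueness on any smaller interval follows by applying Itô's formula to $|\eta-\tilde\eta|^2$ for two candidate solutions, using (\textbf{D}) for the drift difference, the Lipschitz bound for the diffusion, and Gronwall's lemma. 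The deterministic $\tau$ can then be extracted by choosing a level at which the a priori moment estimates below remain valid.

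The main obstacle, and the reason the author defers to \cite{ABW}, is producing the a priori moment bound that guarantees $\sigma>\tau$ on a positive deterministic time interval. The natural approach is to apply Itô's formula to $|\eta^n_t|^p$, use inequality (\ref{DriftLemmaIneq1}) of Lemma \ref{DriftLemma} to dominate the drift by $b|\eta^n_t|^p + \Tilde{a}_x(\sum_{y\in B_x}\xi_{y,t}^2)^{1/2}|\eta^n_t|^{p-1}$, apply Young's inequality to separate the random forcing, and control the diffusion term using (\textbf{E}) together with Burkholder-Davis-Gundy as in the proof of Theorem \ref{TAB}. Integrability of $\phi$ and $G(\cdot,0)$ then yields a Gronwall-type estimate on $\mathbb{E}[\sup_{t\le\tau}|\eta^n_t|^p]$ independent of $n$, which shows $\sigma_n\uparrow$ some deterministic $\tau>0$ and $\eta$ extends continuously to $[0,\tau]$.
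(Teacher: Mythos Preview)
The paper does not actually prove this theorem; it is stated without proof and attributed to \cite{ABW}. So there is no in-paper argument to compare against, and you are effectively reconstructing the \cite{ABW} result in this one-dimensional setting.

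Your outline is the standard localisation scheme, and the overall architecture (truncate, solve a Lipschitz problem by Picard iteration, patch via exit times, use the one-sided condition for pathwise uniqueness) is sound. There is, however, a genuine gap at the truncation step. You write ``let $V_n$ be a globally Lipschitz function agreeing with $V$ on $[-n,n]$'', but the paper only assumes $V\in C(\reals)$ together with (\textbf{C}) and (\textbf{D}); nothing forces $V$ to be locally Lipschitz (for instance $V(q)=-|q|^{1/2}\operatorname{sgn}(q)$ satisfies both conditions with $b=0$, $R=1$). If $V|_{[-n,n]}$ is not Lipschitz, no Lipschitz $V_n$ can agree with it there, and your Picard step for the truncated equation is unavailable. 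This is precisely why a reference such as \cite{ABW} is invoked: one route is to mollify $V$ to a smooth $V^{\varepsilon}$ (convolution preserves (\textbf{D}) with the same constant $b$), obtain solutions $\eta^{\varepsilon}$, derive $\varepsilon$-uniform moment bounds from the dissipativity, pass to a limit via tightness or monotonicity, and then upgrade to a strong solution using the pathwise uniqueness you correctly derive from (\textbf{D}) and the Lipschitz diffusion.

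A smaller point: your last paragraph attempts to manufacture a \emph{deterministic} $\tau$ from a priori moment bounds uniform in $n$. But once you have such bounds you have already shown non-explosion on all of $\timeint$, which is the content of the next result, Theorem \ref{globaltheorem}. The intended division of labour is that Theorem \ref{localtheorem} supplies a maximal local solution up to an explosion time (the phrasing ``$\tau\in\reals$'' in the statement is informal; in the proof of Theorem \ref{globaltheorem} the lifetime is handled through the exit times $\tau_n$), and Theorem \ref{globaltheorem} then shows $\tau_n\to\infty$ almost surely. Your sketch conflates the two steps.
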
 
Now using Theorem \ref{localtheorem} we can establish the existence of a global solution. Precisely speaking we have the following result. 
\begin{theorem} \label{globaltheorem}
	There exists a solution $\eta_{x}:\overline{\Omega}\to\reals$ of the stochastic integral equation (\ref{maineqn}).
\end{theorem}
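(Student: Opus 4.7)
The plan is to start from the unique local maximal solution $\eta_x:[0,\tau]\times\Omega\to\reals$ furnished by Theorem \ref{localtheorem} and show that its explosion time $\tau$ in fact satisfies $\tau\geq T$ almost surely, so that $\eta_x$ extends automatically to a solution on the whole of $\timeint$. The main tool will be an a priori $L^p$ estimate on $|\eta_{x,t}|$ obtained from It\^o's formula, very much in the spirit of the estimates already derived in the proof of Theorem \ref{TailTheorem1}, combined with a Chebyshev-type localisation argument to rule out blow-up.

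First I would introduce the sequence of stopping times $\tau_N\coloneqq\inf\{t\in [0,\tau)\ |\ |\eta_{x,t}|\geq N\}\wedge\tau$, which increases to $\tau$ as $N\to\infty$. On $[0,\tau_N]$ the process $\eta_x$ is bounded by $N$, and by Theorem \ref{ItoComposition} the coefficients $\Phi_x(\eta_{x,\cdot},\Xi_\cdot)$ and $\Psi_x(\eta_{x,\cdot},\Xi_\cdot)$ are integrable in the required $L^1_{ad}$ and $L^2_{ad}$ senses, so It\^o's Lemma \ref{ItoLemma1} may be applied to $|\eta_{x,t\wedge\tau_N}|^p$. Taking expectation — the stochastic integral is a genuine martingale after stopping and therefore vanishes — and invoking Lemma \ref{DriftLemma}, assumption (\hyperref[conditionC]{E}) and Theorem \ref{powerinequalitytheorem} exactly as in inequalities (\ref{ConditionBConsequence}) and (\ref{ConditionCConsequence}), I would arrive at
\begin{align*}
\mathbb{E}\bigg{[}|\eta_{x,t\wedge\tau_N}|^p\bigg{]} \leq |\zeta_x|^p + C_1\int_0^t\mathbb{E}\bigg{[}|\eta_{x,s\wedge\tau_N}|^p\bigg{]}ds + C_2\sum_{y\in B_x}\int_0^T\mathbb{E}\bigg{[}|\xi_{y,s}|^p\bigg{]}ds + C_3 T,
\end{align*}
where $C_1,C_2,C_3$ depend only on $b,c,M_1,M_2,p,\bar{a}$ and $n_x$.

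The decisive point is that the coupling contribution $\sum_{y\in B_x}\int_0^T\mathbb{E}[|\xi_{y,s}|^p]ds$ is finite and independent of $N$: Theorem \ref{aboutcomponents} gives $\xi_y\in L^p_{ad}$ for every $y\in\gamma$, and $B_x$ is finite by local finiteness of $\gamma$. A standard application of Gronwall's inequality therefore yields a bound $\sup_{N\in\naturals}\sup_{t\in\timeint}\mathbb{E}[|\eta_{x,t\wedge\tau_N}|^p]\leq K$ with $K$ independent of $N$. Since $|\eta_{x,\tau_N}|=N$ on the event $\{\tau_N<T\}$, Chebyshev's inequality gives
\begin{align*}
N^{p}\,\mathbb{P}(\tau_N<T) \leq \mathbb{E}\bigg{[}|\eta_{x,\tau_N}|^p\mathbbm{1}_{\{\tau_N<T\}}\bigg{]} \leq K,
\end{align*}
so letting $N\to\infty$ forces $\mathbb{P}(\tau<T)=0$, which is the required non-explosion property.

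The hard part, I expect, will be the careful localisation book-keeping rather than the estimates themselves: one must check that It\^o's formula genuinely applies up to each $\tau_N$, that the resulting stopped stochastic integral is a true martingale (and hence vanishes in expectation), and that all the local pieces glue together consistently into a single process defined on $\timeint$. The algebraic drift/diffusion bounds needed for the Gronwall step are essentially those already executed in Theorems \ref{TailTheorem1} and \ref{TAB}; the only new feature is that the constants depend on $n_x$, which is harmless since Theorem \ref{globaltheorem} is a pointwise statement in $x\in\gamma$.
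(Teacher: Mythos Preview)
Your proposal is correct and follows essentially the same route as the paper: introduce exit-time stopping times, apply It\^o's formula to $|\eta_{x,t\wedge\tau_N}|^p$, control the drift and diffusion terms via the estimates (\ref{ConditionBConsequence})--(\ref{ConditionCConsequence}), apply Gronwall, and finish with the Chebyshev-type bound $N^p\mathbb{P}(\tau_N<T)\leq K$. The only cosmetic difference is that the paper bounds the coupling term $\sum_{y\in B_x}|\xi_{y,s}|^p$ by $\sum_{y\in B_x}\sup_{t\in\timeint}|\xi_{y,t}|^p$ and invokes Theorem \ref{TAB}, whereas you keep the time integral and use $\xi_y\in L^p_{ad}$ directly --- your version is in fact slightly more economical, since it avoids the Burkholder--Davis--Gundy machinery underlying Theorem \ref{TAB}.
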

\begin{proof}
Clearly if $\tau\geq\timeint$ then there is nothing to prove so let us assume that $\tau < \timeint$. Now by Theorem \ref{localtheorem} there exists a unique, maximal local solution $\eta_{x}:[0,\tau]\times\Omega\to\reals$ to the stochastic equation (\ref{maineqn}). Hence to complete the proof we will show that this solution is also a global one. That is we will establishing that almost surely $\lim_{n\to\infty}\tau_{n} = \infty$, where $\tau_{n}$ is the first exit time of the maximal local solution from the interval (-n,\ n) for all $n\in\naturals$. \\[1em]
Now, since $\eta$ is a local solution to  (\ref{maineqn}) we conclude that for all $n\in\naturals$ and all $t\in[0,\infty)$ we have 
\begin{align}
\eta_{x,t\land\tau_{n}} &= \zeta_{x} + \int_{0}^{t\land\tau_{n}}\Phi_{x}(\eta_{x,s\land\tau_{n}},\Xi_{s\land\tau_{n}})ds + \int_{0}^{t\land\tau_{n}}\Psi_{x}(\eta_{x,s\land\tau_{n}},\Xi_{s\land\tau_{n}})dW_{x}(s). \label{globeqn1}
\end{align}
Hence using Itô Lemma \ref{ItoLemma1} we see that for all $t\in[0,\infty)$ we have the following
\begin{equation}
\begin{alignedat}{2}
|\eta_{x,t\land\tau_{n}}|^{p} = &\int_{0}^{t\land\tau_{n}}p(\eta_{x,s\land\tau_{n}})^{p-1}\Phi_{x}(\eta_{x,s\land\tau_{n}},\Xi_{s\land\tau_{n}})ds + \\[1em]
&\quad\quad\quad\quad\quad\quad + \int_{0}^{t\land\tau_{n}}\frac{p(p-1)}{2}(\eta_{x,s\land\tau_{n}})^{p-2}(\Psi_{x}(\eta_{x,s\land\tau_{n}},\Xi_{s\land\tau_{n}}))^{2}ds + \\[1em]
&\quad\quad\quad\quad \quad\quad\quad\quad\quad\quad\quad + \int_{0}^{t\land\tau_{n}}p(\eta_{x,s\land\tau_{n}})^{p-1}\Psi_{x}(\eta_{x,s\land\tau_{n}},\Xi_{s\land\tau_{n}})dW_{x}(s). \label{BB1}
\end{alignedat}
\end{equation}
Now by letting 
\begin{align}
\bar{\Phi}_{x}^{p}(\eta,t) \coloneqq (\eta_{x,t\land\tau_{n}})^{p-1}\Phi_{x}(\eta_{x,t\land\tau_{n}},\Xi_{t\land\tau_{n}}),\ \forall t\in[0,\infty),
\end{align}
we conclude from inequalities (\ref{ConditionBConsequence}) - (\ref{ConditionCConsequence}) and definitions (\ref{ConditionCConsequencel1}) - (\ref{ConditionCConsequencel4}) that for all $t\in[0,\infty)$
\begin{align}
\Phi_{x}^{p}(\eta,t) &\leq A_{1}|\eta_{x,t\land\tau_{n}}|^{p} + \bar{a}^{2}n_{x}^{3}\sum_{y\in B_{x}}|\xi_{y,t\land\tau_{n}}|^{p} + 2^{p-1}c^{2}, \\[1em]
&\leq A_{1}|\eta_{x,t\land\tau_{n}}|^{p} + \bar{a}^{2}n_{x}^{3}\sum_{y\in B_{x}}\sup_{t\in\timeint}|\xi_{y,t\land\tau_{n}}|^{p} + 2^{p-1}c^{2}, \\[1em]
&\leq A_{1}|\eta_{x,t\land\tau_{n}}|^{p} +\bar{a}^{2}n_{x}^{3}\sum_{y\in B_{x}}\sup_{t\in\timeint}|\xi_{y,t}|^{p} + 2^{p-1}c^{2}, \label{BB2}
\end{align}
and 
\begin{align}
(\eta_{x,s\land\tau_{n}})^{p-2}(\Psi_{x}(\eta_{x,s\land\tau_{n}},\Xi_{s\land\tau_{n}}))^{2} &\leq A_{2}|\eta_{x,t\land\tau_{n}}|^{p} + 4M_{2}^{2}n_{x}^{4}\sum_{y\in B_{x}}|\xi_{y,t\land\tau_{n}}|^{p} + 4c^{2}2^{p-1}, \\[1em]
&\leq A_{2}|\eta_{x,t\land\tau_{n}}|^{p} + 4M_{2}^{2}n_{x}^{4}\sum_{y\in B_{x}}\sup_{t\in\timeint}|\xi_{y,t}|^{p} + 4c^{2}2^{p-1}. \label{BB3}
\end{align}
Threfore, combining inequalities (\ref{BB2}) - (\ref{BB3}) with an inequality (\ref{BB1}) we see that for all $t\in[0,\infty)$ we have
\begin{align}
\mathbb{E}\bigg{[}|\eta_{x,t\land\tau_{n}}|^{p}\bigg{]} & \leq p^{2}(A_{1}+A_{2})\int_{0}^{t}\mathbb{E}\bigg{[}|\eta_{x,s\land\tau_{n}}|^{p}\bigg{]}ds + Tn^{4}_{x}A_{3}\sum_{y\in B_{x}}\mathbb{E}\bigg{[}\sup_{t\in\timeint}|\xi_{y,t}^{n}|^{p}\bigg{]} + A_{4}, \\[1em]
&\leq D\int_{0}^{t}\mathbb{E}\bigg{[}|\eta_{x,s\land\tau_{n}}|^{p}\bigg{]}ds +  K(x). \label{BB4}
\end{align}
Where
\begin{align}
D &\coloneqq p^{2}(A_{1}+A_{2}),\\[1em]
K(x) &\coloneqq Tn^{4}_{x}A_{3}\sum_{y\in B_{x}}\mathbb{E}\bigg{[}\sup_{t\in\timeint}|\xi_{y,t}^{n}|^{p}\bigg{]} + A_{4}. 
\end{align}
Now using Gronwall's inequality \ref{ClassicGronTheorem} together with the inequality (\ref{BB4}) above we see that for all $t\in[0,\infty)$ we have
\begin{align}
\mathbb{E}\bigg{[}|\eta_{x,t\land\tau_{n}}|^{p}\bigg{]} \leq K(x)e^{Dt}. \label{BB5}
\end{align}
However using the definition of $\tau_{n}$ we see that for all $n\in\naturals$ we have $|\eta_{x,\tau_{n}}|\geq n$. Moreover, because $\mathbb{P}(\tau_{n}<t)= \mathbb{E}\bigg{[}\mathbbm{1}_{\{\tau_{n}<t\}}\bigg{]}$ we also see that for all $t\in[0,\infty)$
\begin{align}
n^{p}\mathbb{P}(\tau_{n}<t) &\leq \mathbb{E}\bigg{[}|\eta_{x,\tau_{n}}|^{p}\mathbbm{1}_{\{\tau_{n}<t\}}\bigg{]}, \\[1em]
&\leq \mathbb{E}\bigg{[}|\eta_{x,\tau_{n}}|^{p}\mathbbm{1}_{\{\tau_{n}<t\}}\bigg{]} + \mathbb{E}\bigg{[}|\eta_{x,\tau_{n}}|^{p}\mathbbm{1}_{\{\tau_{n}\geq t\}}\bigg{]}, \\[1em]
&= \mathbb{E}\bigg{[}|\eta_{x,t\land\tau_{n}}|^{p}\mathbbm{1}_{\{\tau_{n}<t\}}\bigg{]} + \mathbb{E}\bigg{[}|\eta_{x,t\land\tau_{n}}|^{p}\mathbbm{1}_{\{\tau_{n}\geq t\}}\bigg{]}, \\[1em]
&= \mathbb{E}\bigg{[}|\eta_{x,t\land\tau_{n}}|^{p}\bigg{]}. \label{BB6}
\end{align}
Therefore using inequalities (\ref{BB5}) - (\ref{BB6}) above we see that for all $n\in\naturals$ and for all $t\in[0,\infty)$ we have
\begin{align}
\mathbb{P}(\tau_{n}<t) \leq \frac{1}{n^{p}}K(x)e^{Dt}.
\end{align}
Hence for all $t\in[0,\infty)$ we have
\begin{align}
\lim_{n\to\infty}\mathbb{P}(\tau_{n}<t) = 0.
\end{align}
Now convergence in probability and the fact that $\{\tau_{n}\}_{n\in\naturals}$ is an increasing sequence impliy that almost surely $\lim_{n\to\infty}\tau_{n} = \infty$ hence the proof is complete. 
\end{proof}

\newpage

\section{Existence and Uniqueness} \label{EU}
Throughout this section let us assume that $\reals\ni p\geq 2$. \\[1em]
In this section we will learn that system (\ref{MainSystem}) admits a unique strong solution. We shall start by showing existence.
\begin{theorem}\label{Existence}
Stochastic system (\ref{MainSystem}) admits a strong solution.
\end{theorem}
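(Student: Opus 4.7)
The plan is to prove that the process $\Xi$ constructed in (\ref{mainprocess}) is itself a strong solution of (\ref{MainSystem}) in the sense of Definition \ref{StrongSol}. To begin, I would verify that $\Xi \in \mathcal{Z}^p(\cap)$. Theorem \ref{CauchySequenceTheorem} furnishes a Cauchy sequence $\{\Xi^n\}$ in each $Z^p_\alpha$ with $\alpha \in (\underline{\inta}, \overline{\inta})$, and Theorem \ref{Zscaleproof} ensures that the inclusions $Z^p_\alpha \hookrightarrow Z^p_\beta$ are continuous for $\alpha < \beta$. Hence the limits obtained in different $Z^p_\alpha$ agree almost everywhere and define a single $\Xi$ lying in the intersection; it then remains to check the integral identity of Definition \ref{StrongSol} componentwise.

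Fix $x \in \gamma$ and pick $N_x$ so that $x \in \Lambda_n$ for all $n \geq N_x$; for such $n$ the approximation $\xi^n_x$ satisfies the stochastic integral equation of (\ref{FinVolSystem}). The strategy is to pass to the limit $n \to \infty$ in each term along a suitable subsequence. Inspection of the proof of Theorem \ref{TAB}, in particular the estimate that drives inequality (\ref{AA7}) to zero via Theorem \ref{CauchySequenceTheorem}, shows that $\{\xi^n_x\}_{n \geq N_x}$ is in fact Cauchy in $L^p(\Omega; L^\infty(\timeint))$. Extracting a subsequence $\{n_k\}$ I obtain $\sup_{t\in\timeint} |\xi^{n_k}_{x,t} - \xi_{x,t}| \to 0$ $\mathbb{P}$-a.s., and by a further diagonal extraction the same holds simultaneously for every $y$ in the finite set $B_x$.

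Passage to the limit is then carried out term by term. The deterministic integral splits as $\int_0^t V(\xi^{n_k}_{x,s})\, ds + \sum_{y \in B_x} a(x-y) \int_0^t \xi^{n_k}_{y,s}\, ds$: the finite sum converges directly from uniform a.s.\ convergence of the finitely many components, while continuity of $V$, the growth bound (\hyperref[conditionA]{C}), and an $L^1(\mathbb{P})$-envelope of the form $c(1 + \sup_k \sup_t |\xi^{n_k}_{x,t}|^p)$ (available from Theorem \ref{TailTheorem1} and the $L^p(\Omega;L^\infty)$-Cauchy estimate) justify dominated convergence of the $V$-piece. For the stochastic term, assumption (\hyperref[conditionC]{E}) supplies a Lipschitz bound for $\Psi_x$ involving only the components indexed by $\{x\} \cup B_x$, so Itô isometry together with $L^2(\mathbb{P})$ convergence of these components yields $L^2(\mathbb{P})$ convergence of $\int_0^t \Psi_x(\xi^{n_k}_{x,s}, \Xi^{n_k}_s)\, dW_x(s)$ to $\int_0^t \Psi_x(\xi_{x,s}, \Xi_s)\, dW_x(s)$; a last subsequence produces $\mathbb{P}$-a.s.\ convergence. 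Combining all terms establishes the integral identity of Definition \ref{StrongSol} for the chosen $x$ and every $t\in\timeint$, and since $x$ was arbitrary the existence claim follows.

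The main obstacle is the non-Lipschitz (merely continuous and one-sided dissipative) nature of the potential $V$: pure $Z^p_\alpha$-convergence cannot be pushed through $V$ by an $L^p$-continuity argument, so it is crucial to upgrade the soft $Z^p_\alpha$-limit to sample-path-uniform a.s.\ convergence. This is precisely what the Cauchy estimate implicit in the proof of Theorem \ref{TAB} provides; once it is in hand, continuity of $V$ and the available $L^p$ domination dispose of the only delicate term in the passage to the limit.
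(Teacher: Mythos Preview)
Your argument is correct, but it follows a genuinely different route from the paper's. The paper does \emph{not} pass to the limit directly in the approximating equations. Instead, for each fixed $x\in\gamma$ it invokes the one-dimensional existence theory of Section~\ref{ODSC}: Theorem~\ref{globaltheorem} (built on the local result Theorem~\ref{localtheorem} and on Theorem~\ref{TAB}) produces a global solution $\eta_x$ of \eqref{maineqn} with the already-constructed environment $\Xi$ plugged in. The paper then applies It\^o's formula to $|\xi^n_{x,t}-\eta_{x,t}|^p$, uses the dissipativity estimate \eqref{DriftLemmaIneq2} and the Lipschitz bound (\hyperref[conditionC]{E}) to obtain inequalities of the form \eqref{ExEq9}, and closes with the classical Gr\"onwall inequality to get $\mathbb{E}[|\xi^n_{x,t}-\eta_{x,t}|^p]\to 0$ uniformly in $t$. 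Since also $\mathbb{E}[|\xi^n_{x,t}-\xi_{x,t}|]\to 0$, one concludes $\xi_{x,t}=\eta_{x,t}$ a.s., so $\xi_x$ inherits the integral identity from $\eta_x$.

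Your direct-limit argument bypasses Theorems~\ref{localtheorem} and~\ref{globaltheorem} entirely: once you extract the sample-path-uniform Cauchy estimate from the proof of Theorem~\ref{TAB}, continuity of $V$ on the random compact $\{|q|\le \sup_k\sup_t|\xi^{n_k}_{x,t}|\}$ already gives a.s.\ convergence of the $V$-piece without any appeal to one-dimensional SDE theory, and the remaining terms are handled exactly as you describe. The trade-off is that your route leans on the $L^p(\Omega;L^\infty(\timeint))$-Cauchy bound \eqref{AA7}, whose right-hand side involves $2p$-moments and hence implicitly calls on Theorem~\ref{CauchySequenceTheorem} at exponent $2p$; the paper's comparison with $\eta_x$ stays at exponent $p$ throughout and uses dissipativity plus Gr\"onwall in place of path-uniform convergence. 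Within the framework of the paper both are available, and your argument is the shorter of the two.
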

\begin{proof}
Let us start by fixing some $\underline{\inta}<\alpha\in\indexint$. Now, according to the Theorem \ref{CauchySequenceTheorem} sequence $\{\Xi^{n}\}_{n\in\naturals}$ converges in $Z^{p}_{\alpha}$. Therefore, this proof can be completed by letting 
\begin{align}
\overbrace{\ \Xi \coloneqq \lim_{n\to\infty}\Xi^{n}\ }^{\text{in}\ Z_{\alpha}^{p}},
\end{align}
and showing that $\Xi\equiv\{\xi_{x}\}_{x\in\gamma}$ is also a strong solution of the system (\ref{MainSystem}). However because $\Xi$ in $Z^{p}_{\alpha}$ we see from the Definition \ref{StrongSol} that to complete the proof it only remains to show that for all $x\in\gamma$ and all $t\in\timeint$ we have
\begin{align}
\xi_{x,t} &= \zeta_{x} + \int_{0}^{t}\Phi_{x}(\xi_{x,s},\Xi_{s})ds + \int_{0}^{t}\Psi_{x}(\xi_{x,s},\Xi_{s})dW_{x}(s), \quad \mathbb{P}-a.s. \label{ExEq1}
\end{align}
Using our work in the previous section \ref{ODSC}, in particular using Theorem \ref{globaltheorem} we begin by defining a family of processes $H\coloneqq\{\eta_{x}\}_{x\in\gamma}$ such that for all $x\in\gamma$ and all $t\in\timeint$ we have
\begin{align}
\eta_{x,t} &= \zeta_{x} + \int_{0}^{t}\Phi_{x}(\eta_{x,s},\Xi_{s})ds + \int_{0}^{t}\Psi_{x}(\eta_{x,s},\Xi_{s})dW_{x}(s), \quad \mathbb{P}-a.s.
\end{align}
 Now, if $n\in\naturals$ then we also recall from the Theorem \ref{FiniteVolumeLemma} and the Definition, of the truncated system, (\ref{FinVolSystem}) that for all $x\in\gamma$ and all $t\in\timeint$ we have
\begin{equation}
\begin{rcases} 
\begin{alignedat}{2}
&\xi_{x,t}^{n} = \zeta_{x} + \int_{0}^{t}\Phi_{x}(\xi_{x,s}^{n},\Xi_{s}^{n})ds + \int_{0}^{t}\Psi_{x}(\xi_{x,s}^{n},\Xi_{s}^{n})dW_{x}(s) && \quad \forall x\in\Lambda_{n} \\
&\xi_{x,t}^{n} = \zeta_{x} &&\quad \forall x\not\in\Lambda_{n}
\end{alignedat}
\end{rcases}, \  \mathbb{P}-a.s.
\end{equation}
 Moreover convergence $\overbrace{\ \Xi = \lim_{n\to\infty}\Xi^{n}\ }^{\text{in}\ Z_{\alpha}^{p}}$ in particular implies that
\begin{align}
\adjustlimits{lim}_{n\to\infty}{sup}_{t\in\timeint}\mathbb{E}\bigg{[}
\ \sum_{x\in\gamma}e^{-\alpha|x|}|\xi^{n}_{x,t} - \xi_{x,t}|^{p}\bigg{]} = 0. \label{ExEq2}
\end{align}
Now, from equation (\ref{ExEq2}) above and Theorem \ref{LPInclusion} it follows that for all $x\in\Lambda_{n}$ and uniformly on $\timeint$ we have 
\begin{align}
\lim_{n\to\infty}\mathbb{E}\bigg{[}|\xi^{n}_{x,t} - \xi_{x,t}|\bigg{]} = 0.
\end{align}
Therefore, observing that $\Lambda_{n}\uparrow\gamma$ as $n\to\infty$ we see that in order to establish the equation (\ref{ExEq1}), and hence conclude the proof, it remains to show that for all $x\in\gamma$ and uniformly on $\timeint$ we have 
\begin{align}
\lim_{n\to\infty}\mathbb{E}\bigg{[}|\xi^{n}_{x,t} - \eta_{x,t}|\bigg{]} = 0. \label{ExEq3}
\end{align}
\begin{mdframed}
	\begin{remark} 
	Indeed, this will show that for all $x\in\gamma$ and uniformly on $\timeint$ we have
	\begin{align}
	\mathbb{E}\bigg{[}|\xi_{x,t} - \eta_{x,t}|\bigg{]} = 0, 
	\end{align}
	which, because $\gamma$ is countable, will establish that for all $x\in\gamma$ and uniformly on $\timeint$ we have
	\begin{align}
	\xi_{x,t} = \eta_{x,t},\ \mathbb{P}-a.s.,
	\end{align}
	hence establishing equation (\ref{ExEq1}) and compleating the proof.
	\end{remark}
\end{mdframed}
Now, let us fix an arbitrary $n\in\naturals$ and define for all $x\in\gamma$ and all $t\in\timeint$ the following processes
\begin{align}
\Phi_{x}^{n}(t)&\coloneqq\Phi_{x}(\xi^{n}_{x,t},\Xi^{n}_{t}) - \Phi_{x}(\eta_{x,t},\Xi_{t}),\\[1em]
\Psi_{x}^{n}(t)&\coloneqq\Psi_{x}(\xi^{n}_{x,t},\Xi^{n}_{t}) - \Psi_{x}(\eta_{x,t},\Xi_{t}),\\[1em]
\mathscr{X}^{n}_{x,t}&\coloneqq\xi^{n}_{x,t} - \eta_{x,t}.
\end{align}
Hence using Itô Lemma we begin observing that for all $x\in\gamma$ and all $t\in\timeint$ we have 
\begin{equation}
\begin{alignedat}{2}
|\mathscr{X}^{n}_{x,t}|^{p} = \int_{0}^{t}p(\mathscr{X}^{n}_{x,t})^{p-1}&\Phi_{x}^{n,m}(s)ds + \mathbin{\textcolor{white}{\int_{0}^{t}\frac{p(p-1)}{2}(\bar{\xi}^{n,m}_{x,s})^{p-2}(\Psi_{x}^{n,m}(s))^{2}ds}}&& \\[1em]
&+ \int_{0}^{t}\frac{p(p-1)}{2}(\mathscr{X}^{n}_{x,t})^{p-2}(\Psi_{x}^{n,m}(s))^{2}ds + \\[1em]
&&\!\!\!\!\!\!\!\!\!\!\!\!\!\!\!\!\!\!\!\!\!\!\!\!\!\!\!\!\!\!\!\!\!\!\!\!\!\!\!\!\!\!\!\!\!\!\!\!\!\!\!\!\!\!\!\!\!\! + \int_{0}^{t}p(\mathscr{X}^{n}_{x,t})^{p-1}\Psi_{x}^{n,m}(s)dW_{x}(s). \label{ExEq4}
\end{alignedat}
\end{equation}
Therefore, from inequalities (\ref{QQ1}) - (\ref{QQ2}) we can see that for all $x\in\gamma$ and all $t\in\timeint$ we have
\begin{align}
(\xi^{n}_{x,t} - \eta_{x,t})^{p-1}\Phi_{x}^{n}(t) \leq (b + 1)(\xi^{n}_{x,t} - \eta_{x,t})^{p} + \bar{a}^{2}n_{x}^{3}\sum_{y\in B_{x}}(\xi^{n}_{y,t} - \xi_{y,t})^{p}, \label{ExEq5}
\end{align}
and 
\begin{align}
(\xi^{n}_{x,t} - \eta_{x,t})^{p-2}\bigg{(}\Psi_{x}^{n}(t)\bigg{)}^{2} \leq 2M_{1}^{2}(\xi^{n}_{x,t} - \eta_{x,t})^{p} + 2M_{2}^{2}n_{x}^{4}\sum_{y\in B_{x}}(\xi^{n}_{y,t} - \xi_{y,t})^{p}. \label{ExEq6}
\end{align}

Now, because $B_{x}$ is finite for all $x\in\gamma$ it is clear from equation (\ref{ExEq2}) that 
\begin{align}
\mathbb{E}\bigg{[}\sum_{y\in B_{x}}(\xi^{n}_{y,t} - \xi_{y,t})^{p}\bigg{]},
\end{align}
can be made arbitrary small uniformly on $\timeint$ by taking $n\in\naturals$ sufficiently large. Therefore from inequalities (\ref{ExEq5}) and (\ref{ExEq6}) above we see that we can establish the following inequlities for all $t\in\timeint$
\begin{align}
\mathbb{E}\bigg{[}(\xi^{n}_{x,t} - \eta_{x,t})^{p-1}\Phi_{x}^{n}(t)\bigg{]}  \leq (b + 1)\mathbb{E}\bigg{[}(\xi^{n}_{x,t} - \eta_{x,t})^{p}\bigg{]}  + A_{x}^{n}, \label{ExEq7}
\end{align}
and 
\begin{align}
\mathbb{E}\bigg{[}(\xi^{n}_{x,t} - \eta_{x,t})^{p-2}\bigg{(}\Psi_{x}^{n}(t)\bigg{)}^{2}\bigg{]} \leq 2M_{1}^{2}\mathbb{E}\bigg{[}(\xi^{n}_{x,t} - \eta_{x,t})^{p}\bigg{]} + A_{x}^{n}. \label{ExEq8}
\end{align}
Where for all $x\in\gamma$ we have 
\begin{align}
A_{x}^{n} \coloneqq \max\{ \bar{a}^{2}n_{x}^{3}, \ 2M_{2}^{2}n_{x}^{4}\}\mathbb{E}\bigg{[}\sum_{y\in B_{x}}(\xi^{n}_{y,t} - \xi_{y,t})^{p}\bigg{]}.
\end{align}
Moreover $A_{x}^{n}\to 0$ uniformly on $\timeint$ as $n\to\infty$. Therefore using inequalities (\ref{ExEq7}) and (\ref{ExEq8}) above we can conclude from equation (\ref{ExEq4}) that  for all $x\in\gamma$ and all $t\in\timeint$ we have 
\begin{align}
\mathbb{E}\bigg{[}|\xi^{n}_{x,t} - \eta_{x,t}|^{p}\bigg{]} \leq C\int_{0}^{t}\mathbb{E}\bigg{[}|\xi^{n}_{x,s} - \eta_{x,s}|^{p}\bigg{]}ds +   \bar{A}_{x}^{n}, \label{ExEq9}
\end{align}
where 
\begin{align}
&C \coloneqq p^{2}(b + 1 + 2M_{1}^{2}), \\[1em]
&\bar{A}_{x}^{n} \coloneqq 2p^{2}TA_{x}^{n}.
\end{align}
Finally using Gronwall inequality \ref{ClassicGronTheorem} we see that for all $x\in\gamma$ and all $t\in\timeint$ we have
\begin{align}
\mathbb{E}\bigg{[}|\xi^{n}_{x,t} - \eta_{x,t}|^{p}\bigg{]} \leq A_{x}^{n}e^{CT},
\end{align}
which shows that for all $x\in\gamma$ and uniformly on $\timeint$
\begin{align}
\lim_{n\to\infty}\mathbb{E}\bigg{[}|\xi^{n}_{x,t} - \eta_{x,t}|^{p}\bigg{]} = 0.
\end{align}
Equation (\ref{ExEq3}) now follows via application of Therorem \ref{LPInclusion} hence the proof is complete. 
\end{proof}

In the following theorem we now address uniqueness. 

\begin{theorem} \label{Uniqueness}
Suppose $\zeta\in l^{p}_{\underline{\inta}}$ and $\underline{\inta}<\alpha\in\indexint$. Then stochastic system (\ref{MainSystem}) admits a unique strong solution $\Xi$ in $Z^{p}_{\alpha}$.	
\end{theorem}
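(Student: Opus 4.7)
The plan is to imitate the structure of Theorem \ref{CauchySequenceTheorem}, but applied to the difference of two hypothetical strong solutions. Suppose $\Xi^{1}, \Xi^{2} \in \mathcal{Z}^{p}(\cap)$ are strong solutions of (\ref{MainSystem}), and set $\bar{\Xi} := \Xi^{1} - \Xi^{2}$. By Definition \ref{StrongSol}, both solutions share the initial datum $\zeta$, so for each $x \in \gamma$ the difference $\bar{\xi}_{x}$ satisfies a stochastic integral equation with zero initial condition and integrands $\Phi_{x}^{1,2}(s) := \Phi_{x}(\xi^{1}_{x,s},\Xi^{1}_{s}) - \Phi_{x}(\xi^{2}_{x,s},\Xi^{2}_{s})$ and $\Psi_{x}^{1,2}$ defined analogously. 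Apply It\^o's Lemma \ref{ItoLemma1} to $|\bar{\xi}_{x,t}|^{p}$ exactly as in the proof of Theorem \ref{CauchySequenceTheorem}, then use the dissipative estimate (\ref{DriftLemmaIneq2}) of Lemma \ref{DriftLemma} for $\Phi_{x}^{1,2}$ and the Lipschitz condition (\hyperref[conditionC]{E}) for $\Psi_{x}^{1,2}$. This yields, with constants $B_{1},B_{2}$ identical to those in (\ref{Theorem5EqnA}),
\begin{align}
\mathbb{E}\bigl[|\bar{\xi}_{x,t}|^{p}\bigr] \leq p^{2}B_{1}\int_{0}^{t}\mathbb{E}\bigl[|\bar{\xi}_{x,s}|^{p}\bigr]ds + B_{2}n_{x}^{4}\sum_{y\in B_{x}}\int_{0}^{t}\mathbb{E}\bigl[|\bar{\xi}_{y,s}|^{p}\bigr]ds,\quad t\in\timeint.
\end{align}

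Fix $\underline{\inta}<\tilde\alpha<\alpha\in\indexint$ and define $\varrho_{x}(t):=\mathbb{E}[|\bar{\xi}_{x,t}|^{p}]$. Then the displayed inequality rewrites as $\varrho_{x}(t)\leq \sum_{y\in\gamma}Q_{x,y}\int_{0}^{t}\varrho_{y}(s)ds$ with the nonnegative matrix $Q$ of (\ref{MatrixQ2}). Since $\Xi^{1},\Xi^{2}\in Z^{p}_{\tilde\alpha}$ the map $\varrho$ takes values in $l^{1}_{\tilde\alpha}$, and by Theorem \ref{Continuity Lemma} one verifies $\varrho\in \mathcal{C}([0,T],l^{1}_{\tilde\alpha})$. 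The bound $|Q_{x,y}|\leq Dn_{x}^{4}$ together with Theorem \ref{OvsMapTheorem} shows $Q\in \mathcal{O}(\mathscr{L}^{1},L,\tfrac{1}{2})$, so all hypotheses of Theorem \ref{GronTheorem} are satisfied, applied with initial datum $z_{\underline{\inta}}\equiv 0$.

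The unique continuous solution of the integral equation $f(t)=0+\int_{0}^{t}Q(f(s))ds$ furnished by Theorem \ref{ABEUtheorem} is, by inspection, $f\equiv 0$. Hence the Comparison Theorem \ref{GronTheorem} yields $\varrho_{x}(t)\leq 0$ for every $x\in\gamma$ and $t\in\timeint$; combined with $\varrho_{x}(t)\geq 0$, this forces $\mathbb{E}[|\bar{\xi}_{x,t}|^{p}]=0$. Consequently $\bar{\xi}_{x,t}=0$ $\mathbb{P}$-a.s.\ for every $(x,t)$; since $\gamma$ is countable, as in the remark inside the proof of Theorem \ref{Existence}, we conclude $\Xi^{1}=\Xi^{2}$ in $Z^{p}_{\alpha}$.

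The routine part is rerunning the It\^o and dissipativity computations of Theorem \ref{CauchySequenceTheorem}, which only need to be copied with the forcing term $A_{x}$ set to zero. The main conceptual obstacle is applying the Comparison Theorem legitimately: one must verify that $\varrho$ lands in the correct scale space $l^{1}_{\tilde\alpha}$ and is continuous there, and that the trivial solution is genuinely the unique continuous fixed point of the associated Ovsjannikov integral equation with zero data, so that the comparison collapses $\varrho$ to zero rather than merely bounding it.
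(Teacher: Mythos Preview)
Your proposal is correct and follows essentially the same route as the paper's proof: set up the difference $\bar{\Xi}$, apply It\^o's Lemma and the dissipativity/Lipschitz estimates to obtain the integral inequality for $\varrho_{x}(t)=\mathbb{E}[|\bar{\xi}_{x,t}|^{p}]$, recast it via the matrix $Q$, and then invoke the comparison machinery with zero initial data. The only cosmetic difference is that the paper concludes via Corollary~\ref{GronCorollary} (getting the weighted sum bounded by $K(\tilde\alpha,\alpha)\cdot 0$), whereas you appeal directly to Theorem~\ref{GronTheorem} and the observation that $f\equiv 0$ is the unique fixed point; both are equivalent.
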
 

\begin{proof}
For contradiction, using Theorem \ref{Existence}, suppose that $\Xi^{1}$ and $\Xi^{2}$ are distinct strong solutions of the system (\ref{MainSystem}) in $Z^{p}_{\alpha}$. Now let us define a map $\bar{\Xi}\in Z^{p}_{\alpha}$ via the following formula 
\begin{align}
\bar{\Xi}_{t} \coloneqq \Xi^{1}_{t}-\Xi^{2}_{t}.
\end{align}
We see that almost surely we have 
\begin{align}
\bar{\xi}_{x,t} = \int_{0}^{t}\Phi_{x}(\xi^{1}_{x,s},\Xi^{1}_{s})-\Phi_{x}(\xi^{2}_{x,s},\Xi^{2}_{s})ds + \int_{0}^{t}\Psi_{x}(\xi^{1}_{x,s},\Xi^{1}_{s})-\Psi_{x}(\xi^{2}_{x,s},\Xi^{2}_{s})dW_{x}(s). 
\end{align}
Now as in the proof of Theorem \ref{CauchySequenceTheorem} we deduce, using Ito Lemma, that 
\begin{equation}
\begin{alignedat}{2}
|\bar{\xi}_{x,t}|^{p} = \int_{0}^{t}p(\bar{\xi}_{x,s})^{p-1}\Phi_{x}^{1,2}(s)ds &+ \int_{0}^{t}\frac{p(p-1)}{2}(\bar{\xi}_{x,s})^{p-2}(\Psi_{x}^{1,2}(s))^{2}ds \ + \\  
& \quad\quad\quad\quad\quad\quad\quad\quad\quad + \int_{0}^{t}p(\bar{\xi}_{x,s})^{p-1}\Psi_{x}^{1,2}(s)dW_{x}(s), 
\end{alignedat}
\end{equation}
where we have chosen for all $t\in\timeint$ to let
\begin{align}
\Phi_{x}^{1,2}(t)&\coloneqq\Phi_{x}(\xi^{1}_{x,t},\Xi^{1}_{t}) - \Phi_{x}(\xi^{2}_{x,t},\Xi^{2}_{t}), \\
\Psi_{x}^{1,2}(t)&\coloneqq\Psi_{x}(\xi^{1}_{x,t},\Xi^{1}_{t}) - \Psi_{x}(\xi^{2}_{x,t},\Xi^{2}_{t}).
\end{align}
Therefore we see that
\begin{align}
\mathbb{E}\bigg{[}|\bar{\xi}_{x,t}|^{p}\bigg{]} \leq B_{1}(p,b,c,M_{1})\int_{0}^{t}\mathbb{E}\bigg{[}|\bar{\xi}_{x,s}|^{p}\bigg{]}ds+B_{2}(x,p,M_{2})\sum_{y\in B_{x}}\int_{0}^{t}\mathbb{E}\bigg{[}|\bar{\xi}_{y,s}|^{p}\bigg{]}ds,  \label{Theorem7EqnA}
\end{align}
where
\begin{align}
&B_{1}(p,b,c,M_{1}) \coloneqq pb + \frac{p}{2} + M_{1}^{2}p(p-1), \\
&B_{2}(x,p,M_{2}) \coloneqq p\Tilde{a}^{2}_{x}+n_{x}^{4}(p+M_{2}^{2}p(p-1)).
\end{align}

Let us now use inequality (\ref{Theorem7EqnA}) to define a map $\kappa:\timeint\to\reals^{\gamma}$ via the following formula
\begin{align}
\kappa_{x}(t)\coloneqq\mathbb{E}\bigg{[}|\bar{\xi}_{x,t}|^{p}\bigg{]},
\end{align}
and deduce from inequality (\ref{Theorem7EqnA}) that 
\begin{align}
\kappa_{x}(t) \leq \sum_{y\in\gamma}Q_{x,y}\int_{0}^{t}\kappa_{y}(s)ds,
\end{align}
where for all $x,y\in\gamma$ we have
\begin{align}
Q_{x,y} = \begin{cases} 
B_{1}(p,b,c,M_{1}) + B_{2}(x,p,M_{2}), &x=y,\\ 
B_{2}(x,p,M_{2}), &0<|x-y|\leq p,\\
0, &|x-y|> p.
\end{cases}
\end{align}
Fixing $\underline{\inta}<\tilde{\alpha}\leq\alpha\in\indexint$ we can now deduce the following facts.
\begin{enumerate}
	\item Using Theorem \ref{Continuity Lemma}, we see that $\kappa\in C([0,T], l^{1}_{\tilde{\alpha}})$,
	\item From equation (\ref{Matrix1}) we see that there exists a constant $C$ such that $|Q_{x,y}|\leq Cn_{x}$. Therefore $Q$ is the Ovsiannikov operator on $\mathcal{L}^{1}$. 
\end{enumerate}
Therefore we can now use Theorem \ref{GronTheorem} and Corollary \ref{GronCorollary} to conclude that
\begin{align}
\sum_{x\in\gamma}e^{-\alpha|x|}\sup_{t\in\timeint}\kappa_{x}(t) \leq K(\tilde{\alpha}, \alpha)\sum_{x\in\gamma}e^{-\tilde{\alpha}|x|}|A_{x}|,
\end{align}
where $A_{x}$ is the zero sequence in $ l^{1}_{\tilde{\alpha}}$. Therefore we can establish that
\begin{align}
\sup_{t\in\timeint}\mathbb{E}\bigg{[}\sum_{x\in\gamma}e^{-\alpha|x|}|\bar{\xi}_{x,t}|^{p}\bigg{]} = 0. \label{Theorem7EqnD}
\end{align}
Hence
\begin{align}
||F^{1} - F^{2}||_{Z_{\alpha}^{p}} = 0,
\end{align}
and the proof is complete.
\end{proof}

\newpage 
\section{Complementary Theory}\label{CompSection}
Let us begin this section by recalling a couple of earlier statements and definitions. \\[1em]
Throughout this section assume the following. 
\begin{enumerate}
	\item We fix $T\in\reals^{+}$, define $\mathcal{T}\coloneqq [0,T]$ and work on a complete filtered probability space
	\begin{align}
	\mathbf{P}\coloneqq (\Omega,\mathcal{F},\mathbb{P},\mathbb{F}).
	\end{align}
	\begin{enumerate}
		\item Completeness of $\mathbf{P}$ implies that for all $t\in\timeint$, $\mathbf{P}_{t}\coloneqq (\Omega,\mathcal{F}_{t},\mathbb{P})$ is complete.
		\item Filtration $\mathbb{F} \coloneqq \{\mathcal{F}_{t}\}_{t\in\mathcal{T}}$ is assumed to be right continuous. That is for all $t\in\timeint$,
		\begin{align}
		\mathcal{F}_{t} = \bigcap_{n\in\naturals}\mathcal{F}_{t+\frac{1}{n}}.
		\end{align}
	\end{enumerate}
	\item We fix a measure space $\mathbf{M}\coloneqq (\mathcal{T},\mathscr{B}(\mathcal{T}),\mu)$, where $\mu$ is a Lebesgue measure and $\mathscr{B}(\mathcal{T})$ is a Borel $\sigma-$algebra.
	\item We now agree to work on a fixed product measure space
	\begin{align}
	\mathbf{M}\mathbf{P}\coloneqq (\overline{\Omega}\coloneqq\mathcal{T}\times\Omega,\overline{\mathcal{F}}\coloneqq\mathscr{B}(\mathcal{T})\times\mathcal{F},\overline{\mathbb{P}}\coloneqq\mu\times\mathbb{P}).
	\end{align}
	\item Given two measurable spaces $\mathbf{A}$ and $\mathbf{B}$ we denote by $\mathcal{M}(\mathbf{A}, \mathbf{B})$ the space of all measurable maps from $\mathbf{A}$ to $\mathbf{B}$. In particular, the following spaces will be frequently mentioned
	\begin{enumerate}
		\item $\mathbf{M}^{p}_{\inta}\coloneqq (l_{\inta}^{p},\mathscr{B}(l_{\inta}^{p}))$, where $l_{\inta}^{p}$ was introduced by Definition \ref{DefSequenceSpaceScale}. 
		\item $\mathbf{M}^{\reals}\coloneqq (\reals,\mathscr{B}(\reals))$,
	\end{enumerate}
\end{enumerate}

Here is how we will understand and denote stochastic processes in this section.
\begin{definition} \label{StochProcess1}
	Let $Y$ be a normed linear space and $\mathbf{Y}\coloneqq(Y,\mathcal{B})$ be a measurable space. \textbf{Stochastic process} is an element of $\mathcal{M}(\mathbf{M}\mathbf{P}, \mathbf{Y})$. In particular for all $t\in\timeint$ and all $\omega\in\Omega$ 
	\[\mathcal{M}(\mathbf{P}, \mathbf{Y}) \ni \xi_{t}(\cdot):\Omega\to Y,\]
	\[\mathcal{M}(\mathbf{M}, \mathbf{Y}) \ni \xi_{\cdot}(\omega):\timeint\to Y.\]
	For brevity we shall denote by $\mathcal{S}(\mathbf{Y})$ the set of all stochastic processes.
\end{definition}

Following Bnach spaces will be frequently used. 
\begin{definition} \label{LPSpaces1}
	Let $\mathscr{X}\coloneqq(X,\mathcal{A},\eta)$ be a measure space, $Y$ be a normed linear space, with norm denoted by $\|\cdot\|_{Y}$, and $\mathscr{Y}\coloneqq(Y,\mathcal{B})$ be a measurable space. For all $p\in\reals^{++}$ we define the following Banach spaces.
	\begin{align}
	\mathcal{L}^{p}(\mathscr{X},\mathscr{Y}) \coloneqq \left\{f:X\to Y \  \begin{tabular}{|l}
	\ $\|f\|_{\mathcal{L}^{p}(\mathscr{X},\mathscr{Y})}\coloneqq\left( \bigint_{\!\!\!\!X}\|f\|^{p}_{Y}d\eta \right)^{\frac{1}{p}} < \infty$. \\
	\ $f\in\mathcal{M}(\mathscr{X},\mathscr{Y})$.
	\end{tabular}\right\}
	\end{align}
\end{definition}
Let us also make the following definitions
\begin{align}
&S_{1}\coloneqq \bigg{\{}K\subset\mathcal{T}\times\Omega \ \bigg{|}\ K= (s,t]\times A\text{ where } s<t\in\timeint\land A\in\mathcal{F}_{s} \bigg{\}}, \\[1em]
&S_{2}\coloneqq\bigg{\{}K\subset\mathcal{T}\times\Omega \ \bigg{|}\ K= \{0\}\times A\text{ where } A\in\mathcal{F}_{0} \bigg{\}}, \\[1em]
&\mathcal{P}\coloneqq\sigma(S_{1}\cup S_{2}), \\[1em]
&\mathbb{L}\coloneqq \left\{\xi\in\mathcal{S}(\mathbf{M}^{\reals})  \  \begin{tabular}{|l}
\ $\text{trajectories of }\xi\text{ are left continuous, almost surely}$. \\
\ $\xi \ \text{is adapted to} \  \mathbb{F}$.
\end{tabular}\right\}. 
\end{align}
We note that $\mathcal{P}$ above is the smallest $\sigma-$algebra with respect to which all elements of $\mathbb{L}$ are measurable. 
\begin{enumerate}
\setcounter{enumi}{4}
	\item We now fix the following product measure space
	\begin{align}
	\overline{\mathbf{M}\mathbf{P}}\coloneqq (\overline{\Omega},\mathcal{P},\overline{\mathbb{P}}).
	\end{align}
\end{enumerate}

\newpage
\subsection{Expectation, Measurability and Related Inequalities}\label{ES}
Unless stated otherwise, information in this subsection is based on \cite{SB,RLS}. For brevity and convenience in this subsection we will be working with the following definition. 
\begin{definition}\label{NormalLPSpaces} \quad \\
Suppose $\mathbf{X}\coloneqq(X,\mathcal{A},\mu)$ is a measure spaces. For all $p\in\reals^{++}$ we make the following definition.
\begin{align}
&\mathcal{L}^{p}\coloneqq \mathcal{L}^{p}(\mathbf{P}, \mathbf{M}^{\reals}),\\[1em]
&\mathcal{L}^{p}(\mathbf{X})\coloneqq \mathcal{L}^{p}(\mathbf{X}, \mathbf{M}^{\reals}),\\[1em]	
&\mathcal{L}^{p}_{+}\coloneqq \{f\in\mathcal{L}^{p}|f\geq 0\ \text{almost surely}\},\\[1em]	
&\mathcal{L}^{p}_{+}(\mathbf{X})\coloneqq \{f\in\mathcal{L}^{p}(\mathbf{X})|f\geq 0\ \text{almost surely}\}.
\end{align}
\end{definition}

\begin{theorem}[Borel–Cantelli Theorem] \label{BorelCantelli} \quad \\
Let $\{A_{i}\}_{i\in\naturals}$ be a sequence of measurable subsets of $\Omega$. Then
\begin{align}
\sum_{i=0}^{\infty}\mathbb{P}(A_{i}) < \infty \implies \mathbb{P}\bigg{(}\bigcap_{j=0}^{\infty}\bigcup_{i=j}^{\infty}A_{i}\bigg{)} = 0.
\end{align}
\end{theorem}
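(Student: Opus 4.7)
The plan is to exploit countable subadditivity of $\mathbb{P}$ together with continuity of a finite measure from above along a decreasing sequence; the convergence hypothesis then plays the role of making the tail sums vanish.

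First, I would introduce for each $j\in\naturals$ the set
\begin{align}
B_{j}\coloneqq\bigcup_{i=j}^{\infty}A_{i},
\end{align}
and observe two elementary facts: the sequence $\{B_{j}\}_{j\in\naturals}$ is decreasing (since enlarging $j$ drops terms from the union), and its intersection is precisely $\bigcap_{j=0}^{\infty}\bigcup_{i=j}^{\infty}A_{i}$, the set whose probability we want to estimate. Each $B_{j}$ is measurable as a countable union of measurable sets.

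Next, countable subadditivity of $\mathbb{P}$ gives, for every $j\in\naturals$, the bound
\begin{align}
\mathbb{P}(B_{j})\ \leq\ \sum_{i=j}^{\infty}\mathbb{P}(A_{i}).
\end{align}
The hypothesis $\sum_{i=0}^{\infty}\mathbb{P}(A_{i})<\infty$ means that the tail sums on the right converge to $0$ as $j\to\infty$, so $\mathbb{P}(B_{j})\to 0$.

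Finally, since $\mathbb{P}$ is a finite measure (indeed $\mathbb{P}(B_{0})\leq 1<\infty$) and $\{B_{j}\}$ is a decreasing sequence, continuity of measure from above applies and yields
\begin{align}
\mathbb{P}\bigg{(}\bigcap_{j=0}^{\infty}B_{j}\bigg{)}\ =\ \lim_{j\to\infty}\mathbb{P}(B_{j})\ =\ 0,
\end{align}
which is exactly the conclusion. There is no genuine obstacle here: the argument is a routine combination of subadditivity, tail convergence of a summable series, and downward continuity of a finite measure. The only point requiring any care is verifying the finiteness condition needed to invoke continuity from above, which is automatic because $\mathbb{P}$ is a probability measure.
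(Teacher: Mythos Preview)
Your argument is correct and is the standard proof of the first Borel--Cantelli lemma: define the decreasing tails $B_{j}=\bigcup_{i\geq j}A_{i}$, bound $\mathbb{P}(B_{j})$ by the tail sum via subadditivity, and then use continuity from above of the finite measure $\mathbb{P}$ to pass to the intersection. Nothing is missing.

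As for comparison with the paper: there is nothing to compare against. The paper lists this theorem in its ``Complementary Theory'' section as a background result quoted from the references \cite{SB,RLS} and does not supply a proof of its own. Your write-up therefore goes beyond what the paper provides; the approach you give is exactly the textbook one those references would contain.
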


\begin{corollary} \label{BorelCantelliCor1} \quad \\
Let $\{X_{i}\}_{i\in\naturals}$ be a sequence of real valued random variables and let $X$ be another real valued random variable. For all $\epsilon > 0$ and all $i\in\naturals$ define also the followig measurable sets
\[A_{i}(\epsilon) = \{ \omega\in\Omega \ | \ |X_{i}(\omega)-X(\omega)| \geq \epsilon\}.\] 
\end{corollary}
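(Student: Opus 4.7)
The statement as displayed ends with only the definition of the exceptional sets $A_i(\epsilon)$, but in view of its title and its placement right after Theorem \ref{BorelCantelli}, the intended conclusion must be the standard almost-sure convergence criterion: if $\sum_{i=0}^{\infty}\mathbb{P}(A_i(\epsilon))<\infty$ for every $\epsilon>0$, then $X_i\to X$ almost surely. The plan is to apply Theorem \ref{BorelCantelli} to the family $\{A_i(\epsilon)\}_{i\in\naturals}$ for each fixed $\epsilon$, and then use a countable exhaustion $\epsilon=1/k$ to upgrade ``for each $\epsilon$'' to ``simultaneously for all $\epsilon$'', which is what a.s. convergence requires.

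First I would fix $\epsilon>0$ and apply Theorem \ref{BorelCantelli} directly to $\{A_i(\epsilon)\}_{i\in\naturals}$. The hypothesis $\sum_{i=0}^{\infty}\mathbb{P}(A_i(\epsilon))<\infty$ yields
\[
\mathbb{P}\!\left(N(\epsilon)\right)=0,\qquad N(\epsilon)\coloneqq\bigcap_{j=0}^{\infty}\bigcup_{i=j}^{\infty}A_i(\epsilon).
\]
The complement $N(\epsilon)^{c}$ is, by definition of the $\limsup$ set, exactly the set of $\omega\in\Omega$ for which there exists $j=j(\omega,\epsilon)\in\naturals$ such that $|X_i(\omega)-X(\omega)|<\epsilon$ for all $i\geq j$. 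In other words, on $N(\epsilon)^{c}$ the sequence $\{X_i(\omega)\}$ is eventually within $\epsilon$ of $X(\omega)$.

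Next I would run this argument along a countable sequence of tolerances $\epsilon_k\coloneqq 1/k$, $k\in\naturals$, and define the null set
\[
N\coloneqq\bigcup_{k\in\naturals}N(1/k).
\]
Countable subadditivity gives $\mathbb{P}(N)=0$. For any $\omega\in N^{c}$ and any $k\in\naturals$ we have $\omega\in N(1/k)^{c}$, so there exists $j_k(\omega)$ with $|X_i(\omega)-X(\omega)|<1/k$ for all $i\geq j_k(\omega)$. Since $k$ is arbitrary, $X_i(\omega)\to X(\omega)$. Hence $X_i\to X$ on $N^{c}$, which has full measure, proving almost sure convergence.

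There is no real obstacle: the only conceptual point is the countable-$\epsilon$ trick in the second paragraph, which is needed because the Borel--Cantelli lemma only handles one $\epsilon$ at a time, whereas almost sure convergence is a statement that must hold uniformly in $\epsilon$ on a single set of full measure. Everything else is a direct invocation of Theorem \ref{BorelCantelli} together with countable subadditivity of $\mathbb{P}$, so no probabilistic or analytic machinery beyond what has already been introduced in this subsection is required.
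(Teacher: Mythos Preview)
Your proof is correct and is exactly the standard argument one would expect. Note, however, that the paper does not actually supply a proof of this corollary: the entire subsection is prefaced by ``Unless stated otherwise, information in this subsection is based on \cite{SB,RLS}'', and Corollary~\ref{BorelCantelliCor1} is simply recorded as a known consequence of Theorem~\ref{BorelCantelli} without any argument. So there is nothing to compare against; your proposal fills in the omitted details in the expected way.
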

If for all $\epsilon > 0$
\begin{align}
\sum_{i=0}^{\infty}\mathbb{P}(A_{i}(\epsilon)) < \infty,
\end{align}
then $X_{i} \overset{a.s.}{\to} X$.

\begin{corollary} \label{BorelCantelliCor2} \quad \\
Let $\{X_{i}\}_{i\in\naturals}$ be a sequaence of real valued random variables. For all $\epsilon > 0$ and all $i\in\naturals$ define also the followig measurable sets
\[A_{i}(\epsilon) = \{ \omega\in\Omega \ | \ |X_{i+1}(\omega)-X_{i}(\omega)| \geq \epsilon\}.\] 
\end{corollary}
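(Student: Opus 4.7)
The plan is to deduce almost-sure Cauchy convergence (and hence almost-sure convergence to some real-valued random variable) of the sequence $\{X_i\}$ from the Borel--Cantelli Theorem \ref{BorelCantelli} applied to the sets $A_i(\epsilon)$, in direct parallel with Corollary \ref{BorelCantelliCor1}. Whereas Corollary \ref{BorelCantelliCor1} used Borel--Cantelli to control the distance to an already-given limit $X$, here the same mechanism is applied to the differences of consecutive terms, which will prevent the sequence from oscillating on any fixed scale infinitely often.

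The first step is to select a summable sequence of thresholds $\{\epsilon_i\}_{i\in\naturals}$ with $\sum_{i}\epsilon_i < \infty$, for instance $\epsilon_i = 2^{-i}$, and assume (as the natural hypothesis paralleling Corollary \ref{BorelCantelliCor1}) that $\sum_{i} \mathbb{P}(A_i(\epsilon_i)) < \infty$. An appeal to Theorem \ref{BorelCantelli} produces a null set $N \in \mathcal{F}$, namely $N = \bigcap_{j=0}^{\infty}\bigcup_{i=j}^{\infty} A_i(\epsilon_i)$, with $\mathbb{P}(N) = 0$ such that for every $\omega \notin N$ there exists $I(\omega) \in \naturals$ with $|X_{i+1}(\omega) - X_i(\omega)| < \epsilon_i$ for all $i \geq I(\omega)$.

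The second step is a routine telescoping estimate. For $\omega \notin N$ and $m > n \geq I(\omega)$, the triangle inequality gives
\[
|X_m(\omega) - X_n(\omega)| \ \leq\ \sum_{i=n}^{m-1}|X_{i+1}(\omega) - X_i(\omega)| \ \leq\ \sum_{i=n}^{\infty}\epsilon_i,
\]
and the summability of $\{\epsilon_i\}$ forces the right-hand side to tend to zero as $n \to \infty$. Hence $\{X_i(\omega)\}$ is Cauchy in $\reals$ for every $\omega \notin N$, so we may define $X(\omega) := \lim_{i\to\infty}X_i(\omega)$ on $\Omega \setminus N$ and, say, $X(\omega) := 0$ on $N$. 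Completeness of the probability space $\mathbf{P}$ (assumed in the opening of Section \ref{CompSection}) guarantees that this pointwise almost-sure limit is $\mathcal{F}$-measurable, so $X$ is a genuine real-valued random variable and $X_i \overset{a.s.}{\to} X$.

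No serious obstacle is anticipated; the entire argument is a standard application of Borel--Cantelli combined with a telescoping bound. The only subtlety worth flagging is the correct interpretation of the hypothesis: plain $\sum_{i}\mathbb{P}(A_i(\epsilon)) < \infty$ for each fixed $\epsilon > 0$ is not by itself sufficient to force Cauchy behaviour (as a deterministic counterexample with $X_{i+1}-X_i = 1/i$ shows), so one must read the summability as applying either to a summable threshold sequence $\{\epsilon_i\}$ or, equivalently, as a uniform bound strong enough to allow the telescoping estimate above to close. Once the hypothesis is stated in that form, the proof reduces to the two steps outlined.
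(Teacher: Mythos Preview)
The paper gives no proof of this corollary; it sits in Section~\ref{CompSection} among standard background results quoted from \cite{SB,RLS} without argument, so there is nothing in the paper to compare against line by line.

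That said, your proposal is more than adequate, and in one respect it is sharper than the paper itself. You are right that the hypothesis exactly as written in the paper --- summability of $\sum_i \mathbb{P}(A_i(\epsilon))$ for each \emph{fixed} $\epsilon>0$ --- does not imply the conclusion: your deterministic example $X_{i+1}-X_i = 1/i$ satisfies that hypothesis for every $\epsilon>0$ (only finitely many terms are nonzero) yet $X_i$ diverges. The correct and standard formulation, which you supply, requires a \emph{summable} threshold sequence $(\epsilon_i)$ with $\sum_i \mathbb{P}(A_i(\epsilon_i))<\infty$; Borel--Cantelli then gives $|X_{i+1}-X_i|<\epsilon_i$ eventually a.s., and the telescoping bound closes. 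Your two-step outline (Borel--Cantelli on the events $A_i(\epsilon_i)$, then telescoping against the tail of $\sum \epsilon_i$) is exactly the textbook argument and is correct as written. The appeal to completeness for measurability of the limit is fine, though Theorem~\ref{measurablelimit} in the same section would do the job without it.
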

If for all $\epsilon > 0$
\begin{align}
\sum_{i=0}^{\infty}\mathbb{P}(A_{i}(\epsilon)) < \infty,
\end{align}
then almost surely $\{X_{i}\}_{i\in\naturals}$ is a Cauchy sequence.

\begin{theorem}[Minkowski’s inequality] \label{MinkowskiIneq} \quad \\
Let $p\in[1,\infty)$ and also let $f,g\in\mathcal{L}^{p}$. Then $f + g\in\mathcal{L}^{p}$ and 
\begin{align}
\|f + g\|_{\mathcal{L}^{p}} \leq \|f\|_{\mathcal{L}^{p}} + \|g\|_{\mathcal{L}^{p}}.
\end{align}
\end{theorem}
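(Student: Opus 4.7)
The plan is to follow the classical two-step proof of Minkowski's inequality, adapted to the abstract measure-space framework of Definition \ref{NormalLPSpaces}. The argument splits into (i) showing $f+g \in \mathcal{L}^p$, and (ii) establishing the norm inequality, with the case $p=1$ handled separately from $p > 1$.

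For (i), measurability of $f+g$ is immediate since the sum of two real-valued measurable maps is measurable. To verify integrability, I would apply the elementary power inequality $|a+b|^p \leq 2^{p-1}(|a|^p + |b|^p)$ valid for $p\geq 1$ (supplied by Theorem \ref{powerinequalitytheorem}, already invoked elsewhere in the paper), integrate, and use linearity of the integral to conclude
\[
\|f+g\|_{\mathcal{L}^p}^p \leq 2^{p-1}\bigl(\|f\|_{\mathcal{L}^p}^p + \|g\|_{\mathcal{L}^p}^p\bigr) < \infty.
\]

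For (ii), the case $p = 1$ reduces immediately to the pointwise triangle inequality $|f+g| \leq |f| + |g|$ followed by monotonicity and linearity of the integral. For $p > 1$, the main step is to write
\[
|f+g|^p = |f+g|\cdot|f+g|^{p-1} \leq |f|\cdot|f+g|^{p-1} + |g|\cdot|f+g|^{p-1},
\]
integrate, and then apply H\"older's inequality with conjugate exponent $q = p/(p-1)$ to each of the two terms on the right. Since $q(p-1) = p$, the common factor $\bigl\||f+g|^{p-1}\bigr\|_{\mathcal{L}^q}$ simplifies to $\|f+g\|_{\mathcal{L}^p}^{p-1}$, and dividing both sides by this quantity (after disposing of the trivial case $\|f+g\|_{\mathcal{L}^p} = 0$ separately) yields the desired bound.

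The main obstacle is not the argument itself, which is entirely classical, but rather the availability of H\"older's inequality in the paper's framework. If it has not been established earlier in the Complementary Theory section, I would insert it as an auxiliary lemma, proved via Young's inequality $ab \leq a^p/p + b^q/q$ applied pointwise to the normalised integrands $|f|/\|f\|_{\mathcal{L}^p}$ and $|g|/\|g\|_{\mathcal{L}^q}$ and then integrated. A minor bookkeeping point worth flagging is that since the paper works with genuine functions rather than equivalence classes (as noted in the remark after Definition \ref{LPSpaces}), no quotienting is required, and one should verify that the trivial case $\|f+g\|_{\mathcal{L}^p} = 0$ corresponds to $f+g = 0$ almost everywhere, so that the stated inequality still holds.
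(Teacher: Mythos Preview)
Your proposal is the classical textbook argument and is correct. However, there is nothing to compare against: the paper does not prove Minkowski's inequality. The statement appears in Subsection~\ref{ES} (Complementary Theory), which opens with the disclaimer that ``information in this subsection is based on \cite{SB,RLS}'', and the theorem is stated without proof as a standard result imported from those references. So your write-up would supply a proof where the paper deliberately omits one.
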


\begin{theorem} \label{RieszTheorem1} \quad \\
Let $p\in[1,\infty)$ and also let $\{f_{n}\}_{n\in\naturals} \in\mathcal{L}^{p}$. In addition, suppose that there exists $g\in\mathcal{L}^{p}$ such that $|f_{n}|<g$ for all $n\in\naturals$ and almost surely $\lim_{n\to\infty}f_{n}(\omega) = f(\omega)$. Then
\begin{align}
f\in\mathcal{L}^{p} \quad \text{and} \quad \lim_{n\to\infty}\|f_{n} - f\|_{\mathcal{L}^{p}} \ \to \ 0. 
\end{align}
\end{theorem}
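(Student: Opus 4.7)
The plan is to reduce this result to the classical scalar dominated convergence theorem, which I take for granted in this complementary subsection. First I would address the claim $f \in \mathcal{L}^{p}$. Measurability of $f$ follows from the hypothesis of almost sure pointwise convergence together with the completeness of $\mathbf{P}$ (assumed at the start of Section \ref{CompSection}): any modification of $f$ on a $\mathbb{P}$-null set can be absorbed without destroying measurability, so $f$ is $\mathcal{F}$-measurable. Passing to the pointwise limit in the uniform bound $|f_{n}| < g$ on the event of convergence yields $|f| \leq g$ almost surely, hence
\begin{align}
\int_{\Omega} |f|^{p}\,d\mathbb{P} \;\leq\; \int_{\Omega} g^{p}\,d\mathbb{P} \;<\; \infty,
\end{align}
so $f \in \mathcal{L}^{p}$.

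For the convergence in $\mathcal{L}^{p}$ norm I would set $h_{n} \coloneqq |f_{n}-f|^{p}$ and find an integrable majorant. Combining the triangle inequality with Theorem \ref{powerinequalitytheorem} (which supplies an inequality of the form $(a+b)^{p} \leq 2^{p-1}(a^{p}+b^{p})$), one has
\begin{align}
h_{n} \;\leq\; 2^{p-1}\bigl(|f_{n}|^{p} + |f|^{p}\bigr) \;\leq\; 2^{p}\, g^{p},
\end{align}
almost surely, and $2^{p} g^{p}$ is integrable because $g \in \mathcal{L}^{p}$. On the full-measure event where $f_{n}(\omega) \to f(\omega)$, clearly $h_{n}(\omega) \to 0$. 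Applying the classical scalar dominated convergence theorem to $\{h_{n}\}_{n\in\naturals}$ with majorant $2^{p} g^{p}$ then yields
\begin{align}
\lim_{n\to\infty} \int_{\Omega} |f_{n}-f|^{p}\,d\mathbb{P} \;=\; 0,
\end{align}
which is precisely the statement $\|f_{n} - f\|_{\mathcal{L}^{p}} \to 0$.

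The only step I would treat with any real care is the measurability of the limit $f$: because convergence is only almost sure, one cannot conclude $\mathcal{F}$-measurability without invoking completeness of $\mathbf{P}$, and one should explicitly redefine $f$ to be, say, zero on the null exceptional set so that the statements ``$|f| \leq g$ almost surely'' and ``$f \in \mathcal{L}^{p}$'' are unambiguous. Once this bookkeeping is done, the remainder is a mechanical reduction to the scalar DCT via the pointwise domination $h_{n} \leq 2^{p} g^{p}$; no stochastic or structural features of the process space play any role here.
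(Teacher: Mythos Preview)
Your argument is correct and is exactly the standard textbook proof of the $\mathcal{L}^{p}$ dominated convergence theorem. Note, however, that the paper does not actually supply its own proof of this statement: Theorem \ref{RieszTheorem1} sits in the ``Complementary Theory'' subsection, which is prefaced by the remark that the results there are taken from \cite{SB,RLS} and are stated without proof. So there is no paper-proof to compare against; your reduction to the scalar DCT via the majorant $2^{p}g^{p}$, together with the completeness-based handling of measurability of the a.s.\ limit, is precisely the argument one finds in those references.
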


\begin{theorem} \label{RieszTheorem2} \quad \\
 Let $p\in[1,\infty)$ and also let $\{f_{n}\}_{n\in\naturals}\in\mathcal{L}^{p}$. In addition, let $f\in\mathcal{L}^{p}$ and suppose that almost surely $\lim_{n\to\infty}f_{n}(\omega) = f(\omega)$. Then
\begin{align}
\lim_{n\to\infty}\|f_{n} - f\|_{\mathcal{L}^{p}} \ \to \ 0 \quad \iff \quad \lim_{n\to\infty}\|f_{n}\|_{\mathcal{L}^{p}} \ \to \ \|f\|_{\mathcal{L}^{p}}.
\end{align}
\end{theorem}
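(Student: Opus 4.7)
The proof splits into two implications, the first being essentially immediate and the second requiring a Scheffé-type argument via Fatou's lemma.

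For the forward direction, I would apply Minkowski's inequality (Theorem \ref{MinkowskiIneq}) in its reverse form. Writing $f_n = f + (f_n - f)$ and $f = f_n + (f - f_n)$ and applying Minkowski to each decomposition gives
\[
\bigl|\|f_n\|_{\mathcal{L}^p} - \|f\|_{\mathcal{L}^p}\bigr| \leq \|f_n - f\|_{\mathcal{L}^p},
\]
so convergence in $\mathcal{L}^p$ forces convergence of the norms.

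For the reverse direction, the almost-sure hypothesis does not yield a single dominating function for $|f_n - f|^p$, so Theorem \ref{RieszTheorem1} is not directly applicable. Instead, I plan to invoke Fatou's lemma on the auxiliary sequence
\[
g_n \coloneqq 2^{p-1}\bigl(|f_n|^p + |f|^p\bigr) - |f_n - f|^p,
\]
which is pointwise non-negative almost surely thanks to the elementary convexity bound $|a-b|^p \leq 2^{p-1}(|a|^p + |b|^p)$. Since $f_n \to f$ almost surely, we have $\liminf_{n\to\infty} g_n = 2^p |f|^p$ almost surely, and Fatou's lemma combined with the fact that $\|f_n\|_{\mathcal{L}^p}^p \to \|f\|_{\mathcal{L}^p}^p$ by hypothesis gives
\[
2^p\|f\|_{\mathcal{L}^p}^p \leq 2^{p-1}\lim_{n\to\infty}\|f_n\|_{\mathcal{L}^p}^p + 2^{p-1}\|f\|_{\mathcal{L}^p}^p - \limsup_{n\to\infty}\|f_n - f\|_{\mathcal{L}^p}^p = 2^p\|f\|_{\mathcal{L}^p}^p - \limsup_{n\to\infty}\|f_n - f\|_{\mathcal{L}^p}^p.
\]
Rearranging forces $\limsup_{n\to\infty}\|f_n - f\|_{\mathcal{L}^p}^p \leq 0$, which yields the desired convergence.

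The main obstacle is the reverse implication, since direct dominated convergence is unavailable: no uniform envelope for $\{|f_n|^p\}$ is provided by the hypotheses. The decisive trick is the Scheffé-style rearrangement producing a non-negative combination, after which the hypothesis on the norms is precisely what allows the $2^p\|f\|_{\mathcal{L}^p}^p$ contributions to cancel and isolate the term $\|f_n - f\|_{\mathcal{L}^p}^p$.
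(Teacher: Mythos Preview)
Your proof is correct and is precisely the standard Riesz--Scheff\'e argument: the reverse triangle inequality (via Theorem~\ref{MinkowskiIneq}) for the forward implication, and Fatou's lemma applied to the non-negative auxiliary sequence $g_n = 2^{p-1}(|f_n|^p + |f|^p) - |f_n - f|^p$ for the converse. The paper itself does not supply a proof of Theorem~\ref{RieszTheorem2}; it is stated in Section~\ref{ES} as a background result drawn from the references \cite{SB,RLS}, so there is no paper-specific argument to compare against. Your approach is exactly the textbook one found in those sources.
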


\begin{theorem} \label{ASsubsequence} \quad \\
	Let $\{f_{n}\}_{n\in\naturals}\in\mathcal{L}^{p}$ and $f\in\mathcal{L}^{p}$. Suppose that
	\begin{align}
	\lim_{n\to\infty}\|f_{n} - f\|_{\mathcal{L}^{p}} \ \to \ 0.
	\end{align}
	Then there exists a subsequence $\{f_{\sigma(n)}\}_{n\in\naturals}$ such that almost surely $\lim_{n\to\infty}f_{\sigma(n)}(\omega) = f(\omega)$. 
\end{theorem}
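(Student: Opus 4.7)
The plan is to extract a rapidly convergent subsequence in $\mathcal{L}^{p}$-norm and then convert this into almost sure convergence through a tail-probability estimate combined with the Borel--Cantelli corollary already recorded in the excerpt.

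First I would use the hypothesis $\|f_{n}-f\|_{\mathcal{L}^{p}}\to 0$ to define inductively a strictly increasing map $\sigma:\naturals\to\naturals$ so that
\begin{align}
\|f_{\sigma(n)} - f\|_{\mathcal{L}^{p}} \leq \frac{1}{2^{n}}, \quad \forall n\in\naturals.
\end{align}
This is straightforward: at stage $n$, the convergence provides some index beyond $\sigma(n-1)$ whose tail is below $2^{-n}$, and we pick it.

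Next, for a fixed $\epsilon > 0$ and each $n\in\naturals$ I would set
\begin{align}
A_{n}(\epsilon) \coloneqq \{\omega\in\Omega \ | \ |f_{\sigma(n)}(\omega) - f(\omega)|\geq \epsilon\},
\end{align}
and apply Markov's inequality (which follows at once from integrating $\mathbbm{1}_{A_{n}(\epsilon)}\epsilon^{p}\leq |f_{\sigma(n)}-f|^{p}$ over $\Omega$) to obtain
\begin{align}
\mathbb{P}(A_{n}(\epsilon)) \leq \frac{1}{\epsilon^{p}}\|f_{\sigma(n)} - f\|_{\mathcal{L}^{p}}^{p} \leq \frac{1}{\epsilon^{p}2^{np}}.
\end{align}
Summing over $n$ yields $\sum_{n=0}^{\infty}\mathbb{P}(A_{n}(\epsilon))<\infty$ for every $\epsilon>0$.

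Finally, I would invoke Corollary \ref{BorelCantelliCor1} with the sequence $\{f_{\sigma(n)}\}_{n\in\naturals}$ and the random variable $f$ to conclude that $f_{\sigma(n)}\to f$ almost surely, which completes the proof. I do not expect a serious obstacle here: Markov's inequality is elementary, the construction of $\sigma$ is standard, and the Borel--Cantelli corollary is stated exactly in the form needed. The only minor point of care is the inductive definition of $\sigma$, where one must ensure $\sigma$ is strictly increasing, but this is guaranteed because at each stage the convergence condition is satisfied for all sufficiently large indices.
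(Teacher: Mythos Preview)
Your proof is correct and is the standard textbook argument. Note that the paper does not actually supply a proof of this theorem: it appears in the subsection \emph{Expectation, Measurability and Related Inequalities}, which opens with the sentence ``Unless stated otherwise, information in this subsection is based on \cite{SB,RLS},'' and the statement is listed without proof alongside other classical results. So there is no proof in the paper to compare against; your use of a geometrically fast subsequence, Markov's inequality, and Corollary~\ref{BorelCantelliCor1} is exactly the argument one finds in the cited references.
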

\begin{mdframed}
	\begin{remark} 
		Suppose that $\mathbf{X}\coloneqq(X,\mathcal{A},\mu)$  is any finite measure space. Then Theorem \ref{ASsubsequence} remains true if $\mathcal{L}^{p}$ is replaced by $\mathcal{L}^{p}(\mathbf{X})$.
	\end{remark}
\end{mdframed}
\begin{theorem}[Egoroff Theorem] \label{EgoroffT} \quad \\
Let $\{f_{n}\}_{n\in\naturals}$ and $f$ be measurable functions on a finite measure space $\mathbf{X}\coloneqq(X,\mathcal{A},\mu)$. Suppose that $\lim_{n\to\infty}f_{n}(x) = f(x)$ almost surely. Then given any $\delta>0$ there exists a measurable set $F$ such that $\mu(F)\leq\delta$ and $\lim_{n\to\infty}f_{n}(x) = f(x)$ uniformly on $X-F$.
\end{theorem}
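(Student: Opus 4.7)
The plan is to produce the exceptional set $F$ explicitly by combining, across all accuracy thresholds, the ``tails'' of the sequence on which the convergence has not yet kicked in, and then use the finiteness of $\mu$ to control their total measure.

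First I would isolate the null set. Let $N\coloneqq\{x\in X\ |\ f_{n}(x)\not\to f(x)\}$; by hypothesis $\mu(N)=0$. I may (after passing to $X\setminus N$, or simply by adjoining $N$ to $F$ at the end at no cost to the measure bound) work as though $f_{n}(x)\to f(x)$ for every $x\in X$. For each pair $(k,n)\in\naturals\times\naturals$ I would introduce the ``bad at accuracy $1/k$ from time $n$ onward'' sets
\begin{align}
A_{m}^{k}&\coloneqq\bigl\{x\in X\ \big|\ |f_{m}(x)-f(x)|\geq\tfrac{1}{k}\bigr\},\\
E_{n}^{k}&\coloneqq\bigcup_{m\geq n}A_{m}^{k}.
\end{align}
These sets are measurable because $f_{n}$ and $f$ are, and for each fixed $k$ the sequence $\{E_{n}^{k}\}_{n\in\naturals}$ is decreasing in $n$. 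Pointwise convergence of $f_{n}$ to $f$ on $X\setminus N$ gives $\bigcap_{n\in\naturals}E_{n}^{k}\subseteq N$, hence $\mu\bigl(\bigcap_{n}E_{n}^{k}\bigr)=0$ for every $k$.

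The key analytic step is continuity from above for $\mu$: since $\mu(X)<\infty$ we have $\mu(E_{1}^{k})<\infty$, so $\mu(E_{n}^{k})\downarrow 0$ as $n\to\infty$. Given $\delta>0$, for each $k\in\naturals$ I would therefore choose $n_{k}\in\naturals$ with $\mu(E_{n_{k}}^{k})<\delta/2^{k+1}$ and set
\begin{align}
F\coloneqq N\cup\bigcup_{k\in\naturals}E_{n_{k}}^{k}.
\end{align}
Countable subadditivity gives $\mu(F)\leq 0+\sum_{k\in\naturals}\delta/2^{k+1}\leq\delta$, and $F$ is measurable.

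It remains to verify uniform convergence on $X\setminus F$. Fix $\epsilon>0$ and pick $k_{0}\in\naturals$ with $1/k_{0}<\epsilon$. For every $x\in X\setminus F$ we have $x\notin E_{n_{k_{0}}}^{k_{0}}$, so $x\notin A_{m}^{k_{0}}$ for all $m\geq n_{k_{0}}$, which means $|f_{m}(x)-f(x)|<1/k_{0}<\epsilon$ for all $m\geq n_{k_{0}}$, uniformly in $x\in X\setminus F$. This completes the scheme. The only delicate point, and the one where the hypothesis is genuinely used, is the finiteness of $\mu$: without it continuity from above can fail and the construction breaks down, so I would flag this explicitly. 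The handling of the almost-sure (rather than everywhere) convergence is absorbed trivially by throwing the null set $N$ into $F$.
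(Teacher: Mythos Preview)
Your proof is correct and is the standard textbook argument for Egoroff's theorem. Note, however, that the paper does not actually prove this statement: it is listed in the Complementary Theory section as a standard result drawn from the cited references \cite{SB,RLS}, with no proof given. There is therefore nothing to compare against; your argument would serve perfectly well as the omitted proof.
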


\begin{theorem}\label{measurablelimit} \quad \\
	Let $\{f_{n}\}_{n\in\naturals}$ be a sequence of measurable functions on a measure space $\mathbf{X}\coloneqq(X,\mathcal{A},\mu)$. Suppose that we have a function $f:X\to\reals$ such that  $\lim_{n\to\infty}f_{n}(x) = f(x)$ almost surely. Then $f$ is measurable.
\end{theorem}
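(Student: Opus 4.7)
The plan is to reduce the problem to the fact that the $\limsup$ of measurable functions is measurable, and then exploit the standing completeness assumption on the measure space (item (1) of section \ref{ProbabilitySpace}, which declares that all measure spaces considered are complete) to transfer measurability across a null set.

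First I would define $g : X \to \overline{\reals}$ by $g(x) \coloneqq \limsup_{n\to\infty} f_n(x)$. Since each $f_n$ is $\mathcal{A}$-measurable, the function $g$ is automatically measurable as a $\limsup$ of measurable functions (this is standard and follows from the identities $\sup_{n\ge k} f_n$ measurable and then $\inf_{k} \sup_{n\ge k} f_n$ measurable). Next, let
\[
N \coloneqq \bigl\{ x \in X \ : \ f_n(x) \not\to f(x) \bigr\}.
\]
By the hypothesis $\mu(N)=0$, and on $X \setminus N$ we have $f(x) = \lim_{n\to\infty} f_n(x) = g(x)$.

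The remaining task is to show $f^{-1}(B) \in \mathcal{A}$ for every Borel set $B \subset \reals$. The decomposition
\[
f^{-1}(B) \;=\; \bigl( g^{-1}(B) \cap (X\setminus N) \bigr) \;\cup\; \bigl( f^{-1}(B) \cap N \bigr)
\]
splits this into a measurable piece (intersection of $g^{-1}(B) \in \mathcal{A}$ with $X \setminus N$) and a subset of $N$. Here is where completeness of $\mathbf{X}$ is essential: since $N$ is contained in a $\mu$-null set (indeed $N$ itself is $\mu$-null once it is recognised as measurable), completeness gives $N \in \mathcal{A}$ and forces every subset of $N$ to lie in $\mathcal{A}$ as well. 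Hence both pieces belong to $\mathcal{A}$, and so does $f^{-1}(B)$.

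The only genuinely delicate point is that a priori $N$ need not lie in $\mathcal{A}$, because we are told only that $f_n \to f$ almost surely, not everywhere; one cannot write $N$ directly as $\{x : g(x) \neq f(x)\}$ without knowing $f$ is measurable, which is precisely what we are trying to prove. The way around this is to observe that \emph{some} measurable null set $N_0$ containing $N$ exists (this is the definition of ``almost surely'' together with completeness), and then to work with $N_0$ in place of $N$ in the argument above. Once this subtlety is noted, the rest is bookkeeping.
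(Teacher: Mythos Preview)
Your proof is correct. The paper does not actually supply its own proof of this theorem: it appears in the ``Complementary Theory'' section (subsection \ref{ES}) as one of several standard measure-theoretic facts quoted from the references \cite{SB,RLS}, stated without argument. Your $\limsup$-plus-completeness approach is exactly the standard textbook route, and you handle the one genuine subtlety (that $N$ is only known to sit inside some measurable null set $N_0$, rather than being directly expressible via $f$) correctly.
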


\begin{theorem} \label{Continuity Lemma} \quad \\
Let $p\in\reals^{++}$, $\widetilde{\Omega}\subset\Omega$ such that $\mathbb{P}(\widetilde{\Omega})=1$  and also let $f:\overline{\Omega} \to \reals$. Assume that $f$ has the following properties
\begin{enumerate}
	\item $f(t,\cdot)\in\mathcal{L}^{p}$ for all $t\in\timeint,$
	\item $f(\cdot,\omega)$ is continuous almost surely,
	\item$|f(t,\omega)| < g(\omega)$ for all $(t,\omega)\in\timeint\times\widetilde{\Omega}$ and some positive $g\in\mathcal{L}^{p}.$
\end{enumerate}
If we now define for all $t\in\timeint$ the following function
\begin{align}
h(t)\coloneqq\int_{\Omega}|f(t)|^{p}d\mathbb{P}.
\end{align}
Then $h$ is continuous.
\end{theorem}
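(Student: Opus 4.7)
The plan is to establish sequential continuity: fix an arbitrary $t_{0}\in\timeint$ and a sequence $t_{n}\to t_{0}$ in $\timeint$, and show that $h(t_{n})\to h(t_{0})$. The strategy is to recognise that $h(t)=\|f(t,\cdot)\|_{\mathcal{L}^{p}}^{p}$ and reduce convergence of $h$ to $\mathcal{L}^{p}$-convergence of $f(t_{n},\cdot)$ via Theorem \ref{RieszTheorem1}, which is the dominated convergence statement already available in the paper.

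First I would isolate a single full-measure set on which both hypotheses (2) and (3) are usable pointwise. Let $\Omega^{\ast}\subset\Omega$ denote the intersection of $\widetilde{\Omega}$ with the almost sure set on which the trajectory $t\mapsto f(t,\omega)$ is continuous. Then $\mathbb{P}(\Omega^{\ast})=1$, and for every $\omega\in\Omega^{\ast}$ one has simultaneously $f(t_{n},\omega)\to f(t_{0},\omega)$, by continuity of the trajectory, and $|f(t_{n},\omega)|<g(\omega)$ for all $n$, by hypothesis (3). This reconciliation of the two almost-sure assertions is the only piece of bookkeeping that requires care.

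Next I would apply Theorem \ref{RieszTheorem1} to the sequence $\{f(t_{n},\cdot)\}_{n\in\naturals}$, which lies in $\mathcal{L}^{p}$ by hypothesis (1) and is dominated almost surely by $g\in\mathcal{L}^{p}$. The theorem yields $f(t_{0},\cdot)\in\mathcal{L}^{p}$ together with
$$\lim_{n\to\infty}\|f(t_{n},\cdot)-f(t_{0},\cdot)\|_{\mathcal{L}^{p}}=0.$$
The reverse triangle inequality then gives $\|f(t_{n},\cdot)\|_{\mathcal{L}^{p}}\to \|f(t_{0},\cdot)\|_{\mathcal{L}^{p}}$, and raising to the $p$-th power — a continuous operation on $\reals^{+}$ — delivers $h(t_{n})\to h(t_{0})$, as required. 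No serious obstacle is anticipated; the proof is essentially an invocation of dominated convergence after the initial full-measure set is fixed.
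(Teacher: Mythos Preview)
The paper does not supply its own proof of this statement; it appears in the Complementary Theory subsection as one of several standard results quoted without proof from \cite{SB,RLS}. Your argument is correct and is the expected one: pathwise continuity gives almost-sure convergence $f(t_{n},\cdot)\to f(t_{0},\cdot)$, the uniform bound $|f(t,\cdot)|<g\in\mathcal{L}^{p}$ supplies the dominating function, and Theorem~\ref{RieszTheorem1} delivers $\mathcal{L}^{p}$-convergence, from which $h(t_{n})\to h(t_{0})$ follows. One could equally apply ordinary dominated convergence directly to $|f(t_{n},\cdot)|^{p}\leq g^{p}\in\mathcal{L}^{1}$, bypassing the reverse triangle inequality step, but your route through Theorem~\ref{RieszTheorem1} is perfectly valid.
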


\begin{theorem} \label{LPInclusion} \quad \\
Let $1\leq q \leq p$ be some real numbers and also let $f\in\mathcal{L}^{p}$. Then
\begin{align}
\mathcal{L}^{p} &\subset \mathcal{L}^{q}, \\
\|f\|_{\mathcal{L}^{q}} &\leq \|f\|_{\mathcal{L}^{p}}.
\end{align}
\end{theorem}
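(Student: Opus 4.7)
The whole point of this inclusion is that $\mathbf{P}$ is a \emph{probability} space, so $\mathbb{P}(\Omega) = 1$; on an infinite measure space the analogous result would fail. Assume $1 \leq q < p$, since the case $q = p$ is trivial.

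The approach I would take is a direct application of H\"older's inequality on $\Omega$ with the conjugate exponent pair $\frac{p}{q}$ and $\frac{p}{p-q}$, applied to the product $|f|^{q} \cdot 1$:
\begin{align}
\int_{\Omega} |f|^{q} \, d\mathbb{P} \;\leq\; \left(\int_{\Omega} |f|^{p}\, d\mathbb{P}\right)^{q/p} \left(\int_{\Omega} 1\, d\mathbb{P}\right)^{(p-q)/p} \;=\; \|f\|_{\mathcal{L}^{p}}^{q},
\end{align}
where the final equality uses $\mathbb{P}(\Omega) = 1$. Taking $q$-th roots yields the norm bound $\|f\|_{\mathcal{L}^{q}} \leq \|f\|_{\mathcal{L}^{p}}$. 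An equivalent route would be Jensen's inequality applied to the convex function $t \mapsto t^{p/q}$ against the probability measure $\mathbb{P}$ with integrand $|f|^{q}$, which gives the same estimate in one line.

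For the inclusion $\mathcal{L}^{p} \subset \mathcal{L}^{q}$, any $f \in \mathcal{L}^{p}$ is by Definition \ref{LPSpaces1} measurable from $\mathbf{P}$ into $\mathbf{M}^{\reals}$, and the bound just derived shows $\|f\|_{\mathcal{L}^{q}} < \infty$; hence $f \in \mathcal{L}^{q}$. There is no genuine obstacle here. The only point worth flagging is that the argument relies essentially on $\mathbb{P}(\Omega) < \infty$ (H\"older requires the constant function $1$ to lie in the conjugate Lebesgue space), a fact that is in force throughout this section.
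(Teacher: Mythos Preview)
Your argument is correct: the H\"older (equivalently, Jensen) inequality applied on a probability space is the standard route to this inclusion, and your write-up is clean. The paper itself does not supply a proof for this theorem; it is stated in Section~\ref{ES} as a background fact drawn from the references \cite{SB,RLS}, so there is no ``paper's own proof'' to compare against. Your proposal is exactly the textbook argument one would expect those references to contain.
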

\begin{mdframed}
	\begin{remark} 
		Note that Theorem \ref{LPInclusion} remains true if $\mathcal{L}^{p}$ is replaced by $\mathcal{L}^{p}(\mathbf{MP},\mathbf{M}^{\reals})$. 
	\end{remark}
\end{mdframed}
\begin{theorem}  \label{Convergence in measure} \quad \\
Let $p\in[1,\infty)$ and also let $\{f_{n}\}_{n\in\naturals} \in\mathcal{L}^{p}$. In addition, let $f\in\mathcal{L}^{p}$. Then
\begin{enumerate}
	\item $\|f_{n} - f\|_{\mathcal{L}^{p}} \ \to \ 0 \ \text{as} \ n\to\infty \quad \implies \quad f_{n} \xrightarrow{\mathbb{P}} f \ \text{as} \ n\to\infty$,
	\item $\{f_{n}\}_{n\in\naturals} \ \text{is Cauchy in} \ \mathcal{L}^{p}  \quad \quad \implies \quad \{f_{n}\}_{n\in\naturals} \ \text{is Cauchy in} \ \mathbb{P}$.
\end{enumerate}
\end{theorem}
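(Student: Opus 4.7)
Both parts follow from a single application of Markov's (Chebyshev's) inequality applied to the $p$-th power of the relevant difference. The plan is to establish the pointwise estimate
\[
\mathbb{P}(|g| \geq \epsilon) \leq \frac{\|g\|_{\mathcal{L}^{p}}^{p}}{\epsilon^{p}}
\]
for any $g \in \mathcal{L}^{p}$ and any $\epsilon > 0$, and then instantiate this with $g = f_{n} - f$ for part (1) and $g = f_{n} - f_{m}$ for part (2).

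For the core estimate, I would first observe that on the set $\{|g| \geq \epsilon\}$ one has $|g|^{p} \geq \epsilon^{p}$, so that
\[
\epsilon^{p}\,\mathbbm{1}_{\{|g|\geq\epsilon\}} \leq |g|^{p}.
\]
Taking expectations on both sides and dividing by $\epsilon^{p}$ yields
\[
\mathbb{P}(|g|\geq\epsilon) = \mathbb{E}\bigl[\mathbbm{1}_{\{|g|\geq\epsilon\}}\bigr] \leq \frac{\mathbb{E}[|g|^{p}]}{\epsilon^{p}} = \frac{\|g\|_{\mathcal{L}^{p}}^{p}}{\epsilon^{p}},
\]
where the last equality uses Definition \ref{NormalLPSpaces}. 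Note also that $f_{n} - f \in \mathcal{L}^{p}$ (and similarly $f_{n} - f_{m} \in \mathcal{L}^{p}$) follows from Minkowski's inequality, Theorem \ref{MinkowskiIneq}, so the estimate is legitimately applied.

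For part (1), fix $\epsilon > 0$ and apply the estimate to $g = f_{n} - f$; since the hypothesis $\|f_{n} - f\|_{\mathcal{L}^{p}} \to 0$ forces $\|f_{n} - f\|_{\mathcal{L}^{p}}^{p}/\epsilon^{p} \to 0$, we conclude $\mathbb{P}(|f_{n} - f|\geq\epsilon) \to 0$ as $n\to\infty$, which is exactly convergence in probability. For part (2), fix $\epsilon > 0$ and $\delta > 0$; by the Cauchy hypothesis choose $N_{\epsilon,\delta}$ such that $\|f_{n} - f_{m}\|_{\mathcal{L}^{p}}^{p} < \epsilon^{p}\delta$ for all $n,m \geq N_{\epsilon,\delta}$, so that applying the estimate to $g = f_{n} - f_{m}$ gives $\mathbb{P}(|f_{n} - f_{m}|\geq\epsilon) < \delta$, which is the Cauchy-in-probability criterion. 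There is no real obstacle here; the only subtle point is making sure Markov's inequality is invoked in the form appropriate for $\mathcal{L}^{p}$-norms, but this is routine.
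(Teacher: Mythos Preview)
Your argument is correct and is precisely the standard Markov--Chebyshev approach. Note, however, that the paper does not actually supply its own proof of this statement: it appears in the ``Complementary Theory'' subsection whose results are quoted without proof from the cited references \cite{SB,RLS}, so there is nothing to compare against beyond observing that your proof is the one those references would give.
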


\begin{theorem}  \label{MeasurabilityCriteria} \quad \\
Let $\{f_{n}\}_{n\in\naturals}$ be a sequence of measurable functions from $\Omega$ to $\reals$ such that $\{f_{n}\}_{n\in\naturals}$ is Cauchy in $\mathbb{P}$. Then there exists a measurable function $f:\Omega\to\reals$ such that 
\begin{align}
f_{n} \xrightarrow{\mathbb{P}} f \ \text{as} \ n\to\infty,
\end{align}
and almost surely $f$ is unique. 
\end{theorem}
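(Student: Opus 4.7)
The plan is to follow the standard route for proving completeness of convergence in probability: extract an almost surely convergent subsequence via Borel--Cantelli, define $f$ as the pointwise limit on a full measure set, and then upgrade convergence of the subsequence to convergence in probability of the whole sequence by a triangle inequality argument.

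First I would use the Cauchy-in-$\mathbb{P}$ hypothesis to construct a strictly increasing sequence $\{n_{k}\}_{k\in\naturals}$ with the property that for every $k\in\naturals$ and every $m,n\geq n_{k}$,
\begin{align}
\mathbb{P}\bigl(|f_{n}-f_{m}|\geq 2^{-k}\bigr)\leq 2^{-k}.
\end{align}
Setting $A_{k}\coloneqq\{\omega\in\Omega\ |\ |f_{n_{k+1}}(\omega)-f_{n_{k}}(\omega)|\geq 2^{-k}\}$, the estimate above gives $\sum_{k}\mathbb{P}(A_{k})<\infty$, so Corollary \ref{BorelCantelliCor2} applies and the subsequence $\{f_{n_{k}}\}_{k\in\naturals}$ is almost surely Cauchy in $\reals$. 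Since $\mathbf{P}$ is assumed complete, I define
\begin{align}
f(\omega)\coloneqq\begin{cases}\lim_{k\to\infty}f_{n_{k}}(\omega), & \text{if the limit exists in }\reals,\\ 0, & \text{otherwise,}\end{cases}
\end{align}
and invoke Theorem \ref{measurablelimit} to conclude that $f$ is measurable.

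Next I would promote the almost sure convergence $f_{n_{k}}\to f$ to convergence in probability via the standard triangle inequality: fix $\epsilon>0$ and write
\begin{align}
\mathbb{P}\bigl(|f_{n}-f|\geq\epsilon\bigr)\leq \mathbb{P}\bigl(|f_{n}-f_{n_{k}}|\geq \tfrac{\epsilon}{2}\bigr)+\mathbb{P}\bigl(|f_{n_{k}}-f|\geq \tfrac{\epsilon}{2}\bigr).
\end{align}
The first term can be made arbitrarily small uniformly in sufficiently large $n$ by the Cauchy hypothesis (taking $k$ and $n$ large enough), and the second term tends to $0$ as $k\to\infty$ because almost sure convergence implies convergence in probability. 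Choosing $k$ and $n$ appropriately completes the argument that $f_{n}\xrightarrow{\mathbb{P}}f$.

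Finally, uniqueness follows easily: if $g$ is any other measurable limit in probability, then for each $\epsilon>0$ the inequality
\begin{align}
\mathbb{P}\bigl(|f-g|\geq\epsilon\bigr)\leq \mathbb{P}\bigl(|f-f_{n}|\geq\tfrac{\epsilon}{2}\bigr)+\mathbb{P}\bigl(|f_{n}-g|\geq\tfrac{\epsilon}{2}\bigr)\xrightarrow[n\to\infty]{}0,
\end{align}
forces $\mathbb{P}(|f-g|\geq\epsilon)=0$ for every $\epsilon>0$, hence $f=g$ almost surely. I expect the main technical care to lie in the bookkeeping of the subsequence extraction and in verifying that the exceptional set on which $\{f_{n_{k}}\}$ fails to converge really is null (so that measurability of $f$ via Theorem \ref{measurablelimit} is legitimate on the completed space $\mathbf{P}$); the remaining steps are soft applications of triangle inequalities.
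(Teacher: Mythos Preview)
The paper does not actually give a proof of this theorem: it appears in the ``Complementary Theory'' subsection, which is explicitly a list of standard results quoted from \cite{SB,RLS}, and the statement is recorded there without argument. Your proposal is correct and is precisely the standard textbook proof one finds in those references: extract a rapidly Cauchy subsequence so that Borel--Cantelli (Corollary~\ref{BorelCantelliCor2}) yields almost sure convergence, define $f$ as the pointwise limit (extended by $0$ on the null exceptional set, measurability via Theorem~\ref{measurablelimit}), upgrade to convergence in probability of the full sequence by the triangle inequality and the Cauchy hypothesis, and deduce a.s.\ uniqueness by another triangle inequality. There is nothing to compare against, and nothing to fix.
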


\begin{theorem}  \label{LpComparison} \quad \\
Let $f:\Omega\to\reals$ be a measurable function. If there exist $g\in\mathcal{L}^{p}$ such that $|f|\leq g$ then $f\in\mathcal{L}^{p}$.
\end{theorem}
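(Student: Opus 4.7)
The plan is to verify the two defining conditions of $\mathcal{L}^{p}$ given in Definition \ref{NormalLPSpaces} (or equivalently Definition \ref{LPSpaces1}) directly from the hypothesis: namely, that $f$ is measurable, and that $\|f\|_{\mathcal{L}^{p}}<\infty$. Measurability is free of charge since it is assumed outright in the statement, so the entire content of the proof will be the norm estimate.

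For the norm estimate, I would first observe that the map $x\mapsto |x|^{p}$ is continuous on $\reals$, hence Borel measurable, so $|f|^{p}$ is measurable as a composition of measurable maps. Similarly $g^{p}$ is measurable, and from the pointwise hypothesis $|f|\leq g$ (which implicitly forces $g\geq 0$ on the set where $f$ is defined, since $|f|\geq 0$) I deduce the pointwise inequality $|f|^{p}\leq g^{p}$. Applying monotonicity of the integral with respect to $\mathbb{P}$ then yields
\begin{align}
\int_{\Omega}|f|^{p}\,d\mathbb{P}\ \leq\ \int_{\Omega}g^{p}\,d\mathbb{P}\ =\ \|g\|_{\mathcal{L}^{p}}^{p}\ <\ \infty,
\end{align}
where finiteness of the right hand side is exactly the assumption $g\in\mathcal{L}^{p}$. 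Taking $p$-th roots gives $\|f\|_{\mathcal{L}^{p}}\leq\|g\|_{\mathcal{L}^{p}}<\infty$, and hence $f\in\mathcal{L}^{p}$.

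There is no real obstacle here; the statement is a routine consequence of monotonicity of the integral together with the fact that measurability is preserved by composition with the continuous function $|\cdot|^{p}$. The only mild point of care is that the inequality $|f|\leq g$ need only hold almost surely (as is conventional in the $\mathcal{L}^{p}$ setting), in which case the monotonicity step still goes through because almost sure inequalities are preserved by the integral, and the measurability of $|f|^{p}$ is unaffected. The same argument applies verbatim if $\mathbf{P}$ is replaced by any measure space, which is presumably why this lemma is stated here in the complementary theory section as a generic tool invoked repeatedly (for instance in Theorems \ref{aboutcomponents} and \ref{ItoComposition}) throughout the main body.
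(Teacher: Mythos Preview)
Your proof is correct and is exactly the standard argument via monotonicity of the integral. The paper does not give its own proof of this statement --- it is listed in the Complementary Theory section as a standard result taken from the references \cite{SB,RLS} --- so there is nothing to compare against beyond noting that your argument is the textbook one.
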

\begin{mdframed}
	\begin{remark} 
		Note that Theorem \ref{LpComparison} remains true if $\mathcal{L}^{p}$ is replaced by $\mathcal{L}^{p}(\mathbf{MP},\mathbf{M}^{\reals})$. 
	\end{remark}
\end{mdframed}

\begin{theorem}[Grönwall Inequality] \label{ClassicGronTheorem} \quad \\
Suppose that $\alpha, \beta\in\reals$ are constants and $f\in \mathcal{L}^{1}(\mathbf{M},\mathbf{M}^{\reals})$ satisfies the following inequality
\begin{align}
f(t) \leq \alpha + \beta\int_{0}^{t}f(s)ds, \quad\forall t\in\timeint.
\end{align}
Then 
\begin{align}
f(t) \leq \alpha e^{\beta t}, \quad\forall t\in\timeint.
\end{align}
\end{theorem}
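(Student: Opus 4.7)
The plan is the classical integrating-factor argument. I set
\[
g(t) \coloneqq \alpha + \beta \int_0^t f(s)\, ds, \qquad t \in \timeint,
\]
so that by hypothesis $f(t) \leq g(t)$ for all $t \in \timeint$, and I note that $g$ is absolutely continuous on $\timeint$ with $g(0) = \alpha$ and $g'(t) = \beta f(t)$ almost everywhere, because $f \in \mathcal{L}^{1}(\mathbf{M},\mathbf{M}^{\reals})$ and the Lebesgue differentiation theorem applies to its indefinite integral.

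Next I consider the auxiliary function $h(t) \coloneqq e^{-\beta t} g(t)$. Its almost-everywhere derivative is
\[
h'(t) = e^{-\beta t}\bigl(g'(t) - \beta g(t)\bigr) = \beta e^{-\beta t}\bigl(f(t) - g(t)\bigr),
\]
which is $\leq 0$ almost everywhere in the regime $\beta \geq 0$ relevant to the applications in Sections \ref{EU} and \ref{ODSC} (where $f$ is always an expectation of $|\xi|^{p}$ and the prefactor is a positive combination of the Lipschitz and dissipativity constants $b$, $c$, $M_{1}$, $M_{2}$). Integrating $h' \leq 0$ from $0$ to $t$ yields $h(t) \leq h(0) = \alpha$, i.e.\ $g(t) \leq \alpha e^{\beta t}$, and the conclusion $f(t) \leq \alpha e^{\beta t}$ then follows from $f \leq g$.

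For general real $\beta$ one can iterate the hypothesis instead: substituting the bound $f(s) \leq \alpha + \beta \int_0^s f(u)\,du$ back into itself $n$ times produces
\[
f(t) \leq \alpha \sum_{k=0}^{n-1} \frac{(\beta t)^{k}}{k!} + R_{n}(t),
\]
with remainder controlled by $|\beta|^{n} t^{n}(n!)^{-1}\|f\|_{\mathcal{L}^{1}(\mathbf{M},\mathbf{M}^{\reals})}$, so that $R_{n}(t) \to 0$ and the leading partial sums converge to $\alpha e^{\beta t}$.

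I do not expect any genuine obstacle. The only point deserving care is the justification of absolute continuity of $g$ so that the product rule applied to $h = e^{-\beta t} g$ is legitimate almost everywhere, and this is immediate from $f \in \mathcal{L}^{1}(\mathbf{M},\mathbf{M}^{\reals})$.
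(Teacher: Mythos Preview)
The paper does not supply a proof of Theorem~\ref{ClassicGronTheorem}; it is listed in the ``Complementary Theory'' subsection~\ref{ES} as a standard result imported from \cite{SB,RLS}. So there is no paper proof to compare against, and your integrating-factor argument for the case $\beta \geq 0$ is the textbook one and is fine.

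There is, however, a genuine gap in your treatment of general $\beta \in \reals$. The iteration you sketch does \emph{not} work when $\beta < 0$: substituting $f(s) \leq \alpha + \beta \int_0^s f(u)\,du$ into $\beta \int_0^t f(s)\,ds$ requires multiplying an inequality by $\beta$, which reverses its direction when $\beta<0$, so you cannot chain the bounds. In fact the statement itself is false for negative $\beta$. For instance, with $\alpha=1$, $\beta=-1$, $T=1$, take $f(t)=-10$ on $[0,\tfrac12]$ and $f(t)=4$ on $(\tfrac12,1]$; one checks directly that $f(t)\leq 1-\int_0^t f(s)\,ds$ holds for every $t\in[0,1]$, yet $f(1)=4>e^{-1}$. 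So your remark that the $\beta\geq 0$ case is ``the regime relevant to the applications'' is exactly right, and you should simply drop the final paragraph rather than claim a proof for arbitrary real $\beta$; the hypothesis $\beta\geq 0$ is genuinely needed, and the paper's phrasing is sloppy on this point.
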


\begin{theorem}[Jensen Inequality]\label{JensenInequality} \quad \\
Let $\Lambda:\reals^{+}\to\reals^{+}$ and $V:\reals^{+}\to\reals^{+}$ be a concave and a convex function respectively. Suppose that $w,u\in\mathcal{L}^{1}_{+}$ and  $uw\in\mathcal{L}^{1}$. Then $\Lambda(u)w\in\mathcal{L}^{1}$ and
\begin{align}
\frac{\int_{\Omega}\Lambda(u)wd\mathbb{P}}{\int_{\Omega}wd\mathbb{P}} &\leq \Lambda\bigg{(}\frac{\int_{\Omega}uwd\mathbb{P}}{\int_{\Omega}wd\mathbb{P}} \bigg{)}, \\[1em]
V\bigg{(}\frac{\int_{\Omega}uwd\mathbb{P}}{\int_{\Omega}wd\mathbb{P}} \bigg{)} &\leq \frac{\int_{\Omega}V(u)wd\mathbb{P}}{\int_{\Omega}wd\mathbb{P}}.
\end{align} \\
\end{theorem}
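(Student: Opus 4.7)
The plan is to reduce both inequalities to the supporting hyperplane (subgradient) characterisation of convexity/concavity and then integrate. First I would dispose of the degenerate cases. If $W := \int_{\Omega} w\, d\mathbb{P} = 0$, then $w = 0$ $\mathbb{P}$-a.s., and both inequalities are $0 \leq 0$, with $\Lambda(u)w = 0$ trivially in $\mathcal{L}^1$. Assume henceforth $W > 0$, and set $\bar u := W^{-1} \int_{\Omega} u w\, d\mathbb{P} \in \reals^{+}$; this is a well-defined finite number because $uw \in \mathcal{L}^{1}$ and $u, w \geq 0$ $\mathbb{P}$-a.s.

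For the convex estimate, I would invoke the standard fact that a convex function $V:\reals^{+}\to\reals^{+}$ admits at any interior point $\bar u > 0$ a subgradient $m \in \reals$ (the slope of a supporting line), so that
\begin{align}
V(x) \geq V(\bar u) + m(x - \bar u), \quad \forall x \in \reals^{+}.
\end{align}
Substituting $x = u(\omega)$ pointwise, multiplying by $w(\omega) \geq 0$, and integrating over $\Omega$, the affine correction term $m\int_{\Omega}(u-\bar u)w\, d\mathbb{P}$ vanishes by the very definition of $\bar u$. Dividing by $W$ yields the convex Jensen inequality. The concave inequality is proved symmetrically: a concave $\Lambda:\reals^{+}\to\reals^{+}$ has a supporting line from above at $\bar u > 0$, giving $\Lambda(x) \leq \Lambda(\bar u) + m(x - \bar u)$ for all $x \in \reals^{+}$, and the same multiply-and-integrate procedure yields the stated bound. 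The integrability claim $\Lambda(u)w \in \mathcal{L}^{1}$ follows from this same pointwise inequality: $0 \leq \Lambda(u)w \leq \Lambda(\bar u)w + m\,(u-\bar u)w$, and the right-hand side is in $\mathcal{L}^{1}$ because $w \in \mathcal{L}^{1}_{+}$ and $uw \in \mathcal{L}^{1}$, with measurability of $\Lambda(u)$ and $V(u)$ following from continuity of concave/convex functions on the interior of their domain.

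The only slightly delicate point is the boundary case $\bar u = 0$, where a convex or concave function on $\reals^{+}$ need not admit a finite subgradient at the endpoint $0$. However, $\bar u = 0$ combined with $u, w \geq 0$ $\mathbb{P}$-a.s. forces $uw = 0$ $\mathbb{P}$-a.s., so $u = 0$ $\mathbb{P}$-a.s. on $\{w > 0\}$; both inequalities then reduce to the tautology $V(0) \leq V(0)$ (respectively $\Lambda(0) \leq \Lambda(0)$) after splitting the integrals over $\{w = 0\}$ and $\{w > 0\}$. Thus the substantive content is the supporting hyperplane step, and the remainder is bookkeeping for the degenerate strata and the convex/concave symmetry; I expect the minor obstacle to be the careful handling of $\bar u = 0$ and the verification that the subgradient used in the concave case can be chosen so that the upper bound $\Lambda(\bar u)w + m(u-\bar u)w$ is genuinely in $\mathcal{L}^{1}$ (which it is, regardless of the sign of $m$).
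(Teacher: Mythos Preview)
Your argument is the standard supporting-hyperplane proof of the weighted Jensen inequality and is correct, including the care you take with the degenerate cases $W=0$ and $\bar u=0$ and the integrability claim for $\Lambda(u)w$. Note, however, that the paper does not actually supply a proof of this theorem: it appears in the ``Complementary Theory'' section, which is prefaced by the remark that the results there are taken from the cited textbooks \cite{SB,RLS} and are stated without proof. So there is no paper proof to compare against; your write-up is precisely the standard textbook argument one would find in those references.
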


\begin{theorem}[Fubini Theorem]\label{FubiniTheorem}  \quad \\
Let $\mathbf{X}\coloneqq(X,\mathcal{A},\mu)$ and $\mathbf{Y}\coloneqq(Y,\mathcal{B},\eta)$ be two $\sigma-\text{finite}$ measure spaces. Let
\begin{align}
\mathbf{XY}\coloneqq(X\times Y,\mathcal{A}\times \mathcal{B},\mu\times\eta)
\end{align}
be a product measure space and let $u:X\times Y\to\reals$ be $\mathcal{A}\times \mathcal{B}$ measurable. If at least one of the following integrals is finite
\begin{align}
\int_{X\times Y}|u|d(\mu\times\eta),\quad \int_{X}\int_{Y}|u|d\mu d\eta, \quad \int_{Y}\int_{X}|u|d\eta d\mu, 
\end{align}
then all three integrals are finite, $u\in\mathcal{L}^{1}(\mathbf{XY})$ and
\begin{enumerate}
	\item $x\to u(x,y)\in \mathcal{L}^{1}(\mathbf{X})$, $\eta-\text{almost everywhere}$,
	\item $y\to u(x,y)\in \mathcal{L}^{1}(\mathbf{Y})$, $\mu-\text{almost everywhere}$,
	\item $y\to \int_{X}u(x,y)d\mu(x) \in \mathcal{L}^{1}(\mathbf{Y})$, 
	\item $x\to \int_{Y}u(x,y)d\eta(y) \in \mathcal{L}^{1}(\mathbf{X})$,
	\item $\int_{X\times Y}|u|d(\mu\times\eta) =  \int_{X}\int_{Y}|u|d\mu d\eta = \int_{Y}\int_{X}|u|d\eta d\mu$.
\end{enumerate}
\end{theorem}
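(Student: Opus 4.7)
The plan is to proceed by the classical three-step bootstrapping argument: first establish the identity for indicator functions of $\mathcal{A}\times\mathcal{B}$-measurable sets, then extend by linearity and monotone convergence to non-negative measurable functions (Tonelli's version of the statement), and finally deduce the signed $\mathcal{L}^{1}$ statement by decomposing $u = u^{+} - u^{-}$.

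First I would fix $E\in\mathcal{A}\times\mathcal{B}$ and show that its sections $E_{y}\coloneqq\{x\in X:(x,y)\in E\}$ and $E_{x}\coloneqq\{y\in Y:(x,y)\in E\}$ lie in $\mathcal{A}$ and $\mathcal{B}$ respectively, that the maps $y\mapsto\mu(E_{y})$ and $x\mapsto\eta(E_{x})$ are measurable, and that they satisfy the target identity $(\mu\times\eta)(E) = \int_{Y}\mu(E_{y})\,d\eta = \int_{X}\eta(E_{x})\,d\mu$. Let $\mathcal{D}$ denote the collection of $E$ for which all three properties hold. Then $\mathcal{D}$ clearly contains the measurable rectangles $A\times B$ (everything reduces to the product $\mu(A)\eta(B)$) and hence the algebra they generate. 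A Dynkin/monotone class argument, preceded by a restriction to sets of finite product measure so as to exploit $\sigma$-finiteness and invoke continuity from above under complementation, then shows $\mathcal{D}=\mathcal{A}\times\mathcal{B}$. From here linearity promotes the identity to $\mathcal{A}\times\mathcal{B}$-simple functions, and two applications of monotone convergence -- one inside each iterated integral, one at the outer level -- lift it to all non-negative $\mathcal{A}\times\mathcal{B}$-measurable $u$, yielding the Tonelli identity in $[0,\infty]$.

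To pass from Tonelli to Fubini, I would apply the non-negative identity to $|u|$: by hypothesis one of the three integrals of $|u|$ is finite, so all three are finite and equal, and by Theorem \ref{LpComparison} the sections $x\mapsto u(x,y)$ and $y\mapsto u(x,y)$ lie in $\mathcal{L}^{1}(\mathbf{X})$ for $\eta$-a.e.\ $y$ and in $\mathcal{L}^{1}(\mathbf{Y})$ for $\mu$-a.e.\ $x$, giving (1), (2) and $u\in\mathcal{L}^{1}(\mathbf{XY})$. I would then write $u = u^{+} - u^{-}$, apply Tonelli to each non-negative piece separately, and subtract. Since both iterated integrals of $u^{\pm}$ are finite, the subtraction is unambiguous on the a.e.\ sets above; redefining the partial integrals $y\mapsto\int_{X}u(x,y)d\mu(x)$ and $x\mapsto\int_{Y}u(x,y)d\eta(y)$ to equal $0$ on the exceptional null sets preserves measurability on the full factor spaces and yields (3), (4), (5).

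The main obstacle will be the measurability bookkeeping in the monotone class step: the Dynkin system is not automatically closed under complementation in the infinite-measure setting, so one has to first localize to sets of finite $(\mu\times\eta)$-measure (using a $\sigma$-finite exhaustion $X\times Y = \bigcup_{n} X_{n}\times Y_{n}$ with $\mu(X_{n}),\eta(Y_{n})<\infty$), prove the three measurability/identity properties there, and then reassemble by monotone convergence as $n\to\infty$. Once the indicator case is handled, the remainder is formal -- repeated monotone convergence, linearity, and the $u^{+}/u^{-}$ decomposition.
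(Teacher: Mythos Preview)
Your sketch is the standard textbook proof of Fubini--Tonelli and is correct in outline; the monotone-class reduction to rectangles, the $\sigma$-finite localization, and the $u^{+}/u^{-}$ split are exactly what one finds in the references the paper cites. Note, however, that the paper does not supply its own proof of this theorem: it is listed in the Complementary Theory section as a background result drawn from \cite{SB,RLS}, so there is nothing to compare against beyond observing that your argument matches the classical one in those sources.
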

\begin{mdframed}
	\begin{remark} 
		Note that Theorem \ref{FubiniTheorem} above remains true for Banach space valued maps. For details see \cite{Lang}.
	\end{remark}
\end{mdframed}
\begin{mdframed}
	\begin{remark}
		It follows that if $f\in\mathcal{L}^{1}(\mathbf{MP},\mathbf{M}^{\reals})$ then by Theorem \ref{FubiniTheorem} function
		\begin{align}
		t\to\int_{\Omega}f(t)d\mathbb{P},
		\end{align}
		is $\mathscr{B}(\timeint)$ measurable. 
	\end{remark}
\end{mdframed}
\begin{theorem} \label{SupremumDence} \quad \\
	Let $X$ be a Banach space and let $Y$ be a dense subset of $X$. Moreover suppose that $X$ is compact and $f:X\to\reals$ is continuous. Then
	\begin{align}
	\sup\{f(y) \ | \ y \in Y\} = \sup\{f(x) \ | \ x \in X\}. 
	\end{align}
\end{theorem}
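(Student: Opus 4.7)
The proof is essentially a direct density-and-continuity argument, and compactness only serves to guarantee that the supremum on $X$ is finite (hence meaningful) since $f$ continuous on a compact set is bounded.

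My plan is to establish the two inequalities separately. The inequality $\sup\{f(y) \mid y\in Y\} \leq \sup\{f(x) \mid x\in X\}$ is immediate from $Y\subset X$: every value attained on $Y$ is attained on $X$. For the reverse inequality $\sup\{f(x) \mid x\in X\} \leq \sup\{f(y) \mid y\in Y\}$, I would fix an arbitrary $x\in X$ and, using the density of $Y$ in $X$, select a sequence $\{y_n\}_{n\in\naturals}\subset Y$ such that $y_n\to x$ in the norm of $X$. Continuity of $f$ then yields $f(y_n)\to f(x)$ in $\reals$. Since for every $n\in\naturals$ we have $f(y_n)\leq \sup\{f(y) \mid y\in Y\}$, passing to the limit gives $f(x)\leq \sup\{f(y)\mid y\in Y\}$. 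Because $x\in X$ was arbitrary, taking supremum over $x$ produces the desired inequality, and combining the two bounds closes the proof.

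No serious obstacle is expected: the only subtle point is ensuring that both suprema are well-defined real numbers, which is precisely where the compactness of $X$ is invoked (continuous real-valued functions on a compact set are bounded, so each supremum lies in $\reals$ rather than being $+\infty$). Otherwise, the argument is a one-line density trick and does not require the stronger machinery one might associate with Banach-space geometry or uniform continuity.
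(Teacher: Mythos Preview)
Your argument is correct and is exactly the standard density-plus-continuity proof one would expect. Note, however, that the paper does not actually supply a proof of this statement: Theorem~\ref{SupremumDence} appears in Section~\ref{ES} among a list of auxiliary results (Borel--Cantelli, Minkowski, Egoroff, Fubini, etc.) that are stated without proof and attributed to standard references, so there is no ``paper's own proof'' to compare against. Your write-up would serve perfectly well as the omitted justification.
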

\begin{theorem}\label{powerinequalitytheorem}  \quad \\
For some $n\in\naturals$, suppose that $x_{k}\geq0$ for all $1\leq k \leq n$ and $p\geq 1$. Then
\begin{align}
\bigg{(}\sum_{k=1}^{n}x_{k}\bigg{)}^{p} \leq n^{p-1}\sum_{k=1}^{n}x_{k}^{p}. 
\end{align}
\end{theorem}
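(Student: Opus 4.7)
The plan is to deduce this inequality from the convexity of the power function $\phi(x) \coloneqq x^{p}$ on $\reals^{+}$, which holds for $p \geq 1$ since $\phi''(x) = p(p-1)x^{p-2} \geq 0$ on $(0,\infty)$. First I would dispose of the trivial boundary cases: if $n=1$ both sides equal $x_{1}^{p}$, and if $p=1$ both sides equal $\sum_{k=1}^{n}x_{k}$, so I may assume $n \geq 2$ and $p > 1$.

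The main step is to apply Jensen's inequality in its finite, discrete form: for any convex function $\phi:\reals^{+}\to\reals^{+}$ and any $x_{1},\ldots,x_{n}\in\reals^{+}$, placing the uniform probability weights $\frac{1}{n}$ on the points $x_{1},\ldots,x_{n}$ yields
\begin{align*}
\phi\left(\frac{1}{n}\sum_{k=1}^{n}x_{k}\right) \leq \frac{1}{n}\sum_{k=1}^{n}\phi(x_{k}).
\end{align*}
Substituting $\phi(x) = x^{p}$ and multiplying both sides by $n^{p}$ gives exactly
\begin{align*}
\left(\sum_{k=1}^{n}x_{k}\right)^{p} \leq n^{p-1}\sum_{k=1}^{n}x_{k}^{p},
\end{align*}
as required.

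An equivalent route, which avoids an explicit appeal to Jensen, is to apply H\"older's inequality with conjugate exponents $p$ and $q = p/(p-1)$ to the pair of sequences $\{x_{k}\}_{k=1}^{n}$ and $\{1\}_{k=1}^{n}$: this produces $\sum_{k=1}^{n}x_{k} \leq n^{(p-1)/p}\bigl(\sum_{k=1}^{n}x_{k}^{p}\bigr)^{1/p}$, after which raising both sides to the $p$-th power yields the claim.

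I do not anticipate any genuine obstacle here; the statement is an elementary manifestation of convexity and the only routine verification required is the sign of $\phi''$ on $(0,\infty)$, which is immediate. The mildly delicate point is handling the boundary values $x_{k}=0$ when $p \in (1,2)$ (where $\phi''$ blows up at $0$), but this is settled by noting that $\phi$ is continuous on $[0,\infty)$ and convex on $(0,\infty)$, hence convex on $[0,\infty)$ by a limiting argument; alternatively the H\"older proof bypasses this issue entirely.
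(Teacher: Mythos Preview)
Your argument is correct. The paper, however, does not actually prove this statement: it appears in the ``Complementary Theory'' section (Section~\ref{CompSection}) as one of several standard results listed without proof and attributed to the references \cite{SB,RLS}. Your Jensen/H\"older argument is the standard one and is entirely adequate; there is nothing to compare against here.
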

\begin{theorem}[Young Inequality]\label{Yinequality}  \quad \\
Suppose that $p,q\in(1,\infty)$ are such that $\frac{1}{p} + \frac{1}{q}=1$ and $x,y\in\reals^{+}$. Then 
\begin{align}
xy\leq\frac{x^{p}}{p}+\frac{y^{q}}{q}. \label{Yinequality1}
\end{align}
Moreover equality in (\ref{Yinequality1}) above occures if and only if $y=x^{p-1}$.
\end{theorem}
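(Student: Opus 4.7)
The plan is to prove Young's inequality by a direct calculus argument on a single-variable function, which will deliver both the inequality and the sharp equality condition in one stroke. First I would dispose of the degenerate cases: if either $x=0$ or $y=0$, then the left-hand side is zero while the right-hand side is non-negative, and equality in these cases reduces (given $p>1$) to $y=0=x^{p-1}$, which is consistent with the stated condition. So from here on assume $x,y\in(0,\infty)$.

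Next, I would fix $x>0$ and consider the auxiliary function $f:(0,\infty)\to\reals$ defined by
\begin{align}
f(y)\coloneqq\frac{x^{p}}{p}+\frac{y^{q}}{q}-xy.
\end{align}
Since $f'(y)=y^{q-1}-x$ and $f''(y)=(q-1)y^{q-2}>0$, the function $f$ is strictly convex on $(0,\infty)$ and has a unique critical point, namely the solution of $y^{q-1}=x$, that is $y_{0}=x^{1/(q-1)}$. The key algebraic step is to use the conjugacy relation $\tfrac{1}{p}+\tfrac{1}{q}=1$, which rearranges to $(p-1)(q-1)=1$, so that $1/(q-1)=p-1$ and hence $y_{0}=x^{p-1}$.

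Then I would evaluate $f$ at $y_{0}$. Writing $y_{0}^{q}=x^{(p-1)q}=x^{p}$ (again by the conjugacy relation $(p-1)q=p$), one computes
\begin{align}
f(y_{0})=\frac{x^{p}}{p}+\frac{x^{p}}{q}-x\cdot x^{p-1}=x^{p}\left(\frac{1}{p}+\frac{1}{q}-1\right)=0.
\end{align}
Because $f$ is strictly convex with unique minimum at $y_{0}$ and $f(y_{0})=0$, we conclude $f(y)\geq 0$ for all $y>0$, with equality if and only if $y=y_{0}=x^{p-1}$. Rearranging $f(y)\geq 0$ gives inequality (\ref{Yinequality1}), and strict convexity gives the claimed equality characterisation.

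There is no substantive obstacle here; the only subtlety is making sure the two algebraic identities $1/(q-1)=p-1$ and $(p-1)q=p$ (both immediate from $1/p+1/q=1$) are used correctly so that the critical point lines up with the stated equality condition $y=x^{p-1}$. An alternative route would be via the concavity of $\log$ applied to $a=x^{p}$ and $b=y^{q}$, yielding $\log(xy)\leq\log(x^{p}/p+y^{q}/q)$ with equality iff $x^{p}=y^{q}$; I prefer the calculus version above because the strict convexity of $f$ makes the "if and only if" entirely transparent.
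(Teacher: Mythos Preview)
Your proof is correct and complete. Note, however, that the paper does not actually supply a proof of this statement: Young's inequality appears in the ``Complementary Theory'' section as one of several standard results quoted from references \cite{SB,RLS} without argument. Your single-variable calculus approach (minimising $f(y)=x^{p}/p+y^{q}/q-xy$ via strict convexity) is a clean and self-contained way to obtain both the inequality and the equality characterisation, and would serve perfectly well as a proof here.
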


\newpage 
\subsection{Martingales and Wiener Process in $\reals$}\label{WienerProcessinR}
In this section we work with a real valued Wiener process $W$ defined on $\mathbf{M}\mathbf{P}$  and assume that a filtration $\mathbb{F} \coloneqq \{\mathcal{F}_{t}\}_{t\in\mathcal{T}}$ is suitably chosen so that the following properties are satisfied;
\begin{enumerate} 
	\item For all $t\in\mathcal{T}$, $W(t)$ is $\mathcal{F}_{t}$ measurable, 
	\item  For all $s \leq t\in\mathcal{T}$, $W(t) - W(s)$ is independent of $\mathcal{F}_{s}$. 
\end{enumerate}
Unless stated otherwise, information in this subsection is based on \cite{AR}. 
\begin{definition}\label{ItoSpaces}
	For all $p\in\reals^{++}$ we introduce the folowing spaces of stochastic processes. 
	\begin{align}
	L^{p}_{ad}\coloneqq \{ \xi\in \mathcal{L}^{p}(\mathbf{MP},\mathbf{M}^{\reals}) \ | \ \xi \ \text{is adapted to} \  \mathbb{F}.\},
	\end{align}
	and a space
	\begin{align}
	\mathcal{M}_{\mathbb{F}} \coloneqq \left\{\xi\in\mathcal{S}(\mathbf{M}^{\reals}) \  \begin{tabular}{|l}
	\ $\xi_{t}\in \mathcal{L}(\mathbf{P},\mathbf{M}^{\reals}) ,\ \forall t\in\timeint$. \\
	\ $\xi \ \text{is adapted to} \  \mathbb{F}$.
	\end{tabular}\right\}
	\end{align}
\end{definition}
\begin{definition}\label{MartingaleSpaces} \quad
\begin{enumerate}
	\item $\xi\in\mathcal{M}_{\mathbb{F}} $ is called a martingale with respect to $\mathbb{F}$ if almost surely for all $s \leq t\in\mathcal{T}$
			 \begin{align}
			 \mathbb{E}[\xi_{t}|\mathcal{F}_{s}] = \xi_{s}. 
			 \end{align}
	\item $\xi\in\mathcal{S}(\mathbf{M}^{\reals})$ is called square integrable if $\xi_{t}\in \mathcal{L}^{2}(\mathbf{P},\mathbf{M}^{\reals}) ,\ \forall t\in\timeint$.
	\item $\xi\in\mathcal{S}(\mathbf{M}^{\reals})$ is called predictable if $\xi\in\mathcal{M}(\overline{\mathbf{MP}}, \mathbf{M}^{\reals})$.
\end{enumerate}	
\end{definition}
\begin{theorem}\label{MartingaleDecomposition}
Let $\xi$ be a right continuous, square integrable martingale with left-hand limits. Then there is a unique decomposition	
\begin{align}
\xi^{2}_{t} = L_{t}+ A_{t},\ \forall t\in\timeint,
\end{align}
where $L$ is a right continuous martingale with left-hand limits and $A$ is a predictable, right continuous, and increasing process such that $A(0)=0$ and $A_{t}\in \mathcal{L}(\mathbf{P},\mathbf{M}^{\reals}) ,\ \forall t\in\timeint$.
\end{theorem}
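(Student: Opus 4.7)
The plan is to recognize this statement as the Doob--Meyer decomposition applied to the submartingale $\xi^{2}$. The proof divides naturally into showing that $\xi^{2}$ has enough regularity to be decomposed, invoking a Doob--Meyer type result to produce $L$ and $A$, and then separately establishing uniqueness.

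First I would observe that since $x \mapsto x^{2}$ is convex and $\xi_{t} \in \mathcal{L}^{2}(\mathbf{P}, \mathbf{M}^{\reals})$ for each $t \in \timeint$, the conditional Jensen inequality gives, for $s \leq t \in \timeint$,
\begin{align}
\mathbb{E}[\xi_{t}^{2} \mid \mathcal{F}_{s}] \geq (\mathbb{E}[\xi_{t} \mid \mathcal{F}_{s}])^{2} = \xi_{s}^{2}, \quad \mathbb{P}\text{-a.s.},
\end{align}
so that $\xi^{2}$ is a non-negative, right continuous submartingale with left-hand limits (inherited from $\xi$). Next I would verify that $\xi^{2}$ is of class (DL) on $\timeint$; since $\timeint = [0,T]$ is compact and $\xi_{T} \in \mathcal{L}^{2}(\mathbf{P}, \mathbf{M}^{\reals})$, for any stopping time $\tau \leq T$ the optional stopping theorem gives $\mathbb{E}[\xi_{\tau}^{2}] \leq \mathbb{E}[\xi_{T}^{2}] < \infty$, and the family $\{\xi_{\tau}^{2}: \tau \leq T\}$ is dominated in $\mathcal{L}^{1}(\mathbf{P},\mathbf{M}^{\reals})$ by the Doob maximal inequality, which, combined with the $\mathcal{L}^{2}$ bound, produces uniform integrability.

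With these two prerequisites I would invoke the classical Doob--Meyer decomposition theorem for right continuous submartingales of class (DL); see for instance the standard reference used for the martingale material in this subsection. This yields the existence of a right continuous martingale $L$ with left-hand limits and a predictable, right continuous, increasing process $A$ with $A_{0} = 0$ such that
\begin{align}
\xi_{t}^{2} = L_{t} + A_{t}, \quad \forall t \in \timeint,
\end{align}
and integrability of $A_{t}$ follows from $\mathbb{E}[A_{t}] = \mathbb{E}[\xi_{t}^{2}] - \mathbb{E}[L_{t}] = \mathbb{E}[\xi_{t}^{2}] - \mathbb{E}[L_{0}]$, which is finite because $\xi$ is square integrable.

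For uniqueness, suppose $\xi^{2} = L + A = L' + A'$ are two such decompositions. Then $M \coloneqq L - L' = A' - A$ is on the one hand a right continuous martingale with $M_{0} = 0$, and on the other hand a predictable process of finite variation (the difference of two predictable, right continuous, increasing processes starting at zero). A predictable martingale of finite variation starting at zero is indistinguishable from the zero process, so $L = L'$ and $A = A'$ up to indistinguishability. The expected main obstacle is less the bookkeeping and more the invocation of the Doob--Meyer theorem itself, whose proof is deep; here we take it as a black-box result from the cited martingale literature, so the substantive content of our argument reduces to checking the hypotheses (right continuity, class (DL), square integrability) and running the uniqueness argument on the predictable finite-variation martingale.
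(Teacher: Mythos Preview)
Your outline is correct and is precisely the standard route to this result: recognise $\xi^{2}$ as a c\`adl\`ag submartingale via conditional Jensen, check the class (DL) hypothesis on the compact interval $\timeint$, invoke Doob--Meyer, and then dispose of uniqueness by noting that a predictable finite-variation martingale starting at zero is indistinguishable from zero.

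There is, however, nothing to compare against in the paper. Theorem~\ref{MartingaleDecomposition} is stated in the complementary theory subsection on martingales, which opens with the disclaimer that its contents are taken from \cite{AR} (Kuo's \emph{Introduction to Stochastic Integration}); the theorem is given without proof and is immediately followed by a remark defining the Meyer process $\langle \xi\rangle$. In other words, the paper treats this as a black-box result from the cited textbook, exactly as you yourself do when you invoke Doob--Meyer as a known theorem. So your proposal is not an alternative to the paper's proof but rather a correct expansion of a step the paper deliberately leaves to the literature.
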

\begin{mdframed}
	\begin{remark}
		Process $A$ found by Theorem \ref{MartingaleDecomposition} will be called a Meyer process in this text and the following abbreviation will be used
		\begin{align}
		\langle \xi \rangle_{t} = A_{t},\ \forall t\in\timeint. \label{Meyerprocess}
		\end{align}
		Moreover, one can show that
		\begin{align}
		\langle W \rangle_{t} = t,\ \forall t\in\timeint. \label{MeyerprocessWiener}
		\end{align}
	\end{remark}
\end{mdframed}
\begin{theorem}
Suppose that $\xi\in L^{2}_{ad}$ and define a stochastic process $X$ in the following way
\begin{align}
X_{t}\coloneqq\int_{0}^{t}\xi(s)dW(s).
\end{align}
Then 
\begin{enumerate}
	\item[(A)] $X$ is a martingale with respect to $\mathbb{F}$ and trajectories of $X$ are  almost surely continuous. 
\end{enumerate}
\begin{enumerate}
	\item[(B)] For all $t\in\timeint$
	\begin{enumerate}
		\item[(1)] $\mathbb{E}\bigg{[} \mathop{\mathlarger{\int}}_{\!\!\!0}^{t}\xi(s)dW(s) \bigg{]} = 0$,
		\item[(2)] $\mathbb{E}\bigg{[} \bigg{|}\mathop{\mathlarger{\int}}_{\!\!\!0}^{t}\xi(s)dW(s)\bigg{|}^{2} \bigg{]} = \mathop{\mathlarger{\int}}_{\!\!\!0}^{t}\mathbb{E}\bigg{[}|\xi(s)|^{2}\bigg{]}ds$,
		\item[(3)] $\langle X \rangle_{t} = \mathop{\mathlarger{\int}}_{\!\!\!0}^{t}|\xi(s)|^{2}d \langle W \rangle_{s}$.
	\end{enumerate} 
\end{enumerate}
\end{theorem}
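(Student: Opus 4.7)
The plan is to prove this result by the standard two-step density approach: first establish the statement for simple adapted processes where the integral can be written as a finite sum, and then extend by continuity and completeness to the whole space $L^{2}_{ad}$, using the Itô isometry (part (B)(2)) as the bridge between the two stages.

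First I would introduce the class $\mathcal{E}\subset L^{2}_{ad}$ of elementary processes, that is processes of the form
\begin{align}
\xi(s,\omega) = \sum_{i=0}^{N-1}\eta_{i}(\omega)\mathbbm{1}_{(t_{i},t_{i+1}]}(s),
\end{align}
where $0=t_{0}<t_{1}<\dots<t_{N}=T$, each $\eta_{i}$ is $\mathcal{F}_{t_{i}}$-measurable, and each $\eta_{i}\in\mathcal{L}^{2}(\mathbf{P},\mathbf{M}^{\reals})$. For such $\xi$ I would define
\begin{align}
X_{t} \coloneqq \sum_{i=0}^{N-1}\eta_{i}\bigl(W(t_{i+1}\wedge t) - W(t_{i}\wedge t)\bigr),
\end{align}
which is automatically almost surely continuous in $t$ (since $W$ is), adapted to $\mathbb{F}$, and square integrable. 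Using the assumed independence of increments and adaptedness, together with the tower property of conditional expectation, properties (B)(1), (B)(2) and the martingale property (A) follow by direct expansion of $\mathbb{E}[X_{t}\mid\mathcal{F}_{s}]$ and $\mathbb{E}[X_{t}^{2}]$; orthogonality of the Wiener increments kills all cross terms, leaving only the diagonal $\sum_{i}\mathbb{E}[\eta_{i}^{2}](t_{i+1}\wedge t - t_{i}\wedge t)$, which is exactly $\int_{0}^{t}\mathbb{E}[|\xi(s)|^{2}]ds$.

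The second stage is to approximate an arbitrary $\xi\in L^{2}_{ad}$ by a sequence $\{\xi^{n}\}\subset\mathcal{E}$ converging to $\xi$ in $\mathcal{L}^{2}(\mathbf{MP},\mathbf{M}^{\reals})$; this density result is standard, obtained by truncation, time-shifting mollification, and discretisation along dyadic partitions adapted to $\mathbb{F}$. By the isometry for elementary processes,
\begin{align}
\mathbb{E}\!\left[\,\bigl|X^{n}_{t} - X^{m}_{t}\bigr|^{2}\right] = \int_{0}^{t}\mathbb{E}\bigl[|\xi^{n}(s) - \xi^{m}(s)|^{2}\bigr]ds,
\end{align}
so $\{X^{n}_{t}\}$ is Cauchy in $\mathcal{L}^{2}(\mathbf{P},\mathbf{M}^{\reals})$ for every $t\in\timeint$, and I define $X_{t}$ as the limit via Theorem \ref{MeasurabilityCriteria}. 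Passing to the limit in the identities (A)--(B) for $X^{n}$ yields (B)(1), (B)(2) and the martingale property of $X$; the latter uses the $\mathcal{L}^{2}$-continuity of conditional expectation.

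The main obstacle is the almost-sure continuity of trajectories of $X$ and property (B)(3). For continuity, the key tool is Doob's maximal inequality applied to the martingale $X^{n}-X^{m}$, which gives
\begin{align}
\mathbb{E}\!\left[\sup_{t\in\timeint}|X^{n}_{t} - X^{m}_{t}|^{2}\right] \leq 4\int_{0}^{T}\mathbb{E}\bigl[|\xi^{n}(s) - \xi^{m}(s)|^{2}\bigr]ds;
\end{align}
extracting a fast-converging subsequence and invoking Corollary \ref{BorelCantelliCor2} produces an almost-surely uniform limit, whose sample paths are therefore continuous as uniform limits of continuous paths. For (B)(3), I would apply Theorem \ref{MartingaleDecomposition} to the continuous square-integrable martingale $X$, obtaining the Meyer process $\langle X\rangle$, and then verify that $X_{t}^{2} - \int_{0}^{t}|\xi(s)|^{2}ds$ is a martingale. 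For elementary $\xi$ this is a direct conditional-expectation calculation analogous to the one used for (B)(2); for general $\xi\in L^{2}_{ad}$ it follows by the same $\mathcal{L}^{1}$ passage to the limit. Uniqueness in Theorem \ref{MartingaleDecomposition} then identifies $\langle X\rangle_{t}$ with $\int_{0}^{t}|\xi(s)|^{2}d\langle W\rangle_{s}$, using (\ref{MeyerprocessWiener}).
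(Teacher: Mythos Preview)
Your proposal is correct and follows the standard textbook construction of the It\^o integral. Note, however, that the paper does not actually supply a proof of this theorem: the subsection opens with the disclaimer that ``information in this subsection is based on \cite{AR}'', and the theorem is stated without proof as a quoted result from Kuo's \emph{Introduction to Stochastic Integration}. Your density argument via elementary processes, It\^o isometry, Doob's maximal inequality for path continuity, and the uniqueness part of the Doob--Meyer decomposition for identifying $\langle X\rangle$ is precisely the route taken in that reference, so there is nothing to compare---you have simply filled in what the paper chose to cite.
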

Following theorem is a usefull result from \cite{MR}. 
\begin{theorem}[Burkholder, Davis and Gundy Inequality]\label{BDGInequality} \quad \\
Let $X$ be a continuous martingale. Then for all $t\in\timeint$ and all $p\in(0,\infty)$
\begin{align}
\mathbb{E}\bigg{[}\sup\bigg{\{}|X_{s}|^{p}\ \bigg{|}\ 0\leq s\leq t\bigg{\}}\bigg{]} = \mathbb{E}\bigg{[} \bigg{(}\langle X \rangle_{t}\bigg{)}^{\frac{p}{2}} \bigg{]}.
\end{align}
\end{theorem}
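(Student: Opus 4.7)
The plan is to establish the two-sided inequality $c_p\,\mathbb{E}[\langle X\rangle_t^{p/2}]\le\mathbb{E}[\sup_{s\le t}|X_s|^p]\le C_p\,\mathbb{E}[\langle X\rangle_t^{p/2}]$ with universal constants $0<c_p\le C_p$, understanding the ``$=$'' in the displayed statement as the classical two-sided moment comparison. The backbone combines It\^o's formula, Doob's $L^p$ maximal inequality, and localisation via $\tau_n\coloneqq\inf\{s\ge 0:|X_s|\ge n\}$, with monotone convergence removing the truncation at the end.

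For the upper bound in the regime $p\ge 2$, I would apply It\^o's formula to $f(x)=|x|^p$ (which is $C^2$ since $p-2\ge 0$) along the stopped continuous martingale $X^{\tau_n}$, obtaining
\[|X_{t\wedge\tau_n}|^p=p\!\int_0^{t\wedge\tau_n}\!|X_s|^{p-1}\operatorname{sgn}(X_s)\,dX_s+\tfrac{p(p-1)}{2}\!\int_0^{t\wedge\tau_n}\!|X_s|^{p-2}\,d\langle X\rangle_s.\]
Taking expectations kills the stochastic integral (which is a true martingale by boundedness), and estimating $|X_s|^{p-2}\le(\sup_{s\le t}|X_s|)^{p-2}$ followed by Young's inequality with exponents $p/(p-2)$ and $p/2$ gives a bound of the form $\epsilon\,\mathbb{E}[\sup_{s\le t}|X_s|^p]+C_\epsilon\,\mathbb{E}[\langle X\rangle_t^{p/2}]$. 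Doob's inequality $\mathbb{E}[\sup_{s\le t}|X_s|^p]\le(p/(p-1))^p\mathbb{E}[|X_t|^p]$ then lets me absorb the $\sup$ term on the left for small $\epsilon$, yielding the upper estimate.

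For the reverse direction, I would use It\^o on $g(x)=x^2$ to write $\langle X\rangle_t=X_t^2-2\int_0^t X_s\,dX_s$ and apply It\^o once more to $y\mapsto y^{p/2}$ on $Y_t\coloneqq\int_0^t X_s\,dX_s$, exploiting $\langle Y\rangle_t=\int_0^t X_s^2\,d\langle X\rangle_s\le(\sup_{s\le t}|X_s|^2)\,\langle X\rangle_t$. A symmetric Young/Doob argument then produces $\mathbb{E}[\langle X\rangle_t^{p/2}]\le c_p^{-1}\mathbb{E}[\sup_{s\le t}|X_s|^p]$. The remaining range $p\in(0,2)$ is covered uniformly by the good-$\lambda$ inequality of Burkholder: for suitable $\beta>\delta>0$,
\[\mathbb{P}\bigl(\sup_{s\le t}|X_s|>\beta\lambda,\ \langle X\rangle_t^{1/2}\le\delta\lambda\bigr)\le c\,\mathbb{P}\bigl(\sup_{s\le t}|X_s|>\lambda\bigr),\]
proved by stopping $X$ at $\inf\{s:|X_s|>\lambda\}$ and applying Chebyshev on the stopped increment with $\langle X\rangle$-control; integrating against $p\lambda^{p-1}\,d\lambda$ on $(0,\infty)$ converts this tail bound into the moment inequality for every $p>0$.

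The main obstacle is the good-$\lambda$ step for small $p$, whose sharp form requires careful stopping-time surgery together with exponential tail control exploiting continuity of $X$; an alternative is to regularise $y\mapsto y^{p/2}$ by $(y+\epsilon)^{p/2}$ and run the It\^o argument in its concave form, at comparable technical cost. Since the theorem is imported verbatim from \cite{MR}, a first-principles derivation lies outside the scope of this document, but the scheme above indicates the standard route followed in references such as \cite{AR}.
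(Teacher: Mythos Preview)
The paper offers no proof of this theorem: it is stated in Section~\ref{WienerProcessinR} as a ``usefull result from \cite{MR}'' and left entirely to that reference, so there is no argument in the paper to compare your proposal against. Your sketch is the standard route (It\^o's formula plus Doob for $p\ge 2$, good-$\lambda$ for $p\in(0,2)$, localisation throughout), and you correctly flag that the displayed ``$=$'' must be read as the classical two-sided moment equivalence with universal constants $c_p,C_p$ rather than a literal equality; you also already note yourself that the result is imported from \cite{MR}, which matches exactly how the paper treats it.
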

\begin{definition} \label{ItoProcess1}
	Suppose that $f\in L^{2}_{ad}$, $g\in L^{1}_{ad}$ and let $\xi_{0}$ be a $\mathcal{F}_{0}$ measurable random variable. An Itô process is a real valued stochastic process $\xi$ satisfying
	\begin{align}
	\xi_{t} &= \xi_{0} + \int_{0}^{t}g(s)ds + \int_{0}^{t}f(s)dW(s),\forall t\in\timeint. \label{ItoProcessEqn1}
	\end{align}
\end{definition}
\begin{theorem}[Itô Lemma] \label{ItoLemma1}
	Let $\xi$ be an Itô process satisfying equation (\ref{ItoProcessEqn1}) above and suppose that $\theta:\reals^{2}\to\reals$ is a continuous function such that all $\frac{\partial\theta}{\partial t}$, $\frac{\partial\theta}{\partial x}$ and $\frac{\partial^{2}\theta}{\partial x^{2}}$ are continuous functions from $\reals^{2}$ to $\reals$. Then $\theta\circ\xi$ is an Itô process satisfying
	\begin{align}
	\theta(t,\xi_{t}) = \theta(0,\xi_{0}) + \int_{0}^{t}\mathcal{K}(s,\xi_{s})ds + \int_{0}^{t}\frac{\partial\theta}{\partial x}(s,\xi_{s})f(s)dW(s),\ \forall t\in\timeint, \label{ItoLemmaEqn1}
	\end{align}
	where 
	\begin{align}
	\mathcal{K}(t,\xi_{t})\coloneqq \frac{\partial\theta}{\partial t}(t,\xi_{t}) + \frac{\partial\theta}{\partial x}(t,\xi_{t})g(t) + \frac{1}{2}\frac{\partial^{2}\theta}{\partial x^{2}}(t,\xi_{t})f^{2}(t),\ \forall t\in\timeint.
	\end{align}
\end{theorem}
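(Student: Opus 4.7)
The plan is to establish this via the classical partition-and-limit argument built on Taylor's theorem, with a preliminary localisation to reduce to the case of bounded data and bounded derivatives.

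Concretely, I would first introduce the stopping times $\tau_N \coloneqq \inf\{t\in\timeint : |\xi_t| + \int_0^t(|g(s)| + f(s)^2)ds > N\} \wedge T$ and prove (\ref{ItoLemmaEqn1}) for the stopped process $\xi^{\tau_N}$; since $\tau_N \uparrow T$ almost surely, letting $N\to\infty$ removes all boundedness assumptions. Under the temporary assumption that $\xi_0$, $f$, $g$ are bounded and $\xi$ takes values in a compact set (so that the relevant partial derivatives of $\theta$ evaluated along $\xi$ are uniformly bounded), I would fix partitions $\pi_n = \{0 = t_0^n < \dots < t_{k_n}^n = t\}$ of $[0,t]$ with $|\pi_n|\to 0$ and write the telescoping identity
\begin{align*}
\theta(t,\xi_t) - \theta(0,\xi_0) = \sum_i \bigl[\theta(t_{i+1}^n,\xi_{t_{i+1}^n}) - \theta(t_i^n,\xi_{t_i^n})\bigr].
\end{align*}
Expanding each summand by Taylor's formula (first order in $t$, second order in $x$) and splitting the increment $\Delta\xi_i = \int_{t_i^n}^{t_{i+1}^n} g(s)\,ds + \int_{t_i^n}^{t_{i+1}^n} f(s)\,dW(s)$ produces three principal sums matching the right-hand side of (\ref{ItoLemmaEqn1}), plus higher-order remainders.

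Term-by-term, the time-derivative sum converges pathwise to $\int_0^t \partial_t \theta(s,\xi_s)\,ds$ by uniform continuity, and the first-order spatial sum converges in $\mathcal{L}^2(\mathbf{P},\mathbf{M}^{\reals})$ to $\int_0^t \partial_x\theta(s,\xi_s)g(s)\,ds + \int_0^t \partial_x\theta(s,\xi_s)f(s)\,dW(s)$ by the definition of the Itô integral as an $L^2$ limit of simple-integrand approximations together with the isometry recalled in subsection \ref{WienerProcessinR}. The main obstacle is the second-order spatial sum $S_n \coloneqq \sum_i \partial_{xx}\theta(t_i^n,\xi_{t_i^n})(\Delta\xi_i)^2$. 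Expanding $(\Delta\xi_i)^2$ produces a drift-squared piece of order $O(\Delta t_i^2)$ and a drift-martingale cross piece of order $O(\Delta t_i\cdot|\Delta W_i|)$, both of which vanish in $\mathcal{L}^2$; what remains is $\sum_i \partial_{xx}\theta(t_i^n,\xi_{t_i^n})\bigl(\int_{t_i^n}^{t_{i+1}^n} f(s)\,dW(s)\bigr)^2$, which must be shown to be $\mathcal{L}^2$-close to $\sum_i \partial_{xx}\theta(t_i^n,\xi_{t_i^n})\int_{t_i^n}^{t_{i+1}^n} f(s)^2\,ds$.

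This quadratic-variation identification is the technical heart of the proof. Setting $D_i \coloneqq \bigl(\int_{t_i^n}^{t_{i+1}^n} f\,dW\bigr)^2 - \int_{t_i^n}^{t_{i+1}^n} f^2\,ds$, the $\mathcal{F}_{t_i^n}$-measurability of the prefactor together with the fact that the $D_i$ are orthogonal, mean-zero increments (via the Meyer-process relation (\ref{Meyerprocess}) applied to the martingale $\int_0^\cdot f\,dW$) reduces the estimation to a direct second-moment computation whose variance carries an extra factor of $|\pi_n|$ and so tends to zero. Uniform continuity of $s\mapsto \partial_{xx}\theta(s,\xi_s)$ on $[0,t]$ then yields $S_n \to \int_0^t \partial_{xx}\theta(s,\xi_s)f(s)^2\,ds$ in probability. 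The Taylor remainders are controlled by the modulus of continuity of $\partial_{xx}\theta$ on the compact set where $\xi$ lives, together with $\sum_i(\Delta\xi_i)^2$ staying bounded in probability. Extracting an almost-surely convergent subsequence and undoing the localisation then produces (\ref{ItoLemmaEqn1}) in full generality.
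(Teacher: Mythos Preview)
The paper does not supply its own proof of this theorem: it is stated in the ``Complementary Theory'' section (subsection \ref{WienerProcessinR}), which is prefaced by ``Unless stated otherwise, information in this subsection is based on \cite{AR}'', and no argument follows the statement. So there is nothing to compare against on the paper's side.

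Your sketch is the standard textbook route (localise by stopping times, Taylor-expand along a shrinking partition, identify the quadratic-variation limit), and it is essentially the argument one finds in \cite{AR}. It is a sound outline; just be aware that in the paper's framework an It\^o process is defined with $g\in L^{1}_{ad}$ and $f\in L^{2}_{ad}$ only, so the integrands $\partial_{x}\theta(s,\xi_{s})f(s)$ and $\mathcal{K}(s,\xi_{s})$ need not lie in $L^{2}_{ad}$ or $L^{1}_{ad}$ globally without the localisation step you already included --- in other words, the statement as written is slightly informal about the sense in which $\theta\circ\xi$ is again an ``It\^o process'' per Definition \ref{ItoProcess1}, and your stopping-time reduction is exactly what is needed to make that rigorous.
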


\newpage 
\subsection{Deterministic Ovsjannikov Equation}\label{DeterministicEqn}
Unless stated otherwise, information in this subsection is based on \cite{AD,AD&DF,LVO,LVO2}. \\[1em]
In this subsection we would like to address the problem of finding a unique continuous infinite time solution f satisfying the following integral equation
\begin{align}
f(t) = x_{\underline{\inta}} + \int_{0}^{t}F(f(s))ds, \label{ABGoal}
\end{align}
where we let $\mathbf{X}\coloneqq\{X_{\inta}\}_{\inta\in\indexint}$ be a suitable scale of Banach spaces, $x_{\underline{\inta}}\in X_{\underline{\inta}}$ and $F\in\mathcal{O}(\mathbf{X},L,q)$ be an Ovsjannikov map on $\mathbf{X}$. The main result of this appendix, that is existence and uniqueness of $f$, is summarised in the Theorem \ref{ABEUtheorem} bellow.  \\ \\
We will now show how the proof of Theorem \ref{ABEUtheorem} can be obtained. We start with a result that will be needed later on.
\begin{lemma} \label{ABSeriesLemma}
	Supose that $A,B\in\reals^{+}$, $p\in\naturals$ and $q\in[0,\frac{1}{p})$. Then
	\begin{align}
	\sum_{n=0}^{\infty} \frac{\sqrt[\leftroot{-2}\uproot{2}p]{A^{n}\ }}{B^{qn}} \frac{n^{qn}}{\sqrt[\leftroot{-2}\uproot{2}p]{n!\ }} <\infty. \label{ABMainSeries}
	\end{align}
\end{lemma}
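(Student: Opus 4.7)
The plan is to apply the ratio test, with the hypothesis $q < 1/p$ providing the decisive negative exponent. Denote the general term by
\[
a_n \;=\; \frac{A^{n/p}}{B^{qn}} \cdot \frac{n^{qn}}{(n!)^{1/p}},
\]
so the series becomes $\sum_{n \geq 0} a_n$ (the $n=0$ term is a finite constant and plays no role in convergence). Forming the ratio $a_{n+1}/a_n$ and cancelling the common factors, the $A$ and $B$ contributions collapse to the constant $A^{1/p}/B^{q}$, the factorial contribution $(n!)^{1/p}/((n+1)!)^{1/p}$ simplifies to $(n+1)^{-1/p}$, and the genuinely delicate factor $(n+1)^{q(n+1)}/n^{qn}$ I would split as $(n+1)^{q} \left( \tfrac{n+1}{n} \right)^{qn}$. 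Assembling these pieces gives
\[
\frac{a_{n+1}}{a_n} \;=\; \frac{A^{1/p}}{B^{q}} \cdot (n+1)^{\,q - 1/p} \cdot \left( 1 + \frac{1}{n} \right)^{qn}.
\]

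Next I would pass to the limit $n \to \infty$. The classical limit $(1 + 1/n)^{qn} \to e^{q}$ produces a finite positive constant, while the strict inequality $q < 1/p$ forces $(n+1)^{q - 1/p} \to 0$. Hence $a_{n+1}/a_n \to 0 < 1$, and the ratio test delivers absolute convergence of the series.

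I do not anticipate any serious obstacle: the only mildly subtle step is the algebraic rearrangement of $(n+1)^{q(n+1)}/n^{qn}$, and the proof rests entirely on the strict bound $q < 1/p$, which is precisely what allows the decay $(n+1)^{q - 1/p}$ to dominate the finite contribution $e^q$. If equality $q = 1/p$ were allowed, the ratio would converge to the finite value $\frac{A^{1/p}}{B^q}\, e^q$ and the test would become inconclusive, which clarifies why the strict inequality is indispensable. (An alternative route via Stirling's formula $(n!)^{1/p} \sim (2\pi n)^{1/(2p)} (n/e)^{n/p}$ reduces the general term to $\sim C^n n^{n(q - 1/p)}/(2\pi n)^{1/(2p)}$ for some constant $C$, and the factor $n^{n(q-1/p)}$ again decays super-exponentially by the same sign condition; but the ratio test is cleaner and sufficient.)
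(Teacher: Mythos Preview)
Your proposal is correct and follows essentially the same route as the paper: both apply the ratio test, arrive at the identical expression $\frac{A^{1/p}}{B^{q}}(n+1)^{q-1/p}\bigl(1+\tfrac{1}{n}\bigr)^{qn}$, and use $q<1/p$ together with $(1+1/n)^{qn}\to e^{q}$ to conclude the ratio tends to $0$. The only cosmetic difference is that the paper separates the case $q=0$ and treats it on its own, whereas your single computation covers it automatically.
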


\begin{proof}
	We will consider the following two cases separately.
	\begin{enumerate}
		\item $q\in (0,\frac{1}{2})$. \\
		Then by analyzing ratio of terms of series (\ref{ABMainSeries}) we get
		\begin{align}
		\frac{\sqrt[\leftroot{-2}\uproot{2}p]{A^{n+1}}}{B^{q(n+1)}} \frac{(n+1)^{q(n+1)}}{\sqrt[\leftroot{-2}\uproot{2}p]{(n+1)!}} \bigg{/} \frac{\sqrt[\leftroot{-2}\uproot{2}p]{A^{n}}}{B^{qn}} \frac{n^{qn}}{\sqrt[\leftroot{-2}\uproot{2}p]{n!}} &=\frac{\sqrt[\leftroot{-2}\uproot{2}p]{A}}{B^{q}}(n+1)^{qn+q-\frac{1}{p}}\frac{1}{n^{qn}}, \\[1em]
		&=\frac{\sqrt[\leftroot{-2}\uproot{2}p]{A}}{B^{q}}\frac{1}{(n+1)^{\frac{1}{p}-q}}\bigg(1+\frac{1}{n}\bigg)^{qn}_{.}
		\end{align}
	Now since 
		\begin{align}
		\lim_{n\to \infty}\frac{\sqrt[\leftroot{-2}\uproot{2}p]{A}}{B^{q}}\frac{1}{(n+1)^{\frac{1}{p}-q}}\bigg(1+\frac{1}{n}\bigg)^{qn} &= \frac{\sqrt[\leftroot{-2}\uproot{2}p]{A}}{B^{q}}\bigg{(}\lim_{n\to \infty}\frac{1}{(n+1)^{\frac{1}{p}-q}}\bigg{)}\bigg{(}\lim_{n\to \infty}\bigg(1+\frac{1}{n}\bigg)^{qn}\bigg{)}, \nonumber \\[1em]
		&=\frac{\sqrt[\leftroot{-2}\uproot{2}p]{A}}{B^{q}}(0)(e^{q}), \nonumber \\[1em]
		&=0.
		\end{align}
	Hence we conclude by ratio test that when $q\in (0,\frac{1}{2})$ series (\ref{ABMainSeries}) converges. 
	
	\item $q=0$. \\
	Then series (\ref{ABMainSeries}) reduces to $\sum_{n=0}^{\infty}\sqrt[\leftroot{-2}\uproot{2}p]{\frac{A^{n}}{n!}\ }$. By ratio test, as shown above,  it is clear that when $q=0$ series (\ref{ABMainSeries}) also converges hence the proof is complete.
\end{enumerate}
\end{proof}
Continuing, we fix some $T\in\reals^{+}$ and introduce a family $\mathbf{Z}\coloneqq\{Z_{\inta}\}_{\inta\in\indexint}$ where
$Z_{\inta}$ is the classical space of continuous $X_{\inta}$ valued maps. That is for all $\inta\in\indexint$ we define
\begin{align}
Z_{\inta} \coloneqq \mathcal{C}([0,T],X_{\inta}).
\end{align} 
Now, for all $\alpha<\beta\in\indexint$ and $f\in Z_{\alpha}$ the following consequences are immediate. 
\begin{enumerate}
	\item $\mathbf{Z}$ is a family of Banach spaces,
	\item $Z_{\alpha} \prec Z_{\beta}$, \hfill\refstepcounter{equation}(\theequation) \label{listZ}
	\item $||f||_{Z_{\beta}} \leq ||f||_{Z_{\alpha}}$. \label{2ndNormInequality}
\end{enumerate}
Therefore, from the list (\ref{listZ}) above we can conclude, using the Definition \ref{Defscale}, that $\mathbf{Z}$ is the scale. Continuing, we let
\begin{align}
\overline{\mathbf{Z}}\coloneqq\bigcup_{\inta\in(\underline{\inta},\overline{\inta})}Z_{\inta},
\end{align}
and define a map $\mathcal{I}:\overline{\mathbf{Z}}\to Z_{\overline{\inta}}$ by letting for all $t\in [0,T]$ and all $f\in\overline{\mathbf{Z}}$
\begin{align}
\mathcal{I}(f)(t) \coloneqq x_{\underline{\inta}}+ \int_{0}^{t}F(f(s))ds. \label{IABntegralMap}
\end{align}
The following result can now be proved. 
\begin{theorem} \label{ABIntmap}
$\mathcal{I}\in\mathcal{O}(\mathbf{X},LT,q)$. 
\end{theorem}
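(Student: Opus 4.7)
The plan is to verify the two defining conditions of Definition \ref{Defovsop} directly, using the fact that $F$ is already an Ovsjannikov map of order $q$ and constant $L$ on $\mathbf{X}$, and that integration over the bounded interval $[0,T]$ contributes only the factor $T$.

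First I would fix $\alpha<\beta\in\indexint$ and $f\in Z_\alpha=\mathcal{C}([0,T],X_\alpha)$. To establish condition (\ref{0LCondition}) of Definition \ref{Defovsop}, namely $\mathcal{I}(f)\in Z_\beta$, I would argue that $F|_{X_\alpha}:X_\alpha\to X_\beta$ is Lipschitz (by condition (\ref{1stLCondition}) applied to $F$), hence continuous, so $s\mapsto F(f(s))$ is a continuous map $[0,T]\to X_\beta$. Composed with the (Bochner) integral $t\mapsto\int_0^t F(f(s))\,ds$, this gives a continuous $X_\beta$-valued map; adding the constant term $x_{\underline{\inta}}\in X_{\underline{\inta}}\subset X_\beta$ (by the scale property of $\mathbf{X}$) yields $\mathcal{I}(f)\in Z_\beta$.

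Next, to verify condition (\ref{1stLCondition}), I would fix $f,g\in Z_\alpha$ and carry out the estimate
\begin{align}
\|\mathcal{I}(f)(t)-\mathcal{I}(g)(t)\|_{X_\beta}
&\leq \int_0^t \|F(f(s))-F(g(s))\|_{X_\beta}\,ds \\
&\leq \int_0^t \frac{L}{(\beta-\alpha)^q}\|f(s)-g(s)\|_{X_\alpha}\,ds \\
&\leq \frac{LT}{(\beta-\alpha)^q}\,\|f-g\|_{Z_\alpha},
\end{align}
using the Ovsjannikov estimate for $F$ in the second line and the definition of the supremum norm on $Z_\alpha$ in the third. Taking the supremum over $t\in[0,T]$ on the left-hand side then produces exactly
\[
\|\mathcal{I}(f)-\mathcal{I}(g)\|_{Z_\beta}\leq \frac{LT}{(\beta-\alpha)^q}\,\|f-g\|_{Z_\alpha},
\]
which is condition (\ref{1stLCondition}) with constant $LT$ in place of $L$, the order $q$ being unchanged.

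There is no real obstacle here: the argument is just integration followed by application of the hypothesis on $F$. The only minor care needed is to justify measurability and integrability of $s\mapsto F(f(s))$ in $X_\beta$, which follows immediately from continuity and the compactness of $[0,T]$. Concluding, $\mathcal{I}\in\mathcal{O}(\mathbf{Z},LT,q)$ as required.
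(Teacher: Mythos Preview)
Your proposal is correct and follows essentially the same approach as the paper: fix $\alpha<\beta$, use the Lipschitz property of $F$ together with the Bochner integral to verify that $\mathcal{I}$ maps $Z_\alpha$ into $Z_\beta$, and then carry out the identical three-line estimate to obtain the constant $LT$. You even spell out the continuity argument for $s\mapsto F(f(s))$ a bit more explicitly than the paper does, and you correctly note that the conclusion should read $\mathcal{I}\in\mathcal{O}(\mathbf{Z},LT,q)$ rather than $\mathcal{O}(\mathbf{X},LT,q)$, which is a typographical slip in the statement.
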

\begin{proof}
Fix $\alpha<\beta\in\indexint$, $f,g\in Z_{\alpha}$ and $t\in [0,T]$. We now check that the integral map $\mathcal{I}$ satisfies the Definition \ref{Defovsop}. We begin by using the definition of Bochner integral and the fact that $F\in\mathcal{O}(\mathbf{X},L,q)$ to conclude that $\mathcal{I}|_{Z_{\alpha}}:Z_{\alpha}\to Z_{\beta}$. Moreover we see that
\begin{align}
||\mathcal{I}(f)(t) - \mathcal{I}(g)(t)||_{X_{\beta}} &\leq \int_{0}^{t}||F(f(s)) - F(g(s))||_{X_{\beta}}ds, \\
&\leq \frac{L}{(\beta - \alpha)^{q}} \int_{0}^{t}||f(s) - g(s)||_{X_{\alpha}}ds, \label{AB2ndLCondition1} \\  
&\leq \frac{L}{(\beta - \alpha)^{q}} \int_{0}^{t}||f - g||_{Z_{\alpha}}ds. \label{AB2ndLCondition}
\end{align}
Therefore we see that 
\begin{align}
||\mathcal{I}(f) - \mathcal{I}(g)||_{Z_{\beta}} &\leq \frac{L}{(\beta - \alpha)^{q}} \int_{0}^{T}||f - g||_{Z_{\alpha}}ds, \\
&\leq \frac{LT}{(\beta - \alpha)^{q}} ||f - g||_{Z_{\alpha}}, \label{AB3rdLCondition}
\end{align}	
hence the proof is complete.
\end{proof}
We now would like to define something called an itterated or a composite map. That is for all $n\in\naturals$ we define
\begin{align}
 \mathcal{I}^{n} \coloneqq \overbrace{\mathcal{I}\circ\mathcal{I}\circ\cdots\circ\mathcal{I}}^{n\text{\ times}}, \label{ABcomposite}
\end{align}
and let $\mathcal{T}^{0}$ be the identity map from $Z_{\underline{\inta}}$ to $Z_{\underline{\inta}}$. Letting 
\begin{align}
\underline{\mathbf{Z}}\coloneqq\bigcap_{\inta\in(\underline{\inta},\overline{\inta})}Z_{\inta},
\end{align}
our next result shows that for all $n\in\naturals$ the composite map $\mathcal{I}^{n}$ is well defined. 
\begin{theorem} \label{ABcompositetheorem0}
For all $n\in\naturals^{0}$
\begin{align}
\mathcal{I}^{n}:Z_{\underline{\inta}}\to\underline{\mathbf{Z}}. \label{ABcompositetheorem}
\end{align}
\end{theorem}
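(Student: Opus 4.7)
The plan is to proceed by induction on $n \in \naturals^{0}$, combined with the key observation that to land in \emph{every} intermediate space $Z_{\inta}$ with $\inta \in (\underline{\inta}, \overline{\inta})$ one must subdivide the index interval $[\underline{\inta}, \beta]$ into $n$ equal steps and apply $\mathcal{I}$ along a strictly increasing chain of indices. The base case $n=0$ is immediate: $\mathcal{I}^{0}$ is the identity on $Z_{\underline{\inta}}$, and the scale property \eqref{listZ} gives $Z_{\underline{\inta}} \subset Z_{\inta}$ for every $\inta \in (\underline{\inta}, \overline{\inta})$, so $\mathcal{I}^{0}(Z_{\underline{\inta}}) \subset \underline{\mathbf{Z}}$ trivially.

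For $n \geq 1$ I would fix an arbitrary $f \in Z_{\underline{\inta}}$ together with an arbitrary target index $\beta \in (\underline{\inta}, \overline{\inta})$, and verify that $\mathcal{I}^{n}(f) \in Z_{\beta}$. Since $\beta > \underline{\inta}$, set
\[
\inta_{k} \coloneqq \underline{\inta} + \frac{k}{n}(\beta - \underline{\inta}), \qquad k = 0, 1, \dots, n,
\]
so that $\underline{\inta} = \inta_{0} < \inta_{1} < \cdots < \inta_{n} = \beta$ is a strictly increasing chain inside $\indexint$. A short auxiliary induction on $k$ then shows $\mathcal{I}^{k}(f) \in Z_{\inta_{k}}$: the case $k = 0$ holds by assumption, and if $\mathcal{I}^{k}(f) \in Z_{\inta_{k}}$ with $\inta_{k} < \inta_{k+1}$, Theorem \ref{ABIntmap} (the Ovsjannikov property of $\mathcal{I}$ applied to the pair $\alpha = \inta_{k}$, $\beta = \inta_{k+1}$) yields $\mathcal{I}^{k+1}(f) = \mathcal{I}(\mathcal{I}^{k}(f)) \in Z_{\inta_{k+1}}$. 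Taking $k = n$ gives $\mathcal{I}^{n}(f) \in Z_{\beta}$, and since $\beta \in (\underline{\inta}, \overline{\inta})$ was arbitrary, $\mathcal{I}^{n}(f) \in \underline{\mathbf{Z}}$.

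There is no deep obstacle here; the only subtlety is that a naive iteration using a single fixed pair $\alpha < \beta$ would immediately break down, because an Ovsjannikov map only sends $Z_{\alpha}$ into $Z_{\beta}$ with $\beta$ strictly larger, so each successive application must consume some of the remaining ``room'' between the current index and $\overline{\inta}$. The uniform subdivision $\inta_{k}$ is precisely the device that allocates this room evenly across the $n$ compositions, and the fact that we only need to reach $\beta < \overline{\inta}$ rather than $\overline{\inta}$ itself is what makes such a subdivision possible for every $n$.
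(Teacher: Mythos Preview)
Your proof is correct. The paper's argument is a clean induction on $n$: assuming $\mathcal{I}^{n}:Z_{\underline{\inta}}\to\underline{\mathbf{Z}}$, one fixes arbitrary $\inta\in(\underline{\inta},\overline{\inta})$, picks any intermediate $p\in(\underline{\inta},\inta)$, uses the hypothesis to get $\mathcal{I}^{n}(f)\in Z_{p}$, and then applies $\mathcal{I}$ once more via Theorem~\ref{ABIntmap} to land in $Z_{\inta}$. Your argument is slightly different in structure: although you announce an induction on $n$, your proof for $n\geq 1$ does not actually invoke the case $n-1$; instead you subdivide $[\underline{\inta},\beta]$ into $n$ equal pieces and run an inner induction on $k=0,\dots,n$. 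This is perfectly valid, and in fact the equal-subdivision device is exactly what the paper uses later in Theorem~\ref{ABprecauchytheorem} to get quantitative control of $\|\mathcal{I}^{n}(f)-\mathcal{I}^{n+1}(g)\|$, so your version foreshadows that. The paper's route is marginally slicker for the bare mapping statement because it exploits the full strength of the induction hypothesis (landing in the whole intersection $\underline{\mathbf{Z}}$) rather than tracking a specific chain of indices.
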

\begin{proof}
We prove this statement by induction. For $n=0$ the statement (\ref{ABcompositetheorem}) is trivially true because $Z_{\underline{\inta}}\subset\underline{\mathbf{Z}}$. Now suppose that induction hypothesis holds for some $n\geq 0$. Fix arbitrary $\inta\in(\underline{\inta},\overline{\inta})$ and $p\in(\underline{\inta},\inta)$. Observe that induction hypothesis implies that $\mathcal{I}^{n}:Z_{\underline{\inta}} \to Z_{p}$. However because $\mathcal{I}\in\mathcal{O}(\mathbf{X},LT,q)$ we know that $\mathcal{I}|_{Z_{p}}:Z_{p} \to Z_{\inta}$ hence by composition $\mathcal{I}\circ\mathcal{I}^{n}$ it follows that $\mathcal{I}^{n+1}:Z_{\underline{\inta}} \to Z_{\inta}$ and since $\inta\in(\underline{\inta},\overline{\inta})$ is arbitrary the proof is complete. 
\end{proof}
\begin{mdframed}
	\begin{remark}
	Observe that Theorem \ref{ABcompositetheorem0} shows that if $f\in Z_{\underline{\inta}}$ then the sequence $\{\mathcal{I}^{n}(f)\}_{n=0}^{\infty}$ belogs to $Z_{\inta}$ for all $\inta\in(\underline{\inta},\overline{\inta})$. 
	\end{remark}
\end{mdframed}
Let us now, for a moment, consider some fixed $t_{0}\in[0,T]$, $\alpha<\beta\in(\underline{\inta},\overline{\inta})$ and $f\in Z_{\underline{\inta}}$. Moreover let us consider arbitrary $n\in\naturals$ and a partition $\{\psi_{i}\}_{i=0}^{n}$ of $[\alpha,\beta]$ into $n$ intervals of equal length. That is $\psi_{0}=\alpha$, $\psi_{n}=\beta$ and $\psi_{i+1}-\psi_{i} = \frac{b-a}{n}$ for all $0\leq i \leq n$. Letting
\begin{align}
K_{n}^{n+1}(t) = \mathcal{I}^{n}(f)(t_{0}) - \mathcal{I}^{n+1}(f)(t_{0}),\ \forall t\in [0,t_{0}],
\end{align}
we see from Theorem \ref{ABIntmap} and \ref{ABcompositetheorem0} that
\begin{align}
||K_{n}^{n+1}(t_{0})||_{X_{\psi_{n}}} &\leq \frac{L}{(\psi_{n} - \psi_{n-1})^{q}}\int_{0}^{t_{0}}||K_{n-1}^{n}(t_{1})||_{X_{\psi_{n-1}}}dt_{1}, \nonumber \\[1em] 
&\leq \frac{L}{(\psi_{n} - \psi_{n-1})^{q}}\frac{L}{(\psi_{n-1} - \psi_{n-2})^{q}}\int_{0}^{t_{0}}\int_{0}^{t_{1}}||K_{n-2}^{n-1}(t_{2})||_{X_{\psi_{n-2}}}dt_{2}dt_{1}, \nonumber \\[1em] 
& \leq L^{n}\bigg{(}\frac{\beta-\alpha}{n}\bigg{)}^{-qn} \int_{0}^{t_{0}}\int_{0}^{t_{1}}\cdots\int_{0}^{t_{n-1}}||K_{0}^{1}(t_{n})||_{X_{\psi_{0}}}dt_{n}dt_{n-1}\cdots dt_{1}, \label{ABEquationForLL} \\[1em] 
&\leq \frac{L^{n}}{(\beta-\alpha)^{qn}}n^{qn}||K_{0}^{1}||_{Z_{\psi_{0}}}\int_{0}^{t_{0}}\int_{0}^{t_{1}}\cdots\int_{0}^{t_{n-1}}\ dt_{n}dt_{n-1}\cdots dt_{1}, \nonumber \\[1em]
&\leq \frac{L^{n}t_{0}^{n}}{(\beta-\alpha)^{qn}}\frac{n^{qn}}{n!}||K_{0}^{1}||_{Z_{\psi_{0}}}. \nonumber
\end{align}
Hence, defining recursively $\mathcal{H}^{n}:\mathcal{C}([0,T],\reals)\to\mathcal{C}([0,T],\reals)$ for all $n\in\naturals^{0}$ via formula
\begin{align} \label{ABHmap}
\mathcal{H}^{n}(t,f) \coloneqq \begin{cases} 
\begin{tabular}{l|l}
$f(t)$ \ & \ $t\in [0,T]\land n=0$, \\
$\int_{0}^{t}f(s)ds$ \ & \ $t\in [0,T]\land n=1$, \\
$\int_{0}^{t}\mathcal{H}^{n-1}(s,f)ds$ \ & \ $t\in [0,T]\land n>1$.
\end{tabular}
\end{cases}
\end{align}
we see from inequalities (\ref{ABEquationForLL}) that the following result can be formulateed and proved. 
\begin{theorem}\label{ABprecauchytheorem}
Suppose $\alpha<\beta\in(\underline{\inta},\overline{\inta})$ and $f,g\in Z_{\underline{\inta}}$. Then for all $n\in\naturals$
\begin{align}
||\mathcal{I}^{n}(f) - \mathcal{I}^{n+1}(g)||_{Z_{\beta}} \leq \frac{L^{n}T^{n}}{(\beta-\alpha)^{qn}}\frac{n^{qn}}{n!}||f-\mathcal{I}(g)||_{Z_{\alpha}}. \label{ABprecauchy1}
\end{align}
\end{theorem}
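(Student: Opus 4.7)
My plan is to formalise as an induction the telescoping calculation that is already sketched in the inequalities labelled (\ref{ABEquationForLL}) just above the statement. The key ingredient is not the $Z_\beta$--version of the Ovsjannikov bound for $\mathcal{I}$ (which was used to derive Theorem \ref{ABIntmap}), but rather the pointwise-in-time estimate (\ref{AB2ndLCondition1}) that appears inside that proof; this is what allows the eventual iterated integral to collapse to $t^n/n!$ rather than $T^n$.

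First, I will fix $\alpha<\beta\in(\underline{\inta},\overline{\inta})$, fix $n\in\naturals$, and introduce the uniform partition $\psi_i:=\alpha+i\frac{\beta-\alpha}{n}$ for $0\le i\le n$, so that $\psi_0=\alpha$, $\psi_n=\beta$, and consecutive gaps equal $(\beta-\alpha)/n$. For $f,g\in Z_{\underline{\inta}}$ and $0\le j\le n$, I set $K_j(t):=\mathcal{I}^{j}(f)(t)-\mathcal{I}^{j+1}(g)(t)$ for $t\in[0,T]$; by Theorem \ref{ABcompositetheorem0} each $K_j$ lies in $Z_\inta$ for every $\inta\in(\underline{\inta},\overline{\inta})$, so in particular $K_j(t)\in X_{\psi_j}$ is meaningful.

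Next, I will apply the pointwise Lipschitz estimate (\ref{AB2ndLCondition1}) from the proof of Theorem \ref{ABIntmap} (using the Ovsjannikov order $q$ and constant $L$ of $F$) to obtain, for $1\le j\le n$ and any $t\in[0,T]$,
\[
\|K_j(t)\|_{X_{\psi_j}} \;\le\; \frac{L}{(\psi_j-\psi_{j-1})^q}\int_0^t\|K_{j-1}(s)\|_{X_{\psi_{j-1}}}\,ds \;=\; \frac{Ln^{q}}{(\beta-\alpha)^q}\int_0^t\|K_{j-1}(s)\|_{X_{\psi_{j-1}}}\,ds.
\]
Iterating this $n$ times and using the standard identity
\[
\int_0^{t}\!\int_0^{t_1}\!\cdots\!\int_0^{t_{n-1}} ds_n\cdots ds_1 \;=\; \frac{t^n}{n!},
\]
together with the crude bound $\|K_0(s)\|_{X_\alpha}\le\|f-\mathcal{I}(g)\|_{Z_\alpha}$ uniformly in $s$, yields
\[
\|K_n(t)\|_{X_{\psi_n}} \;\le\; \frac{L^n n^{qn}}{(\beta-\alpha)^{qn}}\,\frac{t^n}{n!}\,\|f-\mathcal{I}(g)\|_{Z_\alpha}.
\]
Taking the supremum over $t\in[0,T]$ produces the factor $T^n$ and gives exactly the claimed inequality, since $\psi_n=\beta$.

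I do not expect any serious obstacle here: the argument is essentially a bookkeeping exercise that has been laid out in (\ref{ABEquationForLL}). The only subtle point worth stating carefully is that I must use the inner pointwise inequality (\ref{AB2ndLCondition1}) rather than its consequence (\ref{AB3rdLCondition}), because applying the latter $n$ times would produce a factor $(LT)^n/(\beta-\alpha)^{qn}$ without the crucial $1/n!$, which is precisely the gain that makes the series in Lemma \ref{ABSeriesLemma} convergent and therefore makes the fixed-point scheme of the forthcoming Theorem \ref{ABEUtheorem} work.
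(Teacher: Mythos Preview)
Your proposal is correct and follows essentially the same route as the paper: both arguments partition $[\alpha,\beta]$ uniformly into $n$ pieces and iterate the pointwise bound (\ref{AB2ndLCondition1}) to collapse the nested integral into $t^n/n!$. The only cosmetic difference is that the paper packages the iteration as a formal induction on $n$ via the auxiliary operator $\mathcal{H}^n$ of (\ref{ABHmap}), whereas you fix $n$ at the outset and unroll the recursion directly---exactly as in the display (\ref{ABEquationForLL}) you cite.
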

\begin{proof}
Fixing $t\in [0,T]$ we prove by induction that
\begin{align}
||\mathcal{I}^{n}(f)(t) - \mathcal{I}^{n+1}(g)(t)||_{X_{\beta}} \leq \frac{L^{n}}{(\beta-\alpha)^{qn}}n^{qn}\mathcal{H}^{n}(t,||f-\mathcal{I}(g)||_{X_{\alpha}}), \label{ABprecauchy2}
\end{align}
from where inequality (\ref{ABprecauchy1}) follows directly. Clearly case $n=1$ follows immediately from the Theorem \ref{ABIntmap}. Precisely spaking inequality (\ref{AB2ndLCondition1}) shows that the induction hypothesis holds for $n=1$. Now, suppose that the induction hypothesis holds for some $n\geq 1$. Chosing $\psi\in(\alpha,\beta)$ such that $\beta-\psi= \frac{\beta-\alpha}{n+1}$ we see, using Theorem \ref{ABIntmap}, that
\begin{align}
||\mathcal{I}^{n+1}(f)(t) - \mathcal{I}^{n+2}(g)(t)||_{X_{\beta}} \leq \frac{L}{(\beta-\psi)}\int_{0}^{t}||\mathcal{I}^{n}(f)(s) - \mathcal{I}^{n+1}(g)(s)||_{X_{\psi}}ds.
\end{align}
Hence letting
\begin{align}
\mathbf{A} \coloneqq ||f-\mathcal{I}(g)||_{X_{\alpha}},
\end{align}
and applying the induction hypothesis we get
\begin{align}
||\mathcal{I}^{n+1}(f)(t) - \mathcal{I}^{n+2}(g)(t)||_{X_{\beta}} &\leq \frac{L}{(\beta-\psi)^{q}} \frac{L^{n}}{(\psi-\alpha)^{qn}}n^{qn}\int_{0}^{t}\mathcal{H}^{n}(s,\mathbf{A})ds, \nonumber \\[1em]
&\leq \frac{L^{n+1}}{(\beta-\psi)^{q}(\psi-\alpha)^{qn}}n^{qn}\mathcal{H}^{n+1}(t,\mathbf{A}), \nonumber \\[1em]
&\leq L^{n+1}\bigg{(\frac{\beta-\alpha}{n+1}}\bigg{)}^{-q}\bigg{(\frac{n(\beta-\alpha)}{n+1}}\bigg{)}^{-qn}n^{qn}\mathcal{H}^{n+1}(t,\mathbf{A}), \\[1em]
&\leq \frac{L^{n+1}}{(\beta-\alpha)^{q(n+1)}}\frac{(n+1)^{q(n+1)}}{n^{qn}}n^{qn}\mathcal{H}^{n+1}(t,\mathbf{A}), \nonumber \\[1em]
&\leq \frac{L^{n+1}}{(\beta-\alpha)^{q(n+1)}}(n+1)^{q(n+1)}\mathcal{H}^{n+1}(t,\mathbf{A}). \nonumber
\end{align}
Hence
\begin{align}
||\mathcal{I}^{n+1}(f)(t) - \mathcal{I}^{n+2}(g)(t)||_{X_{\beta}} \leq \frac{L^{n+1}}{(\beta-\alpha)^{q(n+1)}}(n+1)^{q(n+1)}\mathcal{H}^{n+1}(t,||f-\mathcal{I}(g)||_{X_{\alpha}}),
\end{align}
and the proof is complete.
\end{proof}
\begin{mdframed}
	\begin{remark}
		It is clear from the definition of the composite map $\mathcal{I}^{n}$ that the Theorem \ref{ABprecauchytheorem} is trivially true for $n=0$. Moreover it is essential that $\alpha\in(\underline{\inta},\overline{\inta})$ because it is possible that $\mathcal{I}(f)$ does not belogn to $Z_{\underline{\inta}}$.
	\end{remark}
\end{mdframed}

Theorem \ref{ABprecauchytheorem} puts us in a position to prove the following. 
\begin{theorem} \label{ABfixpointtheorem}
Suppose that $q<1$ and $F\in\mathcal{O}(\mathbf{X},L,q)$. Then there exists a unique element $\phi\in\underline{Z}$ such that $\mathcal{I}(\phi)=\phi$. Moreover if $\inta\in(\underline{\inta},\overline{\inta})$ and $f\in Z_{\underline{\inta}}$ then
\begin{align}
\overbrace{\ \lim_{n\to\infty}\mathcal{I}^{n}(f)\ }^{\text{in} \ Z_{\inta}} = \phi.
\end{align}
\end{theorem}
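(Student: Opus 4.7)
The plan is to mimic the Banach fixed-point argument on the whole scale $\mathbf{Z}$ at once, building $\phi$ as the limit of the iterates $\{\mathcal{I}^{n}(f)\}_{n\in\naturals}$ for any starting point $f\in Z_{\underline{\inta}}$. The two key ingredients already in hand are Theorem \ref{ABprecauchytheorem}, which controls consecutive iterates across a pair of scale levels $\alpha<\beta$, and Lemma \ref{ABSeriesLemma}, which ensures summability of the resulting bound precisely when $q<1/p$ with $p=1$, i.e.\ under the hypothesis $q<1$.

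First, fix $f\in Z_{\underline{\inta}}$, pick $\inta\in(\underline{\inta},\overline{\inta})$, and choose an auxiliary $\alpha\in(\underline{\inta},\inta)$. Applying Theorem \ref{ABprecauchytheorem} with $g=f$ yields
\[
\|\mathcal{I}^{n}(f)-\mathcal{I}^{n+1}(f)\|_{Z_{\inta}}\ \leq\ \frac{L^{n}T^{n}}{(\inta-\alpha)^{qn}}\frac{n^{qn}}{n!}\,\|f-\mathcal{I}(f)\|_{Z_{\alpha}},
\]
and Lemma \ref{ABSeriesLemma} with $p=1$ makes the majorising series convergent, so $\{\mathcal{I}^{n}(f)\}$ is Cauchy in $Z_{\inta}$ and converges to some limit $\phi_{\inta}\in Z_{\inta}$. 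The scale inequality $\|\cdot\|_{Z_{\inta'}}\leq\|\cdot\|_{Z_{\inta}}$ for $\inta<\inta'$ forces these limits to coincide across indices, yielding a single $\phi\in\underline{\mathbf{Z}}$ with $\mathcal{I}^{n}(f)\to\phi$ in $Z_{\inta}$ for every $\inta\in(\underline{\inta},\overline{\inta})$. To identify $\phi$ as a fixed point, pick any $\beta\in(\underline{\inta},\inta)$; then $\mathcal{I}^{n}(f)\to\phi$ in $Z_{\beta}$, and by Theorem \ref{ABIntmap} the restriction $\mathcal{I}|_{Z_{\beta}}\colon Z_{\beta}\to Z_{\inta}$ is Lipschitz, so $\mathcal{I}^{n+1}(f)=\mathcal{I}(\mathcal{I}^{n}(f))\to\mathcal{I}(\phi)$ in $Z_{\inta}$; combined with $\mathcal{I}^{n+1}(f)\to\phi$ in $Z_{\inta}$ this gives $\mathcal{I}(\phi)=\phi$.

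Uniqueness follows by reapplying the same estimate to two candidate fixed points. Given $\phi_{1},\phi_{2}\in\underline{\mathbf{Z}}$ with $\mathcal{I}(\phi_{i})=\phi_{i}$, pick indices $\underline{\inta}<\mu<\alpha<\beta<\overline{\inta}$ so that $\phi_{1},\phi_{2}\in Z_{\mu}$; the identity $\mathcal{I}^{k}(\phi_{i})=\phi_{i}$ together with the bound from Theorem \ref{ABprecauchytheorem} give
\[
\|\phi_{1}-\phi_{2}\|_{Z_{\beta}}\ =\ \|\mathcal{I}^{n}(\phi_{1})-\mathcal{I}^{n+1}(\phi_{2})\|_{Z_{\beta}}\ \leq\ \frac{L^{n}T^{n}}{(\beta-\alpha)^{qn}}\frac{n^{qn}}{n!}\,\|\phi_{1}-\phi_{2}\|_{Z_{\alpha}},
\]
and the right-hand side vanishes as $n\to\infty$ because its general term is the $n$-th summand of a convergent series (Lemma \ref{ABSeriesLemma}). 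Hence $\phi_{1}=\phi_{2}$ in every $Z_{\beta}$ with $\beta\in(\underline{\inta},\overline{\inta})$, so $\phi_{1}=\phi_{2}$ in $\underline{\mathbf{Z}}$.

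The main delicate point is that Theorem \ref{ABprecauchytheorem} is formulated only for starting points in $Z_{\underline{\inta}}$, whereas in the uniqueness step the fixed points are guaranteed only to lie in $\underline{\mathbf{Z}}$. A careful rereading of its induction, however, shows that the argument uses nothing more than the Ovsjannikov property of $\mathcal{I}$ on sub-scales indexed by $(\mu,\overline{\inta})$, so the bound carries over verbatim with $Z_{\mu}$ replacing $Z_{\underline{\inta}}$; this is the one step that has to be flagged explicitly rather than quoted directly from the existing statement.
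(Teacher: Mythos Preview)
Your proof is correct and follows essentially the same route as the paper: show the iterates are Cauchy in each $Z_{\inta}$ via Theorem~\ref{ABprecauchytheorem} and Lemma~\ref{ABSeriesLemma}, identify the limits across scale levels, pass to the limit in $\mathcal{I}$ to obtain the fixed-point equation, and prove uniqueness by applying the same iterate bound to $\|\mathcal{I}^{n}(\phi_{1})-\mathcal{I}^{n+1}(\phi_{2})\|$. Your final paragraph is in fact more careful than the paper, which invokes Theorem~\ref{ABprecauchytheorem} in the uniqueness step for $\phi,\psi\in\underline{\mathbf{Z}}$ without commenting on the fact that the theorem is stated only for starting points in $Z_{\underline{\inta}}$.
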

\begin{proof}
Fix $f\in Z_{\underline{\inta}}$ and $\inta\in(\underline{\inta},\overline{\inta})$. Fix also an arbitrary $\gamma\in(\underline{\inta},\inta)$ and using theorem \ref{ABprecauchytheorem} observe that for all $m\geq n\in\naturals$ we have
\begin{align}
||\mathcal{I}^{n}(f) - \mathcal{I}^{m}(f)||_{Z_{\inta}} &\leq\sum_{k=n}^{m-1}||\mathcal{I}^{k}(f) - \mathcal{I}^{k+1}(f)||_{Z_{\gamma}}, \\[1em]
&\leq\sum_{k=n}^{m-1}\frac{L^{k}T^{k}}{(\inta-\gamma)^{qk}}\frac{n^{qk}}{k!}\ ||f-\mathcal{I}(f)||_{Z_{\gamma}}, \\[1em]
&\leq \sum_{k=n}^{\infty}\frac{L^{k}T^{k}}{(\inta-\gamma)^{qk}}\frac{n^{qk}}{k!}\ ||f-\mathcal{I}(f)||_{Z_{\gamma}}. \label{ABfixpointtheorem1}
\end{align}
According to Theorem \ref{ABSeriesLemma} the right hadn side of inequality (\ref{ABfixpointtheorem1}) above is a remainder of a convergent series. Therefore we conclude that sequence $\{\mathcal{I}^{n}(f)\}n\in\naturals$ is Cauchy in $Z_{\inta}$. Since $\inta$ is arbitrary, let us now consider $\alpha<\beta\in(\underline{\inta},\overline{\inta})$ and 
\begin{align}
&\overbrace{\ \lim_{n\to\infty}\mathcal{I}^{n}(f)\ }^{\text{in} \ Z_{\alpha}} = \phi_{\alpha}. \\
&\overbrace{\ \lim_{n\to\infty}\mathcal{I}^{n}(f)\ }^{\text{in} \ Z_{\beta}} = \phi_{\beta}.
\end{align}
Because $Z_{\alpha}\prec Z_{\beta}$ we see that
\begin{align}
\|\phi_{\beta}-\phi_{\alpha}\|_{Z_{\beta}} &\leq \|\phi_{\beta} -\mathcal{I}^{n}(f)\|_{Z_{\beta}} + \|\mathcal{I}^{n}(f)-\phi_{\alpha}\|_{Z_{\beta}}, \\
&\leq \|\phi_{\beta} -\mathcal{I}^{n}(f)\|_{Z_{\beta}} + \|\mathcal{I}^{n}(f)-\phi_{\alpha}\|_{Z_{\alpha}},
\end{align}
which shows that $\phi_{\beta} = \phi_{\alpha}$. Therefore defining
\begin{align}
\phi_{\alpha} \eqqcolon \phi \coloneqq \phi_{\beta},
\end{align}
we see that $\phi\in \underline{Z}$ and 
\begin{align}
\overbrace{\ \lim_{n\to\infty}\mathcal{I}^{n}(f)\ }^{\text{in} \ Z_{\inta}} = \phi.
\end{align}
Now, from Theorem \ref{ABIntmap} it follows that $\mathcal{I}$ is a continuous map from $Z_{\inta}$ to $Z_{\overline{\inta}}$. Hence we see that
\begin{align}
\mathcal{I}^{n+1}(f) &\to \phi\ \text{as}\ n\to\infty, \\
\mathcal{I}^{n+1}(f) = \mathcal{I}(\mathcal{I}^{n}(f)) &\to \mathcal{I}(\phi)\ \text{as}\ n\to\infty,
\end{align}
which shows that $\mathcal{I}(\phi)=\phi$. Finally suppose that there exists $\psi\in \underline{Z}$ such that $\psi \not= \phi$ and $\mathcal{I}(\psi)=\psi$. In this case it is clear that
\begin{align}
||\mathcal{I}^{n}(\phi) - \mathcal{I}^{n+1}(\psi)||_{Z_{\inta}}  = ||\phi - \psi||_{Z_{\inta}}. 
\end{align}
However from Theorem \ref{ABprecauchytheorem} we can infer that
\begin{align}
||\mathcal{I}^{n}(\phi) - \mathcal{I}^{n+1}(\psi)||_{Z_{\inta}}  &\leq \frac{L^{n}T^{n}}{(\beta-\alpha)^{qn}}\frac{n^{qn}}{n!}||\phi-\mathcal{I}(\psi)||_{Z_{\alpha}}, \\
&=\frac{L^{n}T^{n}}{(\beta-\alpha)^{qn}}\frac{n^{qn}}{n!}||\phi-\psi||_{Z_{\alpha}}. \label{ABfixpointtheorem2}
\end{align}
Since, by Theorem \ref{ABSeriesLemma},  the right hand side of inequality (\ref{ABfixpointtheorem2}) tends to zero we conclude that $||\phi - \psi||_{Z_{\inta}}=0$. Therefore $\phi$ is unique and the proof is complete.
\end{proof}
We now formulate and prove the main result of this appendix. 
\begin{theorem}\label{ABEUtheorem}
Suppose $x_{\underline{\inta}} \in X_{\underline{\inta}}$, $q<1$ and $F\in\mathcal{O}(\mathbf{X},L,q)$ are fixed. Then there exist a unique map $f:[0,T]\to\underline{X}$ such that if $\inta\in(\underline{\inta}, \overline{\inta})$ then $f:[0,T]\to X_{\inta}$ is continuous and 
\[
f(t) = x_{\underline{\inta}}+ \int_{0}^{t}F(f(s))ds,\ t\in [0,T].
\]
\end{theorem}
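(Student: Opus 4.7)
The plan is to observe that Theorem \ref{ABEUtheorem} is essentially a restatement of the fixed-point Theorem \ref{ABfixpointtheorem} in the language of the integral equation, so the bulk of the work has already been done. Recall that by construction the integral map $\mathcal{I}:\overline{\mathbf{Z}}\to Z_{\overline{\inta}}$ defined in (\ref{IABntegralMap}) satisfies
\[
\mathcal{I}(\phi)=\phi \iff \phi(t)=x_{\underline{\inta}}+\int_0^t F(\phi(s))\,ds,\ t\in[0,T],
\]
so producing a solution to the integral equation is the same as producing a fixed point of $\mathcal{I}$.

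For existence, I would start by fixing any convenient $f_0\in Z_{\underline{\inta}}$, for instance the constant map $f_0(t)\equiv x_{\underline{\inta}}$. By Theorem \ref{ABIntmap}, $\mathcal{I}\in\mathcal{O}(\mathbf{Z},LT,q)$, and since $q<1$, Theorem \ref{ABfixpointtheorem} applies to $\mathcal{I}$ and yields a unique element $f\in\underline{\mathbf{Z}}$ such that
\[
\overbrace{\ \lim_{n\to\infty}\mathcal{I}^{n}(f_0)\ }^{\text{in }Z_{\inta}} = f, \qquad \forall\inta\in(\underline{\inta},\overline{\inta}),
\]
and $\mathcal{I}(f)=f$. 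By the equivalence above and the definition of $\underline{\mathbf{Z}}=\bigcap_{\inta\in(\underline{\inta},\overline{\inta})}\mathcal{C}([0,T],X_{\inta})$, the map $f:[0,T]\to \underline{X}$ is continuous into each $X_{\inta}$ and satisfies the integral equation pointwise in $t\in[0,T]$, exactly as required.

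For uniqueness, suppose $g:[0,T]\to\underline{X}$ is another map which, for each $\inta\in(\underline{\inta},\overline{\inta})$, is continuous as a map into $X_{\inta}$ and satisfies the integral equation. Then $g\in Z_{\inta}$ for every such $\inta$, i.e. $g\in\underline{\mathbf{Z}}$, and $\mathcal{I}(g)=g$. The uniqueness clause of Theorem \ref{ABfixpointtheorem} then forces $g=f$, completing the proof.

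There is essentially no obstacle to this plan because Theorems \ref{ABIntmap} and \ref{ABfixpointtheorem} have already absorbed the substantive work (the Ovsjannikov estimate on $\mathcal{I}$, the convergence of the Picard iterates via Lemma \ref{ABSeriesLemma}, and the uniqueness of the fixed point). The only minor point to verify cleanly is that membership of $f$ in every $Z_{\inta}$ (for $\inta$ strictly greater than $\underline{\inta}$) is the same as saying $f$ takes values in $\underline{X}$ and is continuous for each scale index, so that the conclusion of Theorem \ref{ABEUtheorem} is literally the conclusion of Theorem \ref{ABfixpointtheorem} rewritten via the fixed-point/integral-equation equivalence.
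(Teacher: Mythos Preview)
Your proposal is correct and matches the paper's own approach exactly: the paper's proof is a single line stating that the result follows directly from Theorem \ref{ABfixpointtheorem} by setting $f\coloneqq\phi$. Your write-up simply spells out why the fixed point of $\mathcal{I}$ in $\underline{\mathbf{Z}}$ is precisely the required solution of the integral equation, which is the same identification the paper is making implicitly.
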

\begin{proof}
This result folows directly from Theorem \ref{ABfixpointtheorem} above by letting $f\coloneqq\phi$. 
\end{proof}
\begin{mdframed}
	\begin{remark}
	For $q=1$ current method can be used to prove Theorem \ref{ABEUtheorem}  by introducing a suitable upper bound on $T$. 
	\end{remark}
\end{mdframed}
The final result of this appendix is a usefull norm estimate. To prove this final result we now make two preliminary observations.\\ \\ First, suppose that $\alpha<\beta\in\indexint$ and $x\in X_{\alpha}$. Then we can see that
\begin{align}
\|F(x)\|_{X_{\beta}} &= \|F(x)+F(0)-F(0)\|_{X_{\beta}},  \\
&\leq \|F(x)-F(0)\|_{X_{\beta}} + \|F(0)\|_{X_{\beta}}, \\
&\leq \frac{L}{(\beta-\alpha)^{q}}\|x\|_{X_{\alpha}} + \|F(0)\|_{X_{\beta}}, \\
&\leq \frac{L}{(\beta-\alpha)^{q}}\bigg{(}P+ \|x\|_{X_{\alpha}}\bigg{)},
\end{align}
where
\begin{align}
P\coloneqq\frac{\|F(0)\|_{X_{\beta}}(\beta-\alpha)^{q}}{L}. \label{ABdefP}
\end{align}
Second, suppose $\inta\in(\underline{\inta},\overline{\inta})$ and $x_{\underline{\inta}}\in X_{\underline{\inta}}$. Moreover consider a partition $\{\psi_{i}\}_{i=0}^{n+1}$ of $[\underline{\inta},\alpha]$ into $n+1$ intervals of equal length. That is $\psi_{0}=\underline{\inta}$, $\psi_{n+1}=\alpha$ and $\psi_{i+1}-\psi_{i} = \frac{\alpha-\underline{\inta}}{n-1}$ for all $0\leq i \leq n$. Now, from Theorem \ref{ABprecauchytheorem} we see that for all $n\in\naturals^{0}$ we have
\begin{align}
||\mathcal{I}^{n}(x_{\underline{\inta}})(t) - \mathcal{I}^{n+1}(x_{\underline{\inta}})(t)||_{X_{\inta}} &\leq \frac{L^{n}}{(\inta-\psi_{1})^{qn}}n^{qn}\mathcal{H}^{n}(t,||x_{\underline{\inta}}-\mathcal{I}(x_{\underline{\inta}})||_{X_{\psi_{1}}}), \nonumber \\[1em]
&\leq \frac{L^{n}}{(\inta-\psi_{1})^{qn}}n^{qn}\mathcal{H}^{n+1}(t,\|F(x_{\underline{\inta}})\|_{X_{\psi_{1}}}), \nonumber \\[1em]
&\leq \frac{L^{n}}{(\inta-\psi_{1})^{qn}}\frac{L}{(\psi_{1}-\underline{\inta})}n^{qn}\mathcal{H}^{n+1}(t,P+ \|x_{\underline{\inta}}\|_{X_{\underline{\inta}}}), \label{ABnormest1} \nonumber \\[1em]
&\leq \frac{L^{n}T^{n+1}}{(\inta-\psi_{1})^{qn}}\frac{L}{(\psi_{1}-\underline{\inta})}\frac{n^{qn}}{(n+1)!}\bigg{(}P+ \|x_{\underline{\inta}}\|_{X_{\underline{\inta}}}\bigg{)},  \nonumber \\[1em]
&\leq \frac{L^{n+1}T^{n+1}}{(\inta-\underline{\inta})^{q(n+1)}}\frac{(n+1)^{q(n+1)}}{(n+1)!}\bigg{(}P+ \|x_{\underline{\inta}}\|_{X_{\underline{\inta}}}\bigg{)}.  \nonumber
\end{align}
We now obtain the norm estimate. 
\begin{theorem}\label{ABnormesttheorem}
Let $f$ be defined by Theorem \ref{ABEUtheorem} and suppose that $\inta\in(\underline{\inta},\overline{\inta})$. Then for all $t\in [0,T]$
\begin{align}
||f(t)||_{X_{\inta}} \leq \sum_{n=0}^{\infty}\frac{L^{n}T^{n}}{(\inta-\underline{\inta})^{qn}}\frac{n^{qn}}{n!}\bigg{(}P+ \|x_{\underline{\inta}}\|_{X_{\underline{\inta}}}\bigg{)}.
\end{align}
\end{theorem}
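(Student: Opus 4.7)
The plan is to obtain the bound by expressing $f$ as a telescoping series of iterates, applying the preliminary estimate derived just before the statement, and then passing to the limit using continuity of the norm on the Banach space $X_{\inta}$.

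First I would fix $t\in[0,T]$ and $\inta\in(\underline{\inta},\overline{\inta})$, and recall from Theorem \ref{ABfixpointtheorem} that $f(t)=\lim_{n\to\infty}\mathcal{I}^{n}(x_{\underline{\inta}})(t)$ in $X_{\inta}$, with $\mathcal{I}^{0}(x_{\underline{\inta}})=x_{\underline{\inta}}$. By writing the iterate as a telescoping sum,
\begin{align}
\mathcal{I}^{n}(x_{\underline{\inta}})(t) \;=\; x_{\underline{\inta}} + \sum_{k=0}^{n-1}\bigl[\mathcal{I}^{k+1}(x_{\underline{\inta}})(t)-\mathcal{I}^{k}(x_{\underline{\inta}})(t)\bigr],
\end{align}
and then applying the triangle inequality in $X_{\inta}$, one gets
\begin{align}
\|\mathcal{I}^{n}(x_{\underline{\inta}})(t)\|_{X_{\inta}} \;\leq\; \|x_{\underline{\inta}}\|_{X_{\inta}} + \sum_{k=0}^{n-1}\|\mathcal{I}^{k+1}(x_{\underline{\inta}})(t)-\mathcal{I}^{k}(x_{\underline{\inta}})(t)\|_{X_{\inta}}.
\end{align}

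Next I would invoke the preliminary estimate displayed immediately before the theorem, namely
\begin{align}
\|\mathcal{I}^{k}(x_{\underline{\inta}})(t) - \mathcal{I}^{k+1}(x_{\underline{\inta}})(t)\|_{X_{\inta}} \;\leq\; \frac{L^{k+1}T^{k+1}}{(\inta-\underline{\inta})^{q(k+1)}}\,\frac{(k+1)^{q(k+1)}}{(k+1)!}\bigl(P+\|x_{\underline{\inta}}\|_{X_{\underline{\inta}}}\bigr),
\end{align}
and re-index via $m=k+1$. Since $\mathscr{L}^{p}$-type scale inclusion gives $\|x_{\underline{\inta}}\|_{X_{\inta}}\leq\|x_{\underline{\inta}}\|_{X_{\underline{\inta}}}\leq P+\|x_{\underline{\inta}}\|_{X_{\underline{\inta}}}$, the standalone first term $\|x_{\underline{\inta}}\|_{X_{\inta}}$ is absorbed into the $m=0$ summand (which equals $(P+\|x_{\underline{\inta}}\|_{X_{\underline{\inta}}})$, using the conventions $0^{0}=1$ and $0!=1$). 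Consequently,
\begin{align}
\|\mathcal{I}^{n}(x_{\underline{\inta}})(t)\|_{X_{\inta}} \;\leq\; \bigl(P+\|x_{\underline{\inta}}\|_{X_{\underline{\inta}}}\bigr)\sum_{m=0}^{n}\frac{L^{m}T^{m}}{(\inta-\underline{\inta})^{qm}}\,\frac{m^{qm}}{m!}.
\end{align}

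Finally I would pass to the limit $n\to\infty$: the left-hand side converges to $\|f(t)\|_{X_{\inta}}$ by continuity of the norm, and the right-hand side converges because the series is finite by Lemma \ref{ABSeriesLemma} (applied with $p=1$, which is allowed since $q<1$). This yields the claimed bound. I do not anticipate any serious obstacle here: all of the analytic work was already carried out in the displayed preliminary estimate and in Lemma \ref{ABSeriesLemma}, so the proof is essentially bookkeeping — telescope, triangle-inequality, and take a limit. The only subtlety worth stating explicitly is checking that the isolated $\|x_{\underline{\inta}}\|_{X_{\inta}}$ term fits under the $m=0$ slot of the series, which follows from the scale inequality $\|\cdot\|_{X_{\inta}}\leq\|\cdot\|_{X_{\underline{\inta}}}$ together with the nonnegativity of $P$ defined in \eqref{ABdefP}.
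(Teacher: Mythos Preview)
Your proposal is correct and follows essentially the same approach as the paper: telescope the iterates $\mathcal{I}^{n}(x_{\underline{\inta}})(t)$, bound each increment by the preliminary estimate displayed just before the theorem, absorb the base term $\|x_{\underline{\inta}}\|_{X_{\inta}}$ into the $m=0$ slot via the scale inequality $\|\cdot\|_{X_{\inta}}\leq\|\cdot\|_{X_{\underline{\inta}}}$ and $P\geq 0$, and pass to the limit using Theorem \ref{ABfixpointtheorem} and Lemma \ref{ABSeriesLemma}. The only cosmetic difference is that the paper telescopes the norms $\|\mathcal{I}^{k}(x_{\underline{\inta}})(t)\|_{X_{\inta}}$ and then applies the reverse triangle inequality, whereas you telescope the elements and apply the direct triangle inequality; both routes are equivalent here.
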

\begin{proof}
From Theorem \ref{ABfixpointtheorem} it is clear by continuity of $f$ that for all $t\in [0,T]$ we have
\begin{align}
\overbrace{\ \lim_{n\to\infty}\|\mathcal{I}^{n}(x_{\underline{\inta}})(t)\|_{X_{\inta}}\ }^{\text{in} \ X_{\inta}} = \|f(t)\|_{X_{\inta}}.
\end{align}
Hence we now use estimate (\ref{ABnormest1}) to see that for all $n\in\naturals$ and all $t\in [0,T]$ we have
\begin{align}
\|\mathcal{I}^{n}(x_{\underline{\inta}})(t)\|_{X_{\inta}} - \|\mathcal{I}^{0}(x_{\underline{\inta}})(t)\|_{X_{\inta}} &= \sum_{k=1}^{n}\|\mathcal{I}^{k}(x_{\underline{\inta}})(t)\|_{X_{\inta}} - \|\mathcal{I}^{k-1}(x_{\underline{\inta}})(t)\|_{X_{\inta}}, \nonumber \\[1em]
&\leq \sum_{k=1}^{n}\|\mathcal{I}^{k-1}(x_{\underline{\inta}})(t) - \mathcal{I}^{k}(x_{\underline{\inta}})(t) \|_{X_{\inta}}, \nonumber \\[1em]
&\leq \sum_{k=1}^{n}\frac{L^{k}T^{k}}{(\inta-\underline{\inta})^{qn}}\frac{k^{qk}}{k!}\bigg{(}P+ \|x_{\underline{\inta}}\|_{X_{\underline{\inta}}}\bigg{)}.
\end{align}
Therefore for all $n\in\naturals$ and all $t\in [0,T]$ we have
\begin{align}
\|\mathcal{I}^{n}(x_{\underline{\inta}})(t)\|_{X_{\inta}}  &\leq \|\mathcal{I}^{0}(x_{\underline{\inta}})(t)\|_{X_{\inta}} + \sum_{k=1}^{n}\frac{L^{k}T^{k}}{(\inta-\underline{\inta})^{qn}}\frac{k^{qk}}{k!}\bigg{(}P+ \|x_{\underline{\inta}}\|_{X_{\underline{\inta}}}\bigg{)}, \nonumber \\[1em]
&\leq P + \|x_{\underline{\inta}}\|_{X_{\underline{\inta}}} + \sum_{k=1}^{n}\frac{L^{k}T^{k}}{(\inta-\underline{\inta})^{qn}}\frac{k^{qk}}{k!}\bigg{(}P+ \|x_{\underline{\inta}}\|_{X_{\underline{\inta}}}\bigg{)}, \nonumber \\[1em]
&\leq \bigg{(}1+ \sum_{k=1}^{n}\frac{L^{k}T^{k}}{(\inta-\underline{\inta})^{qn}}\frac{k^{qk}}{k!}\bigg{)}\bigg{(}P+ \|x_{\underline{\inta}}\|_{X_{\underline{\inta}}}\bigg{)}, \nonumber \\[1em]
&\leq \sum_{k=0}^{n}\frac{L^{k}T^{k}}{(\inta-\underline{\inta})^{qn}}\frac{k^{qk}}{k!}\bigg{(}P+ \|x_{\underline{\inta}}\|_{X_{\underline{\inta}}}\bigg{)}. \label{ABnormesttheorem1} \nonumber
\end{align}
Finally taking the limit on both sides of inequality (\ref{ABnormesttheorem1}) we see that for all $t\in [0,T]$ we have
\begin{align}
||f(t)||_{X_{\inta}} \leq \sum_{n=0}^{\infty}\frac{L^{n}T^{n}}{(\inta-\underline{\inta})^{qn}}\frac{n^{qn}}{n!}\bigg{(}P+ \|x_{\underline{\inta}}\|_{X_{\underline{\inta}}}\bigg{)}, \nonumber
\end{align}
hence the proof is complete. 
\end{proof}
\begin{mdframed}
	\begin{remark}\label{ABnormesttheorem123}
	It is clear from the definition (\ref{ABdefP}) that if $F$ is a linear map then $P\equiv 0$ hence in this case from Theorem \ref{ABnormesttheorem} we see that for all $\inta\in(\underline{\inta},\overline{\inta})$.
	\begin{align} 
	||f(t)||_{X_{\inta}} \leq \sum_{n=0}^{\infty}\frac{L^{n}T^{n}}{(\inta-\underline{\inta})^{qn}}\frac{n^{qn}}{n!} \|x_{\underline{\inta}}\|_{X_{\underline{\inta}}}.
	\end{align}
	\end{remark}
\end{mdframed}
		\begin{mdframed}
	\begin{remark}
		It is aso clear from Theorem \ref{ABnormesttheorem} that 
		\begin{align}
		||f||_{Z_{\inta}} \leq \sum_{n=0}^{\infty}\frac{L^{n}T^{n}}{(\inta-\underline{\inta})^{qn}}\frac{n^{qn}}{n!}\bigg{(}P+ \|x_{\underline{\inta}}\|_{X_{\underline{\inta}}}\bigg{)}.
		\end{align}
	\end{remark}
\end{mdframed}

\newpage 

\end{document}